\newtheorem{thm}{Theorem}[section]
\newtheorem{prop}[thm]{Proposition}
\newtheorem{lem}[thm]{Lemma}
\newtheorem{conj}[thm]{Conjecture}
\newtheorem{quest}[thm]{Question}
\theoremstyle{definition}
\newtheorem{defn}[thm]{Definition}
\theoremstyle{remark}
\newtheorem{rem}[thm]{Remark}
\def\R{\mathbb{R}}
\def\Z{\mathbb{Z}}
\def\E{\mathbb{E}}
\def\P{\mathbb{P}}
\DeclareMathOperator{\Var}{Var}
\DeclareMathOperator{\Cov}{Cov}
\DeclareMathOperator{\sgn}{sgn}
\DeclareMathOperator{\idf}{\mathds{1}}
\DeclareMathOperator{\Leb}{Leb}
\DeclareMathOperator{\Id}{Id}
\DeclareMathOperator{\Perm}{Perm}
\DeclareMathOperator{\occ}{occ}
\DeclareMathOperator{\pocc}{\widetilde{occ}}
\DeclareMathOperator{\stc}{stc}
\newcommand{\eps}{\varepsilon}
\newcommand{\bmwzm}{\bm W_{[0,m]}}
\newcommand{\dxyent}{\delta_2(x,y,\eps,n,t)}
\newcommand{\wzwb}{\widehat z+ \widehat b}
\newcommand{\sqen}{\sqrt{2\eps n}}
\newcommand{\cnz}{\conti Z}
\newcommand{\conti}[1]{{\bm{\mathcal #1}}}
\title{The skew Brownian permuton: a new universality class for random constrained permutations}
\date{  } 
\author[1]{Jacopo Borga\thanks{\href{mailto:jborga@stanford.edu}{jborga@stanford.edu}}}
\affil[1]{Department of Mathematics, Stanford University}
\newcommand{\subjclass}[2][1991]{%
	\let\@oldtitle\@title%
	\gdef\@title{\@oldtitle\footnotetext{#1 \emph{Mathematics subject classification.} #2.}}%
}
\newcommand{\keywords}[1]{%
	\let\@@oldtitle\@title%
	\gdef\@title{\@@oldtitle\footnotetext{\emph{Key words and phrases.} #1.}}%
}
\keywords{Permutations, permutons, stochastic differential equations, universal phenomena, Liouville quantum gravity, SLE curves}
\subjclass[2010]{60C05 (Combinatorial probability), 05A05 (Permutations, words, matrices), 34K50 (Stochastic functional-differential equations), 60J67 (Stochastic (Schramm-)Loewner evolution (SLE)), 81T40 (Two-dimensional field theories, conformal field theories, etc. in quantum mechanics)}
\begin{document}

\maketitle

\begin{abstract}
	We construct a new family of random permutons, called \emph{skew Brownian permuton}, which describes the limits of several models of random constrained permutations. This family is parametrized by two real parameters. 
	
	For a specific choice of the parameters, the skew Brownian permuton coincides with the Baxter permuton, i.e.\ the permuton limit of Baxter permutations. We prove that for another specific choice of the parameters, the skew Brownian permuton coincides with the biased Brownian separable permuton, a one-parameter family of permutons previously studied in the literature as the limit of uniform permutations in substitution-closed classes. This brings two different limiting objects under the same roof, identifying a new larger universality class.
	
	The skew Brownian permuton is constructed in terms of flows of solutions of certain stochastic differential equations (SDEs) driven by two-dimensional correlated Brownian excursions in the non-negative quadrant. We call these SDEs \emph{skew perturbed Tanaka equations} because they are a mixture of the perturbed Tanaka equations and the equations encoding skew Brownian motions. We prove existence and uniqueness of (strong) solutions for these new SDEs.
	
	In addition, we show that some natural permutons arising from Liouville quantum gravity spheres decorated with two Schramm-Loewner evolution curves are skew Brownian permutons and such permutons cover almost the whole range of possible parameters. Some connections between constrained permutations and decorated planar maps have been investigated in the literature at the discrete level; this paper establishes this connection directly at the continuum level.
	Proving the latter result, we also give an SDE interpretation of some quantities related to SLE-decorated Liouville quantum gravity spheres. 
\end{abstract}

\tableofcontents

\section{Introduction}

\subsection{Limits of random permutations}

The study of limits of random permutations is a classical topic in probability theory. The typical question is to determine the behavior of a large random permutation when its size tends to infinity. For many years, the main approach to answering this question has been to study the convergence of relevant statistics, such as the number of inversions, the length of the longest increasing subsequence, the number of cycles, and many others.
In the last decade, a more geometric approach has been investigated, mainly to study limits of \emph{non-uniform} models of random permutations. 
In this case, the goal is to directly determine the limit of the permutation itself from a global perspective. This theory goes under the name of \emph{permuton limits}. For a complete introduction to the theory of permuton limits, we refer the reader to \cite[Section 2.1]{borga2021random}. Here we only recall some basic definitions.

A Borel probability measure $\mu$ on the unit square $[0,1]^2$ is called a \emph{permuton} if its marginals are uniform, i.e.\ $\mu( [0,1]\times[x,y])=\mu([x,y]\times[0,1])=x-y$
for all $0\leq x\leq y\leq 1$. 

Presutti and
Stromquist \cite{MR2732835} first used permutons, calling them \emph{normalized measures}, to investigate the packing density of some specific patterns. A more theoretical approach to the study of deterministic permutons (again without using this terminology) was developed by Hoppen, Kohayakawa,
Moreira, Rath, and Sampaio~\cite{hoppen2013limits}. The word \emph{permuton} appeared for the first time in a work of Glebov, Grzesik, Klimošová, and Král~\cite{MR3279390}. They adopt this terminology by analogy with
\emph{graphon} in the theory of random graphs \cite{lovasz2012large}. 
Various results appearing in \cite{hoppen2013limits} were generalized to the case of random permutons by Bassino, Bouvel, F\'{e}ray, Gerin, Maazoun, and Pierrot~\cite{bassino2017universal}.

To a permutation $\sigma$ of size $n \ge 1$, it is possible to associate a natural permuton $\mu_\sigma$, given by the sum of Lebesgue area measures
\begin{equation}
	\mu_\sigma(A)
	= 
	n 
	\sum_{i=1}^n 
	\Leb
	\big([(i-1)/n, i/n]
	\times
	[(\sigma(i)-1)/n,\sigma(i)/n]
	\cap 
	A\big),
\end{equation}
where $A$ is a Borel measurable set of $[0,1]^2$. Note that $\mu_\sigma$ corresponds to the normalized diagram of $\sigma$,
where each dot has been replaced with a square of dimension $1/n \times 1/n$ and total mass $1/n$. See the example given in \cref{fig:iubewduqeubd}.

\begin{figure}[htbp]
	\begin{minipage}[c]{0.54\textwidth}
		\centering
		\includegraphics[scale=1.5]{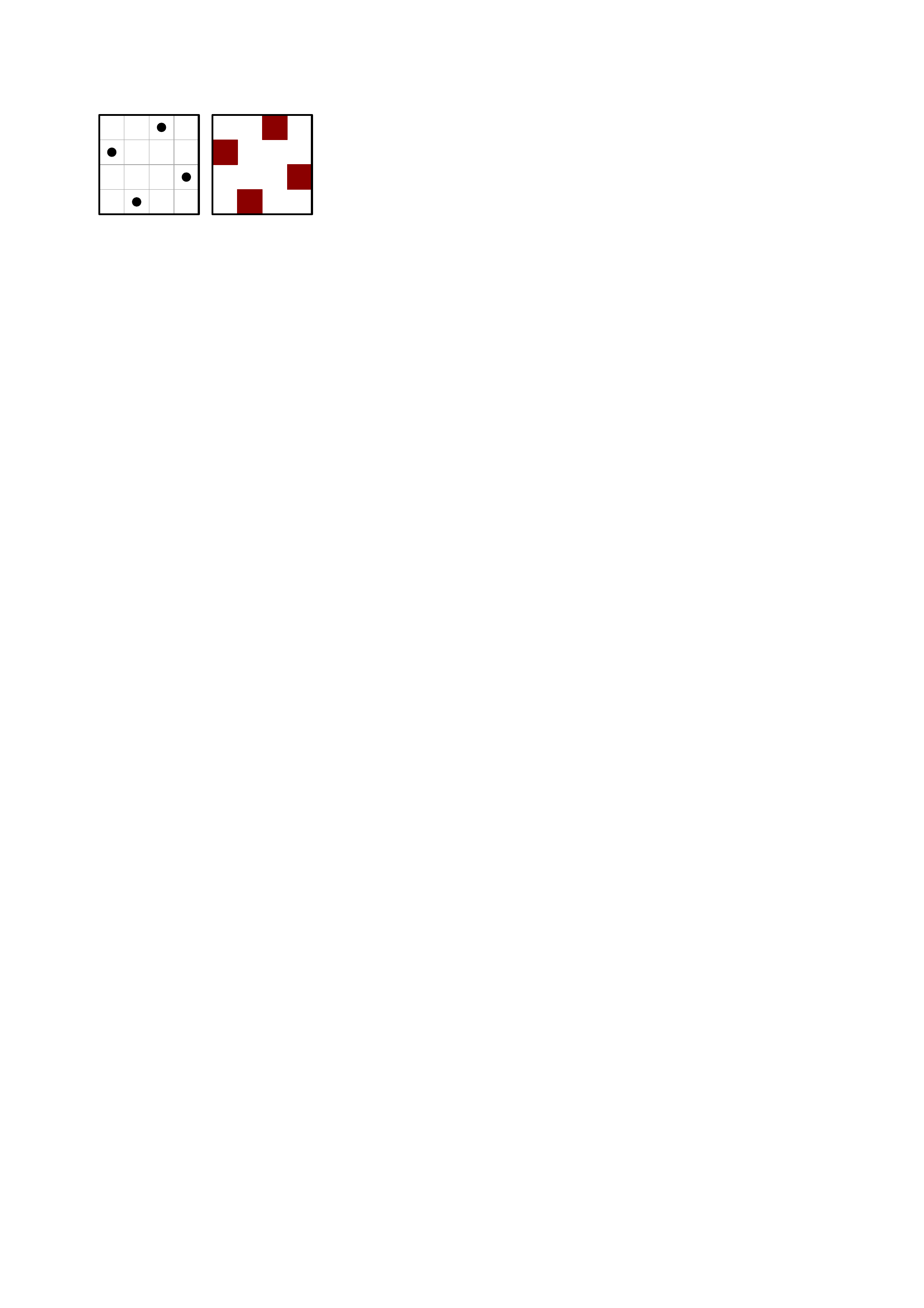}
	\end{minipage}
	\begin{minipage}[c]{0.453\textwidth}
		\caption{\textbf{Left:} The diagram of the permutation $\sigma=3142$. \textbf{Right:} The corresponding permuton. Every small square of the diagram containing a dot is endowed with Lebesgue measure of total mass $1/4$. With this procedure we obtain a probability measure on the unit square with uniform marginals, that is, the permuton $\mu_\sigma$.\label{fig:iubewduqeubd}}
	\end{minipage}
\end{figure}

Let $\mathcal M$ be the set of permutons endowed with the weak topology.
A sequence of permutons $(\mu_n)_{n\in\Z_{>0}}$ converges weakly to $\mu$, and we write $\mu_n \to \mu$, if 
\begin{equation}
	\int_{[0,1]^2} f d\mu_n 
	\rightarrow 
	\int_{[0,1]^2} f d\mu,
\end{equation}
for every continuous function $f: [0,1]^2 \to \mathbb{R}$. When a sequence of permutons $(\mu_n)_{n\in\Z_{>0}}$ converges weakly to $\mu$ we will often say that $(\mu_n)_{n\in\Z_{>0}}$ converges to $\mu$ in the permuton sense.

\medskip

In the past years, permuton limits of various models of non-uniform random permutations have been investigated in the literature. These models can be divided into two main different classes: 
\begin{itemize}
	\item The models that exhibit a \emph{random fractal limiting permuton}: here the classical examples are pattern-avoiding permutations (see below for a more detailed discussion).
	\item The models that exhibit a \emph{deterministic limiting permuton}: here some examples are Erd\"{o}s–Szekeres permutations \cite{MR2266895}, Mallows permutations \cite{starr2009thermodynamic}, random sorting networks \cite{dauvergne2018archimedean}, square\footnote{To be precise, we remark that the permuton limits of square and almost square permutations with a finite number of internal points are random, but their randomness can be simply expressed in terms of a beta distributed random variable. On the contrary, almost square permutations with an infinite number of internal points exhibit the same deterministic permuton limit. For these reasons we prefer to classify square and almost square permutations as permutations that exhibit a deterministic permuton limit.} and almost square permutations \cite{MR4149526,borga2021almost}, and permutations sorted with the \emph{runsort} algorithm \cite{alon2021runsort}.
\end{itemize}
The present paper focuses on models that exhibit a limiting random fractal permuton. Our main goal is to introduce a new family of limiting permutons that unifies the various instances of random fractal limiting permutons that appear in the literature and some new ones.

\bigskip

We now quickly review the literature on models of \emph{pattern-avoiding permutations}  that exhibit a random fractal permuton in the limit. If the reader is not familiar with the terminology related to pattern-avoiding permutations, he/she can find a quick introduction in \cite[Section 1.6.1]{borga2021random}.

\begin{figure}[htbp]
	\centering
	\includegraphics[scale=0.1]{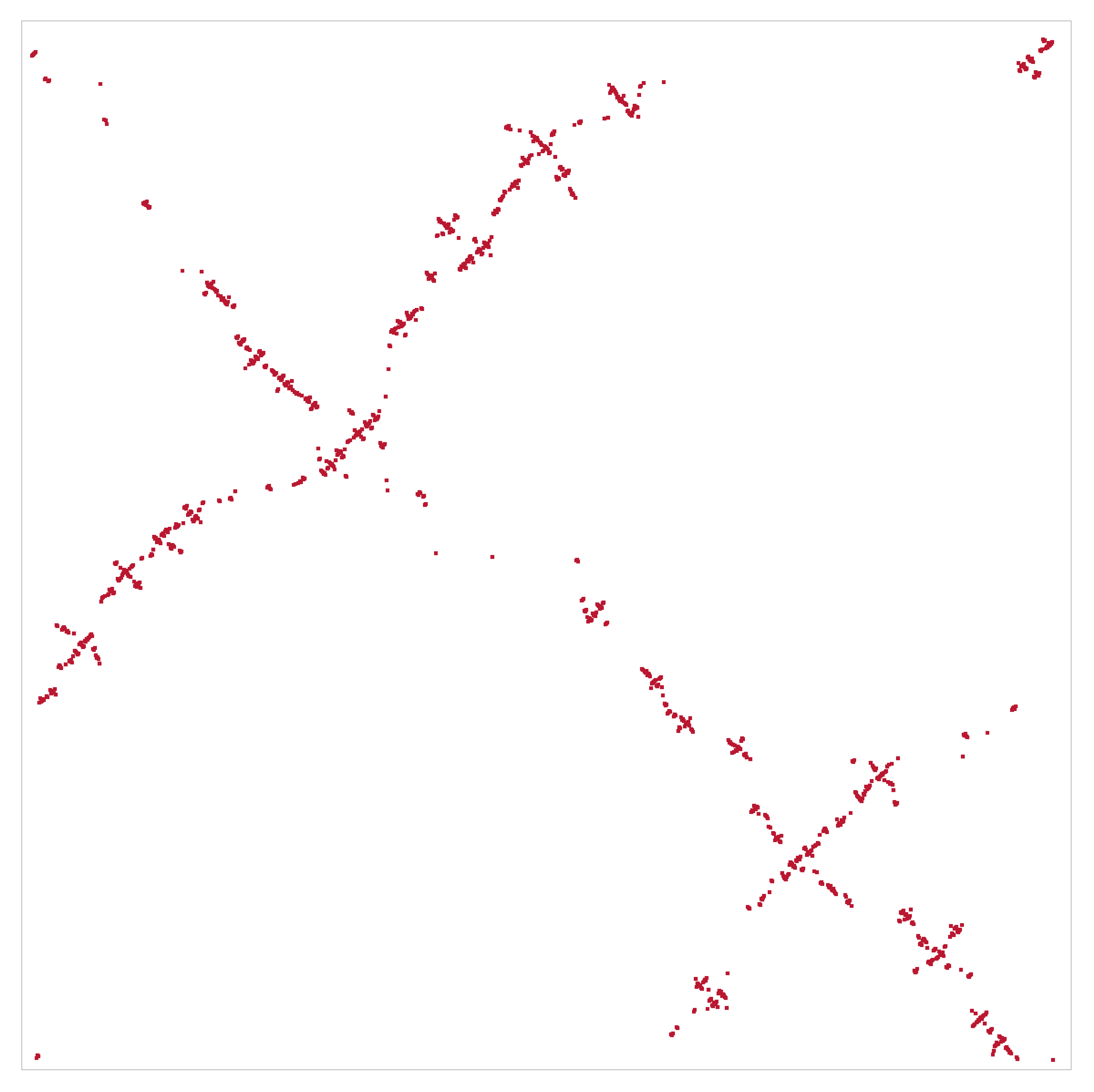}
	\includegraphics[scale=0.1]{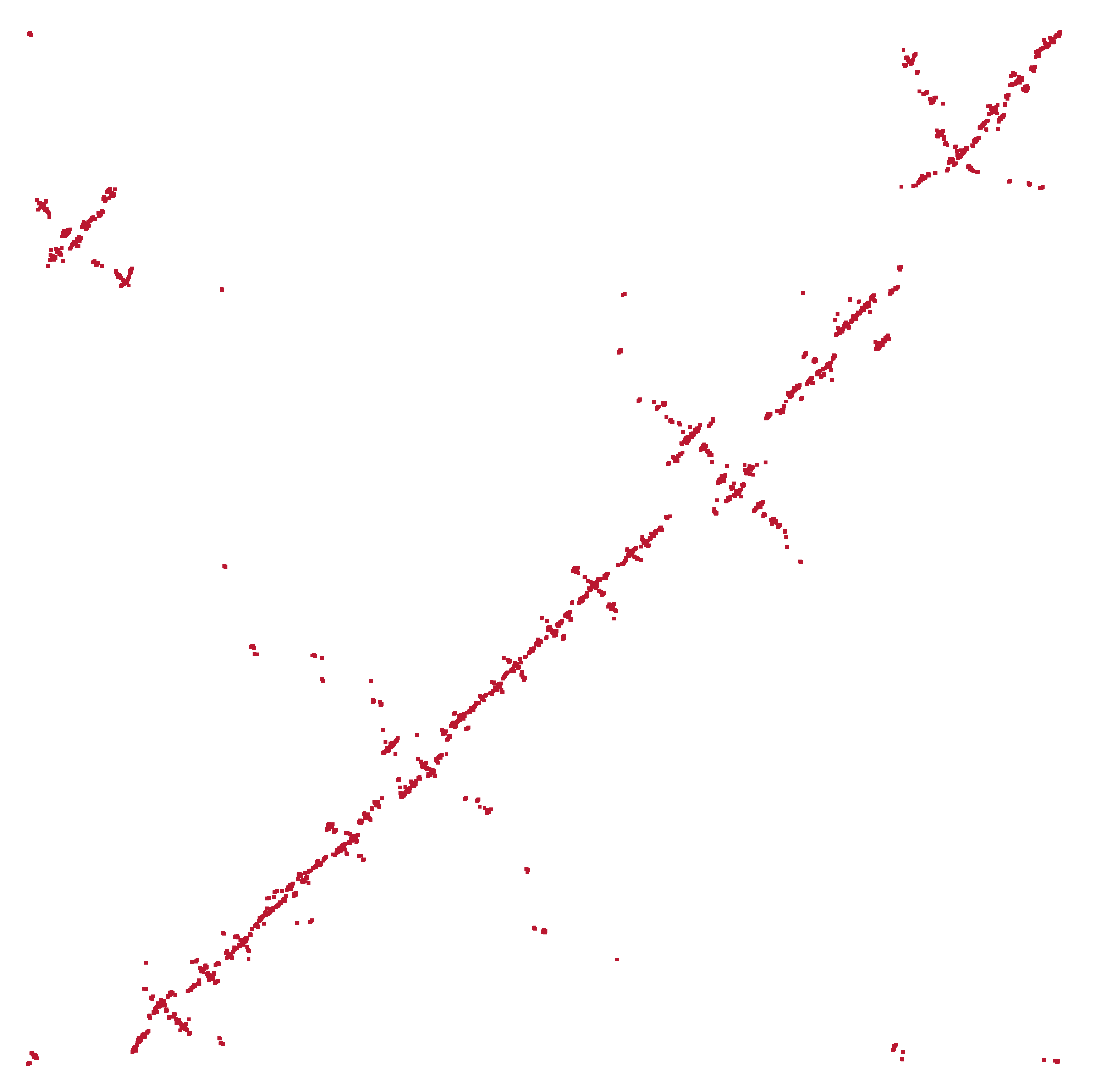}
	\includegraphics[scale=0.091]{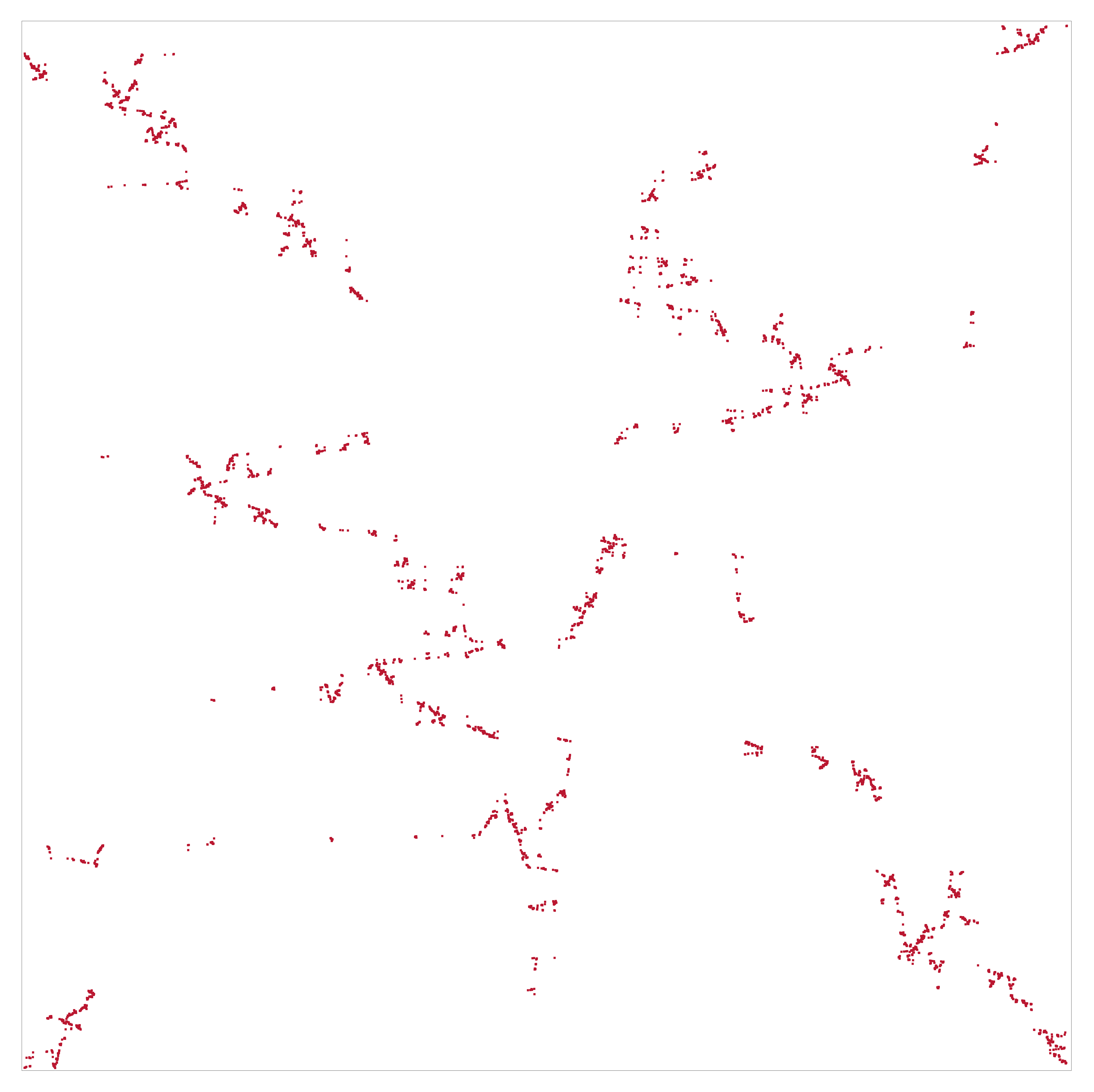}
	\caption{\textbf{Left:} An instance of the Brownian separable permuton. \textbf{Middle:} An instance of the biased Brownian separable permuton.  
		\textbf{Right:} An instance of the Baxter permuton.\break
		More explanations on these limiting permutons will be given in the next sections of the introduction.
		\label{fig:perm_sub}
	}
\end{figure}

\begin{itemize}
	\item In \cite{bassino2018separable}, Bassino, Bouvel, F\'eray, Gerin and Pierrot prove that a sequence of uniform random separable permutations\footnote{Separable permutations are the permutations avoiding the patterns $2413$ and $3142$. They are one of the most studied families of pattern-avoiding permutations, see for instance \cite{MR1620935,MR624050,MR1093199,MR3359905}.} converges to the \emph{Brownian separable permuton} (see the picture on the left-hand side of \cref{fig:perm_sub}). This is the first work where a random fractal limiting permuton appears in the literature.
	
	\item In a second work \cite{bassino2017universal} (see also \cite{MR4115736}) the authors prove that the Brownian separable permuton is \emph{universal}: they consider uniform random permutations in proper substitution-closed classes\footnote{For an introduction to proper substitution-closed classes we refer the reader to \cite[Section 3.2]{borga2021random}.} and show that their limits in the permuton sense are a one-parameter deformation of the Brownian separable permuton, called \emph{biased Brownian separable permuton} (see the picture in the middle of \cref{fig:perm_sub}). 
	
	\item These new universal permutons are later investigated by Maazoun \cite{maazoun}. 
	
	\item In  \cite{bassino2019scaling} the authors investigate permuton limits for permutations in classes having a finite combinatorial specification for the substitution decomposition. The limit depends on the structure of the specification restricted to families with the largest growth rate. When the specification is strongly connected, two cases occur. If the associated system of equations
	is linear, the limiting permuton is a deterministic $X$-shape. Otherwise, the limiting permuton
	is the biased Brownian separable permuton.
	
	\item Finally, in  \cite{borga2020scaling}, Maazoun and the author of the present paper show that the permuton limit of Baxter permutations\footnote{Baxter permutations were introduced by Glen Baxter in 1964 \cite{MR0184217} to study fixed points of commuting functions.
		Baxter permutations are permutations avoiding the vincular patterns $2\underbracket[.5pt][1pt]{41}3$ and $3\underbracket[.5pt][1pt]{14}2$, i.e.\ permutations $\sigma$ such that there are no indices $i < j < k$ such that $\sigma(j+1) < \sigma(i) < \sigma(k) < \sigma(j)$ or $\sigma(j) < \sigma(k) < \sigma(i) < \sigma(j+1)$. These permutations are deeply studied in combinatorics, see for instance \cite{MR0250516,MR491652,MR555815,MR2763051,MR3882946} and references therein.} is a new random fractal limiting permuton, called \emph{the Baxter permuton} (see the picture in the right-hand side of \cref{fig:perm_sub}), that is not included in the biased Brownian separable permuton universality class.
\end{itemize}
The next two sections review the constructions of the biased Brownian separable permuton and the Baxter permuton. These constructions are fundamental to understand later in \cref{sect:feihbweifb} our definition of the new family of limiting permutons mentioned before.

\subsection{The biased Brownian separable permuton}\label{sect:fvuuvf}
We introduce the biased Brownian separable permuton following \cite[Sections 1.3-4]{maazoun}. Consider a one-dimensional Brownian excursion\footnote{Here and throughout the paper we denote random quantities using \textbf{bold} characters.} $(\bm e(t))_{t\in [0,1]}$ on $[0,1]$ and a parameter $p\in[0,1]$. Conditional on $\bm e$,  consider an i.i.d.\ sequence $(\bm s(\ell))_{\ell}\in\{+1,-1\}^{\Z_{>0}}$ indexed by the local
minima\footnote{For the technicalities involved in indexing an i.i.d.\ sequence by
	this random countable set, see \cite[Section 2.2]{maazoun}.} of $\bm e$ and with distribution $\P(\bm s(\ell)=+1)=p=1-\P(\bm s(\ell)=-1)$. We denote by $(\widetilde{\bm e},p)$ the pair $(\bm e,(\bm s(\ell))_{\ell})$. We define the following random relation $\vartriangleleft_{\widetilde{\bm e},p}$:
conditional on $\bm e$, if $x,y\in[0,1]$ and $x<y$ and $\min_{[x,y]}\bm e$ is reached at a unique point which is a strict local minimum $\ell\in[x,y]$ then

\begin{equation}\label{eq:exc_to_perm}
	\begin{cases}
		x\vartriangleleft_{\widetilde{\bm e},p} y, &\quad\text{if}\quad \bm s(\ell)=+1,\\
		y\vartriangleleft_{\widetilde{\bm e},p} x, &\quad\text{if}\quad \bm s(\ell)=-1.\\
	\end{cases}
\end{equation}
Maazoun showed that there exists a random set $\bm A \subset [0,1]^2$ of a.s.\ zero Lebesgue measure, i.e.\ $\P(\Leb(\bm A)=0)=1$, such that for every $x,y\in[0,1]^2\setminus \bm A$ with $x<y$ then $\min_{[x,y]}\bm e$ is reached at a unique point which is a strict local minimum. In particular, the restriction of $\vartriangleleft_{\widetilde{\bm e},p}$ to $[0,1]^2\setminus \bm A$ is a total order.
Setting
\begin{equation}\label{eq:level_function_sep}
	\psi_{\widetilde{\bm e},p}(t)\coloneqq\Leb\left( \big\{x\in[0,1]|x \vartriangleleft_{\widetilde{\bm e},p} t\big\}\right),\quad t\in[0,1],
\end{equation}
then the biased Brownian separable permuton is defined (see \cite[Theorem 1.3]{maazoun}) as the push-forward of the Lebesgue measure on $[0,1]$ via the mapping $(\Id,\psi_{\widetilde{\bm e},p})$, that is
$$\bm \mu^S_p(\cdot)\coloneqq (\Id,\psi_{\widetilde{\bm e},p})_{*}\Leb(\cdot)=\Leb\left(\{t\in[0,1]|(t,\psi_{\widetilde{\bm e},p}(t))\in \cdot \,\}\right).$$
Heuristically, $\psi_{\widetilde{\bm e},p}$ is the \textquotedblleft\emph{continuum permutation}" of the elements in the interval $[0,1]$ induced by the order
$\vartriangleleft_{\widetilde{\bm e},p}$ and $\bm \mu^S_p$ is the diagram of $\psi_{\widetilde{\bm e},p}$.

\subsection{The Baxter permuton} 
We now introduce the Baxter permuton following \cite{borga2020scaling}. 
To do that, we first define the \emph{continuous coalescent-walk process} driven by a two-dimensional Brownian excursion. 

\medskip

We start by recalling that a \emph{two-dimensional Brownian motion} \emph{of correlation} $\rho\in[-1,1]$, denoted $(\conti W_{\rho}(t))_{t\in \R_{\geq 0}}=(\conti X_{\rho}(t),\conti Y_{\rho}(t)))_{t\in \R_{\geq 0}}$, is a continuous two-dimensional Gaussian process such that the components $\conti X_{\rho}$ and $\conti Y_{\rho} $ are standard one-dimensional Brownian motions, and $\mathrm{Cov}(\conti X_{\rho}(t),\conti Y_{\rho}(s)) = \rho \cdot \min\{t,s\}$. We also recall that a  \emph{two-dimensional Brownian excursion} $(\conti E_{\rho}(t))_{t\in [0,1]}$ \emph{of correlation}\footnote{We highlight that we excluded the case $\rho=-1$ in the definition of the two-dimensional Brownian excursion. Indeed, if $\rho=-1$ it is not meaningful to condition a two-dimensional Brownian motion of correlation $\rho=-1$ to stay in the non-negative quadrant.} $\rho\in(-1,1]$ \emph{in the non-negative quadrant} (here simply called a  \emph{two-dimensional Brownian excursion of correlation} $\rho$) is a two-dimensional Brownian motion of correlation $\rho$ conditioned to stay in the non-negative quadrant $\R_{\geq 0}^2$ and to end at the origin, i.e.\ $\conti E_{\rho}(1)=(0,0)$. The latter process was formally constructed in various works (see for instance \cite[Section 3]{MR4010949} and \cite{MR4102254}).

\medskip

Let $(\conti E_{-1/2}(t))_{t\in [0,1]}$be a two-dimensional Brownian excursion of correlation $-1/2$.
Consider the (strong) solutions -- which exist and are unique thanks to Theorem 4.6 in \cite{borga2020scaling}  -- of the following family of stochastic differential equations (SDEs) indexed by $u\in [0,1]$ and driven by $\conti E_{-1/2}(t)= (\conti X_{-1/2}(t),\conti Y_{-1/2}(t))$:
\begin{equation}\label{eq:flow_SDE}
	\begin{cases}
		d\cnz^{(u)}(t) = \idf_{\{\cnz^{(u)}(t)> 0\}} d\conti Y_{-1/2}(t) - \idf_{\{\cnz^{(u)}(t)\leq 0\}} d \conti X_{-1/2}(t),& t\in (u,1),\\
		\cnz^{(u)}(t)=0,&  t\in [0,u].
	\end{cases} 
\end{equation}

\begin{defn}\label{defn:cont_coal_proc}
	The \emph{continuous coalescent-walk process driven by} $\conti E_{-1/2}$ is the collection of stochastic processes $\left\{\cnz^{(u)}\right\}_{u\in[0,1]}$ defined by the SDEs in \cref{eq:flow_SDE}.  
\end{defn}

We can now formally introduce the Baxter permuton (a heuristic explanation is given after \cref{defn:Baxter_perm}). 
Consider the following stochastic process: 
\begin{equation}
	\varphi_{\cnz}(t)
	\coloneqq
	\Leb\left( \big\{x\in[0,t)|\cnz^{(x)}(t)<0\big\} \cup \big\{x\in[t,1]|\cnz^{(t)}(x)\geq0\big\} \right),\quad t\in[0,1],
\end{equation}
where $\Leb(\cdot)$ denotes the one-dimensional Lebesgue measure.

\begin{defn}\label{defn:Baxter_perm}
	The \emph{Baxter permuton} $\bm \mu^B$ is the push-forward of the Lebesgue measure on $[0,1]$ via the mapping $(\Id,\varphi_{\cnz})$, that is,
	$$
	\bm \mu^B(\cdot)\coloneqq(\Id,\varphi_{\cnz})_{*}\Leb (\cdot)= \Leb\left(\{t\in[0,1]|(t,\varphi_{\cnz}(t))\in \cdot \,\}\right).
	$$
\end{defn}
The Baxter permuton $\bm \mu^B$ is a random measure on the unit square $[0,1]^2$ and it has uniform marginals (Lemma 5.5 in \cite{borga2020scaling}), hence it is a permuton.
Informally, as in the case of the biased Brownian separable permuton, $\varphi_{\cnz}$ is the \textquotedblleft\emph{continuum permutation}" of the elements of $[0,1]$ induced by the order
\begin{equation}
    t\preccurlyeq_{\cnz} s \qquad\text{if and only if}\qquad\cnz^{(t)}(s)<0,
\end{equation} 
where $0\leq t<s \leq 1$, and $\bm \mu^B$ is the diagram of $\varphi_{\cnz}$. More details are given in \cref{sect:welldef}.

\medskip

\subsection{A new family of universal permutons: the skew Brownian permuton}\label{sect:feihbweifb}

\emph{Note added in revision:} In this paper, we called the family of permutons introduced in \cref{defn:Baxter_perm_gen} the \emph{skew Brownian permuton}, but we now believe that it would be more appropriate to refer to them as the \emph{skew Brownian permutons}, as done for instance in \cite{borga2022meanders}.

\medskip

In the previous two sections, we introduced the biased Brownian separable permuton and the Baxter permuton. A natural question is to explain what is the connection between the two. The main goal of the present paper is to answer this question by constructing a new family of permutons, called \emph{skew Brownian permuton}, that includes both the biased Brownian separable permuton and the Baxter permuton. This brings two different limiting objects under the same roof, identifying a new larger universality class.

\subsubsection{Definitions and construction}

Let $(\conti E_{\rho}(t))_{t\in [0,1]}$ be a two-dimensional Brownian excursion of correlation $\rho\in(-1,1]$ and  let $q\in[0,1]$ be a further parameter.
Consider the solutions (see \cref{sect:wievviwevf} for a discussion on existence and uniqueness) to the following family of SDEs indexed by $u\in [0,1]$ and driven by $\conti E_{\rho} = (\conti X_{\rho},\conti Y_{\rho})$:
\begin{equation}\label{eq:flow_SDE_gen}
\begin{cases}
d\cnz_{\rho,q}^{(u)}(t) = \idf_{\{\cnz_{\rho,q}^{(u)}(t)> 0\}} d\conti Y_{\rho}(t) - \idf_{\{\cnz_{\rho,q}^{(u)}(t)\leq 0\}} d \conti X_{\rho}(t)+(2q-1)\cdot d\conti L^{\cnz_{\rho,q}^{(u)}}(t),& t\in (u,1)\\
\cnz_{\rho,q}^{(u)}(t)=0,&  t\in [0,u],
\end{cases} 
\end{equation}
where $\conti L^{\cnz_{\rho,q}^{(u)}}(t)$ denotes the symmetric local-time process at zero of the process $\cnz_{\rho,q}^{(u)}$, i.e.\ 
$$\conti L^{\cnz^{(u)}_{\rho,q}}(t)=\lim_{\varepsilon\to 0}\frac{1}{2\varepsilon}\int_0^t\idf_{\{\cnz^{(u)}_{\rho,q}(s)\in[-\varepsilon,\varepsilon]\}}ds.$$
\begin{defn}\label{def:contcoalproc}
	For all $\rho\in(-1,1]$ and $q\in[0,1]$, we call \emph{continuous coalescent-walk process driven by} $(\conti E_{\rho},q)$ the collection of stochastic processes $\cnz_{\rho,q}=\left\{\cnz^{(u)}_{\rho,q}\right\}_{u\in[0,1]}$.
\end{defn}
We then consider the following stochastic process: 
\begin{equation}\label{eq:random_skew_function}
\varphi_{\cnz_{\rho,q}}(t)\coloneqq
\Leb\left( \big\{x\in[0,t)|\cnz_{\rho,q}^{(x)}(t)<0\big\} \cup \big\{x\in[t,1]|\cnz_{\rho,q}^{(t)}(x)\geq0\big\} \right), \quad t\in[0,1].
\end{equation}

\begin{defn}\label{defn:Baxter_perm_gen}
	Fix\footnote{For a discussion on the possible construction of the skew Brownian permuton when $\rho=-1$ we refer the reader to the open problems in \cref{sect:open_problems}.} $\rho\in(-1,1]$ and $q\in[0,1]$. The \emph{skew Brownian permuton} of parameters $\rho, q$, denoted $\bm \mu_{\rho,q}$, is the push-forward of the Lebesgue measure on $[0,1]$ via the mapping $(\Id,\varphi_{\cnz_{\rho,q}})$, that is
	\begin{equation}
		\bm \mu_{\rho,q}(\cdot)\coloneqq(\Id,\varphi_{\cnz_{\rho,q}})_{*}\Leb (\cdot)= \Leb\left(\{t\in[0,1]|(t,\varphi_{\cnz_{\rho,q}}(t))\in \cdot \,\}\right).
	\end{equation} 
\end{defn}

\begin{rem}\label{rem:friv}
	Note that when $\rho=-1/2$ and $q=1/2$ the SDEs in \cref{eq:flow_SDE_gen} are exactly the SDEs considered in \cref{eq:flow_SDE}. Therefore the Baxter permuton $\bm \mu^B(\cdot)$ coincides with the skew Brownian permuton $\bm \mu_{-1/2,1/2}$.
\end{rem}

\begin{rem}
	We highlight that for different values of $u\in[0,1]$, the SDEs in \cref{eq:flow_SDE_gen} are defined using \emph{the same} two-dimensional Brownian excursion $(\conti E_{\rho}(t))_{t\in [0,1]}$.
	Therefore the coupling of $\cnz_{\rho,q}^{(u)}$
	for different values of $u\in[0,1]$ is highly nontrivial.
\end{rem}

To guarantee that the skew Brownian permuton is well-defined for all $\rho\in(-1,1]$ and $q\in[0,1]$, we need  first to discuss existence and uniqueness of solutions to the SDEs in \cref{eq:flow_SDE_gen} and then to check that $\bm \mu_{\rho,q}$ is indeed a permuton, that is, it has uniform marginals. 

\subsubsection{The skew perturbed Tanaka equations and correctness of the definition}\label{sect:wievviwevf}

We now focus on the SDEs in \cref{eq:flow_SDE_gen}, which we call \emph{skew perturbed Tanaka equations}. The terminology is inspired by the names of some SDEs already studied in the literature. Indeed, when $q=1/2$, i.e.\ when the local time term cancels in \cref{eq:flow_SDE_gen}, we obtain the  \emph{perturbed Tanaka equations} studied in\footnote{To be precise the articles \cite{MR3098074,MR3882190} study the perturbed Tanaka equations driven by a correlated Brownian motion instead of a correlated Brownian excursion as in \cref{eq:flow_SDE_gen}. In \cite{MR3098074,MR3882190} it is proved that there exists a unique (strong) solution to the perturbed Tanaka equation driven by a correlated Brownian motion. Then in \cite[Theorem 4.6]{borga2020scaling} pathwise uniqueness and existence of a strong solution to the SDE in \cref{eq:flow_SDE_gen} was deduced from the results in \cite{MR3098074,MR3882190} using absolute continuity arguments.}
\cite{MR3098074,MR3882190}. Since here we are adding a local time term, we adopt the terminology \emph{skew perturbed Tanaka equations} by analogy with the case of skew Brownian motions: A \emph{skew Brownian motion} of parameter $q\in[0,1]$ is a standard one-dimensional Brownian motion where each excursion is flipped independently to the positive side with probability $q$ (see for instance \cite[Theorem 6]{MR2280299}). It is shown in \cite{MR606993} that the unique strong solution to the SDE
\begin{equation}
	\cnz(t)=\conti B (t)+(2q-1)d\conti L^{\cnz}(t),\qquad t\in\R_{\geq 0},
\end{equation}
where $\conti B (t)$ is a standard one-dimensional Brownian motion, is a skew Brownian motion of parameter $q$. 

It is also convenient to distinguish the following two cases:
\begin{itemize}
	\item when $\rho\in(-1,1)$ (and $q\in [0,1]$), we will refer to the SDEs in \cref{eq:flow_SDE_gen} as \emph{skew pure perturbed Tanaka equations};
	\item	when $\rho=1$ (and $q\in [0,1]$), we will refer to the SDEs in \cref{eq:flow_SDE_gen} simply as \emph{skew Tanaka equations}.
\end{itemize}

The skew pure perturbed Tanaka equations have not been investigated so far in the literature. Our first result guarantees existence and uniqueness of a strong solution.

\begin{thm}\label{thm:conj_rho_gen}
	Fix $\rho \in (-1,1)$ and $q\in [0,1]$.
	Pathwise uniqueness and existence of a strong solution to the SDEs in \cref{eq:flow_SDE_gen} hold for all  $u\in(0,1)$.
	
	\smallskip
	
	More precisely, denoting by $\P_{\conti E_\rho}$ the law of $\conti E_\rho$, we consider the sigma-algebra $\mathcal F^{(u)}_t$  generated by $\conti E_\rho(s) - \conti E_\rho(u)$  for $ s\in[u,1]$ and completed by negligible events of $\P_{\conti E_\rho}$.
	For every $u\in(0,1)$, there exists a continuous $\mathcal F^{(u)}_t$-adapted stochastic process $\cnz_{\rho,q}^{(u)}$ on $[u,1)$, such that
		\begin{enumerate}
			\item  the function $(\omega,u)\mapsto \cnz_{\rho,q}^{(u)}$ is jointly measurable. 
			\item For every $u,r\in(0,1)$, $u<r$, $\cnz_{\rho,q}^{(u)}$ satisfies  \cref{eq:flow_SDE_gen} a.s.\ on the interval $[u,r]$.
			\item If $u,r\in(0,1)$, $u<r$, and $\widetilde {\cnz}$ is a continuous $\mathcal F^{(u)}_t$-adapted stochastic process that satisfies \cref{eq:flow_SDE_gen} a.s.\ on the interval $[u,r]$, then $\widetilde  {\cnz} =  \cnz_{\rho,q}^{(u)}$ a.s.\ on $[u,r]$. 
		\end{enumerate}
\end{thm}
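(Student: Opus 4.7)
The plan is to extend the template of \cite[Theorem 4.6]{borga2020scaling}, which settled the non-skew case $q=1/2$ (the perturbed Tanaka equation), by incorporating the local-time drift $(2q-1)d\conti L^{\cnz}$ along the lines of the Harrison--Shepp analysis \cite{MR606993} of skew Brownian motion. As a first step I would reduce from the excursion-driven setting to the correlated Brownian-motion-driven setting. For $0<u<r<1$, the law of $(\conti E_{\rho}(t))_{t\in[u,r]}$ given $\conti E_{\rho}(u)$ is absolutely continuous with respect to the law of a two-dimensional correlated Brownian motion of correlation $\rho$ on the same interval, with bounded Radon--Nikodym density coming from the Doob $h$-transform that conditions the process to remain in $\R_{\geq 0}^{2}$ and return to $(0,0)$ at time $1$ (as used in \cite{borga2020scaling}). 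Since pathwise uniqueness and strong existence are preserved under such equivalence of measures, it suffices to prove the statement for the SDE driven by a correlated Brownian motion on $[u,r]$.

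Next I would establish uniqueness in law and weak existence. For any solution $\cnz$, the continuous martingale
\[
M_{t} := \int_{u}^{t}\idf_{\{\cnz_{s}>0\}}\,d\conti Y_{\rho}(s) - \int_{u}^{t}\idf_{\{\cnz_{s}\le 0\}}\,d\conti X_{\rho}(s)
\]
has quadratic variation $\langle M\rangle_{t}=t-u$ by a direct computation using $d\langle \conti X_{\rho},\conti Y_{\rho}\rangle_{t}=\rho\,dt$ together with $\idf_{\{\cnz>0\}}\cdot\idf_{\{\cnz\le 0\}}=0$. By L\'evy's characterisation, $M$ is a standard Brownian motion, and \cref{eq:flow_SDE_gen} rewrites as the skew Brownian motion SDE $d\cnz = dM + (2q-1)\,d\conti L^{\cnz}$. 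Hence every solution is distributed as a skew Brownian motion of parameter $q$ started at $0$ at time $u$; in particular $\cnz$ spends zero Lebesgue time at $0$ almost surely. Weak existence then follows by combining this identification with a discretisation/tightness construction modelled on the coalescent-walk process approach of \cite{borga2020scaling}, in which the skew rule at the zero set is implemented by independent coin flips whose asymptotic contribution yields precisely the $(2q-1)\,d\conti L^{\cnz}$ term.

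The crux is pathwise uniqueness, which I expect to be the main obstacle. Given two strong solutions $\cnz_{1},\cnz_{2}$ driven by the same $(\conti X_{\rho},\conti Y_{\rho})$ with $\cnz_{1}(u)=\cnz_{2}(u)=0$, set $D:=\cnz_{1}-\cnz_{2}$. Using $\idf_{\{\cnz\le 0\}}=1-\idf_{\{\cnz>0\}}$, a direct computation gives
\[
dD = \alpha\,(d\conti X_{\rho}+d\conti Y_{\rho}) + (2q-1)\bigl(d\conti L^{\cnz_{1}}-d\conti L^{\cnz_{2}}\bigr),\qquad \alpha := \idf_{\{\cnz_{1}>0\}}-\idf_{\{\cnz_{2}>0\}}.
\]
The decisive sign structure is that $\alpha\cdot\sgn(D)=|\alpha|$ on $\{D\neq 0\}$, because $D>0$ forces $\cnz_{1}>\cnz_{2}$ and hence $\alpha\geq 0$ (symmetrically for $D<0$). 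Applying the Tanaka--Meyer formula to $|D|$ and taking expectations, the martingale contribution drops out and one is left to show that the skew local-time difference $(2q-1)\sgn(D)(d\conti L^{\cnz_{1}}-d\conti L^{\cnz_{2}})$ and the local time $L^{D}$ of $D$ at $0$ both have vanishing expectation. The skew term is handled via the occupation-time formula and the fact, proven in the previous step, that each $\cnz_{i}$ spends zero Lebesgue time at $0$; the local time $L^{D}$ is shown to vanish by adapting Prokaj's argument from \cite{MR3098074,MR3882190}, which exploits the degeneracy of the martingale integrand on $\{\sgn \cnz_{1}=\sgn \cnz_{2}\}$. I expect the real technical difficulty to be the simultaneous bookkeeping of the three local-time measures supported on $\{\cnz_{1}=0\}$, $\{\cnz_{2}=0\}$, and $\{D=0\}$, since for $q\neq 1/2$ the skew drift couples them nontrivially. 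A Gr\"onwall-type closing then yields $\E|D_{t}|\equiv 0$, hence pathwise uniqueness. Yamada--Watanabe upgrades weak existence to strong existence for each fixed $u$, and the joint measurability claim (1) together with the adaptedness claim (3) follow verbatim from the corresponding measurable-selection argument in the proof of \cite[Theorem 4.6]{borga2020scaling}.
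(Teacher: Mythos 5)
Your overall skeleton (reduce to the Brownian-motion-driven SDE by absolute continuity with respect to the excursion measure, establish uniqueness for the Brownian-motion case, upgrade via Yamada--Watanabe, then transfer back) matches the paper's, and the L\'evy-characterisation observation that any solution is a skew Brownian motion of parameter $q$ is correct. But your pathwise-uniqueness step has a genuine gap, precisely at the point you flag as the ``crux.''

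Applying the symmetric Tanaka formula to $|D|$, $D=\cnz_1-\cnz_2$, and taking expectations gives
\begin{equation}
\E\bigl|D(t)\bigr| \;=\; (2q-1)\,\E\!\int_u^t \widetilde\sgn\bigl(D(s)\bigr)\bigl(d\conti L^{\cnz_1}(s)-d\conti L^{\cnz_2}(s)\bigr)\;+\;\E\bigl[L^{D}(t)\bigr],
\end{equation}
and you must show the right side vanishes. Neither term does so for the reason you give. The local-time measure $d\conti L^{\cnz_i}$ is supported on $\{\cnz_i=0\}$ and is \emph{not} killed by the fact that $\cnz_i$ spends zero Lebesgue time at $0$ (the latter is true for ordinary Brownian motion, whose local time is strictly positive); and on $\{\cnz_1=0\}$ one has $\widetilde\sgn(D)=-\widetilde\sgn(\cnz_2)$, which can take either sign, so the integral has no reason to vanish or cancel against the $\cnz_2$-term. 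Likewise $\E[L^{D}(t)]\ge 0$ and needs an argument, not an invocation of ``adapting Prokaj.'' For $q\ne 1/2$ you are thus left with a genuine bookkeeping problem across three coupled local times that your sketch does not resolve.

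The paper avoids this entirely by first performing a change of variables that \emph{removes the skew local-time drift}: with $s(x)=(1-q)x\,\mathds{1}_{x>0}+qx\,\mathds{1}_{x\le 0}$ and $r=s^{-1}$, one shows (via It\^o--Tanaka) that $\cnz$ solves \cref{eq:flow_SDE_gen2} iff $\conti R=s(\cnz)$ solves
\begin{equation}
d\conti R(t) = (1-q)\,\idf_{\{\conti R(t)>0\}}\,d\conti Y(t) - q\,\idf_{\{\conti R(t)\le 0\}}\,d\conti X(t),
\end{equation}
an SDE with \emph{no} local-time term (this is \cref{prop:equiv_SDE}). After an orthogonal rotation turning $(\conti X,\conti Y)$ into independent Brownian motions, this SDE is put in the form $d\conti R=f(\conti R)\,d\conti M+d\conti N$ with $\langle\conti M,\conti N\rangle=0$ and $f$ of bounded variation, to which Prokaj's pathwise-uniqueness theorem \cite[Theorem 8.1]{MR3055262} applies directly. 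That transformation is the key idea missing from your argument, and it is what dissolves the three-local-time difficulty rather than confronting it. Weak existence is then also handled concretely (not by tightness of coalescent-walk discretisations): one starts from Nakao's strong solution to $d\conti R=((1-q)\idf_{\{\conti R>0\}}-q\idf_{\{\conti R\le 0\}})\,d\conti B$ for a scalar Brownian motion $\conti B$, and reconstructs a correlated pair $(\overline{\conti X},\overline{\conti Y})$ from $\conti B$, $\conti R$, and auxiliary independent Brownian motions, verifying the correlation by L\'evy's theorem. I would suggest rebuilding your proof around the substitution $\conti R=s(\cnz)$ before attempting any Tanaka--Meyer estimate.
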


\cref{thm:conj_rho_gen}  guarantees that the continuous coalescent-walk process $\cnz_{\rho,q}=\left\{\cnz^{(u)}_{\rho,q}\right\}_{u\in[0,1]}$ introduced in \cref{def:contcoalproc} is well-defined for all $(\rho,q)\in(-1,1)\times[0,1]$ in the following sense: By Tonelli's theorem, we have that for almost every $\omega,$ $\cnz^{(u)}_{\rho,q}$ is a solution for almost every $u$. 
Since the definition in \cref{eq:random_skew_function} does not depend on a negligible subset of $[0,1]$, the latter issue causes no problems in defining the skew Brownian permuton (\cref{defn:Baxter_perm_gen}).

\begin{rem}
	We remark that \cref{thm:conj_rho_gen} is also a fundamental tool to prove permuton convergence towards the skew Brownian permuton for various models of random constrained permutations, as shown in \cite{borga2021strongBaxter}. For more details see for instance the proof of \cite[Theorem 4.4]{borga2021strongBaxter}.
\end{rem}

We highlight that the case $\rho =1$ is excluded in \cref{thm:conj_rho_gen}. Indeed, when $\rho =1$, existence of a strong solution to the SDEs in \cref{eq:flow_SDE_gen} fails (see \cref{sect:skew} below for a precise discussion).  Note that when $\rho =1$, the two-dimensional Brownian excursion $(\conti E_{1}(t))_{t\in [0,1]}$ rewrites as $\conti E_{1}(t)=(\bm e(t),\bm e(t))$, where $(\bm e(t))_{t\in [0,1]}$ is a one-dimensional Brownian excursion on $[0,1]$. Therefore the SDEs in \cref{eq:flow_SDE_gen} take the simplified form
\begin{equation}\label{eq:Tanaka}
	\begin{cases}
		d\cnz^{(u)}(t) = \sgn(\cnz^{(u)}(t)) d \bm e(t)+(2q-1)\cdot d\conti L^{\cnz^{(u)}}(t),& t\in [u,1),\\
		\cnz^{(u)}(t)=0,&  t\in [0,u],
	\end{cases} 
\end{equation}
where $\sgn(x)\coloneqq \idf_{\{x>0\}}-\idf_{\{x\leq0\}}$.

Solutions to the SDEs in \cref{eq:Tanaka} are not measurable functions of the driving process $(\bm e(t))_{t\in [0,1]}$ but can be constructed using some external randomness as follows.
Conditional on $\bm e$,  consider an i.i.d.\ sequence of random variables $(\bm s(\ell))_{\ell}$ indexed by the local
minima\footnote{For the technicalities involved in indexing an i.i.d.\ sequence by
	this random countable set, see again \cite[Section 2.2]{maazoun}.} of $\bm e$ and with distribution $\P(\bm s(\ell)=+1)=q=1-\P(\bm s(\ell)=-1)$.
For $0\leq u \leq t \leq 1$, set $\bm m^{(u)}(t) \coloneqq \inf_{[u,t]} \bm
e$ and $\bm \varepsilon_q^{(u)}(t) \coloneqq \bm s\left(\sup\{r\in[u,t]: \bm e(r) = \bm m^{(u)}(t)\}\right)$. 
Then, define for $0\leq u \leq t \leq 1$,
\begin{equation}\label{eq:def_process2}
	\cnz_q^{(u)}(t) \coloneqq (\bm e(t) - \bm m^{(u)}(t))\cdot\bm \varepsilon_q^{(u)}(t).
\end{equation}

\begin{prop}\label{thm:fvuwevifview2}
	The family $\left\{\cnz_q^{(u)}(t)\right\}_{u\in[0,1]}$ defined in \cref{eq:def_process2} is a family of solutions to the SDEs in \cref{eq:Tanaka}.
\end{prop}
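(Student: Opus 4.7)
The plan is to verify that $\cnz_q^{(u)}$ satisfies the integral equation corresponding to \cref{eq:Tanaka}, namely
\begin{equation*}
\cnz_q^{(u)}(t) = \int_u^t \sgn(\cnz_q^{(u)}(s))\, d\bm e(s) + (2q-1)\, \conti L^{\cnz_q^{(u)}}(t),\qquad t\in[u,1),
\end{equation*}
by computing the natural semimartingale decomposition of the product $\cnz_q^{(u)} = \bm\varepsilon_q^{(u)} \cdot R^{(u)}$, where $R^{(u)}(t) := \bm e(t) - \bm m^{(u)}(t)$, and matching its two components with the terms on the right-hand side.

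First I would identify the local time: since $\bm\varepsilon_q^{(u)}\in\{\pm 1\}$ we have $|\cnz_q^{(u)}(t)| = R^{(u)}(t)$, and the Skorokhod--Tanaka decomposition of the reflection reads $R^{(u)}(t) = (\bm e(t) - \bm e(u)) + K(t)$ with $K(t) := \bm e(u) - \bm m^{(u)}(t)$ continuous, nondecreasing, $K(u)=0$, and supported on $\{R^{(u)}=0\}$. Applying Tanaka's formula to $|\cnz_q^{(u)}|$ and using the symmetric definition of local time together with $d\langle \cnz_q^{(u)}\rangle_t = dt$ then yields $\conti L^{\cnz_q^{(u)}}(t) = K(t)$.

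Second I would check continuity of $\cnz_q^{(u)}$ on $[u,1)$: the sign process $\bm\varepsilon_q^{(u)}$ is piecewise constant and its jump times all lie in $\{R^{(u)}=0\}$, where $\cnz_q^{(u)} = \bm\varepsilon_q^{(u)}\cdot R^{(u)} = 0$, so no discontinuity arises. Then, using integration by parts for a finite-variation process multiplied by a continuous semimartingale, together with the vanishing of $R^{(u)}$ at every jump of $\bm\varepsilon_q^{(u)}$, I would derive
\begin{equation*}
\cnz_q^{(u)}(t) = \int_u^t \bm\varepsilon_q^{(u)}(s)\, d\bm e(s) + \int_u^t \bm\varepsilon_q^{(u)}(s)\, dK(s).
\end{equation*}
The first term equals $\int_u^t \sgn(\cnz_q^{(u)}(s))\, d\bm e(s)$ a.s., because the two integrands agree on $\{R^{(u)}>0\}$ and the complement has Lebesgue (hence quadratic-variation) measure zero, so the stochastic integrals coincide.

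The main obstacle is step three: identifying $\int_u^t \bm\varepsilon_q^{(u)}(s)\, dK(s)$ with $(2q-1) K(t)$. The measure $dK$ is supported on the zero set of $R^{(u)}$, on which $\bm\varepsilon_q^{(u)}$ takes the iid $\pm 1$ values $\bm s(\ell)$ indexed by local minima of $\bm e$, independently of $\bm e$ itself. I would argue this identity using It\^o's excursion theory for $R^{(u)}$: the excursions away from $0$ form a Poisson point process on $(0,K(1)]\times\mathcal E$ with intensity $da\otimes n(d\omega)$, and each excursion carries an independent sign of mean $2q-1$. Since $K$ is constant on each excursion's interior, reparameterising by the inverse local time $K^{-1}$ reduces the integral to a functional of this marked Poisson point process; a time-change plus compensation argument (or, alternatively, a transfer from the classical Harrison--Shepp verification for skew Brownian motion by local absolute continuity of $\bm e$ with respect to standard Brownian motion on any $[u+\delta,1-\delta]$) then yields the desired pathwise identity.
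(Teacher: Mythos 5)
Your plan shares two ingredients with the paper's proof (the identity $|\cnz_q^{(u)}(t)|=\bm e(t)-\bm m^{(u)}(t)$ and the matching of its semimartingale decomposition with It\^o--Tanaka to read off the local time), but the overall route is genuinely different, and your step three is not merely ``the main obstacle'' --- it is a real gap. The integral $\int_u^t \bm\varepsilon_q^{(u)}(s)\,dK(s)$ is not a well-defined object: $dK$ is a diffuse measure carried by the uncountable, perfect zero set of $R^{(u)}$, whereas $\bm\varepsilon_q^{(u)}(s)$ is only canonically defined off that set (it is $\bm s$ evaluated at a strict local minimum, but for $dK$-a.e.\ $s$ in the zero set the relevant time is $s$ itself, which is a.s.\ \emph{not} a local minimum of $\bm e$, so $\bm s(s)$ is simply undefined). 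The assertion that ``$\bm\varepsilon_q^{(u)}$ takes the iid $\pm1$ values $\bm s(\ell)$'' on $\mathrm{supp}\,dK$ conflates a countable index set with a diffuse measure, and no choice of left/right limit rescues it because the zero set is perfect. Step two has a related difficulty: $\bm\varepsilon_q^{(u)}$ has infinitely many sign changes, hence infinite total variation a.s., so the finite-variation integration-by-parts formula you invoke does not apply as stated, and any regularisation leads you straight back to the ill-posed $\int\bm\varepsilon\,dK$.

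The paper sidesteps the product decomposition entirely. It first invokes Hajri's result (\cite[Proposition~1]{MR2835247}, recalled as \cref{rem:sol_sbm}) that, for each fixed $u$, the process $\cnz_q^{(u)}$ defined in \cref{eq:def_process} is a skew Brownian motion of parameter $q$. By Harrison--Shepp \cite{MR606993} there is then a Brownian motion $\conti W$ with $\cnz_q^{(u)}(t)=\conti W(t)+(2q-1)\conti L^{\cnz_q^{(u)}}(t)$. Applying the symmetric It\^o--Tanaka formula to $|\cnz_q^{(u)}|$ gives $|\cnz_q^{(u)}(t)|=\int_u^t\sgn(\cnz_q^{(u)})\,d\conti W+\conti L^{\cnz_q^{(u)}}(t)$, while by construction $|\cnz_q^{(u)}(t)|=\conti B(t)-\bm m^{(u)}(t)$; matching the martingale and increasing parts of these two decompositions identifies $\conti L^{\cnz_q^{(u)}}=-\bm m^{(u)}$ and $\int_u^\cdot\sgn(\cnz_q^{(u)})\,d\conti W=\conti B-\conti B(u)$, whence $\int_u^t\sgn(\cnz_q^{(u)})\,d\conti B=\int_u^t\sgn^2\,d\conti W=\conti W(t)$ and the Tanaka SDE follows by substitution. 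The absolute-continuity transfer from Brownian motion to Brownian excursion that you mention at the end is indeed part of the argument, but it is done after the Brownian-motion case is settled (and the hard part lives there, not in the transfer). If you want to repair your proposal, the missing ingredient is precisely the input that $\cnz_q^{(u)}$ is a skew Brownian motion; with that in hand you should abandon the $\bm\varepsilon\cdot R$ factorisation and argue via the two decompositions of $|\cnz_q^{(u)}|$ as above.
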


\begin{rem}
	The construction in \cref{eq:def_process2} is inspired by the works of Hajri \cite{MR2835247} and Le Jan and
	Raimond \cite{MR2060298,MR4112725} on \textit{stochastic flow of maps}. We will discuss this connection in \cref{sect:skew}. 
	For a discussion on the uniqueness of the solution in \cref{thm:fvuwevifview2}, we refer the reader to \cref{rem:uniqueness}.
\end{rem}

\cref{thm:conj_rho_gen} and \cref{thm:fvuwevifview2} therefore imply that the continuous coalescent-walk process $\cnz_{\rho,q}=\left\{\cnz^{(u)}_{\rho,q}\right\}_{u\in[0,1]}$ introduced in \cref{def:contcoalproc} is well-defined for all $(\rho,q)\in(-1,1]\times[0,1]$. Checking that the marginals of skew Brownian permuton $\bm \mu_{\rho,q}$ are uniform (as done in \cref{sect:welldef}), we obtain the following result.

\begin{thm}\label{thm:perm_is_ok}
	The skew Brownian permuton $\bm \mu_{\rho,q}$ is well-defined for all $(\rho,q)\in(-1,1]\times[0,1]$. That is,  $\bm \mu_{\rho,q}$ is a permuton for all $(\rho,q)\in(-1,1]\times[0,1]$.
\end{thm}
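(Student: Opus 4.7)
The plan is to verify the two parts of the statement separately: (i) $\bm \mu_{\rho,q}$ is well-defined and is a probability measure on $[0,1]^2$, and (ii) both of its marginals are Lebesgue. Part (i) is essentially a corollary of the results stated just above. Indeed, \cref{thm:conj_rho_gen} (for $\rho \in (-1,1)$) together with \cref{thm:fvuwevifview2} (for $\rho=1$) guarantees that $\cnz_{\rho,q}^{(u)}$ exists for a.e.\ $u \in [0,1]$ and is jointly measurable in $(\omega,u)$. Since \cref{eq:random_skew_function} only integrates over $u$, null sets of starting points are immaterial, and $\varphi_{\cnz_{\rho,q}}:[0,1]\to[0,1]$ is almost surely well-defined and measurable. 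Hence $\bm \mu_{\rho,q}=(\Id,\varphi_{\cnz_{\rho,q}})_{*}\Leb$ is a probability measure on $[0,1]^2$, and because $\pi_1\circ(\Id,\varphi_{\cnz_{\rho,q}})=\Id$, the first marginal is automatically Lebesgue.

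The substantive content is showing the second marginal is Lebesgue a.s., i.e., that $\varphi_{\cnz_{\rho,q}}$ is Lebesgue-measure-preserving. My plan is to introduce the random binary relation $\preccurlyeq_{\cnz_{\rho,q}}$ on $[0,1]$ defined by $x \preccurlyeq t$ iff either $(x<t$ and $\cnz_{\rho,q}^{(x)}(t)<0)$ or $(x\geq t$ and $\cnz_{\rho,q}^{(t)}(x)\geq 0)$, so that $\varphi_{\cnz_{\rho,q}}(t)=\Leb\{x:x\preccurlyeq_{\cnz_{\rho,q}} t\}$. Antisymmetry on $\{x\neq t\}$ is immediate from the definition: for $x<t$, exactly one of $\cnz_{\rho,q}^{(x)}(t)<0$ and $\cnz_{\rho,q}^{(x)}(t)\geq 0$ holds, so exactly one of $x\preccurlyeq t$ and $t\preccurlyeq x$ is true. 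The essential step is to prove that $\preccurlyeq_{\cnz_{\rho,q}}$ is a.s.\ \emph{transitive} on a full-measure subset of $[0,1]$.

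Transitivity will be derived from the \emph{coalescing} property of the family $\{\cnz_{\rho,q}^{(u)}\}_{u\in[0,1]}$: whenever two solutions $\cnz_{\rho,q}^{(u_1)}$ and $\cnz_{\rho,q}^{(u_2)}$ meet at some time $t_0\geq u_2$, they agree on $[t_0,1]$. For $\rho\in(-1,1)$ this is a consequence of the pathwise uniqueness in \cref{thm:conj_rho_gen} (both processes solve the same SDE from the common initial datum at $t_0$); for $\rho=1$ it is read directly off the explicit formula \cref{eq:def_process2}. Coalescing implies that two distinct worldlines never cross: for $u_1<u_2$, the sign of $\cnz_{\rho,q}^{(u_1)}(t)-\cnz_{\rho,q}^{(u_2)}(t)$ is constant on $[u_2,\tau]$, where $\tau$ is the first meeting time, after which the processes agree. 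A short case analysis on the orderings of $x,y,z$ then yields $x\preccurlyeq y$ and $y\preccurlyeq z$ imply $x\preccurlyeq z$. For instance, when $x<y<z$ with $\cnz^{(x)}(y)<0=\cnz^{(y)}(y)$, either $\cnz^{(x)}<\cnz^{(y)}$ throughout $[y,z]$ (so $\cnz^{(x)}(z)<\cnz^{(y)}(z)<0$) or they have coalesced before time $z$ (so $\cnz^{(x)}(z)=\cnz^{(y)}(z)<0$); either way $x\preccurlyeq z$. The other orderings are analogous.

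Once $\preccurlyeq_{\cnz_{\rho,q}}$ is known to be a.s.\ a strict total order on a Lebesgue-full subset $S_{\omega}\subset[0,1]$, the conclusion follows from a general measure-theoretic principle: the rank function $\varphi(t)=\Leb\{x\in S_{\omega}:x\preccurlyeq t\}$ is Lebesgue-measure-preserving, since $\{t:\varphi(t)\leq s\}$ is (modulo null sets) an initial segment of $(S_{\omega},\preccurlyeq)$ of $\Leb$-measure exactly $s$. The main obstacle I expect is carrying out the transitivity case analysis uniformly in a.e.\ triple $(x,y,z)$: one must combine pathwise uniqueness with the joint measurability from part (1) of \cref{thm:conj_rho_gen}, and exclude negligible events such as simultaneous zero-hittings of distinct solutions. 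Once these measurability subtleties are addressed, the plan closes.
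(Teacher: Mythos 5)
Your high-level plan matches the paper's: first reduce to the second marginal (your observation that $\pi_1\circ(\Id,\varphi_{\cnz_{\rho,q}})=\Id$ makes the first marginal automatic is exactly what is used), then establish that $\preccurlyeq_{\cnz_{\rho,q}}$ is a.s.\ a total order off a null set and deduce that the ``rank function'' $\varphi_{\cnz_{\rho,q}}$ is Lebesgue-measure-preserving. Your coalescing argument for transitivity is correct in spirit and is essentially \cite[Proposition 5.2]{borga2020scaling}, recorded here as \cref{prop:orfn3rnfo3rinfi3rnf}. The gap is in the final step. You invoke a ``general measure-theoretic principle'' that the rank function of an a.s.\ total order on a full-measure subset of $[0,1]$ is automatically measure-preserving, on the grounds that $\{t:\varphi(t)\le s\}$ is an initial segment of measure exactly $s$. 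But that the initial segment has measure $s$ is precisely the statement to be proved, and the principle as stated is false: under CH a well-ordering of $[0,1]$ of order type $\omega_1$ has all proper initial segments countable, so the rank function is identically zero, even though every initial segment is Lebesgue measurable. What excludes this here is that the \emph{graph} $\{(x,t):x\preccurlyeq_{\cnz_{\rho,q}}t\}$ is a Lebesgue-measurable subset of $[0,1]^2$ (the CH example has a non-measurable graph by Fubini), and under that hypothesis the conclusion is true but not a tautology about initial segments — it requires a genuine symmetry argument. For instance one can show by Fubini and exchangeability of the $k{+}1$ coordinates of $[0,1]^{k+1}$ that $\int_0^1\varphi_{\cnz_{\rho,q}}(t)^k\,dt=\tfrac1{k+1}$ for all $k$ (here transitivity is used to ensure that a.e.\ $(k{+}1)$-tuple has a unique $\preccurlyeq$-maximum) and conclude by the Hausdorff moment problem. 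The paper does the probabilistic analogue: after first proving, via absolute continuity with a skew Brownian motion, that $\cnz_{\rho,q}^{(t)}(s)\ne 0$ a.s.\ for fixed $t<s$, it samples i.i.d.\ uniforms $\bm U_1,\dots,\bm U_k$, observes that conditionally on $\conti E_\rho$ the $\preccurlyeq$-rank of $\bm U_1$ among them is exactly uniform on $\{1,\dots,k\}$ by exchangeability, and by the conditional law of large numbers the normalized rank converges a.s.\ to $\varphi_{\cnz_{\rho,q}}(\bm U_1)$, which is therefore uniform on $[0,1]$. Replacing your ``general principle'' with one of these arguments would close the proof; as written, the crucial step is asserted rather than proved.
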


Simulations of the skew Brownian permuton $\bm \mu_{\rho,q}$ for various values of $(\rho,q)\in(-1,1]\times[0,1]$ can be found in \cref{fig:uyievievbeee}. How these simulations were obtained is explained in \cref{sect:simul}.
\begin{figure}[htbp]
	\begin{minipage}[c]{0.06\textwidth}
		\centering
		\footnotesize$\rho=$\\
		\footnotesize$-0.995$
	\end{minipage}
	\begin{minipage}[c]{0.149\textwidth}
		\centering
		\footnotesize{$q \approx 0$}
		\includegraphics[scale=0.18]{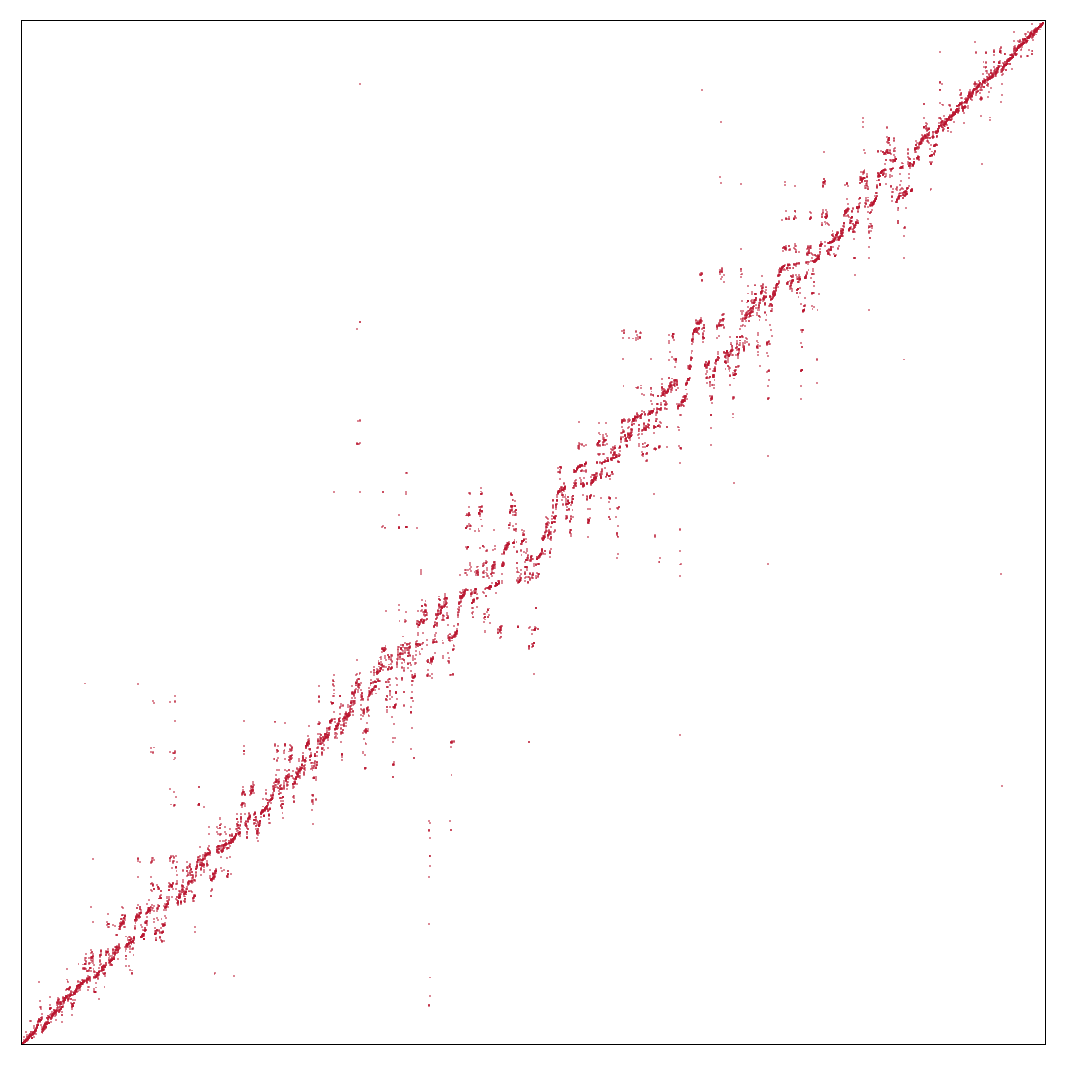}
	\end{minipage}
	\begin{minipage}[c]{0.149\textwidth}
		\centering
		\footnotesize{$q < 0.5$}
		\includegraphics[scale=0.18]{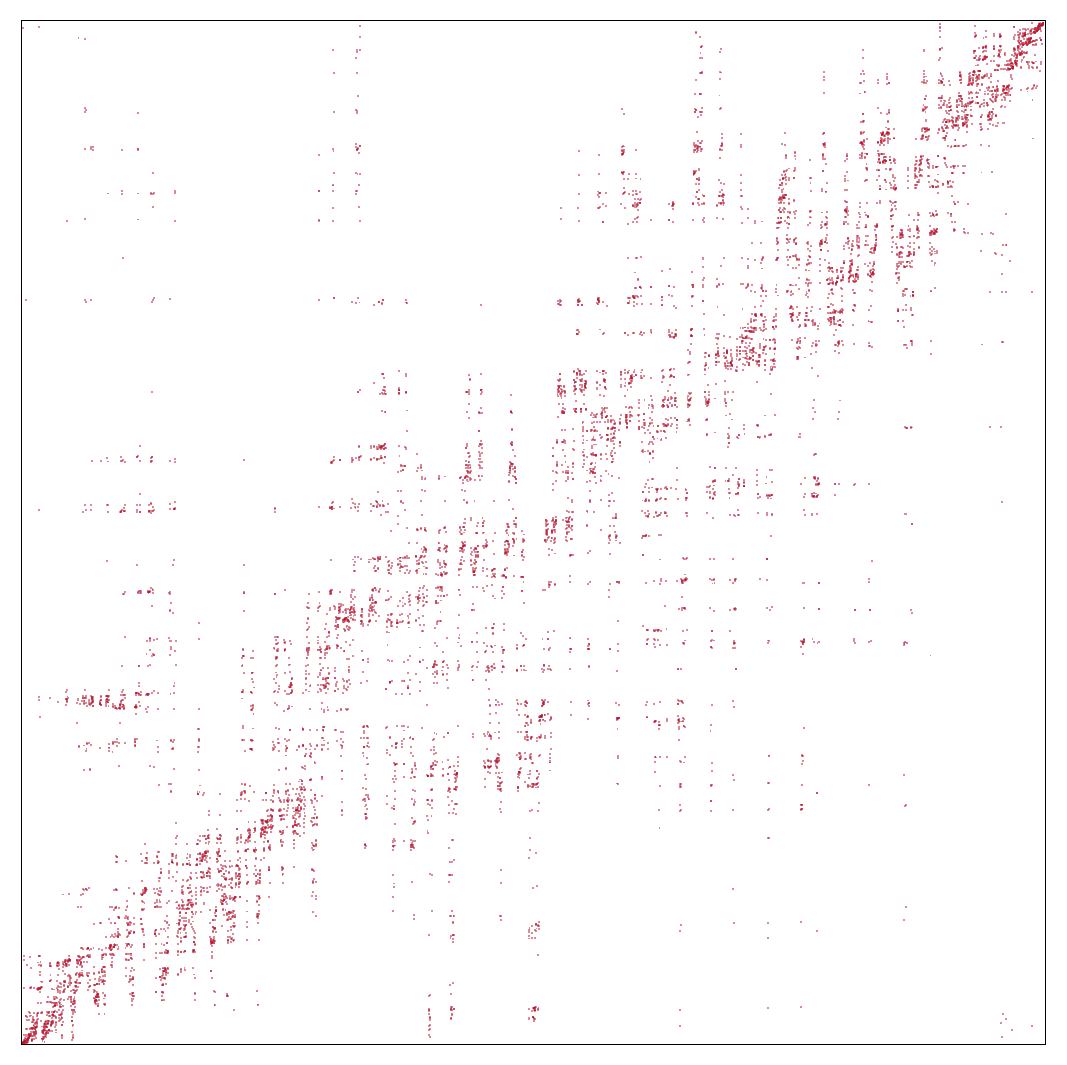}
	\end{minipage}
	\begin{minipage}[c]{0.149\textwidth}
		\centering
		\footnotesize{$q= 0.5$}
		\includegraphics[scale=0.18]{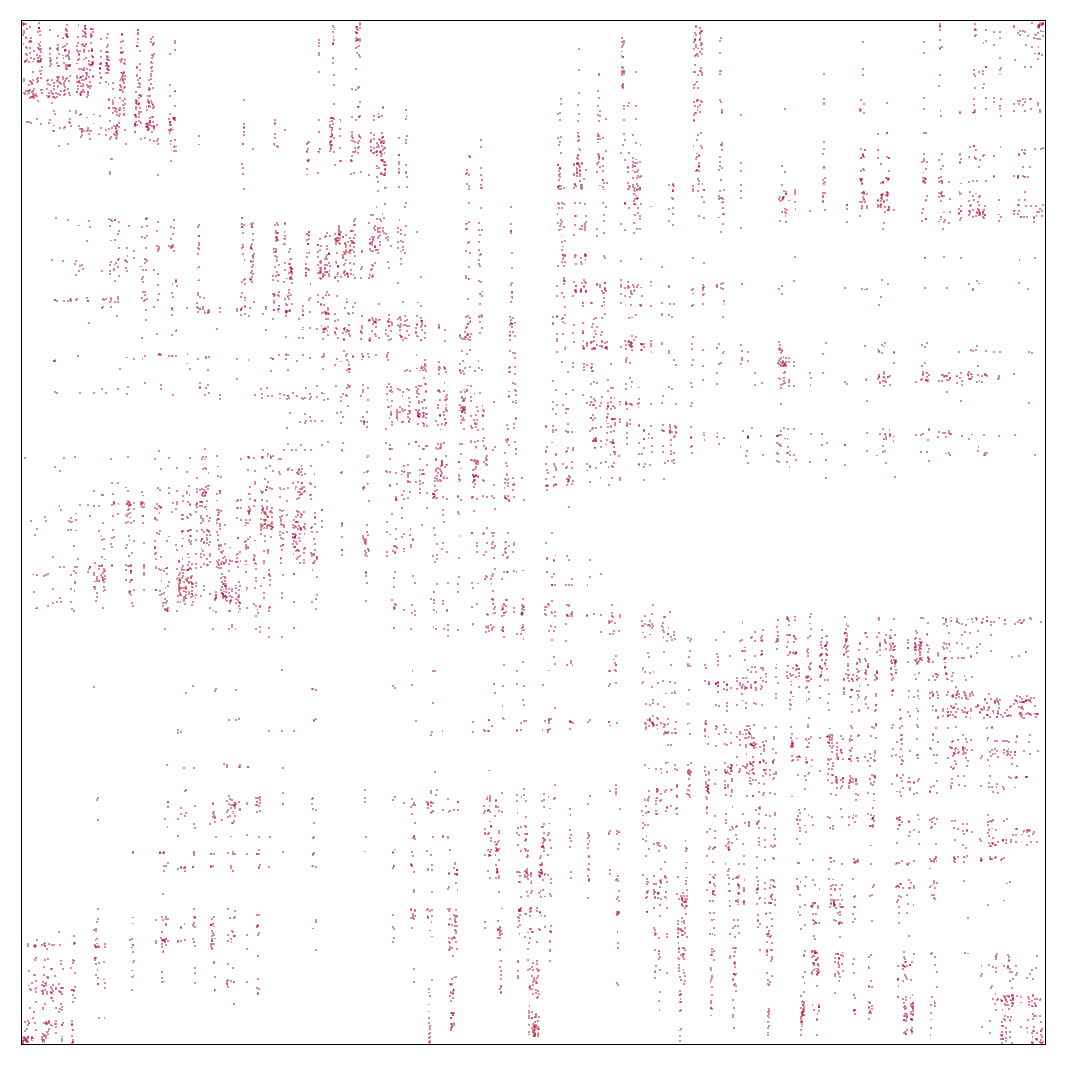}
	\end{minipage}
	\begin{minipage}[c]{0.149\textwidth}
		\centering
		\footnotesize{$q > 0.5$}
		\includegraphics[scale=0.18]{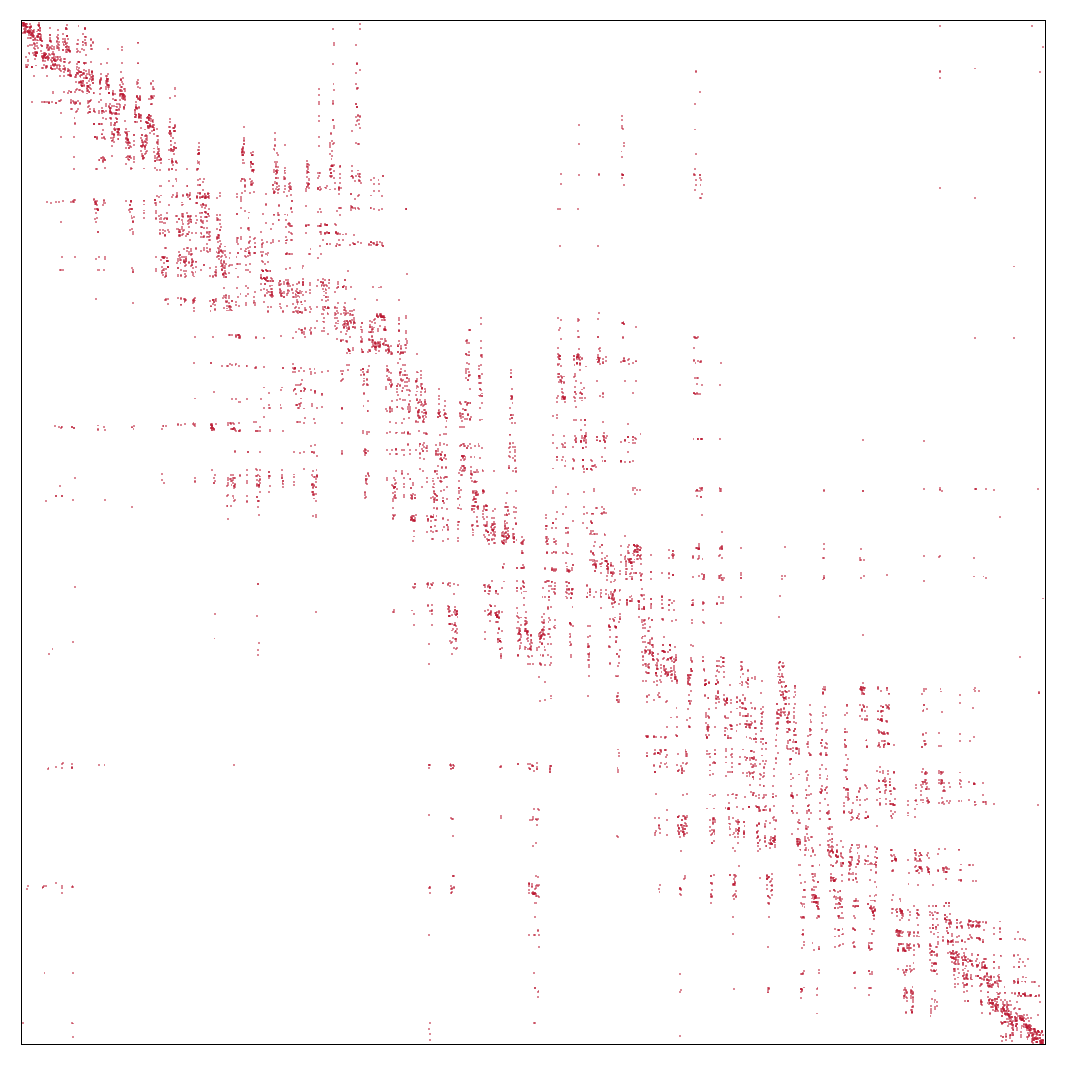}
	\end{minipage}
	\begin{minipage}[c]{0.149\textwidth}
		\centering
		\footnotesize{$q\approx 1$}
		\includegraphics[scale=0.18]{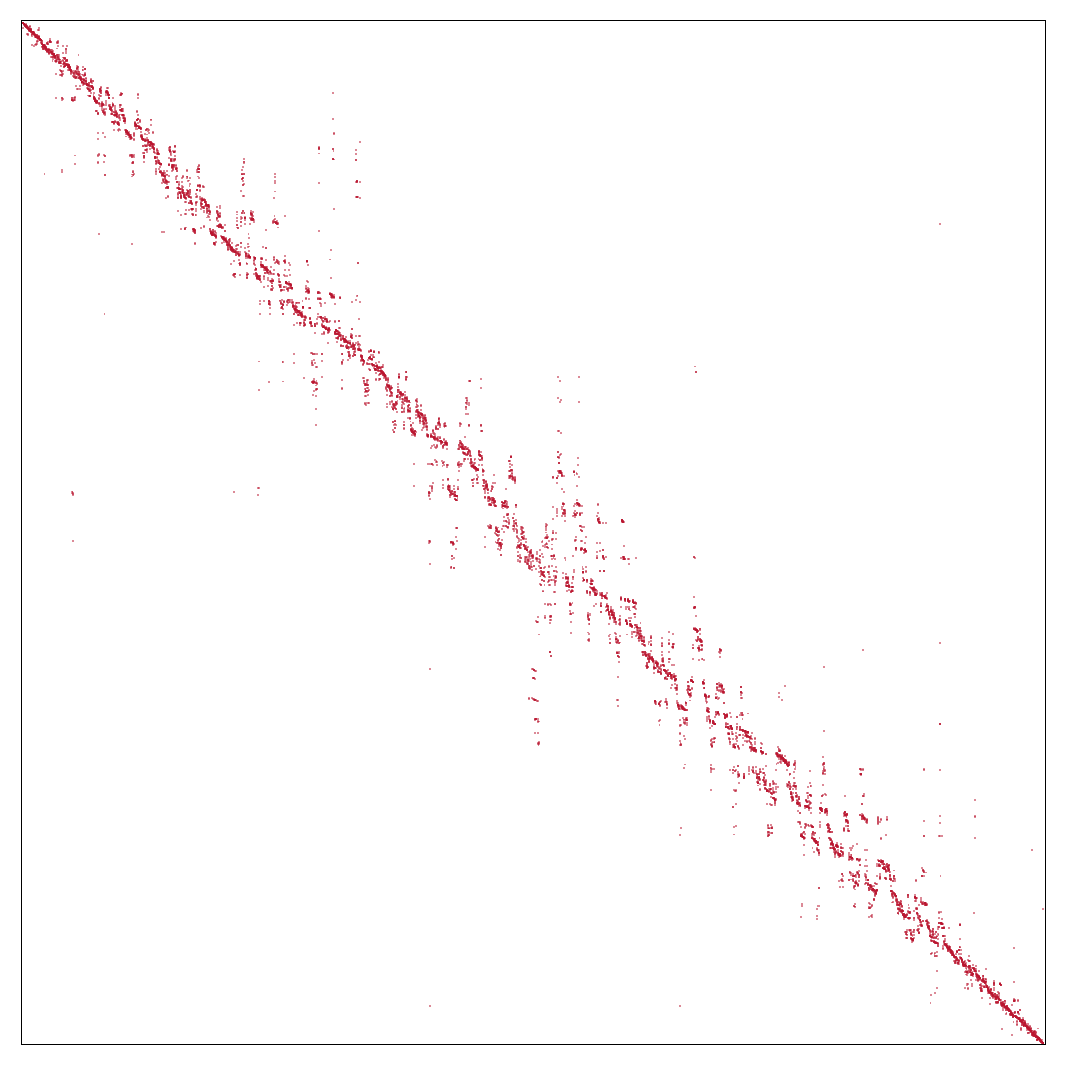}
	\end{minipage}
	\hspace{0.1 cm}
	\begin{minipage}[c]{0.149\textwidth}
		\centering
		\footnotesize{$\conti E_{\rho} = (\conti X_{\rho},\conti Y_{\rho})$}
		\includegraphics[scale=0.225]{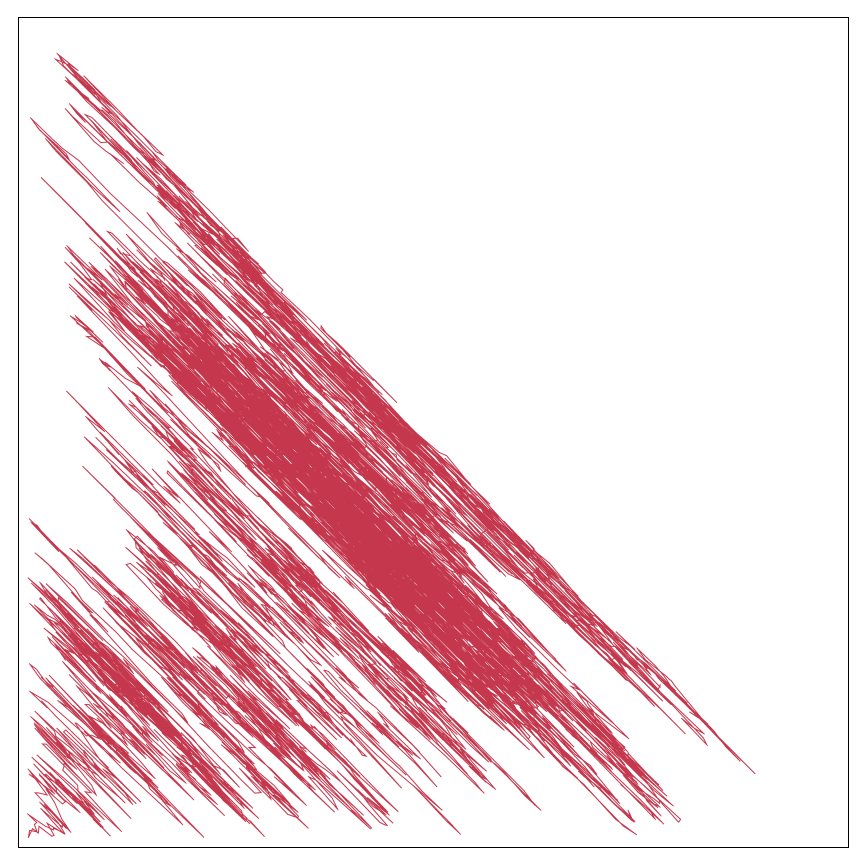}
	\end{minipage}
	
	\begin{minipage}[c]{0.06\textwidth}
		\centering
		\footnotesize$\rho=$\\
		\footnotesize$-0.5$
	\end{minipage}
	\begin{minipage}[c]{0.149\textwidth}
		\centering
		\includegraphics[scale=0.18]{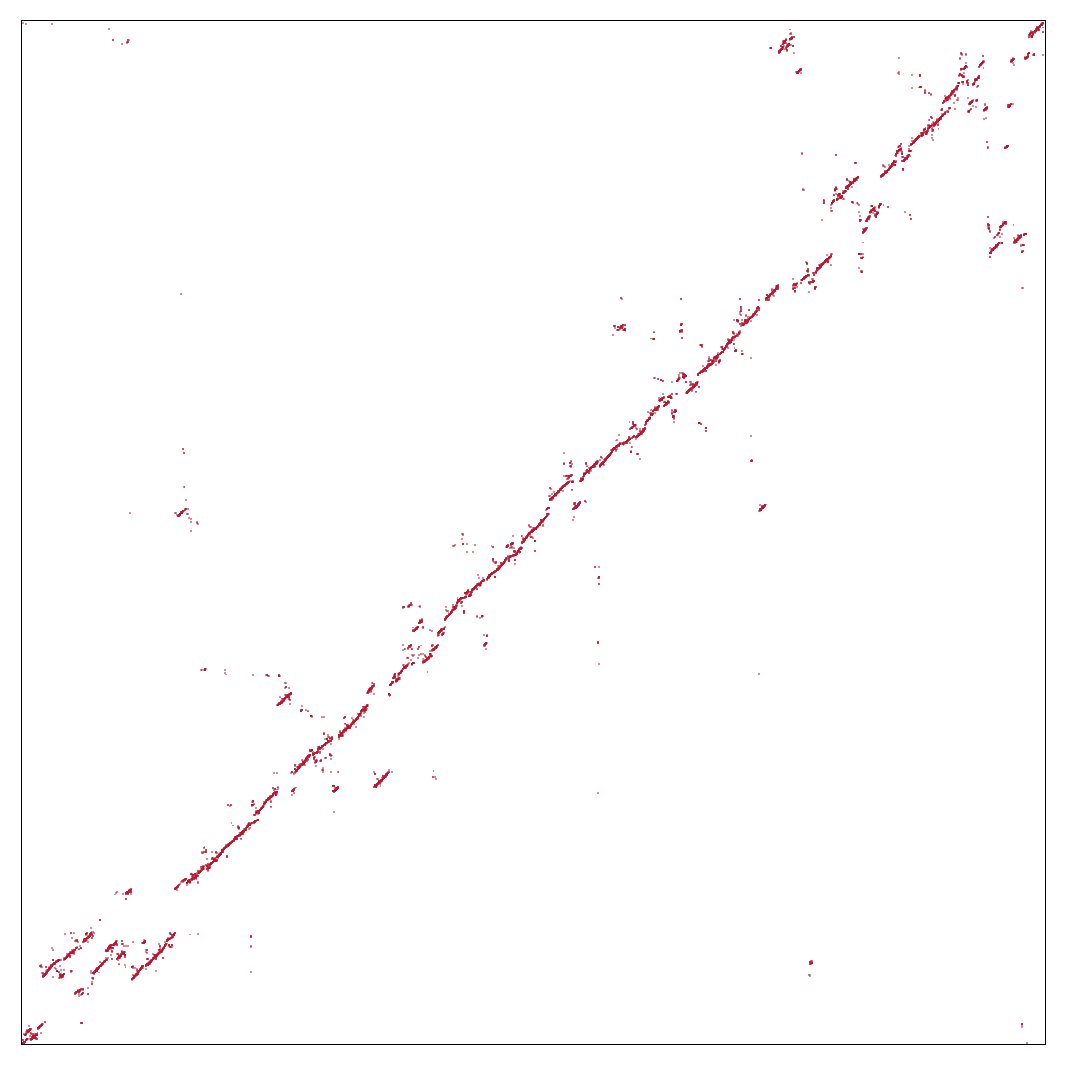}
	\end{minipage}
	\begin{minipage}[c]{0.149\textwidth}
		\centering
		\includegraphics[scale=0.18]{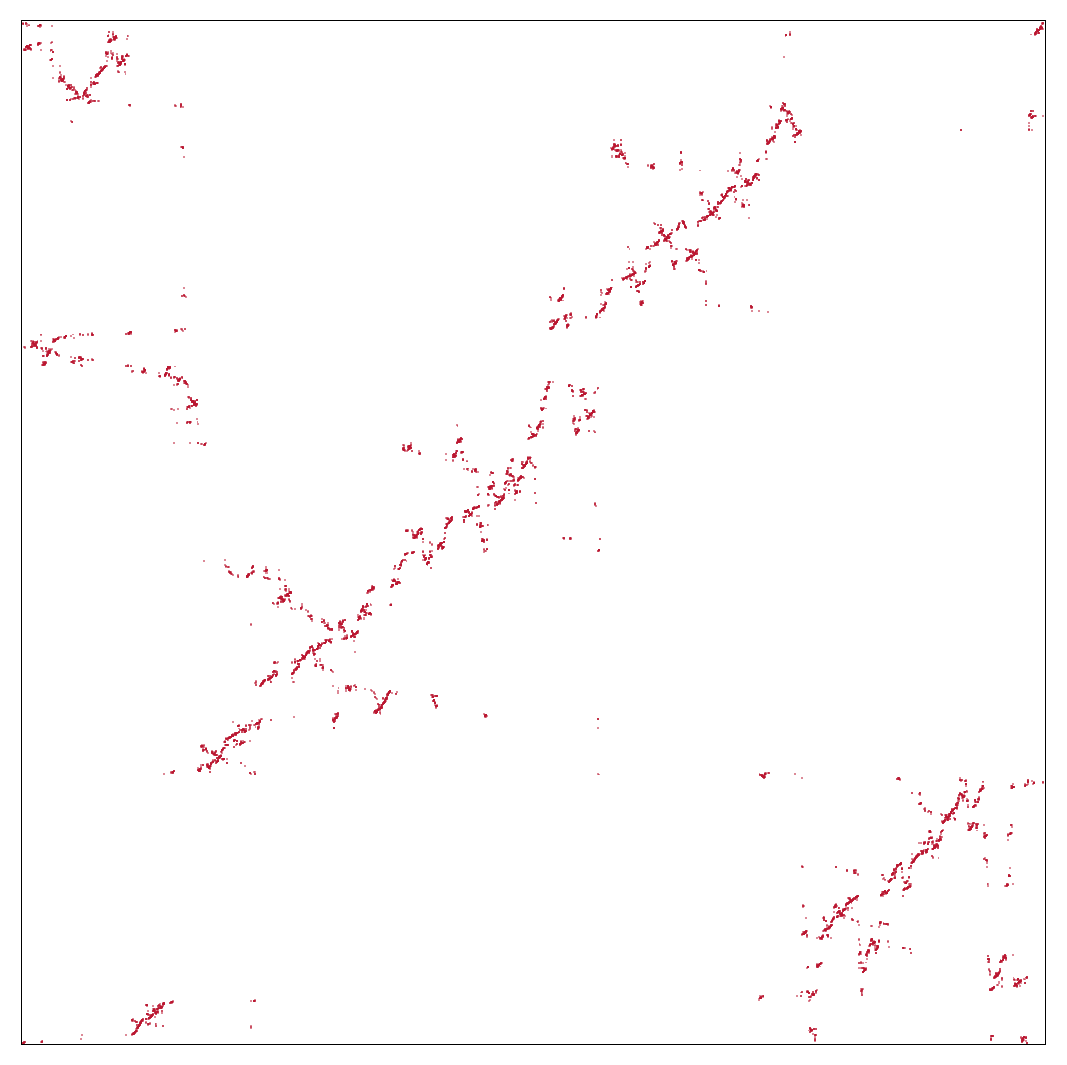}
	\end{minipage}
	\begin{minipage}[c]{0.149\textwidth}
		\centering
		\includegraphics[scale=0.18]{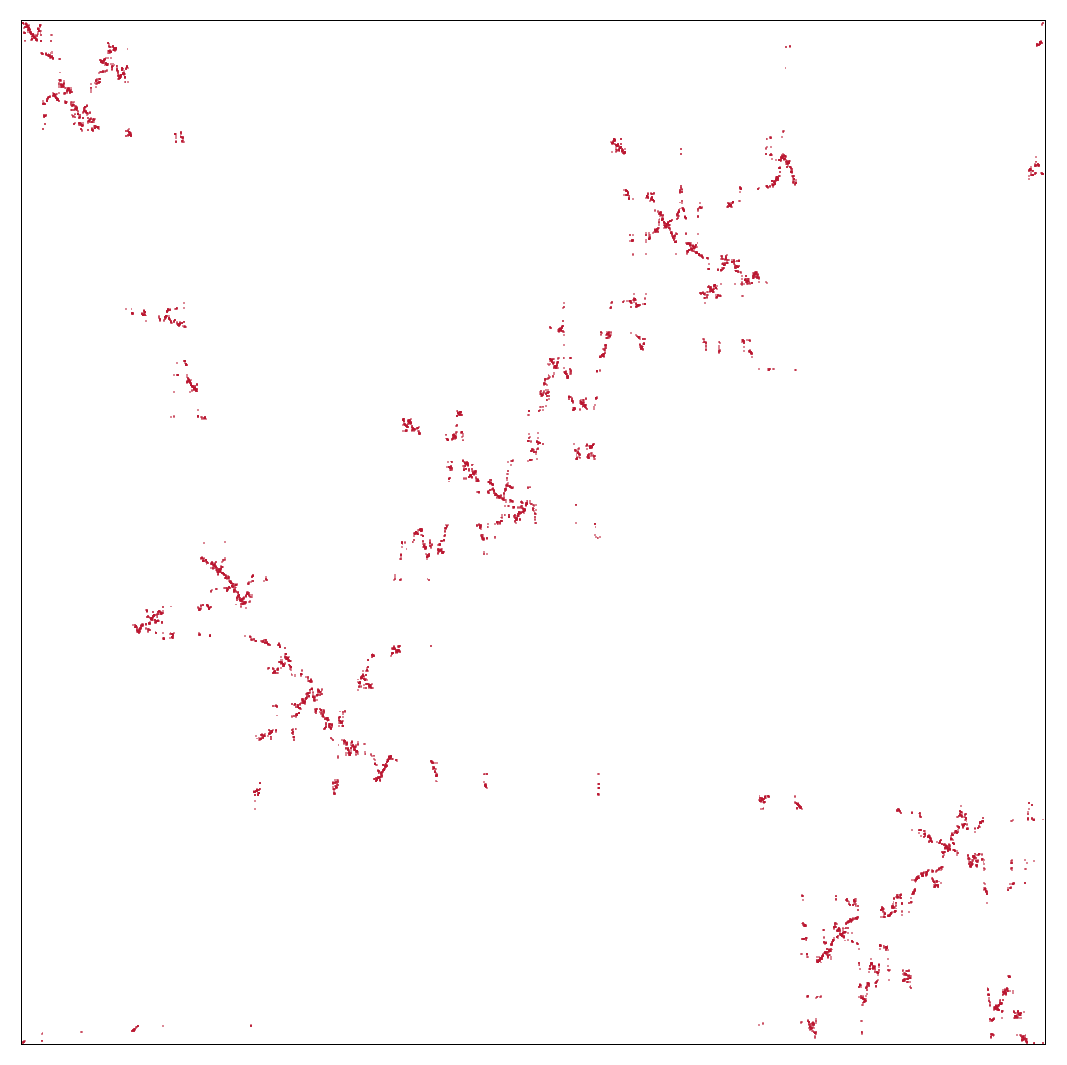}
	\end{minipage}
	\begin{minipage}[c]{0.149\textwidth}
		\centering
		\includegraphics[scale=0.18]{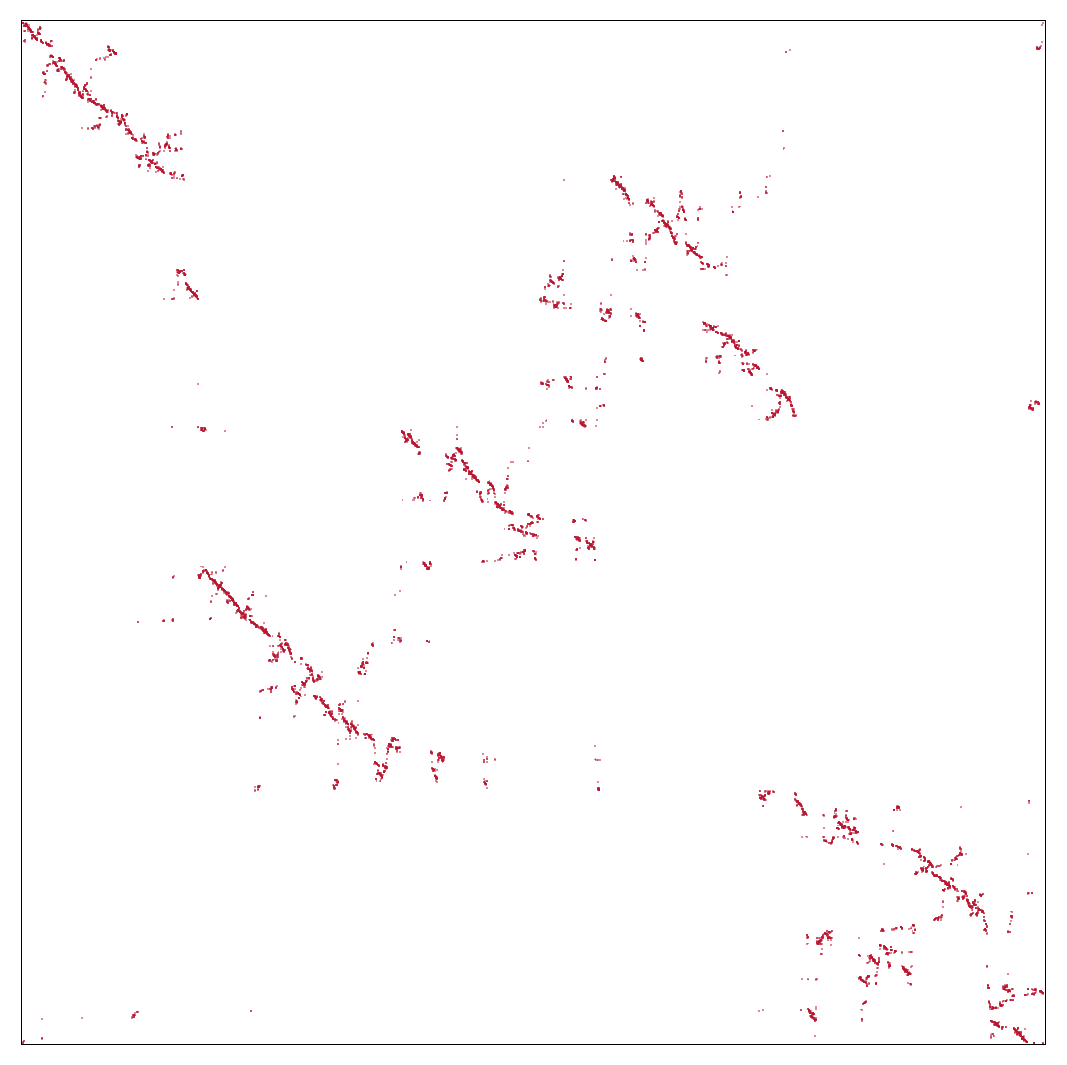}
	\end{minipage}
	\begin{minipage}[c]{0.149\textwidth}
		\centering
		\includegraphics[scale=0.18]{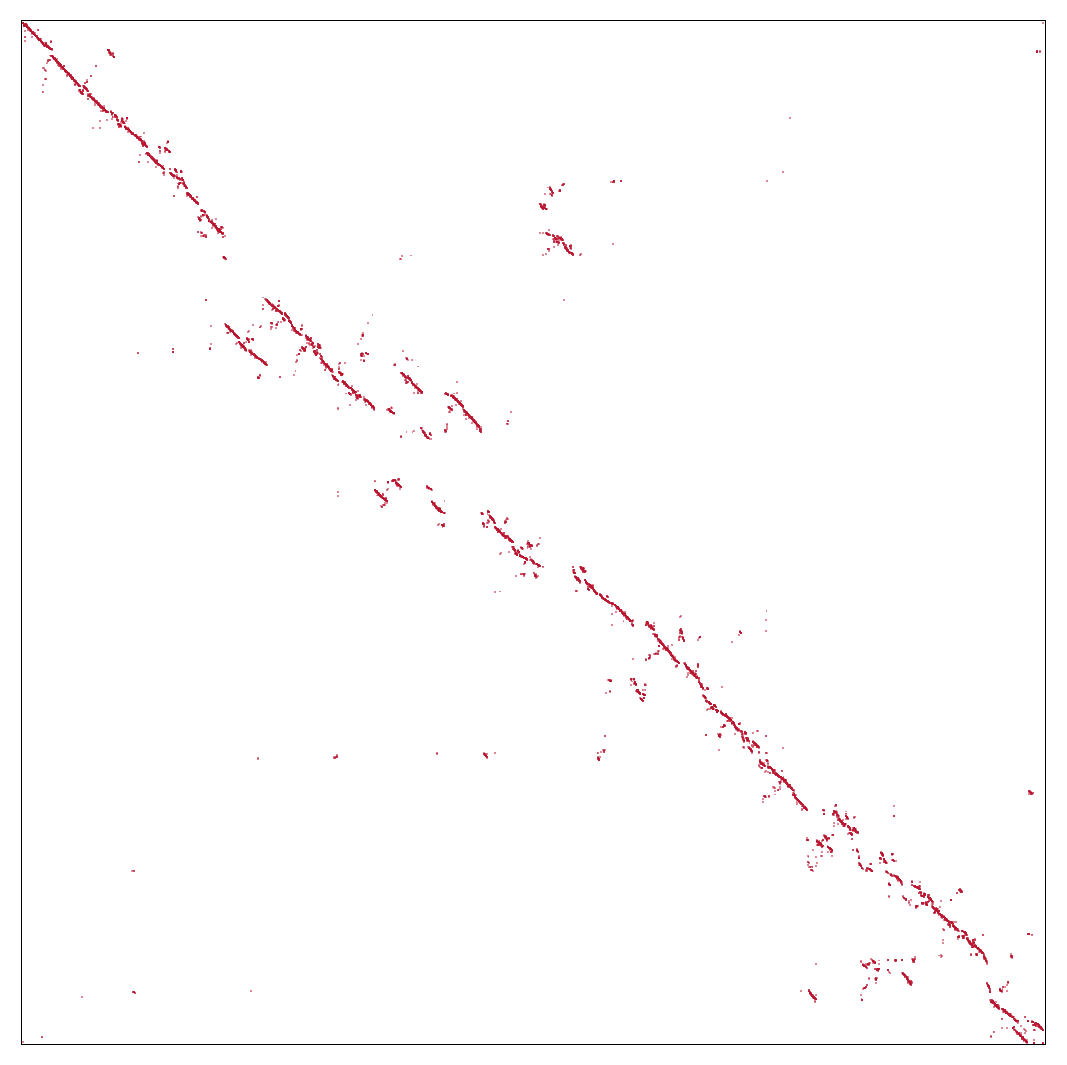}
	\end{minipage}
	\hspace{0.1 cm}
	\begin{minipage}[c]{0.149\textwidth}
		\centering
		\includegraphics[scale=0.225]{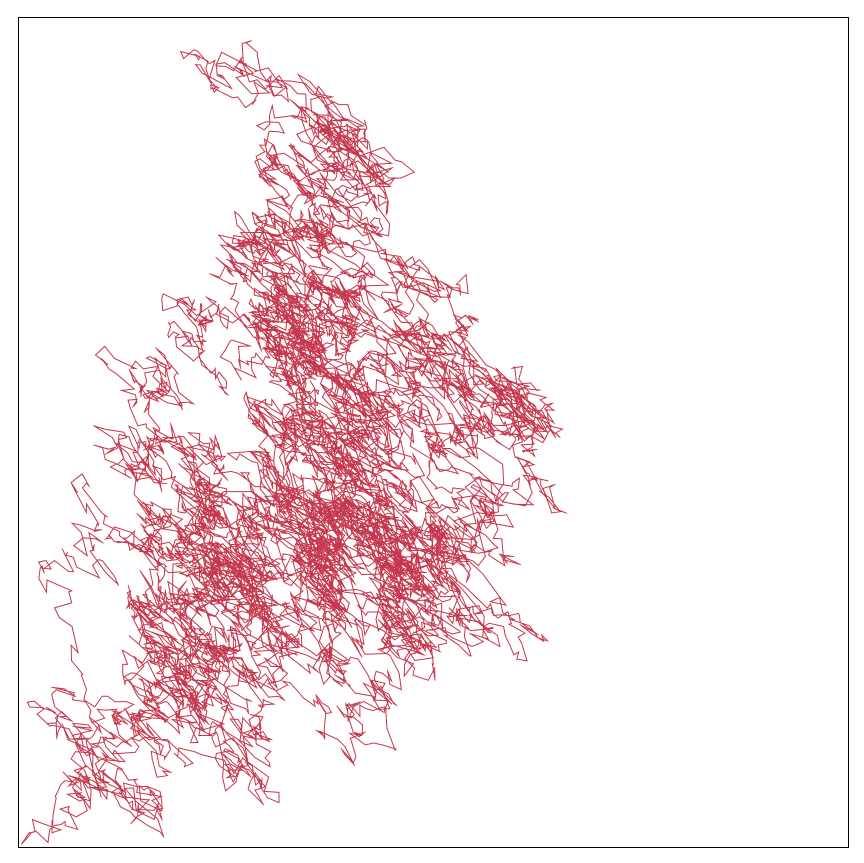}
	\end{minipage}
	
	\begin{minipage}[c]{0.06\textwidth}
		\centering
		\footnotesize$\rho=0$
	\end{minipage}
	\begin{minipage}[c]{0.149\textwidth}
		\centering
		\includegraphics[scale=0.18]{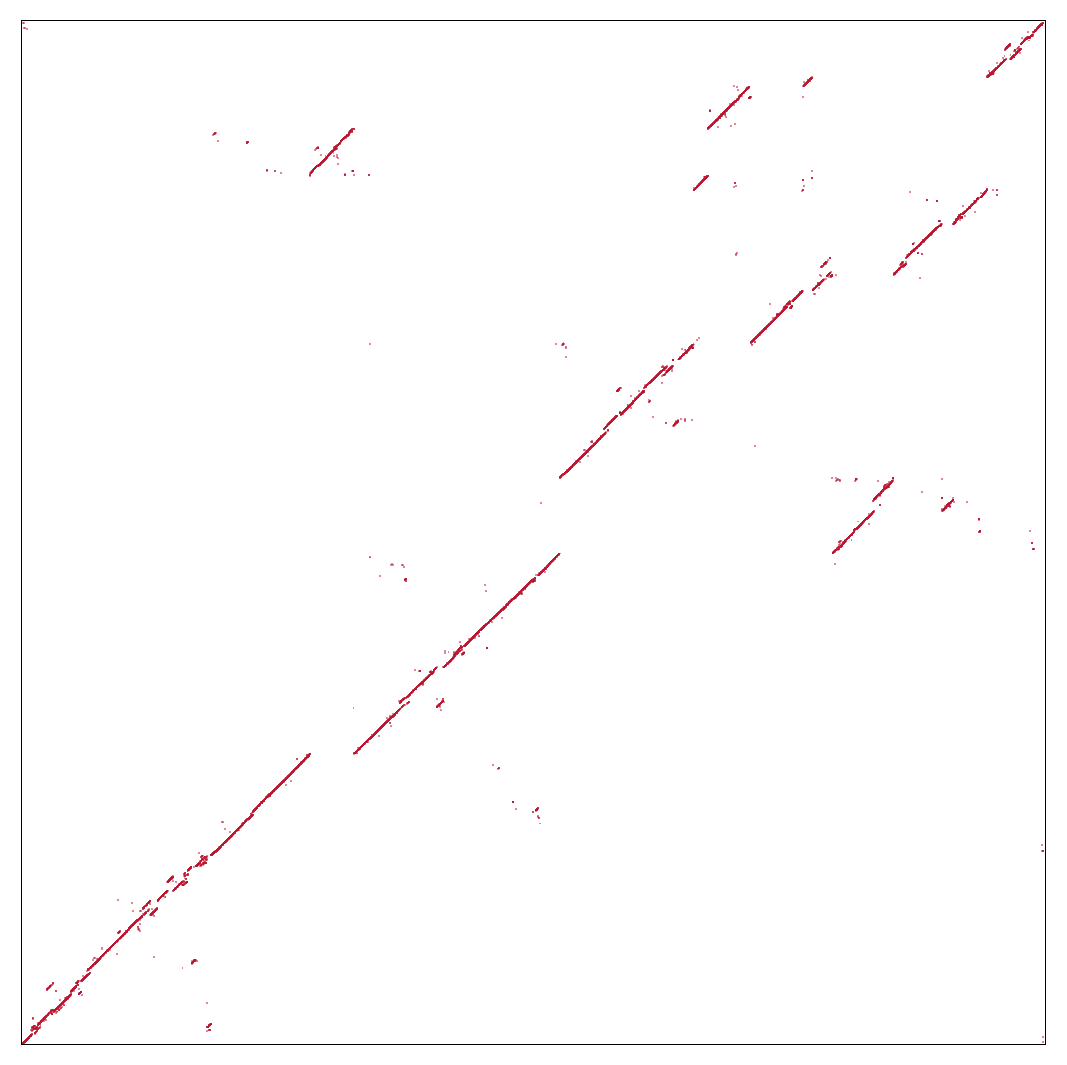}
	\end{minipage}
	\begin{minipage}[c]{0.149\textwidth}
		\centering
		\includegraphics[scale=0.18]{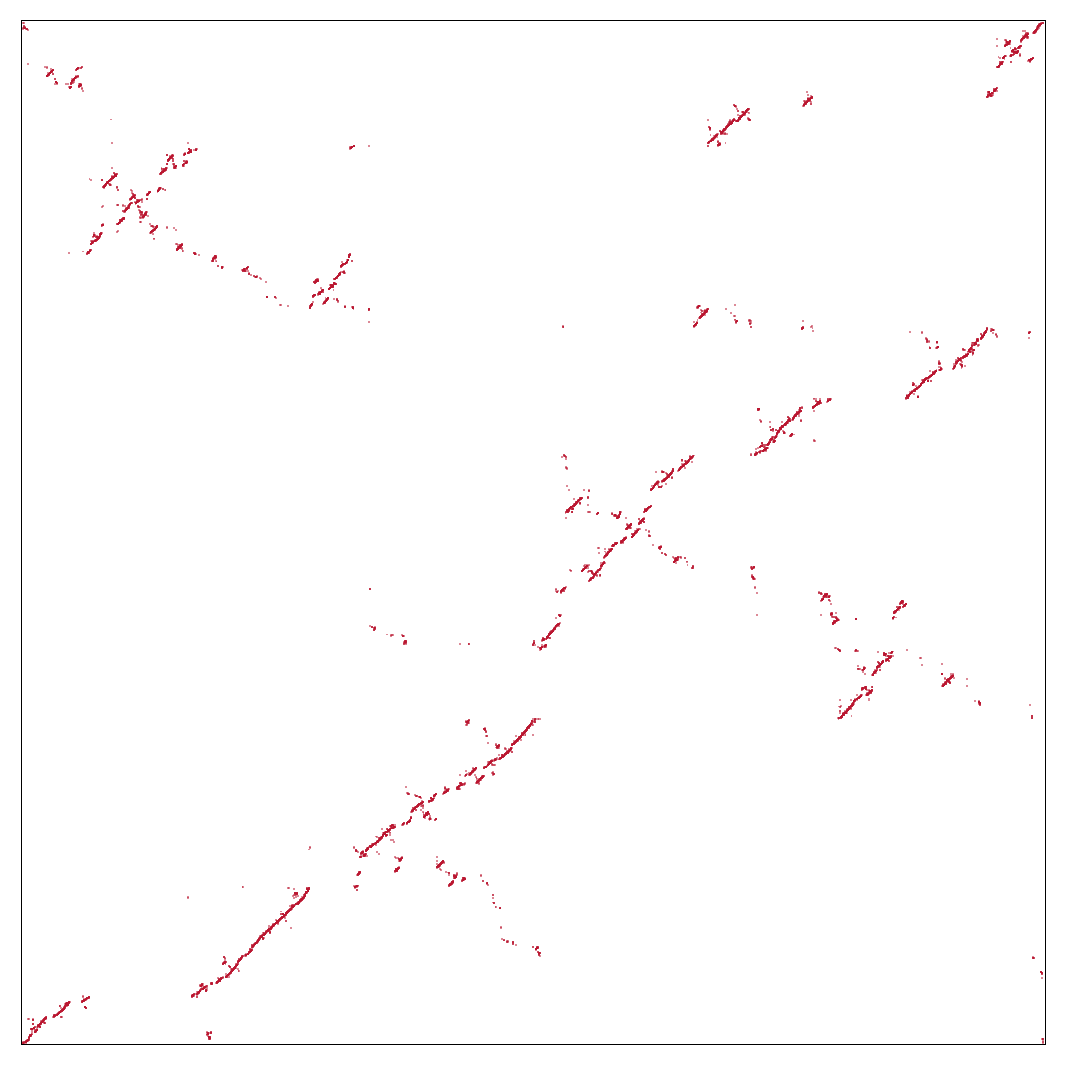}
	\end{minipage}
	\begin{minipage}[c]{0.149\textwidth}
		\centering
		\includegraphics[scale=0.18]{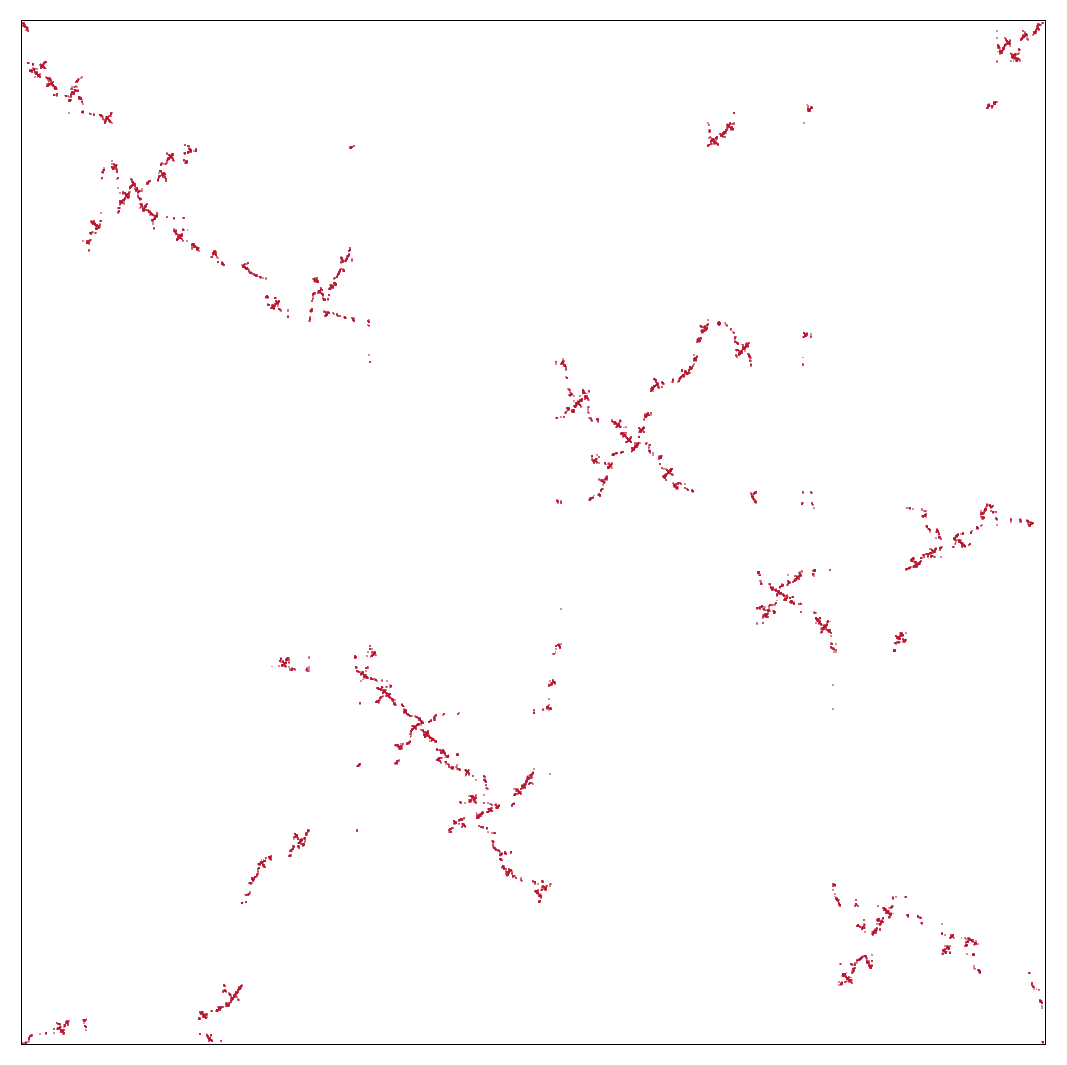}
	\end{minipage}
	\begin{minipage}[c]{0.149\textwidth}
		\centering
		\includegraphics[scale=0.18]{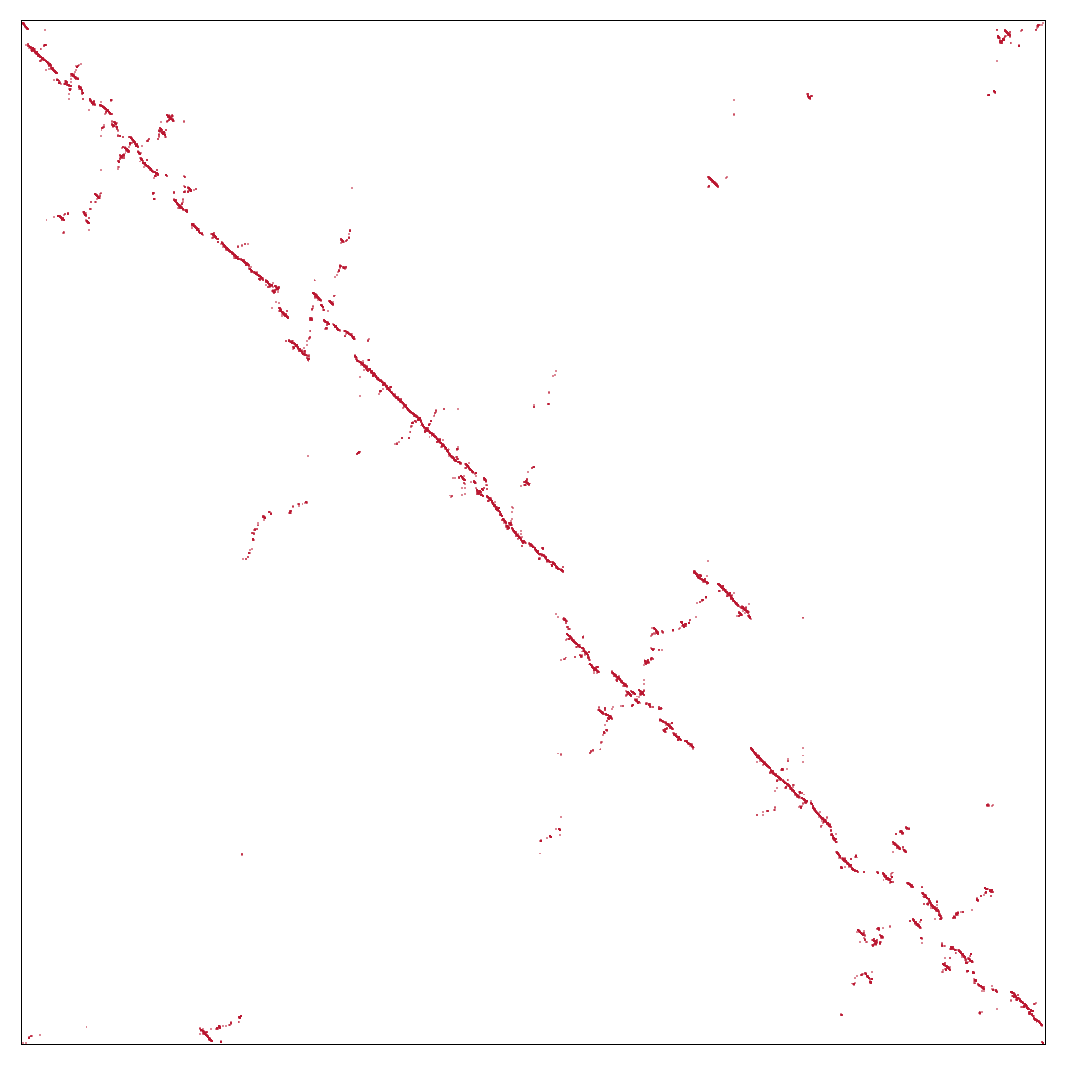}
	\end{minipage}
	\begin{minipage}[c]{0.149\textwidth}
		\centering
		\includegraphics[scale=0.18]{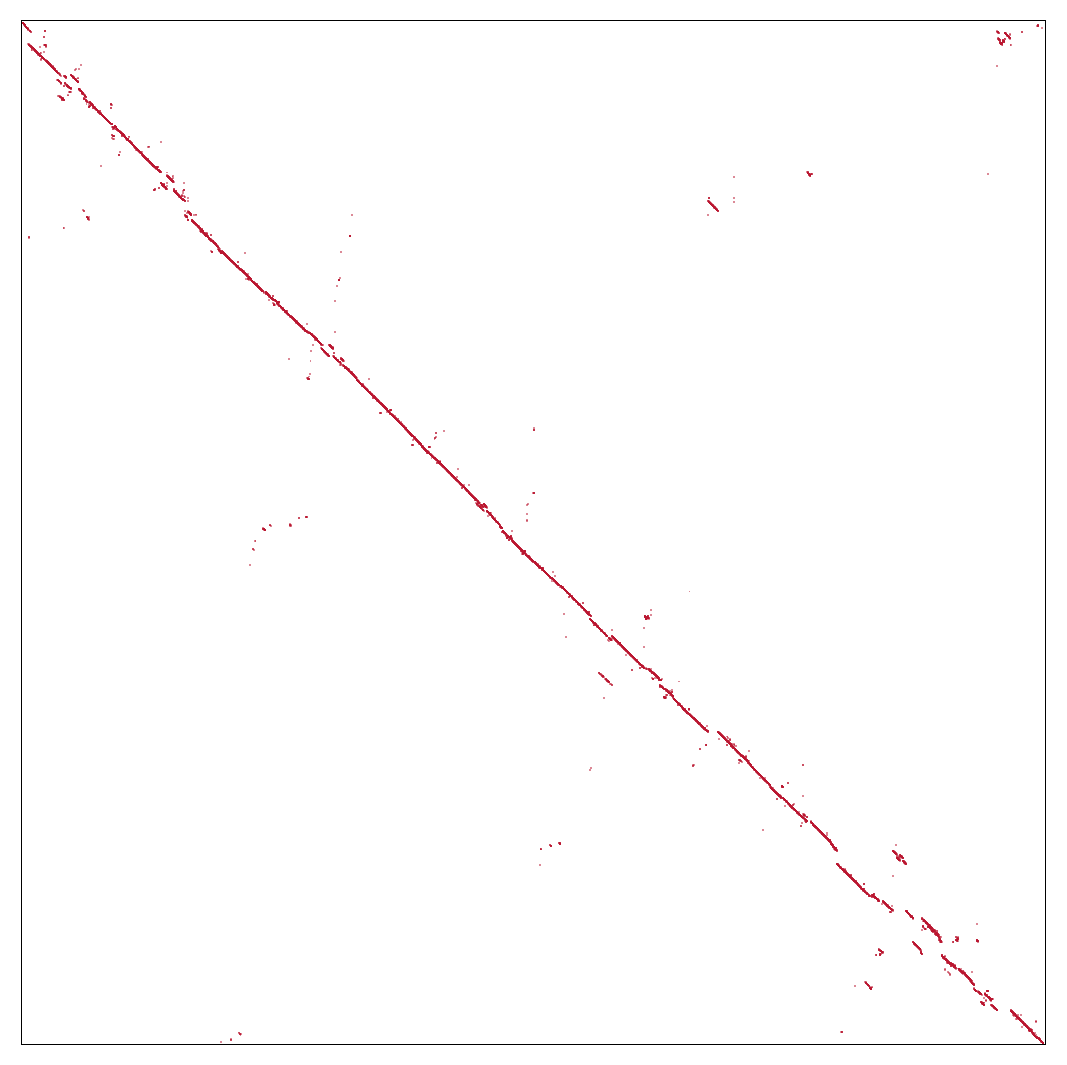}
	\end{minipage}
	\hspace{0.1 cm}
	\begin{minipage}[c]{0.149\textwidth}
		\centering
		\includegraphics[scale=0.225]{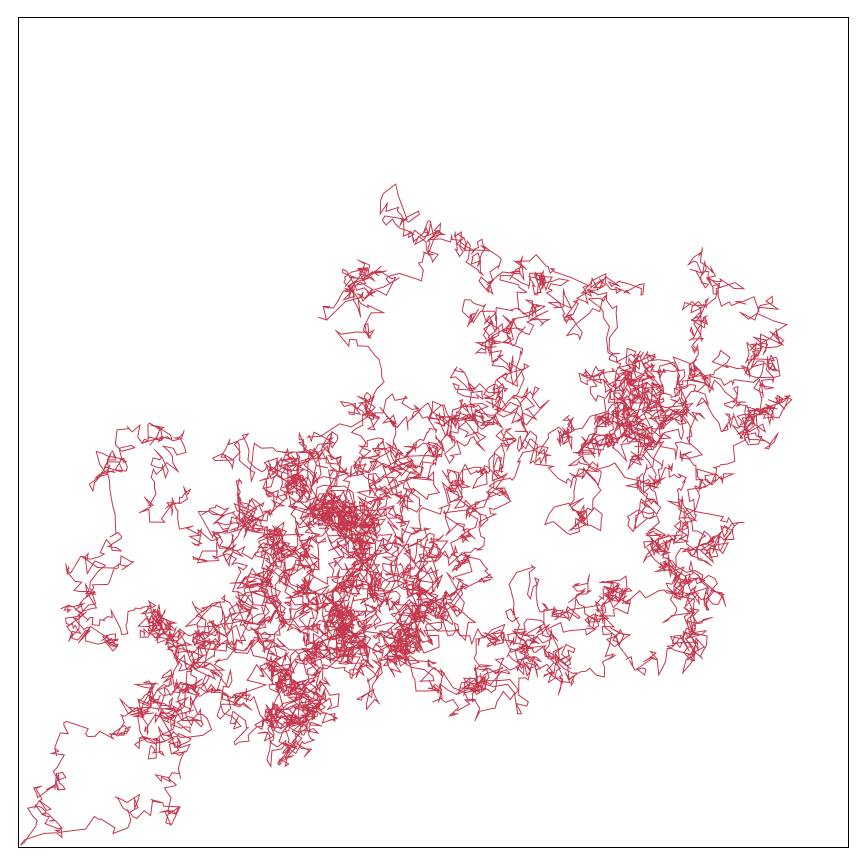}
	\end{minipage}
	
	\begin{minipage}[c]{0.06\textwidth}
		\centering
		\footnotesize$\rho=$\\
		\footnotesize$0.5$
	\end{minipage}
	\begin{minipage}[c]{0.149\textwidth}
		\centering
		\includegraphics[scale=0.18]{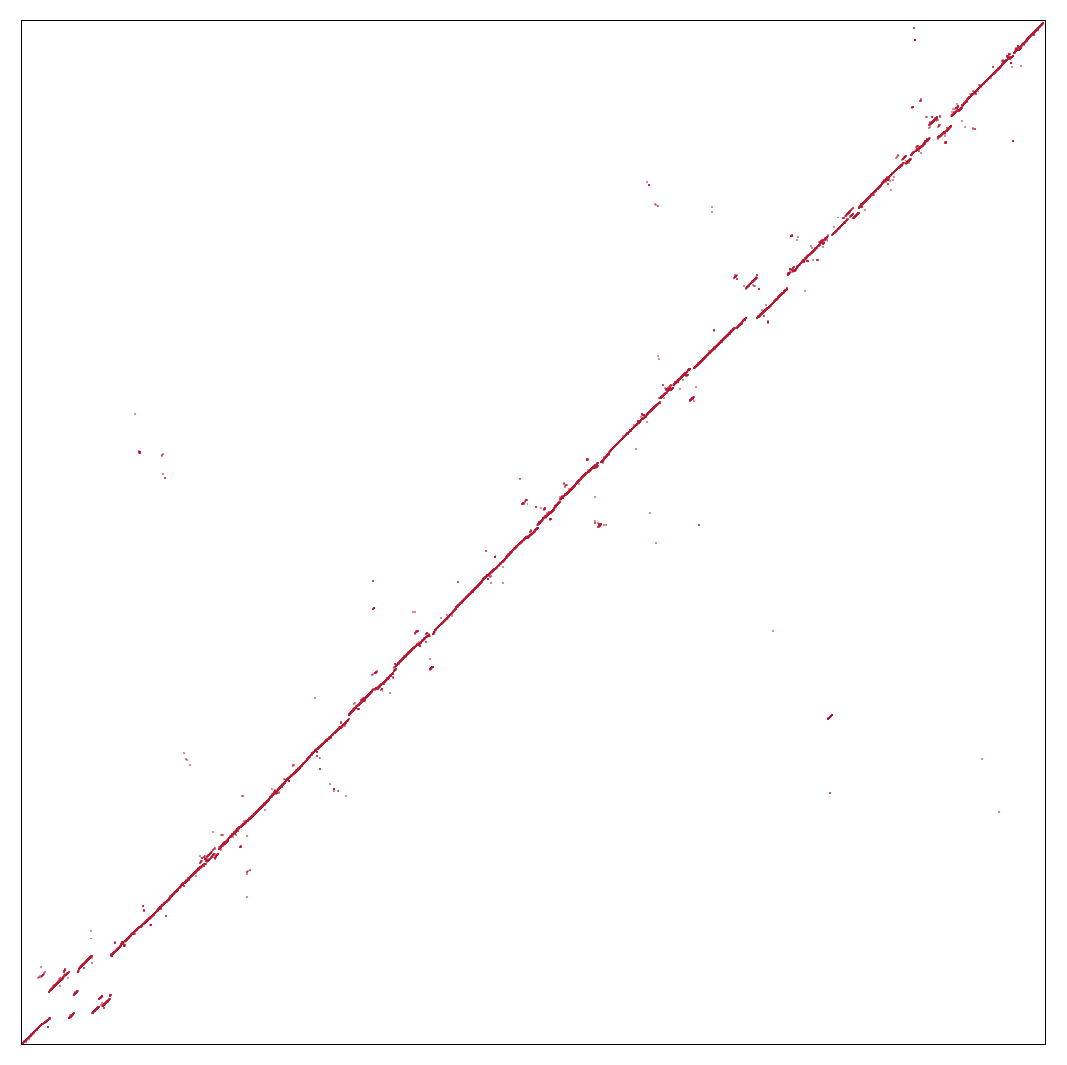}
	\end{minipage}
	\begin{minipage}[c]{0.149\textwidth}
		\centering
		\includegraphics[scale=0.18]{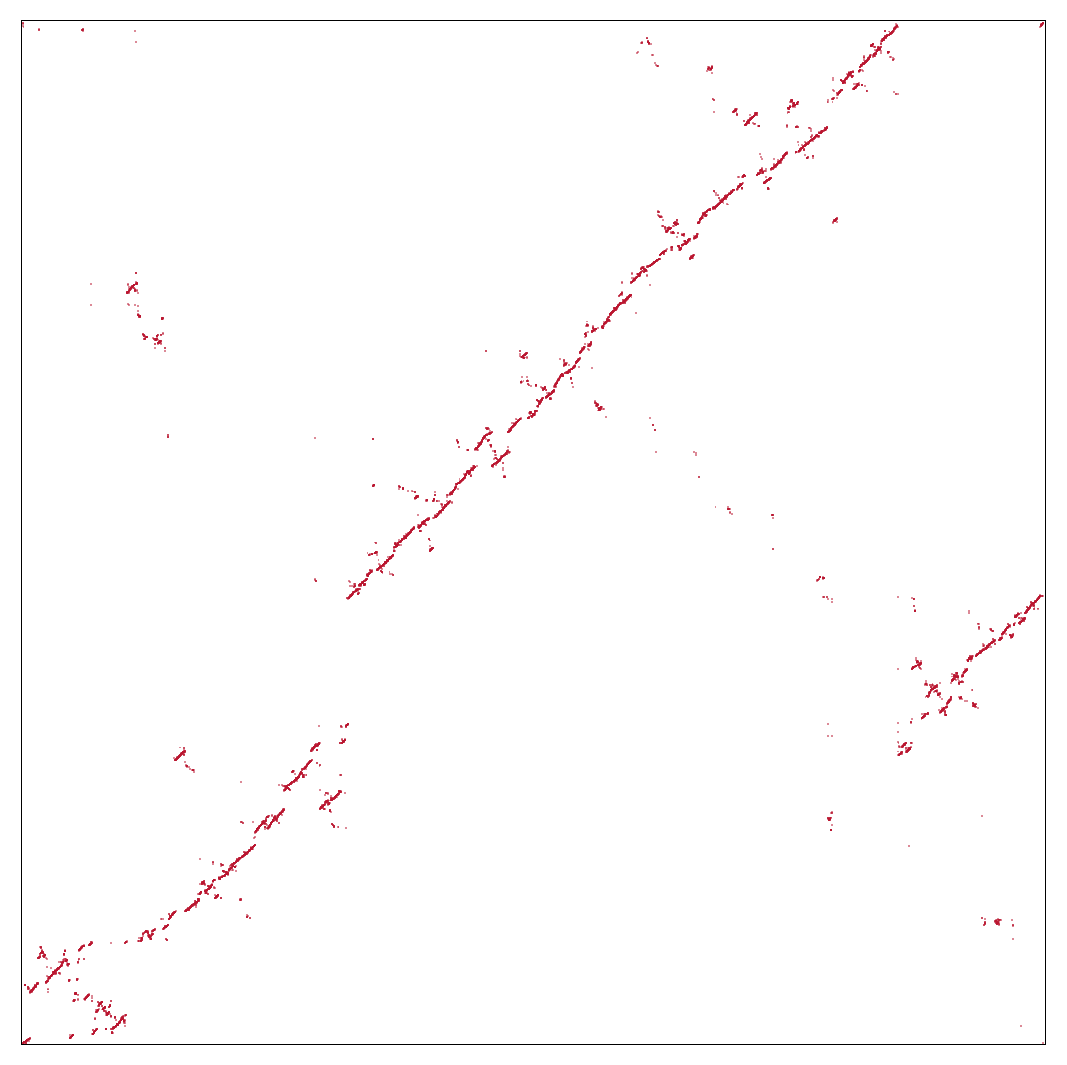}
	\end{minipage}
	\begin{minipage}[c]{0.149\textwidth}
		\centering
		\includegraphics[scale=0.18]{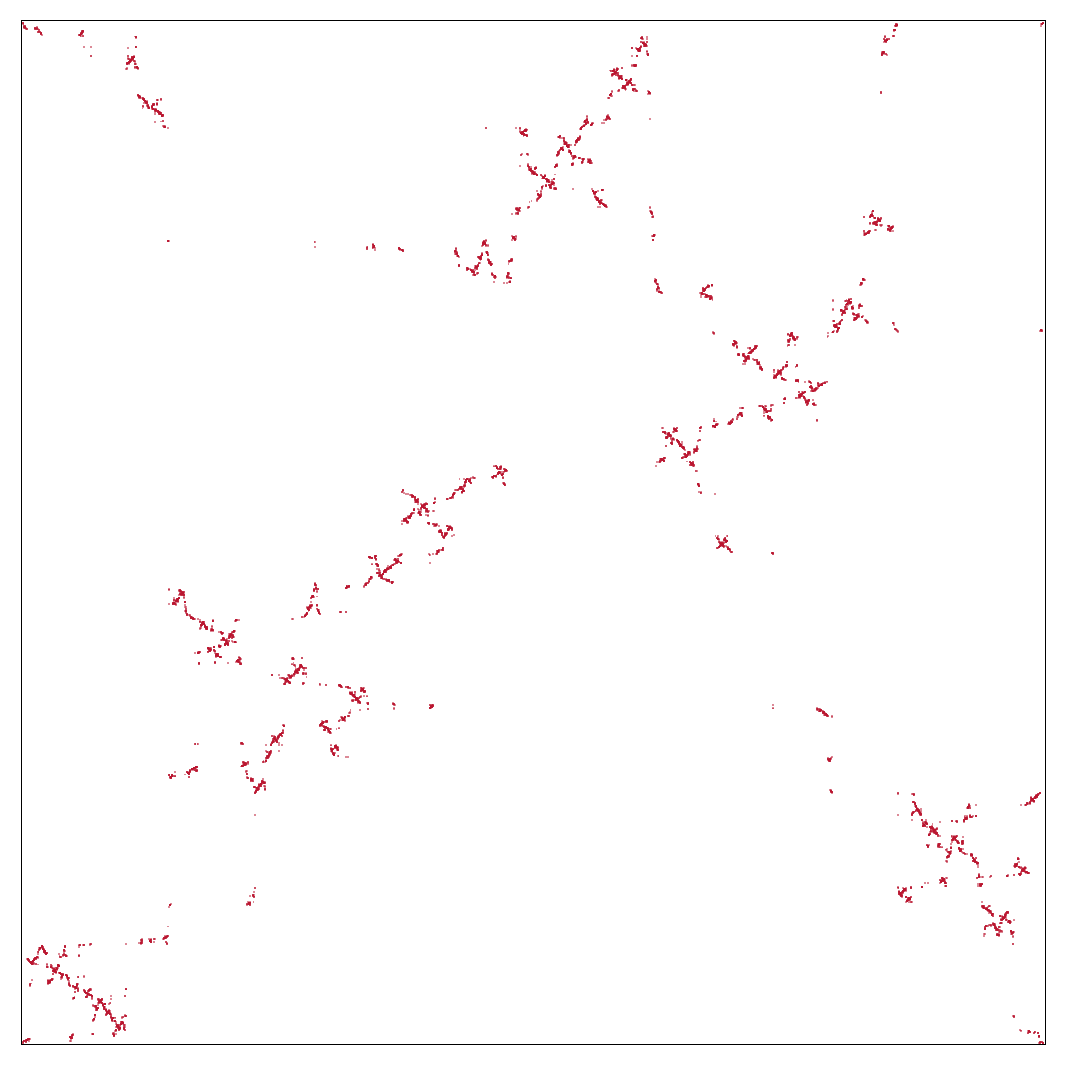}
	\end{minipage}
	\begin{minipage}[c]{0.149\textwidth}
		\centering
		\includegraphics[scale=0.18]{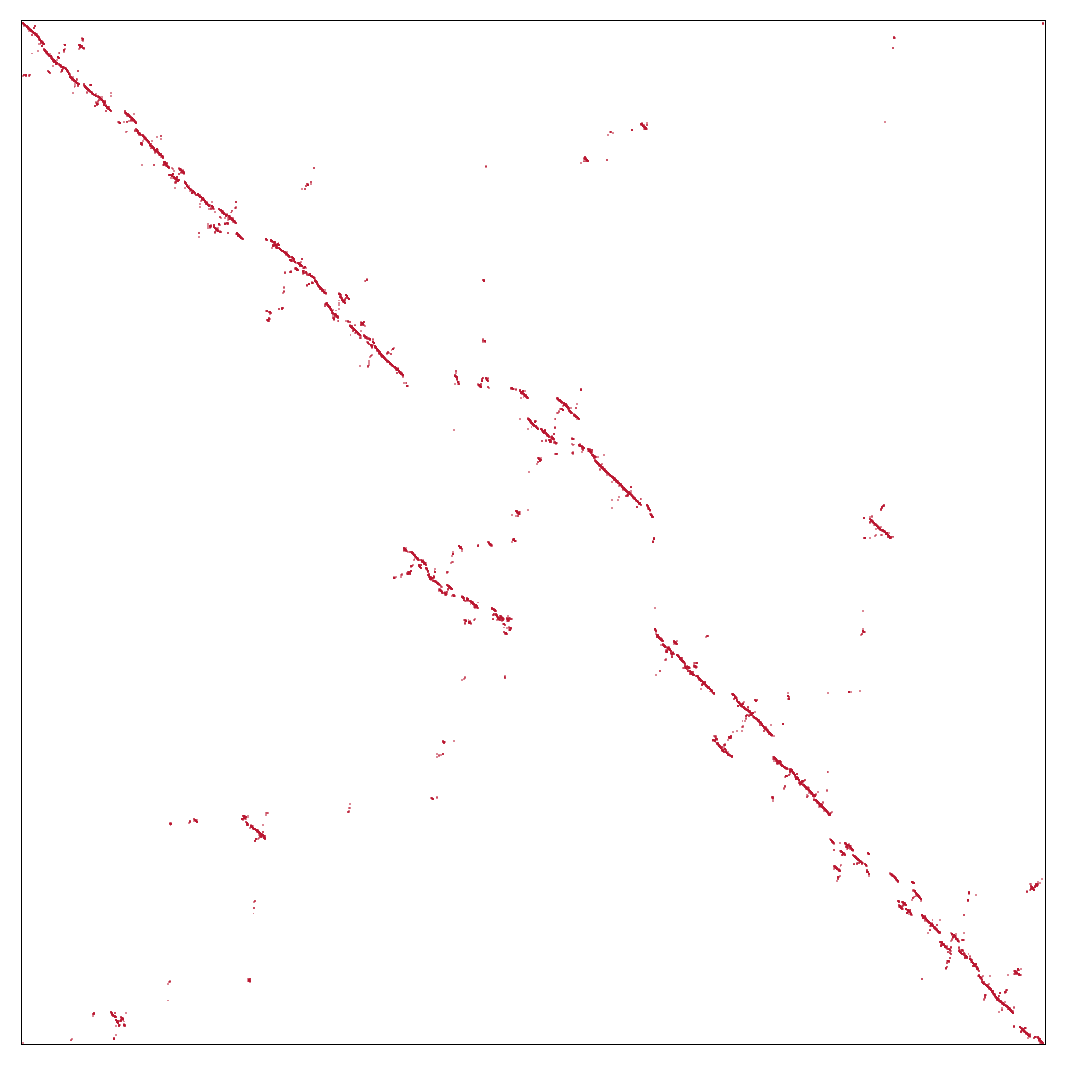}
	\end{minipage}
	\begin{minipage}[c]{0.149\textwidth}
		\centering
		\includegraphics[scale=0.18]{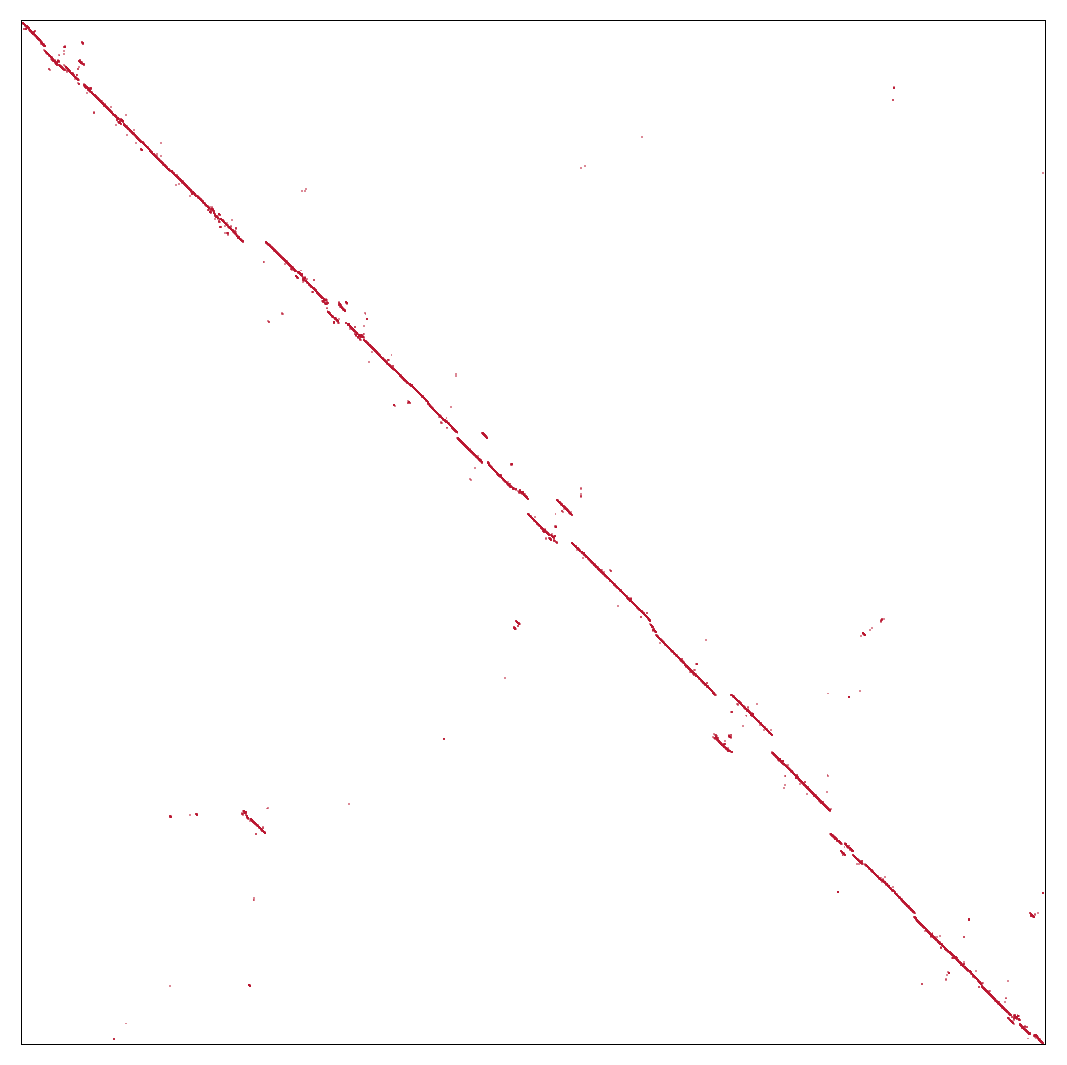}
	\end{minipage}
	\hspace{0.1 cm}
	\begin{minipage}[c]{0.149\textwidth}
		\centering
		\includegraphics[scale=0.225]{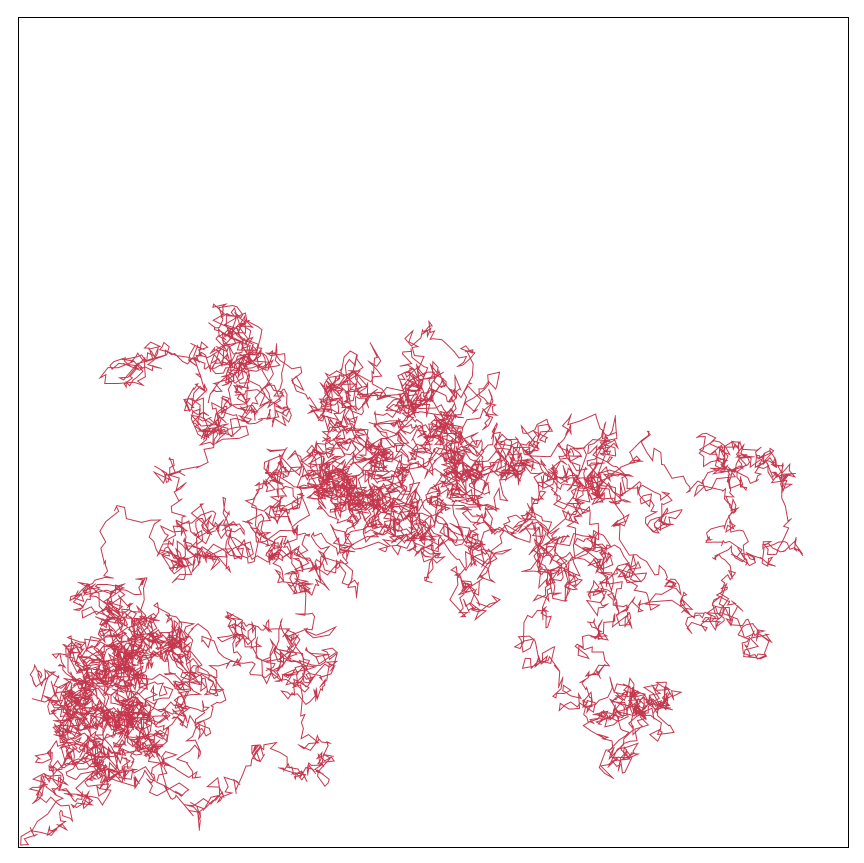}
	\end{minipage}
	
	\begin{minipage}[c]{0.06\textwidth}
		\centering
		\footnotesize$\rho=$\\
		\footnotesize$0.995$
	\end{minipage}
	\begin{minipage}[c]{0.149\textwidth}
		\centering
		\includegraphics[scale=0.18]{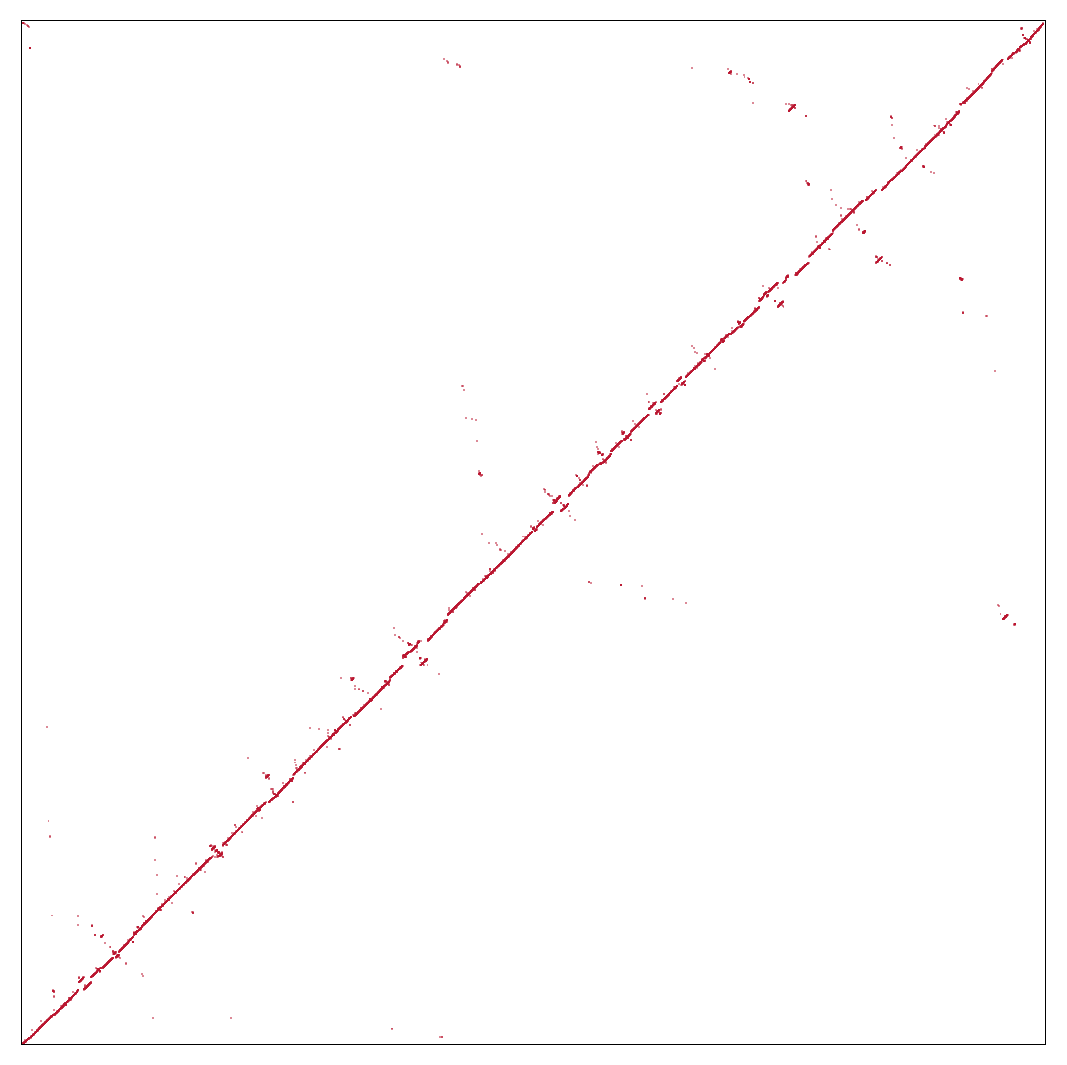}
	\end{minipage}
	\begin{minipage}[c]{0.149\textwidth}
		\centering
		\includegraphics[scale=0.18]{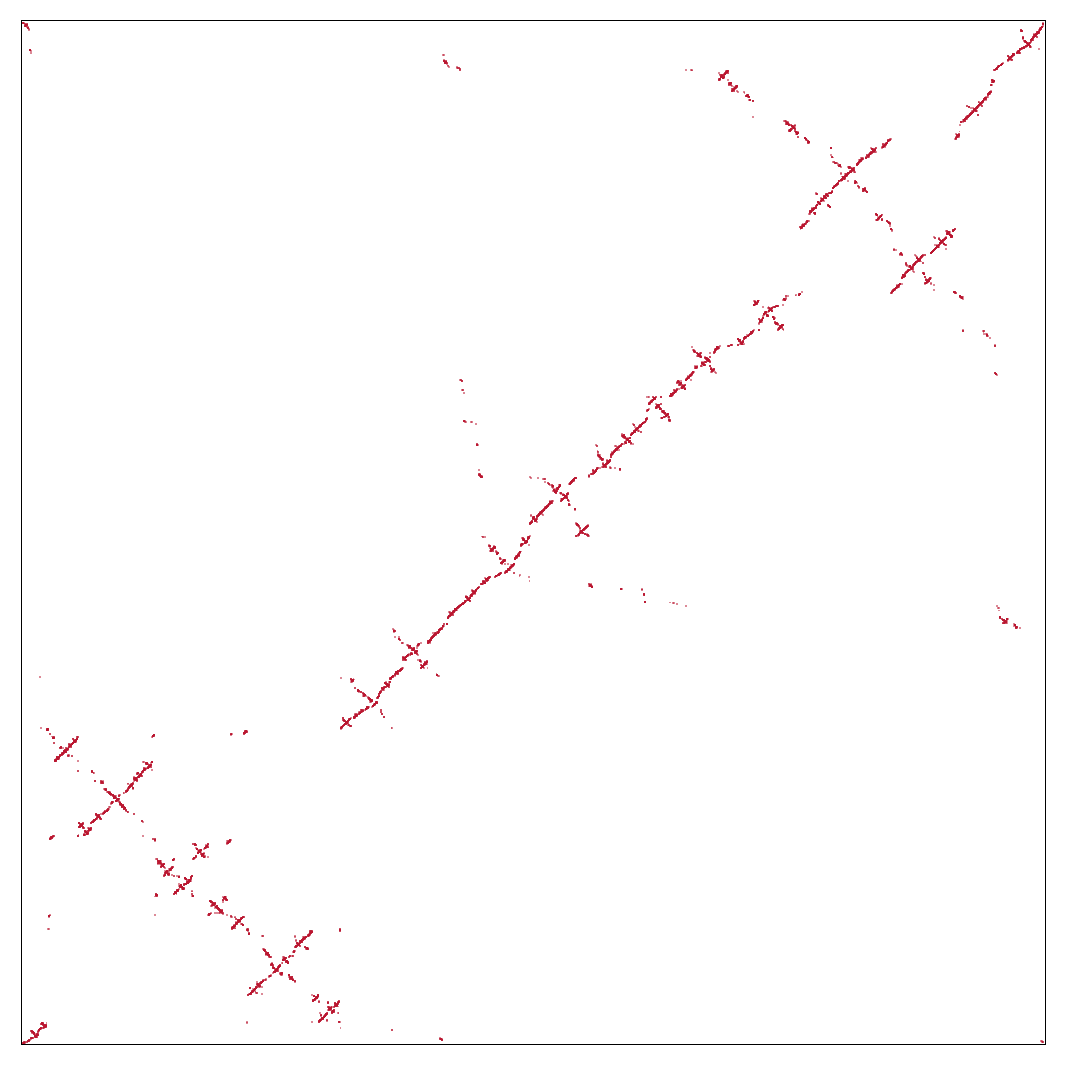}
	\end{minipage}
	\begin{minipage}[c]{0.149\textwidth}
		\centering
		\includegraphics[scale=0.18]{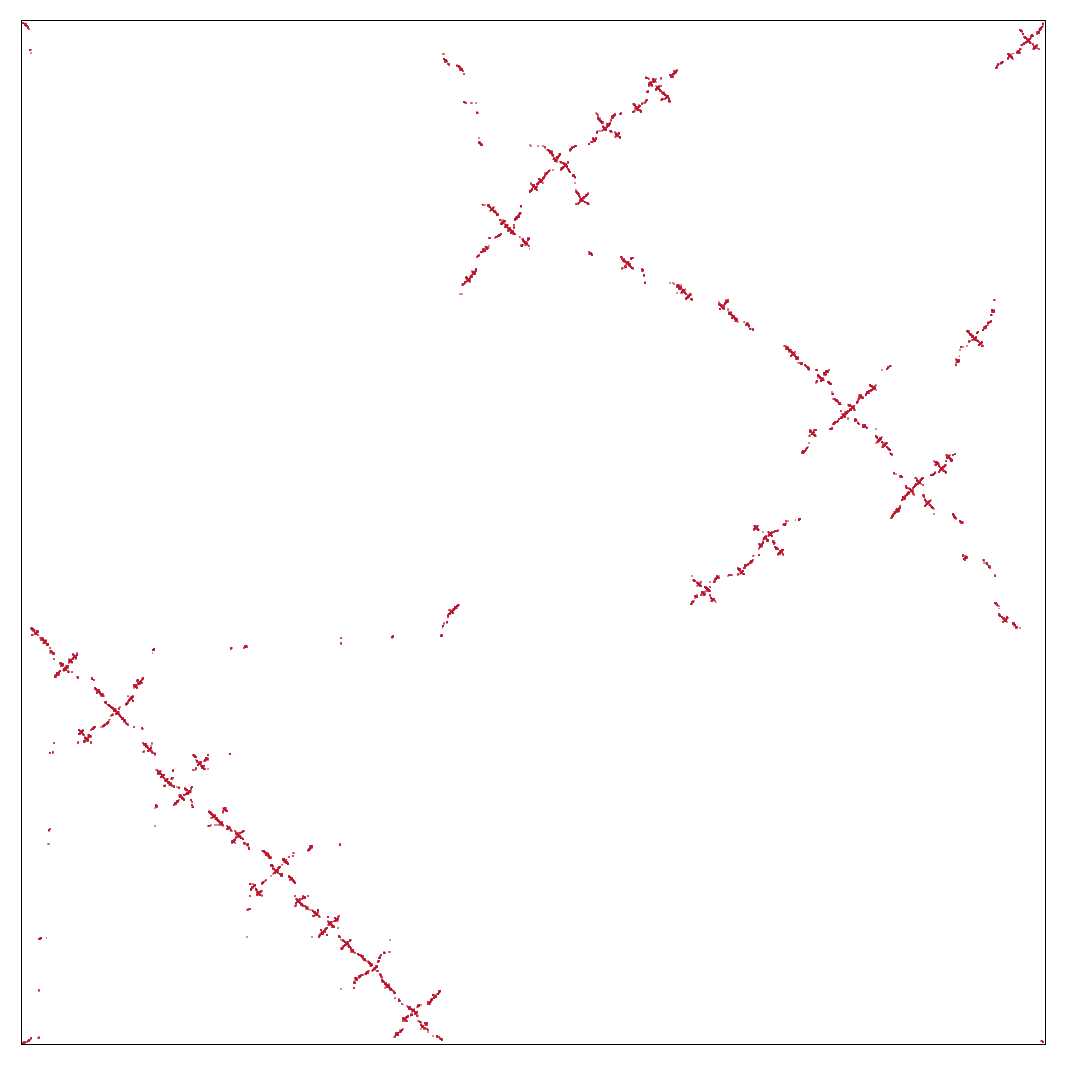}
	\end{minipage}
	\begin{minipage}[c]{0.149\textwidth}
		\centering
		\includegraphics[scale=0.18]{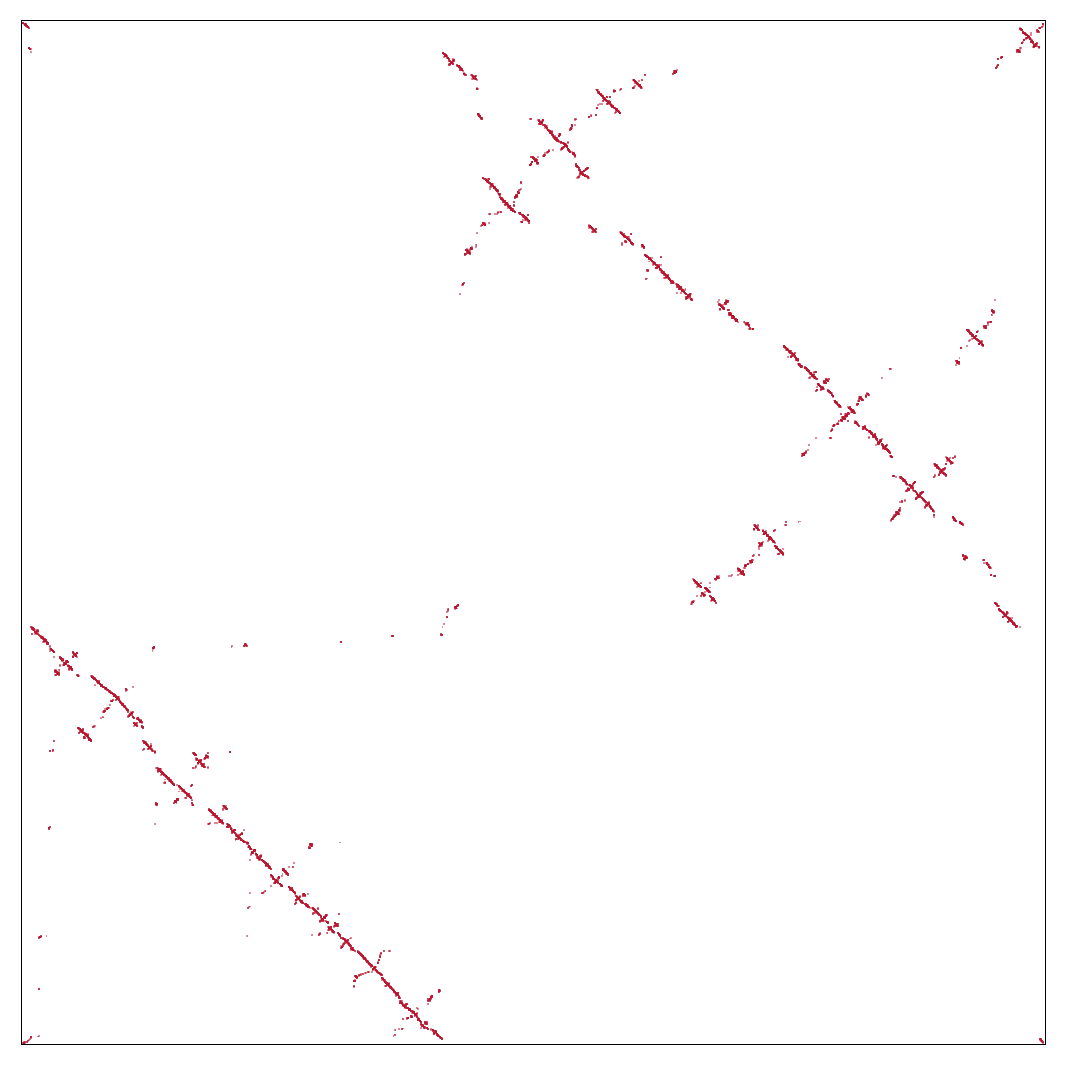}
	\end{minipage}
	\begin{minipage}[c]{0.149\textwidth}
		\centering
		\includegraphics[scale=0.18]{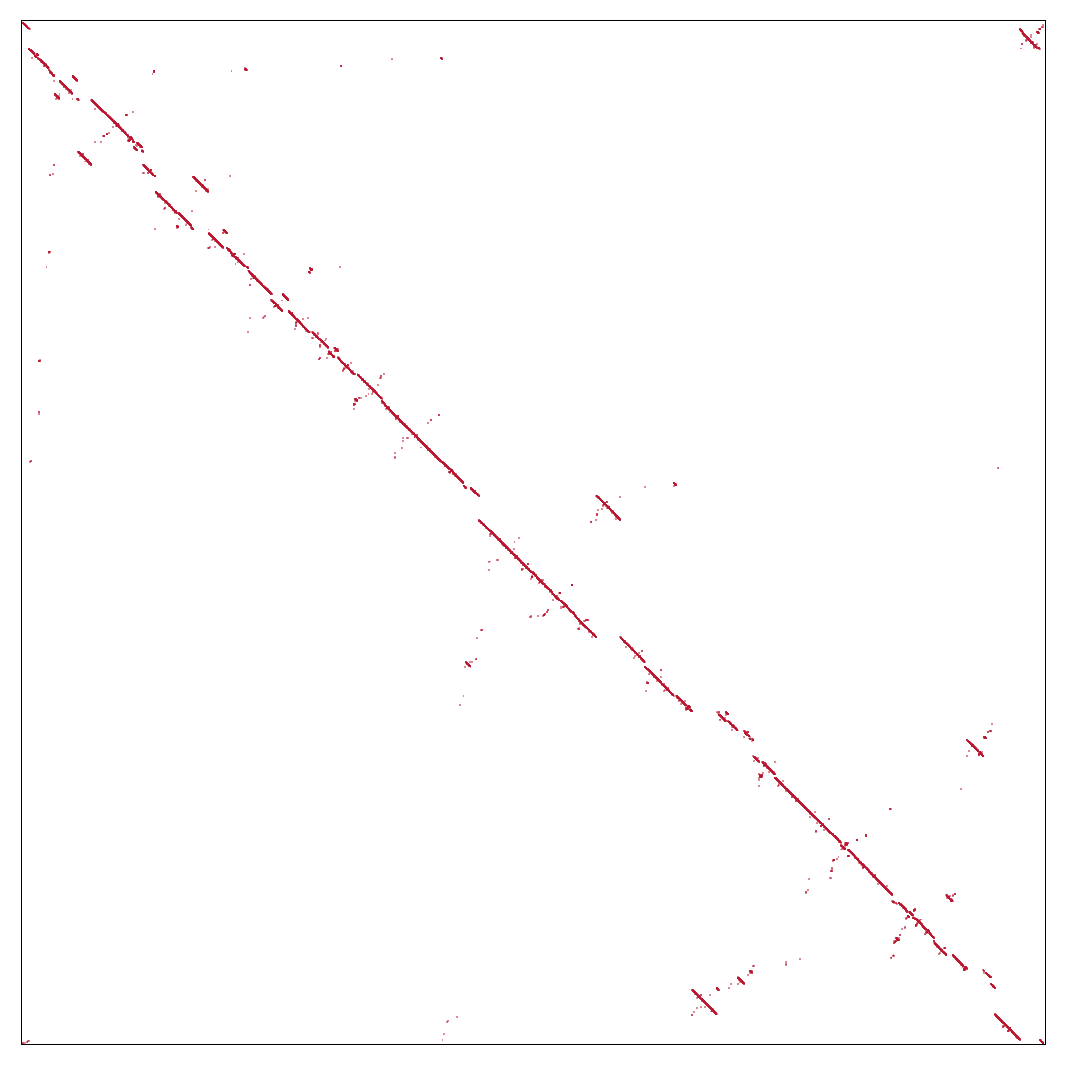}
	\end{minipage}
	\hspace{0.1 cm}
	\begin{minipage}[c]{0.149\textwidth}
		\centering
		\includegraphics[scale=0.225]{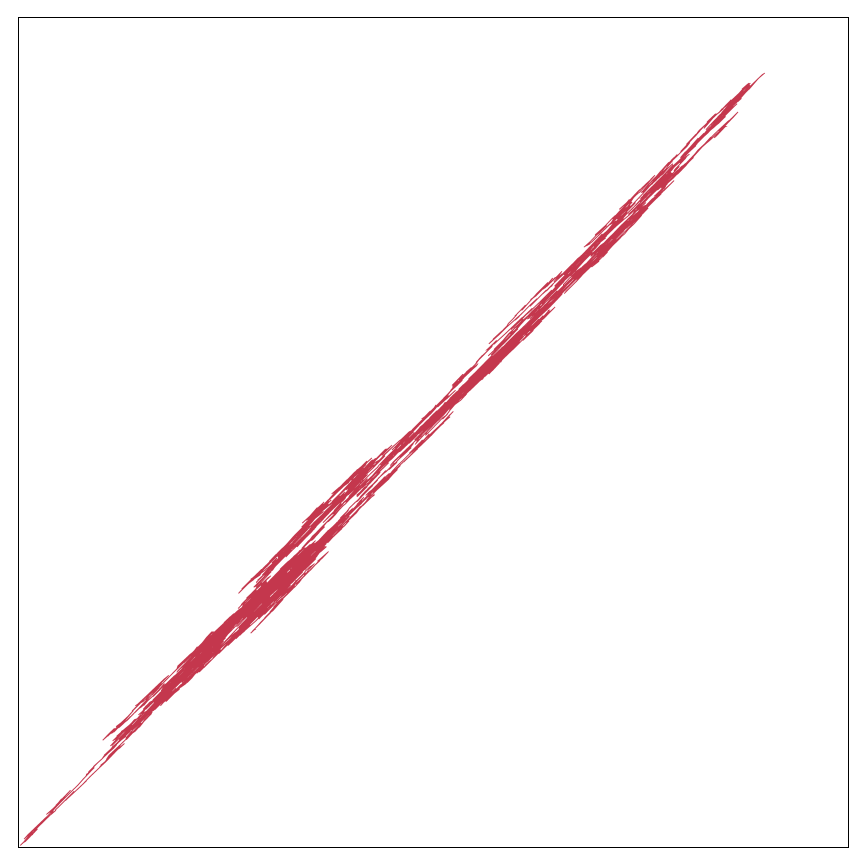}
	\end{minipage}
	
	\begin{minipage}[c]{0.06\textwidth}
		\centering
		\footnotesize$\rho=1$
	\end{minipage}
	\begin{minipage}[c]{0.149\textwidth}
		\centering
		\includegraphics[scale=0.18]{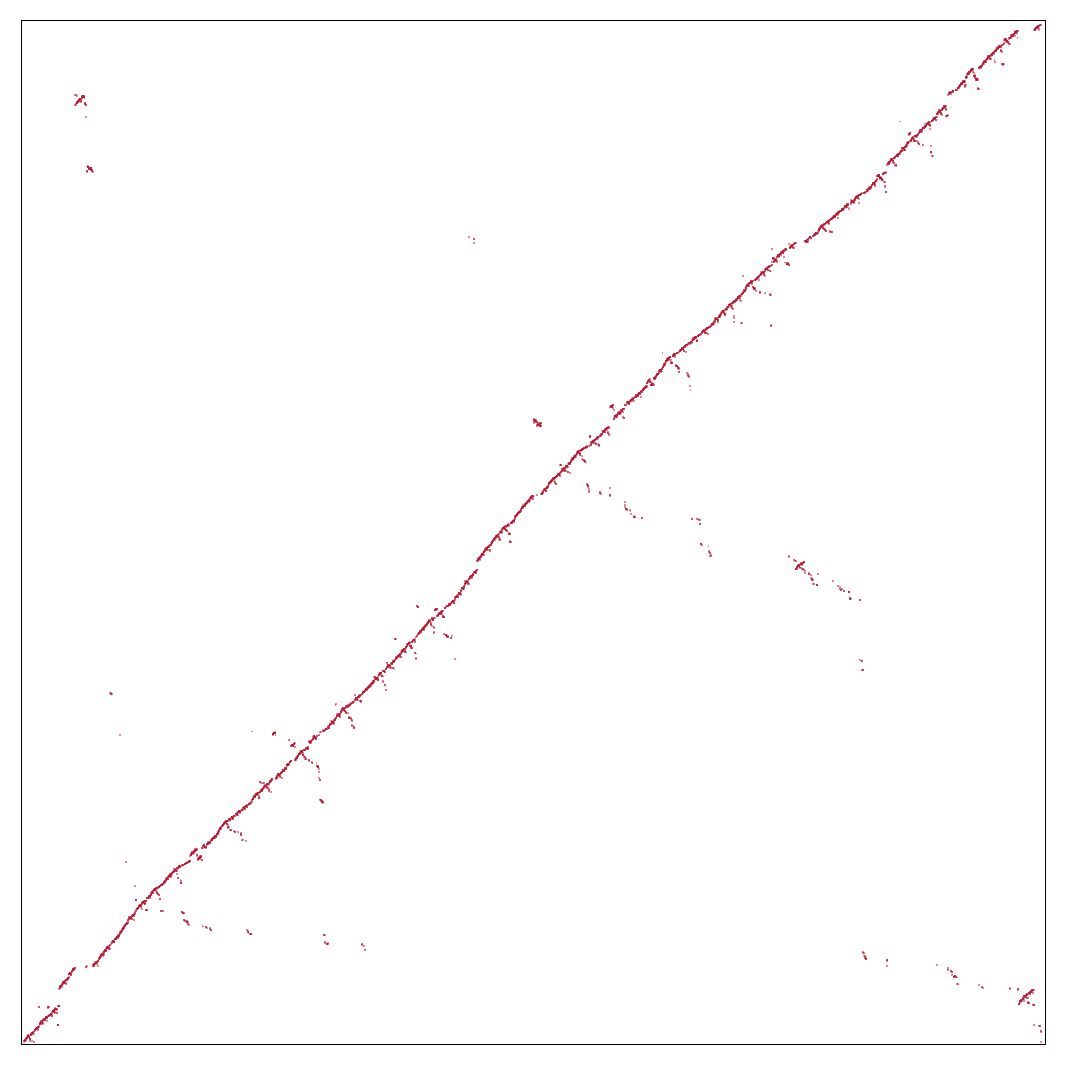}
	\end{minipage}
	\begin{minipage}[c]{0.149\textwidth}
		\centering
		\includegraphics[scale=0.18]{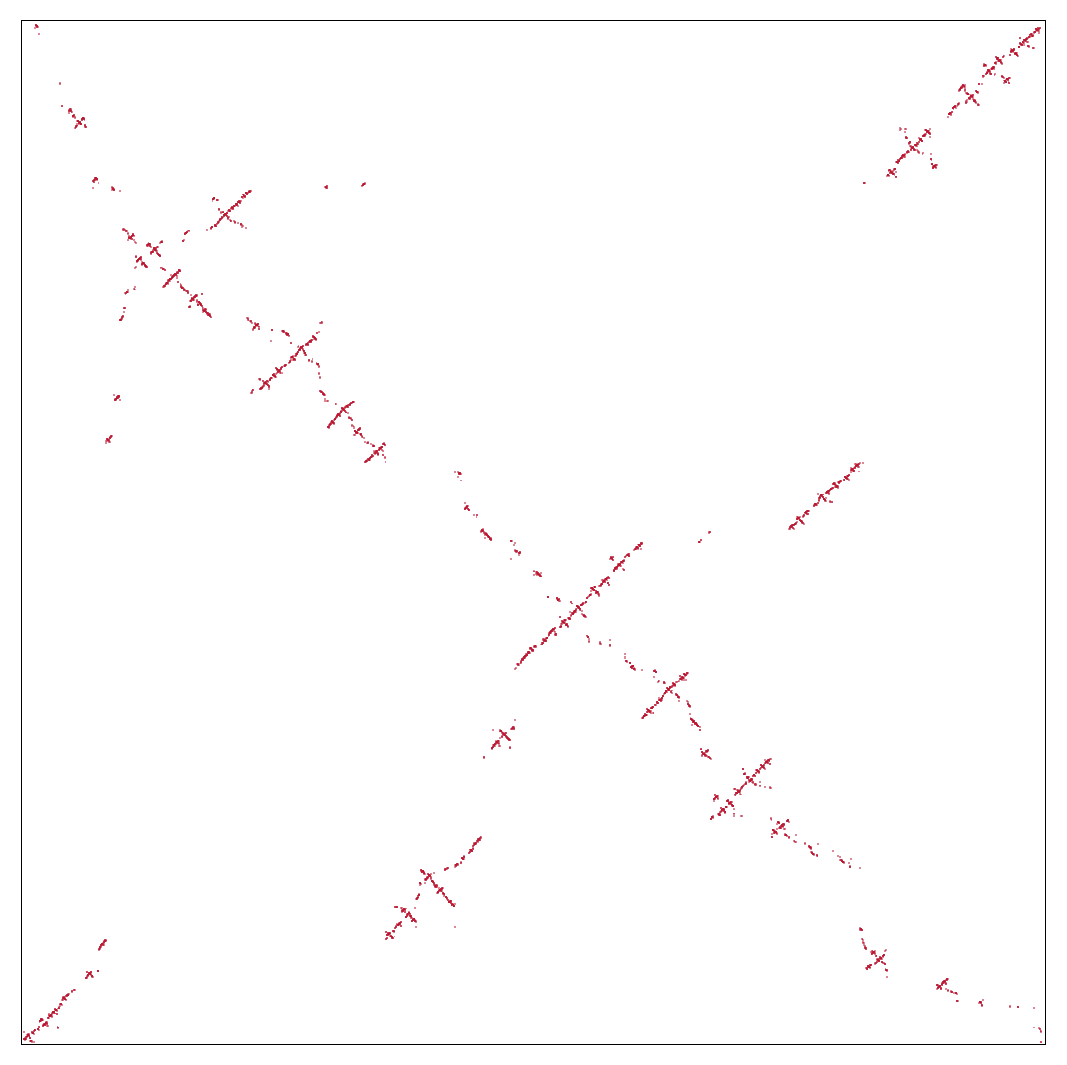}
	\end{minipage}
	\begin{minipage}[c]{0.149\textwidth}
		\centering
		\includegraphics[scale=0.18]{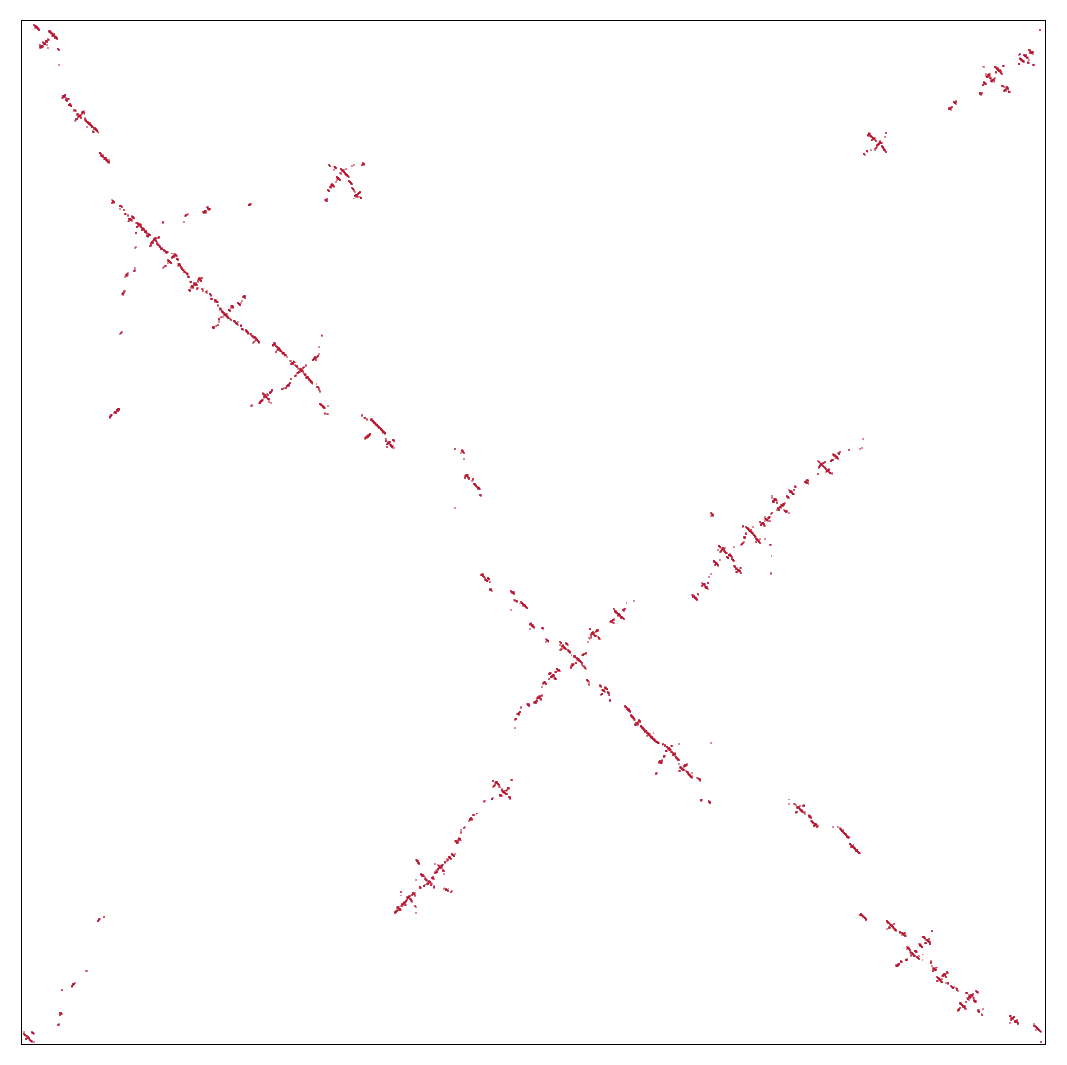}
	\end{minipage}
	\begin{minipage}[c]{0.149\textwidth}
		\centering
		\includegraphics[scale=0.18]{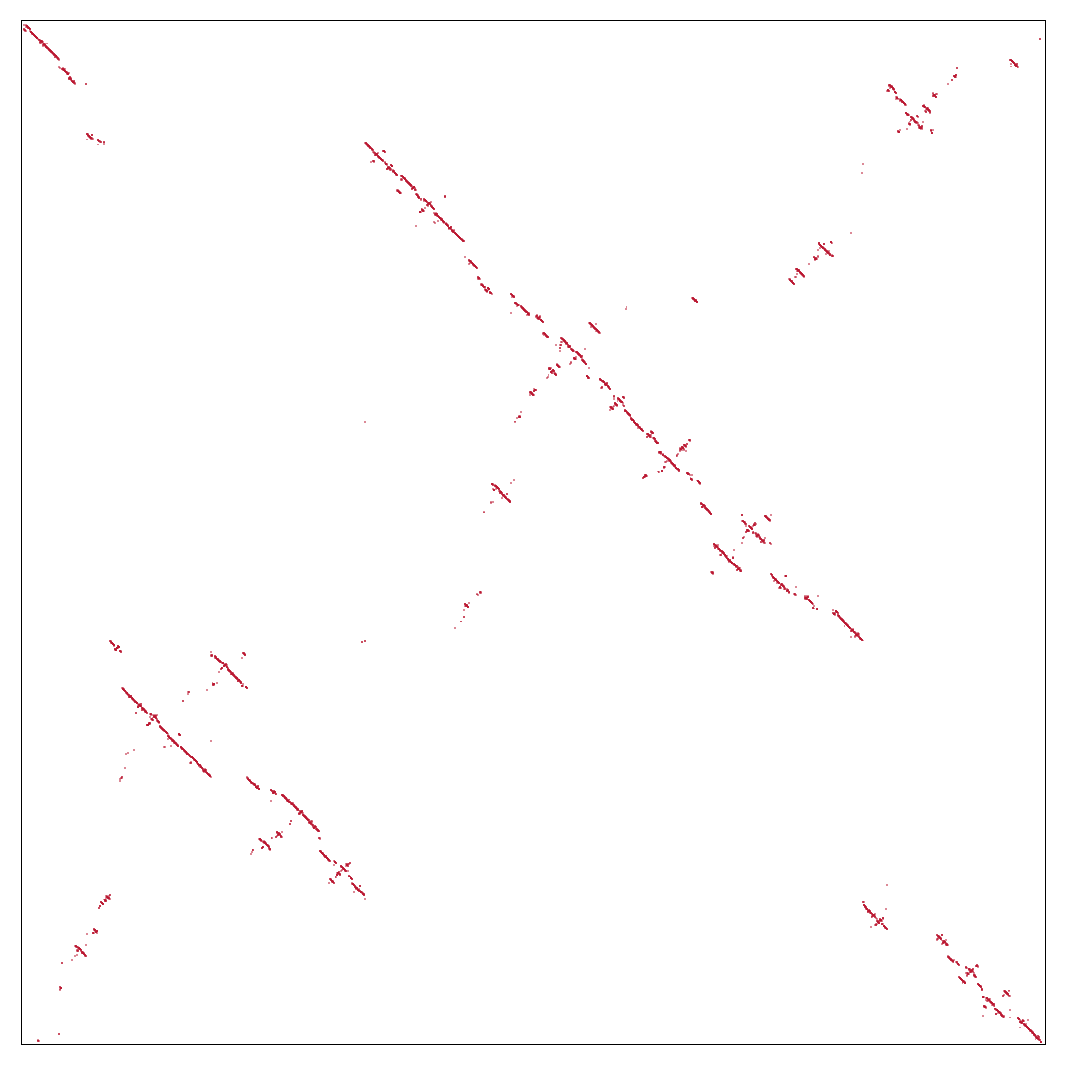}
	\end{minipage}
	\begin{minipage}[c]{0.149\textwidth}
		\centering
		\includegraphics[scale=0.18]{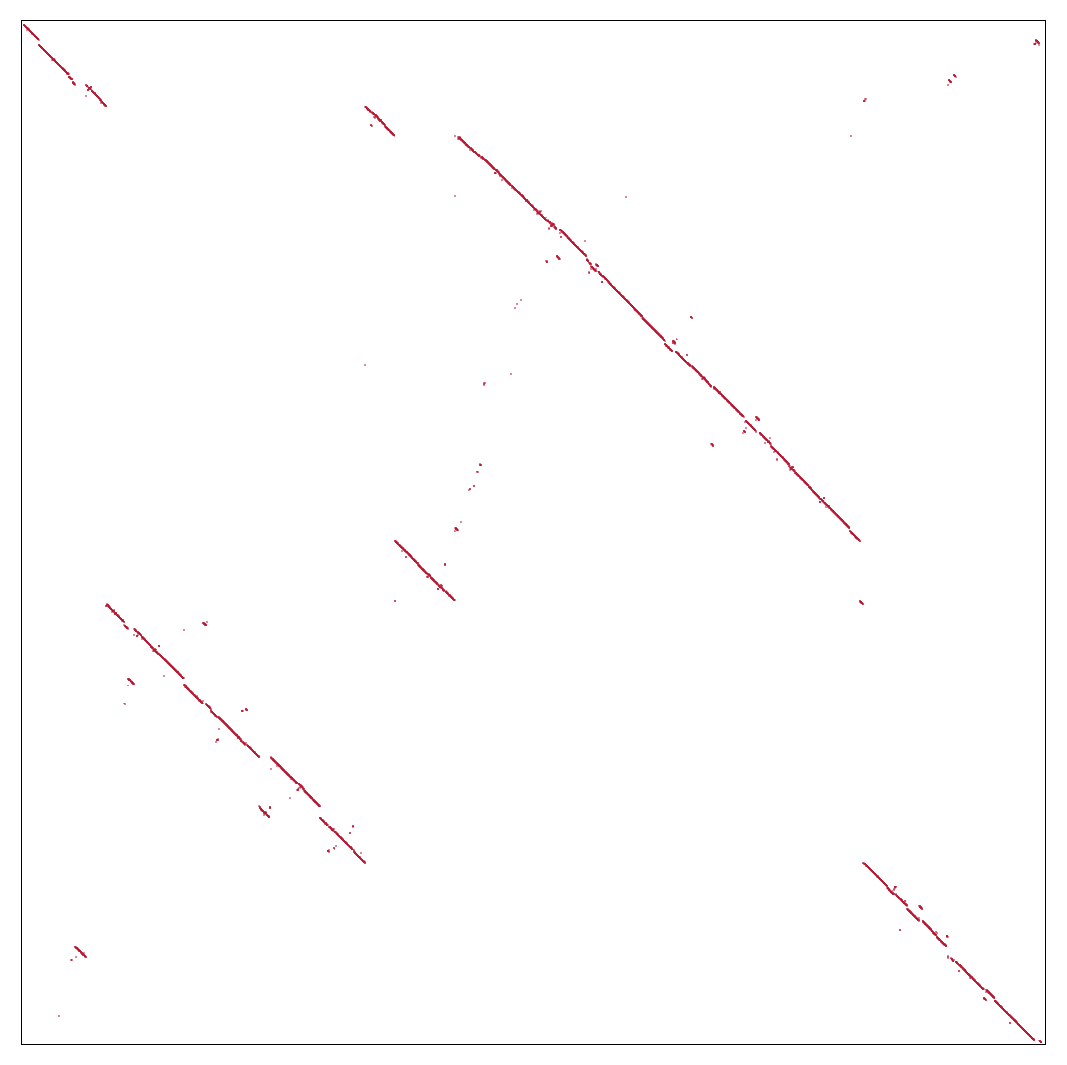}
	\end{minipage}
	\hspace{0.1 cm}
	\begin{minipage}[c]{0.149\textwidth}
		\centering
		\includegraphics[scale=0.202]{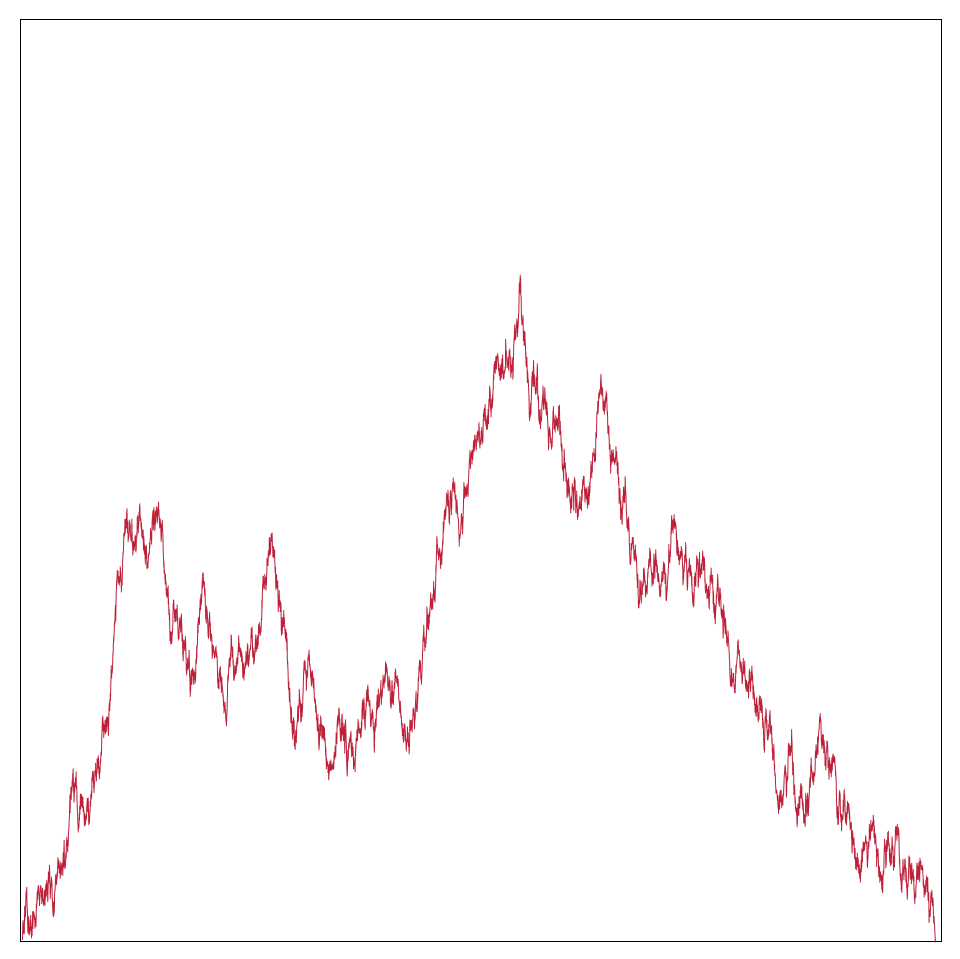}
	\end{minipage}
	\caption{We collect here some simulations of the skew Brownian permuton $\bm \mu_{\rho,q}$ for various values of $(\rho,q)\in(-1,1]\times[0,1]$. In every row, there are five simulations of $\bm \mu_{\rho,q}$ and at the end of each row, there is the corresponding two-dimensional Brownian excursion of correlation $\rho$ in the non-negative quadrant (the specific value of $\rho$ is indicated at the beginning of every row). Each column corresponds to different values of the parameter $q$ (the specific value of $q$ is indicated at the top of every column).
	We highlight that permutons in the same row are driven by \emph{the same} Brownian excursion plotted at the end of the row and so they are coupled. Note that when $\rho=1$ (this is the case of the last row) the corresponding two-dimensional Brownian excursion is simply a one-dimensional Brownian excursion and it is plotted using the standard diagram for real-valued functions. We also remark that the plots of the two-dimensional Brownian excursions $(\conti X_{\rho},\conti Y_{\rho})$ on the right-hand side  have different scales for different values of $\rho$ because $\Var(\conti X_{\rho}+\conti Y_{\rho})$ is of order $1+\rho$.
	}\label{fig:uyievievbeee}
\end{figure}

\subsubsection{Relations with the biased Brownian separable permuton and the Baxter permuton}

As already highlighted in \cref{rem:friv}, the Baxter permuton coincides with the skew Brownian permuton of parameters $\rho=-1/2$ and $q=1/2$. The relation between the skew Brownian permuton and the biased Brownian separable permuton is less trivial and will be investigated in \cref{sect:equiv}, where we will prove the following result.

\begin{thm}\label{thm:Baxt_brow_same}
	For all $p\in[0,1]$, the biased Brownian separable permuton $\bm \mu^S_p$ has the same distribution as the skew Brownian permuton $\bm \mu_{1,1-p}$.
\end{thm}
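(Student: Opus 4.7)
The plan is to exploit the explicit form of the continuous coalescent-walk process at $\rho=1$ provided by \cref{thm:fvuwevifview2} and match it term by term with Maazoun's construction of $\bm\mu^S_p$ from \cref{sect:fvuuvf}. On a common probability space, take a one-dimensional Brownian excursion $\bm e$ and, conditionally on $\bm e$, an i.i.d.\ family $(\bm s(\ell))_\ell$ of signs indexed by the local minima of $\bm e$ with $\P(\bm s(\ell)=+1)=p$. Use $(\bm e,(\bm s(\ell))_\ell)$ to construct $\bm\mu^S_p$ as in \cref{sect:fvuuvf}. In parallel, set $\bm s'(\ell) \coloneqq -\bm s(\ell)$; then $\P(\bm s'(\ell)=+1)=1-p=q$, so plugging $\bm s'$ into the formula \cref{eq:def_process2} yields, by \cref{thm:fvuwevifview2}, a realization of the continuous coalescent-walk process $\cnz_{1,q}$ driving the skew Brownian permuton $\bm\mu_{1,1-p}$.

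The core of the argument is to show that under this coupling,
\begin{equation*}
\psi_{\widetilde{\bm e},p}(t) = \varphi_{\cnz_{1,1-p}}(t) \qquad \text{for Lebesgue-a.e.\ } t\in[0,1],\text{ a.s.}
\end{equation*}
Fix a typical $t$. By the property of $\bm e$ recalled in \cref{sect:fvuuvf}, there is a random set $\bm A_t\subset[0,1]$ of Lebesgue measure zero (a slice of Maazoun's set $\bm A$, which exists by Fubini) such that for every $x\in[0,1]\setminus\bm A_t$ with $x\neq t$, the minimum of $\bm e$ on the interval with endpoints $x$ and $t$ is attained at a unique strict local minimum $\ell_{x,t}$ lying in the interior, and $\bm e(x),\bm e(t)>\bm e(\ell_{x,t})$. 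For such $x<t$, formula \cref{eq:def_process2} gives $\bm m^{(x)}(t)=\bm e(\ell_{x,t})$ and $\bm\varepsilon_q^{(x)}(t)=\bm s'(\ell_{x,t})$, so
\begin{equation*}
\cnz_{1,q}^{(x)}(t)<0 \iff \bm s'(\ell_{x,t})=-1 \iff \bm s(\ell_{x,t})=+1 \iff x \vartriangleleft_{\widetilde{\bm e},p} t.
\end{equation*}
A symmetric argument for $x>t$ shows $\cnz_{1,q}^{(t)}(x)\geq 0 \iff \bm s(\ell_{t,x})=-1 \iff x \vartriangleleft_{\widetilde{\bm e},p} t$; the boundary case $\cnz_{1,q}^{(t)}(x)=0$ can be neglected because for $x\notin\bm A_t$ one has $\bm e(x)>\bm m^{(t)}(x)$, and the point $x=t$ contributes measure zero. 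Taking Lebesgue measure in $x$ on each side and summing the contributions of $x<t$ and $x>t$ yields the displayed identity.

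The conclusion is then immediate: the identity of level functions yields
\begin{equation*}
(\Id,\varphi_{\cnz_{1,1-p}})_*\Leb = (\Id,\psi_{\widetilde{\bm e},p})_*\Leb \qquad \text{a.s.}
\end{equation*}
under the coupling, i.e.\ $\bm\mu_{1,1-p}=\bm\mu^S_p$ as random permutons on this probability space, and equality in distribution follows. The main (minor) obstacle is the measure-theoretic bookkeeping around Maazoun's exceptional set $\bm A$: one must verify that its slice $\bm A_t$ is Lebesgue-null for almost every $t$, and that $\cnz_{1,q}^{(x)}(t)$ and $\cnz_{1,q}^{(t)}(x)$ are jointly measurable in $(x,t,\omega)$ so that the Fubini argument above is legal; both points reduce to routine applications of Maazoun's result and of the measurability statement in \cref{thm:fvuwevifview2} (whose analogue for $\rho=1$ follows directly from the explicit formula \cref{eq:def_process2}).
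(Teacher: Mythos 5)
Your proof is correct and takes essentially the same route as the paper: both construct a coupling in which the signs $(-\bm s(\ell))_\ell$ fed into the explicit $\rho=1$ solution formula \eqref{eq:def_process2} reproduce the order $\vartriangleleft_{\widetilde{\bm e},p}$ pointwise off Maazoun's null set $\bm A$, and then push this to an identity of permutons. The paper phrases the key step as an equality of total orders on $[0,1]^2\setminus\bm A$, whereas you phrase it as a.e.\ equality of the level functions $\psi_{\widetilde{\bm e},p}$ and $\varphi_{\cnz_{1,1-p}}$, but these are immediate restatements of one another.
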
 

As mentioned in the abstract, our new construction of the skew Brownian permuton brings two different limiting objects under the same roof, identifying a new larger universality class and explaining the connection between the Baxter permuton and the biased Brownian separable permuton.

\medskip

The unsatisfactory feature of the instances of the skew Brownian permuton mentioned above is that either $\rho=1$ (biased Brownian separable permuton) or $q=1/2$ (Baxter permuton), and in both cases the SDEs in \cref{eq:flow_SDE_gen} take a simplified form: either the driving process is a one-dimensional Brownian excursion, as in \cref{eq:Tanaka}, or the local time term cancels, as in \cref{eq:flow_SDE} (and so we simply have a perturbed Tanaka equation instead of a skew perturbed Tanaka equation). In a companion paper \cite{borga2021strongBaxter}, we show that this is not the case for the family of  \emph{strong-Baxter permutations}. We recall this result in the next section.

\subsubsection{A model of random permutations converging to the skew Brownian permuton with non-degenerate parameters}

We start by defining \emph{strong-Baxter permutations}.

\begin{defn}
\emph{Strong-Baxter permutations} are permutations avoiding the three vincular patterns $2\underbracket[.5pt][1pt]{41}3$,  $3\underbracket[.5pt][1pt]{14}2$ and $3\underbracket[.5pt][1pt]{41}2$, i.e.\ permutations $\sigma$ such that there are no indices $1\leq i<j<k-1<n$ such that $\sigma(j+1) < \sigma(i) < \sigma(k) < \sigma(j)$ or $\sigma(j) < \sigma(k) < \sigma(i) < \sigma(j+1)$ or $\sigma(j+1) < \sigma(k) < \sigma(i) < \sigma(j)$.
\end{defn}

In \cite{borga2021strongBaxter} we proved the following permuton convergence result.

\begin{thm}[{\cite[Theorem 1.6]{borga2021strongBaxter}}]\label{thm:strong-baxter}
	Let $\bm \sigma_n$ be a uniform strong-Baxter permutation of size $n$. The following convergence in distribution in the permuton sense holds:
	\begin{equation}
		\bm \mu_{\bm \sigma_n}\xrightarrow{d}\bm \mu_{\rho,q},
	\end{equation}  
	where $\rho\approx-0.2151$ is the unique real root of the polynomial 
	\begin{equation}
		1+6\rho+8\rho^2+8\rho^3,
	\end{equation} 
	and $q\approx 0.3008$ is the unique real root of the polynomial
	\begin{equation}
		-1+6q-11q^2+7q^3.
	\end{equation} 
\end{thm}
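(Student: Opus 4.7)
The plan is to adapt the argument used by Maazoun and the author in \cite{borga2020scaling} for the Baxter permuton, which crucially relies on the coalescent-walk process encoding. First I would look for a bijection between strong-Baxter permutations of size $n$ and a family of walks in the non-negative quadrant $\Z_{\geq 0}^2$ ending at the origin. The Baxter case is encoded by tandem walks with an i.i.d.\ step distribution; for strong-Baxter permutations the extra avoided vincular pattern $3\underbracket[.5pt][1pt]{41}2$ should translate into a local constraint forbidding certain consecutive step pairs, so one naturally expects a Markov chain on the step set rather than an i.i.d.\ sequence. Verifying the bijection and identifying the correct step transition structure is the first key combinatorial task.

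Second, I would establish an invariance principle: under Brownian rescaling, the walk associated with a uniformly random strong-Baxter permutation converges to $\conti E_\rho$, the two-dimensional Brownian excursion in the non-negative quadrant of correlation $\rho$. The correlation is determined by the stationary covariance of the step Markov chain, and the cubic $1+6\rho+8\rho^2+8\rho^3$ should emerge from solving explicitly for the stationary distribution of this chain. The conditioning to stay in the quadrant and to end at the origin is then handled using the Denisov-Wachtel type results already invoked in \cite{borga2020scaling}.

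Third, one builds from the walk a discrete coalescent-walk process: a trajectory starting at time $u$ follows the $\conti Y$-coordinate increments above zero and the reflected $\conti X$-coordinate increments below zero. Unlike in the Baxter case, each time such a trajectory hits zero there is an asymmetric probability (depending on the step Markov chain) of reflecting up versus down, which after rescaling should produce the skew local-time drift $(2q-1)\,d\conti L^{\cnz_{\rho,q}}$ appearing in \cref{eq:flow_SDE_gen}; the polynomial $-1+6q-11q^2+7q^3$ should come from expressing this asymmetry in terms of the stationary step distribution. The main technical obstacle is proving the joint scaling limit of the walk and the discrete coalescent-walk process to $(\conti E_\rho,\cnz_{\rho,q})$: the skew drift involves the symmetric local time, which is a singular path functional, and its convergence requires fine control on the occupation time of the discrete processes near zero, together with tightness estimates uniform in the starting index $u$. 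Once this joint convergence is established, \cref{thm:conj_rho_gen} identifies the limit via strong uniqueness, and the permuton convergence $\bm\mu_{\bm\sigma_n}\xrightarrow{d}\bm\mu_{\rho,q}$ follows by expressing $\bm\mu_{\bm\sigma_n}$ as an a.s.\ continuous functional of the coalescent-walk process, exactly as in \cite{borga2020scaling}.
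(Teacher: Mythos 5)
This theorem is not proved in the present paper: it is stated as a citation to \cite[Theorem 1.6]{borga2021strongBaxter}, so there is no ``paper's own proof'' here to compare against. That said, the paper does give several pointers to how the companion paper proceeds. The remark following \cref{thm:conj_rho_gen} explicitly says that the pathwise uniqueness result for the skew perturbed Tanaka equations ``is also a fundamental tool to prove permuton convergence towards the skew Brownian permuton for various models of random constrained permutations, as shown in \cite{borga2021strongBaxter}'' and points to the proof of \cite[Theorem 4.4]{borga2021strongBaxter}; and the appendix on simulations mentions ``some well-chosen \emph{discrete coalescent-walk processes} (see \cite[Section 2.2]{borga2021strongBaxter}) whose walks converge to the solutions of the SDEs in \cref{eq:flow_SDE_gen} (in the same spirit of \cite[Proposition 4.8]{borga2021strongBaxter})''. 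So your overall strategy --- encode the permutation by a constrained quadrant walk, prove an invariance principle to $\conti E_\rho$, couple it with a discrete coalescent-walk process whose asymmetric reflection at zero generates the skew local-time drift, and then invoke \cref{thm:conj_rho_gen} to identify the limit --- is consistent with the documented architecture of the cited proof.

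Two cautions, since I cannot check the cited paper directly from what is reproduced here. First, you posit a Markov chain on the step set as the right generalization of the i.i.d.\ tandem-walk encoding used for Baxter permutations; that is the natural guess given that strong-Baxter permutations forbid one additional vincular pattern, and it is the kind of local step-to-step constraint that produces a correlation $\rho\neq -1/2$, but the exact form of the encoding (and whether the parameter $q$ really comes from a boundary reflection asymmetry rather than, say, a more delicate local behavior of the discrete coalescent-walk process at zero) would need verification against \cite[Section 2.2]{borga2021strongBaxter}. Second, the claim that the specific cubics $1+6\rho+8\rho^2+8\rho^3$ and $-1+6q-11q^2+7q^3$ ``emerge from solving explicitly for the stationary distribution'' is a reasonable heuristic but is precisely the nontrivial computational content of the theorem, and a complete argument would have to carry it out rather than assert it. Subject to those two verifications, your outline matches what the present paper indicates about the structure of the proof in the companion article.
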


In the same paper, we also investigated the permuton limit of \emph{semi-Baxter permutations}. We first recall their definition.

\begin{defn}
	\emph{Semi-Baxter permutations} are permutations avoiding the vincular pattern $2\underbracket[.5pt][1pt]{41}3$, i.e.\ permutations $\sigma$ such that there are no indices $1\leq i<j<k-1<n$ such that $\sigma(j+1)<\sigma(i)<\sigma(k)<\sigma(j)$.
\end{defn}

\begin{thm}[{\cite[Theorem 1.8]{borga2021strongBaxter}}]\label{thm:semi-baxter}
	Let $\bm \sigma_n$ be a uniform semi-Baxter permutation of size $n$. The following convergence in distribution in the permuton sense holds:
	\begin{equation}
		\bm \mu_{\bm \sigma_n}\xrightarrow{d}\bm \mu_{\rho,q},
	\end{equation}  
	where
	\begin{equation}
		\rho=-\frac{1+\sqrt 5}{4}\approx -0.8090
		\quad\text{and}\quad
		q=\frac{1}{2}.
	\end{equation}  
\end{thm}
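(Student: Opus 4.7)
The plan is to follow the template established for the Baxter permuton in \cite{borga2020scaling} and extended to the strong-Baxter case in \cite{borga2021strongBaxter}. First I would seek a bijection between semi-Baxter permutations of size $n$ and a family of two-dimensional lattice walks in $\Z_{\geq 0}^2$ starting and ending at the origin; such encodings are strongly suggested by the existing combinatorial literature on semi-Baxter numbers (notably the generating-tree and kernel-method work of Bousquet-M\'elou and collaborators). Concretely, I would look for a walk $(X_k,Y_k)$ whose increments at step $k$ record local statistics of the permutation around position $k$ (for instance, the numbers of ascending and descending ``active sites'' available after the insertion of the $k$-th value), with the vincular avoidance of $2\underbracket[.5pt][1pt]{41}3$ translating into a forbidden transition.

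From such a walk one defines a discrete coalescent-walk process exactly as in \cite[Sections 4--5]{borga2020scaling}: the trajectory started at time $u$ moves by the $Y$-increments while positive and by the negated $X$-increments while non-positive, and coalesces with other trajectories upon meeting them. Together with an i.i.d.\ choice of sign at each discrete local minimum, this coalescent-walk encodes the permutation, the sign-law being Bernoulli$(1/2)$ as a consequence of a (to be verified) left-right symmetry in the step distribution; this forces $q = 1/2$ in the limit. The covariance matrix of the step distribution is then to be computed explicitly, and the claim is that after standardization one obtains
\begin{equation}
	\rho = -\frac{1+\sqrt 5}{4},
\end{equation}
which satisfies $4\rho^2 + 2\rho - 1 = 0$. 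The appearance of $\sqrt 5$ here is consistent with the fact that the enumerative generating functions arising in the semi-Baxter problem are algebraic of degree two, which is the combinatorial shadow of the golden-ratio value showing up in the covariance.

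With this combinatorial input in hand, the scaling-limit arguments follow the route of \cite{borga2020scaling,borga2021strongBaxter}: the invariance principle for random walks in cones of \cite{MR4010949,MR4102254} gives convergence of the rescaled encoding walk to the two-dimensional Brownian excursion $\conti E_\rho$ of correlation $\rho = -(1+\sqrt 5)/4$ in the non-negative quadrant; \cref{thm:conj_rho_gen} then upgrades this to convergence of the discrete coalescent-walk process to the continuous one defined by the SDEs in \cref{eq:flow_SDE_gen} with $q = 1/2$ (noting that in this regime the local-time term vanishes and we are dealing with a pure perturbed Tanaka equation); finally, continuity of the permuton mapping built from \cref{eq:random_skew_function} transfers the convergence to the level of permutons, yielding $\bm \mu_{\bm\sigma_n}\xrightarrow{d}\bm\mu_{\rho,1/2}$.

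The principal obstacle is the combinatorial step: producing an explicit bijection from semi-Baxter permutations to a walk model whose step distribution yields precisely $\rho = -(1+\sqrt 5)/4$. The golden ratio is not visible from the pattern-avoidance definition, and obtaining it requires either a direct covariance computation in an explicit bijection or the solution of a kernel equation whose characteristic polynomial has $\sqrt 5$ in its discriminant. A secondary technical difficulty is the joint tightness of the walk, coalescent-walk process, and induced permuton; this is a direct analogue of the arguments in \cite{borga2020scaling} but must be re-verified for the new step distribution, in particular near the diagonal times where coalescence behavior is delicate.
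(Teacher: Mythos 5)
The paper you are looking at does not prove \cref{thm:semi-baxter}; it quotes it verbatim as \cite[Theorem 1.8]{borga2021strongBaxter}, whose proof lives in that companion paper. Your proposed strategy --- encode semi-Baxter permutations by a two-dimensional walk confined to $\Z_{\geq 0}^2$ via a generating-tree/kernel-method bijection, build the discrete coalescent-walk process, invoke the Donsker-type invariance principle for walks in cones (\cite{MR4010949,MR4102254}) to obtain $\conti E_\rho$, transfer this via the well-posedness \cref{thm:conj_rho_gen} to the continuous coalescent-walk process, and finally push through to convergence of permutons --- is precisely the template the companion paper follows for both semi-Baxter and strong-Baxter permutations, and is the same framework used in \cite{borga2020scaling} for the Baxter case. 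So the skeleton of your proposal is correct.

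One ingredient of your sketch is speculative and, as phrased, likely wrong. You attribute $q = 1/2$ to a ``left-right symmetry in the step distribution.'' Semi-Baxter permutations avoid only the single vincular pattern $2\underbracket[.5pt][1pt]{41}3$; this class is \emph{not} closed under reverse or under complement (each of those operations sends $2\underbracket[.5pt][1pt]{41}3$ to $3\underbracket[.5pt][1pt]{14}2$, a different avoidance class), so there is no obvious step-distribution symmetry of the kind you invoke. What actually determines $q$ in this framework is the behavior of the discrete coalescent-walk trajectories at their zero-level transitions, read off from the explicit bijection; this is an honest computation, not a symmetry argument. You correctly spot that the stated $\rho$ satisfies $4\rho^2 + 2\rho - 1 = 0$, but both this relation and $q = 1/2$ can only be extracted once the bijection and the resulting transition kernel are nailed down --- which you yourself identify as the main combinatorial obstacle, and which is where the real work in \cite{borga2021strongBaxter} lies.
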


A summarizing table of the various models of random constrained permutations that are currently known to converge to the skew Brownian permuton is given in \cref{fig:Summary_table}. We hope that this table will be enlarged in future research projects (see possible research directions in \cref{sect:open_problems}).

\begin{figure}[htbp]
	\centering
	\includegraphics[scale=0.8]{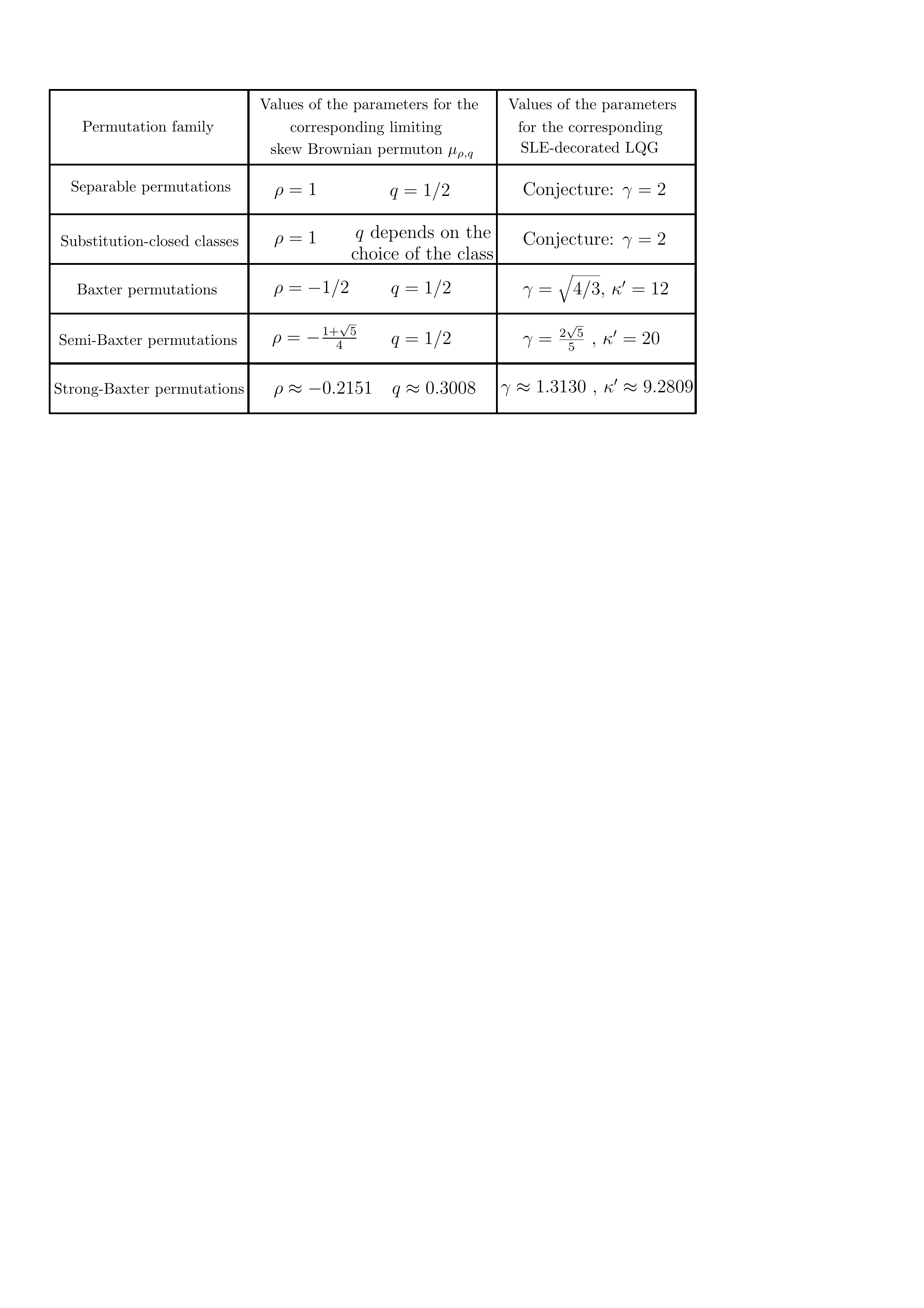}
	\caption{A table with the various models of random constrained permutations that are known to converge to the skew Brownian permuton. The last column shows the values of the parameters $\gamma$ and $\kappa'$ for the corresponding SLE-decorated LQG spheres. These values are obtained using the relations in \cref{eq:paramLQG} and \cref{thm:sbpfromlqg} below. For the case $\rho = 1$ we conjecture that the corresponding $\gamma$-parameter is $\gamma=2$; for further explanations see \cref{rem:paramrel}.
		\label{fig:Summary_table}
	}
\end{figure}

\subsection{The skew Brownian permuton in Liouville quantum gravity theory}\label{sect:efbweifbewoubf}

In the previous section, we saw that the skew Brownian permuton is the permuton limit of various families of constrained permutations.
In this section, we explain how the skew Brownian permuton also arises from a Liouville quantum gravity (LQG) sphere decorated with two coupled Schramm-Loewner evolution (SLE) curves. This result will explain \emph{directly at the continuum level} various connections between decorated planar maps and constrained permutations investigated in the literature \emph{at the discrete level}. More precisely, in the combinatorial literature, several bijections between families of constrained permutations and decorated planar maps have been investigated, see for instance \cite{MR1394948,MR1485138,MR2734180,MR3101730,fusy2021enumeration}.

We highlight that the connection between these two (apparently unrelated) objects has proven to be a relevant tool to establish some results on the skew Brownian permuton using SLE/LQG techniques (see \cite{borga2022baxter,borga2022meanders}). In addition, we believe that building on this new connection, some convergence results available for discrete models of random permutations might be transferred to convergence results for the corresponding discrete models of planar maps (we refer  to \cref{sect:open_problems} for more details on open problems). 

Finally, we mention that our results give a new description of the interactions between two SLE-curves (coupled in the \emph{imaginary geometry sense}; see below for more details) decorating an LQG cone or sphere in terms of the skew perturbed Tanaka equations (see Lemmas \ref{lem:keylemfinvol} and \ref{lem:keylem} below). To the best of our knowledge, this connection with SDEs was not been explicitly exploited so far in the Liouville quantum gravity literature.

\bigskip

We introduce various classical objects related to Liouville quantum gravity. We do not give precise definitions of all these objects, but we provide precise references for each of them.\footnote{Most of the references in this section point to the self-contained survey paper \cite{gwynne2019mating}, where the authors chose to focus (in the finite-volume case) on \emph{quantum disks with boundary length} $\ell$. In the present paper, we need to consider \emph{quantum spheres} instead of \emph{quantum disks}. We remark that a quantum sphere can be viewed as a quantum disk with zero boundary length. If the reader is still not satisfied, we point out that all the definitions used in this section can be also found in \cite{MR4010949}.}

We fix $\gamma\in(0,2)$ and we introduce the following parameters defined in terms of $\gamma$,
\begin{equation}\label{eq:paramLQG}
\kappa=\gamma^2, \qquad \kappa'=16/\gamma^2, \qquad \rho=-\cos(\pi\gamma^2/4), \qquad \chi=2/\gamma-\gamma/2.
\end{equation}
We start by recalling the construction of space-filling SLE$_{\kappa'}$ curves on
$\mathbb{C}\cup\{\infty\}$. One way of constructing them (as proved in \cite{MR3719057}) is from the flow lines of the vector field $e^{i(\hat{\bm h}/\chi+\theta)}$, where $\hat{\bm h}$ is a whole-plane Gaussian free field (modulo a global additive multiple of $2\pi\chi$) and $\theta\in[0,\pi]$. Flow lines of the vector field $e^{i(\hat{\bm h}/\chi+\theta)}$ are constructed in the imaginary geometry sense and are shown to be whole-plane SLE$_\kappa(2-\kappa)$ processes from the starting point to $\infty$. As shown in \cite[Theorem 1.9]{MR3719057}, flow lines of $e^{i(\hat{\bm h}/\chi+\theta)}$ started at different points of $\mathbb{Q}^2$ merge into each other upon intersecting and form a tree. The \emph{space-filling} SLE$_{\kappa'}$ \emph{counterflow line from $\infty$ to $\infty$ generated by} $\hat{\bm{h}}$ \emph{and of angle} $\theta$, denoted $\bm\eta_\theta$, is the Peano curve of this tree, i.e.\ it is the curve which visits the points of $\mathbb{C}$ in chronological order, where a point $x\in\mathbb C$ is visited before a point $y\in \mathbb C$ if the flow line of angle $\theta$ from $x$ merges into the flow line of angle $\theta$ from $y$ on the left side of the latter flow line. 
For $\theta\in[0,\pi]$, we consider the pair $(\bm \eta_0,\bm \eta_\theta)$ of space-filling SLE$_{\kappa'}$ counterflow lines from $\infty$ to $\infty$ generated by the same Gaussian free field $\hat{\bm h}$ of angle $0$ and $\theta$, respectively. 

\medskip

Let now $({\mathbb C}\cup\{\infty\}, \bm h, \infty)$ be a $\gamma$-LQG sphere, independent of $\hat{\bm h}$, with quantum area one and one marked point at $\infty$ (see \cite[Definition 3.20]{gwynne2019mating}). 
We parametrize the pair $( \bm \eta_0, \bm \eta_\theta)$ by the $ \mu_{\bm h}$-LQG area measure (see \cite[Section 3.3]{gwynne2019mating}) so that $\bm\eta_0(0)=\bm\eta_0(1)=\bm\eta_\theta(0)=\bm\eta_\theta(1)=\infty$ and $ \mu_{\bm h}( \bm\eta_0([0, t]))= \mu_{\bm h}( \bm \eta_\theta([0, t])) = t$ for each $t\in[0, 1]$.

\begin{thm}\label{thm:sbpfromlqg}
	Fix $\gamma\in(0,2)$ and $\theta\in[0,\pi]$. Let $({\mathbb C}\cup\{\infty\}, \bm h, \infty)$ and $(\bm \eta_0,\bm \eta_\theta)$ be the $\gamma$-LQG sphere and the pair of space-filling SLE$_{\kappa'}$ introduced above. 
	For $t\in[0, 1]$, let $\bm\psi_{\gamma,\theta}(t)\in[0, 1]$ denote the first time when $\bm\eta_\theta$ hits the point $\bm\eta_0(t)$.\footnote{We recall that space-filling SLE curves have multiple points. Nevertheless, for each $z\in\mathbb C$, a.s. $z$ is not a multiple point of $\bm\eta_\theta$, i.e., $\bm\eta_\theta$ hits $z$ exactly once. Since $\bm h$ is independent from $(\bm\eta_0,\bm\eta_\theta)$ and $\bm\eta_0$ and $\bm\eta_\theta$ are parametrized by $\mu_{\bm h}$-mass, a.s. the set
	of times $t \in [0, 1]$ such that $\bm\eta_0$ is a multiple point of  $\bm\eta_\theta$ has zero Lebesgue measure.}
	Then the random measure 
	$$(\Id,\bm\psi_{\gamma,\theta})_{*}\Leb$$ 
	is a skew Brownian permuton of parameter $\rho=-\cos(\pi\gamma^2/4)$ and $\overline q=\overline q_\gamma(\theta)\in[0,1]$.
\end{thm}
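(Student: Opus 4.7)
The plan is to identify the two-dimensional Brownian excursion driving the skew Brownian permuton with the mating-of-trees encoding of $\bm\eta_0$ on the $\gamma$-LQG sphere, and then to recognize $\bm\psi_{\gamma,\theta}$ as the continuum permutation induced by a coalescent-walk process whose evolution is dictated by the interaction of the two space-filling SLE$_{\kappa'}$ curves.

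First I would invoke the mating-of-trees theorem (Duplantier--Miller--Sheffield, in the sphere setting) to deduce that the pair $(\bm L, \bm R)$ of left/right boundary-length processes associated with $\bm\eta_0$ has the law of a two-dimensional Brownian excursion of correlation $\rho = -\cos(\pi\gamma^2/4)$ in the non-negative quadrant. Since $\gamma\in(0,2)$, one has $\rho\in(-1,1)$, so this produces a valid driving process $\conti E_\rho$ for the SDEs in \cref{eq:flow_SDE_gen}. Next, for each $u\in[0,1]$ and $t\in[u,1]$, I would define a signed boundary-length process $\bm\zeta^{(u)}(t)$ comparing the points $z_u := \bm\eta_0(u)$ and $z_t := \bm\eta_0(t)$: its absolute value is the boundary-length distance between $z_t$ and $z_u$ along the appropriate side of $\bm\eta_0([u,t])$, and its sign is $+1$ if $\bm\eta_\theta$ reaches $z_t$ before $z_u$ and $-1$ otherwise — equivalently, the sign is determined by whether the angle-$\theta$ flow line from $z_t$ merges with the one from $z_u$ on its left or its right. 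This $\bm\zeta^{(u)}$ is the candidate for the coalescent-walk process $\conti Z^{(u)}_{\rho,\overline q}$ of \cref{def:contcoalproc}.

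The heart of the proof — and the main obstacle — is to show that $\bm\zeta^{(u)}$ satisfies the skew perturbed Tanaka equation \cref{eq:flow_SDE_gen}, driven by $(\bm L, \bm R)$, with a skewness $\overline q = \overline q_\gamma(\theta)\in[0,1]$ depending on $\theta$. On the open set $\{\bm\zeta^{(u)}>0\}$ the point $z_t$ lies on the right boundary of the explored region, so $d\bm\zeta^{(u)}$ should agree with $d\bm R$; symmetrically, on $\{\bm\zeta^{(u)}<0\}$ it should agree with $-d\bm L$. When $\bm\zeta^{(u)}$ hits zero, the sign of the subsequent excursion is dictated by which side two angle-$\theta$ flow lines emanating infinitesimally close to each other merge on; by the theory of imaginary geometry this produces an asymmetry that can be rigorously encoded as the local time drift $(2\overline q - 1)\, d\conti L^{\bm\zeta^{(u)}}$, and the value $\overline q_\gamma(\theta)$ is precisely the probability assigned to the positive side. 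Lemmas~\ref{lem:keylemfinvol} and~\ref{lem:keylem} would carry out this derivation using the SLE/GFF toolkit: flow line merging, evolution of boundary lengths under space-filling SLE dynamics, and a Skorokhod-type decomposition of the resulting signed boundary-length process.

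Finally, \cref{thm:conj_rho_gen} applies since $\rho\in(-1,1)$: pathwise uniqueness forces $\bm\zeta^{(u)}$ to coincide almost surely with the strong solution $\conti Z^{(u)}_{\rho,\overline q}$ of \cref{eq:flow_SDE_gen}. Unwinding the definition \cref{eq:random_skew_function} of $\varphi_{\conti Z_{\rho,\overline q}}$ and using that for $t<s$ the curve $\bm\eta_\theta$ visits $z_t$ before $z_s$ if and only if $\bm\zeta^{(t)}(s)<0$, I identify $\bm\psi_{\gamma,\theta}$ with $\varphi_{\conti Z_{\rho,\overline q}}$ almost everywhere. Pushing forward Lebesgue measure on $[0,1]$ along $(\Id,\bm\psi_{\gamma,\theta})$ then yields exactly the skew Brownian permuton $\bm\mu_{\rho,\overline q}$ of \cref{defn:Baxter_perm_gen}, completing the proof.
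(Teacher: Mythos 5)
Your outline matches the paper's strategy closely: mating-of-trees identifies the driving excursion, the signed boundary-length process between $\bm\eta_0(u)$ and $\bm\eta_0(t)$ plays the role of the coalescent walk, the skew Tanaka SDE is extracted from the SLE/GFF data, and pathwise uniqueness (\cref{thm:conj_rho_gen}) then pins down the coupling. The final step — reading off $\bm\psi_{\gamma,\theta}=\varphi_{\cnz_{\rho,\overline q}}$ from the sign dichotomy — is also the paper's \cref{prop:huewvuwevf}.

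Two things, however, are glossed over and they are not cosmetic. First, the paper does not carry out the SDE derivation directly on the sphere as you suggest; it proves \cref{lem:keylem} on a $\gamma$-quantum cone, where the relevant result of Gwynne--Holden--Sun (that the signed boundary-length process $\conti Q$ is a skew Brownian motion of parameter $\overline q_\gamma(\theta)$, \cite[Proposition 3.2]{gwynne2016joint}) is stated, and only then transfers the statement to the sphere (\cref{lem:keylemfinvol}) by restricting both decorated surfaces to $\bm\eta_0([\eps,1-\eps])$ and invoking absolute continuity (\cref{prop:abs_cont}). Working directly on the sphere would require re-proving the skew-BM characterization in a finite-volume setting, which you have no tools for. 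Second — and this is the genuine technical crux — your heuristic ``on $\{\bm\zeta^{(u)}>0\}$ the increment agrees with $d\bm R$, on $\{\bm\zeta^{(u)}\leq 0\}$ with $-d\bm L$, and the local time term encodes the flow-line asymmetry'' does not by itself produce the SDE \emph{driven by the mating-of-trees process} $\widetilde{\conti W}_\rho$. The actual proof first writes $\conti Q = \conti B + (2\overline q - 1)\conti L^{\conti Q}$ for an a priori unrelated Brownian motion $\conti B$, builds from $\conti B$ and $\widetilde{\conti W}_\rho$ an auxiliary two-dimensional Brownian motion $\overline{\conti W}_\rho$ with respect to which $\conti Q$ solves \cref{eq:flow_SDE_gen}, and then shows $\overline{\conti W}_\rho=\widetilde{\conti W}_\rho$ a.s.\ by a martingale argument combined with \cite[Lemma 3.17]{gwynne2016joint}. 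Without this identification step your $\bm\zeta^{(u)}$ would be a solution to the right-looking SDE driven by the wrong noise, and pathwise uniqueness would be applied to the wrong equation. So the skeleton is correct, but the hardest 80\% of the argument is precisely the part you declared would ``be carried out by the SLE/GFF toolkit.''
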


\begin{rem}\label{rem:paramrel}
	The explicit expression of the function $\overline q_\gamma(\theta)$ is unknown. 
	It holds that
	$\overline q_\gamma(\pi/2)=1/2$. Moreover, for every fixed $\gamma\in(0,2)$, the function $$\overline q_\gamma(\theta):[0,\pi]\to[0,1]$$
	is a homeomorphism and therefore has an inverse function $\theta_\gamma(\overline q)$.
	Finally, for all $\theta\in[0,\pi/2)$, it holds that $\overline q_\gamma(\theta) + \overline q_\gamma(\pi-\theta) = 1$. All these properties are consequences\footnote{The fact that the function $\overline q_\gamma(\theta)$ in \cref{thm:sbpfromlqg} is the same function considered in \cite[Lemma 4.3]{li2017schnyder} follows from \cref{lem:keylemfinvol} below.} of \cite[Lemma 4.3]{li2017schnyder}. We highlight that the fact that $\rho=-\cos(\pi\gamma^2/4)$ and $\overline q_\gamma(\theta)$ is a homeomorphism guarantees that the range of parameters for the skew Brownian permuton appearing in \cref{thm:sbpfromlqg}  is $\rho \in (-1,1)$ and $q\in [0,1]$. That is, all the possible skew Brownian permutons appear in \cref{thm:sbpfromlqg}, except the biased  Brownian separable permuton $\bm \mu_{1,q}$ for $q\in[0,1]$. We conjecture that the latter case can be obtained from the critical $2$-LQG since there are several analogies between the construction of $\bm \mu_{1,q}$ and the results in \cite{aru2021mating}.
\end{rem}

The proof of \cref{thm:sbpfromlqg} is given in \cref{sect:LQGperm} building on results of \cite{duplantier2014liouville} and \cite{gwynne2016joint}. We also invite the curious reader to look at \cite[Lemma 2.8]{borga2022meanders} where a simple consequence of \cref{thm:sbpfromlqg} is derived (giving another description of the skew Brownian permuton in terms of SLEs and LQG) and where it is also shown that our specific choice of $\bm\psi_{\gamma,\theta}(t)\in[0, 1]$ in the statement of \cref{thm:sbpfromlqg} can be replaced by any measurable function $\widetilde{\bm\psi}_{\gamma,\theta}(t)\in[0, 1]$ such that $\bm\eta_\theta(\widetilde{\bm\psi}_{\gamma,\theta}(t))=\bm\eta_0(t)$ for all $t\in [0,1]$.

\subsection{Open problems}\label{sect:open_problems}
In this final section of the introduction, we collect a list of open questions and problems that we plan to address in future research projects.

\begin{enumerate}
	\item \textbf{Intensity measure and Hausdorff dimension of the skew Brownian permuton.}
	The skew Brownian permuton $\bm \mu_{\rho,q}$ is a new fractal random
	measure of the unit square and we plan to investigate some of its properties in the future. For instance, two natural questions are:
	\begin{enumerate}
		\item[(a)] What is the density of the intensity measure $\E[\bm \mu_{\rho,q}]$? How does it depend on the two parameters $\rho$ and $q$?
		\item[(b)] What is the Hausdorff dimension of the support of $\bm \mu_{\rho,q}$? And again, how does it depend on the two parameters $\rho$ and $q$?
	\end{enumerate}
	We highlight that the questions above were answered in \cite{maazoun} for the biased  Brownian separable permuton $\bm \mu_{1,q}$. For instance, it was shown that almost surely, the support of $\bm \mu_{1,q}$
	is totally disconnected, and its Hausdorff
	dimension is 1 (with one-dimensional Hausdorff measure bounded above by $\sqrt 2$). For an expression for the intensity measure see \cite[Theorem 1.7]{maazoun}.
	
	We believe that a key tool to answer the two questions above would be the new connection with SLE-decorated LQG spheres described in \cref{thm:sbpfromlqg}. 
	
	\item \textbf{Properties of the stochastic process $\varphi_{\cnz_{\rho,q}}$.}
	Recall that the stochastic process $\varphi_{\cnz_{\rho,q}}$ was defined in \cref{eq:random_skew_function} as follows
	\begin{equation}
		\varphi_{\cnz_{\rho,q}}(t)\coloneqq
		\Leb\left( \big\{x\in[0,t)|\cnz_{\rho,q}^{(x)}(t)<0\big\} \cup \big\{x\in[t,1]|\cnz_{\rho,q}^{(t)}(x)\geq0\big\} \right), \quad t\in[0,1].
	\end{equation}

	Heuristically speaking, the skew Brownian permuton $\bm \mu_{\rho,q}$ is the graph of the function $\varphi_{\cnz_{\rho,q}}$. Therefore, determining some properties of the process $\varphi_{\cnz_{\rho,q}}$ might be a useful step in the investigation of the permuton $\bm \mu_{\rho,q}$. Our intuition suggests that $\varphi_{\cnz_{\rho,q}}$ might be a particular (conditioned) \emph{fragmentation process} in the sense of \cite{MR3606760}. We also believe that a key step to understanding the process $\varphi_{\cnz_{\rho,q}}$ will be to investigate the joint law of two processes $\cnz_{\rho,q}^{(x)}$ and $\cnz_{\rho,q}^{(y)}$. We remark that similar questions have been investigated in \cite{MR2094439} for pair of solutions to the SDEs
	\begin{equation}
		\begin{cases}
			d\cnz^{(u)}(t) = d \conti B(t)+(2q-1)\cdot d\conti L^{\cnz^{(u)}}(t),& t\in \R_{>u},\\
			\cnz^{(u)}(t)=0,&  t\in [0,u],
		\end{cases}
	\end{equation}
	where $\conti B(t)$ is a standard one-dimensional Brownian motion.

	\item \textbf{Proportions of patterns in the skew Brownian permuton.} We first define the permutation induced by $k$ points in the square $[0,1]^2$. Take a sequence of $k$ points $(X,Y)=((x_1,y_1),\dots, (x_k,y_k))$ in $[0,1]^2$ with distinct $x$ and $y$ coordinates. 
	The \emph{$x$-reordering} of $(X,Y)$  is the unique reordering of the sequence $(X,Y)$ such that
	$x_{(1)}<\cdots<x_{(k)}$, and is denoted by $\left((x_{(1)},y_{(1)}),\dots, (x_{(k)},y_{(k)})\right)$.
	The values $(y_{(1)},\ldots,y_{(k)})$ are then in the same
	relative order as the values of a unique permutation of size $k$, that we call the \emph{permutation induced by} $(X,Y)$.

	Let $\mu$ be a permuton and $((\bm X_i,\bm Y_i))_{i\in \Z_{>0}}$ be an i.i.d.\ sequence with distribution $ \mu$. We denote by $\Perm_k(\mu)$ the random permutation induced by $((\bm X_i,\bm Y_i))_{i \in \llbracket 1,k \rrbracket}$. 
	
	Finally, if $\sigma$ is a permutation of size $n$ and $\pi$ a pattern\footnote{We refer the reader to \cite[Section 1.6.1]{borga2021random} for an introduction to permutation patterns.} of size $k\leq n$, then we denote by $\occ(\pi,\sigma)$ the number of occurrences of $\pi$ in $\sigma$.
	Moreover, we denote by $\pocc(\pi,\sigma)$ the proportion of occurrences of $\pi$ in $\sigma,$ that is,
	$\pocc(\pi,\sigma)\coloneqq\frac{\occ(\pi,\sigma)}{\binom{n}{k}}$.
	
	\begin{quest}\label{prop:ewifweivf}
		Let $\bm \mu_{\rho,q}$ be a skew Brownian permuton with parameters $(\rho,q)\in(-1,1]\times[0,1]$. For all $k\in\Z_{>0}$ and patterns $\pi$ of size $k$ is it possible to determine the limiting distribution of the random variables
		$$\lim_{n\to \infty} \pocc(\pi,\Perm_n(\bm \mu_{\rho,q}))=\P( \Perm_k(\bm \mu_{\rho,q})=\pi|\bm \mu_{\rho,q})?$$
		We point out that the fact that the latter limit exists and satisfies the expression above is a consequence of the fact that $\Perm_n(\bm \mu_{\rho,q})$ converges in distribution in the permuton sense to $\bm \mu_{\rho,q}$ (see \cite[Lemma 2.3 and Theorem 2.5]{bassino2017universal}).
	\end{quest}
	A simpler question is to compute, for all $k\in\Z_{>0}$ and patterns $\pi$ of size $k$, the probabilities
	$$\P( \Perm_k(\bm \mu_{\rho,q})=\pi)=\lim_{n\to \infty} \E\left[\pocc(\pi,\Perm_n(\bm \mu_{\rho,q}))\right].$$
	Our intuition suggests the following statement.
	\begin{conj}\label{eq:conj_pos}
		For all $(\rho,q)\in(-1,1)\times(0,1)$ and for all $k\in\Z_{>0}$ and patterns $\pi$ of size $k$, it holds that 
		$$\P( \Perm_k(\bm \mu_{\rho,q})=\pi)>0.$$
	\end{conj}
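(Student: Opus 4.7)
The plan is to reduce the conjecture to a statement about the signs of the coalescent-walk processes, and then establish positivity via a Markov-decomposition argument. By Fubini and the equivalence $\varphi_{\cnz_{\rho,q}}(t)<\varphi_{\cnz_{\rho,q}}(s)\Leftrightarrow \cnz_{\rho,q}^{(t)}(s)<0$ for $t<s$, the probability $\P(\Perm_k(\bm\mu_{\rho,q})=\pi)$ equals the integral over $(u_1,\dots,u_k)\in[0,1]^k$ of the probability that the sign pattern $\{\sgn(\cnz_{\rho,q}^{(u_i)}(u_j))\}_{i<j}$ matches $\pi$. It therefore suffices to exhibit deterministic times $0<t_1<\dots<t_k<1$ for which this sign pattern occurs with strictly positive probability, since a continuity-in-$(u_1,\dots,u_k)$ argument then upgrades this to a positive-measure set of $k$-tuples.

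Fix such times $t_1<\dots<t_k$. The core of the argument would be a \emph{support lemma} proved inductively on $i=1,\dots,k-1$: given the state $(\cnz_{\rho,q}^{(t_j)}(t_i))_{j\le i}$ at time $t_i$ with prescribed pairwise-distinct signs and $\conti E_\rho(t_i)$ in the open positive quadrant, the conditional law of $\bigl(\cnz_{\rho,q}^{(t_j)}(t_{i+1})\bigr)_{j\le i+1}$ charges every target configuration of pairwise-distinct signs with positive probability. Chaining these positive conditional probabilities using the Markov property of $\conti E_\rho$, and integrating against the strictly positive density of $\conti E_\rho(t_1)$, would then yield $\P(E_\pi)>0$, where $E_\pi$ is the event that the $k$ processes realize $\pi$ at the chosen times.

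The main obstacle is proving the support lemma itself: the SDEs in \cref{eq:flow_SDE_gen} have discontinuous coefficients, and the processes $(\cnz_{\rho,q}^{(t_j)})_{j}$ are strongly coupled through the common driver $\conti E_\rho$—indeed, two processes currently on the same side of $0$ evolve in parallel (via the same $\pm d\conti X_\rho$ or $d\conti Y_\rho$ increment) until one of them crosses $0$, at which point the skew local-time term engages and the sign may flip. To attack this I would first, via an $h$-transform/absolute-continuity argument, replace the law of $\conti E_\rho|_{[t_i,t_{i+1}]}$ by the law of a correlated 2D Brownian motion on this subinterval (valid as long as the two endpoint values lie in the open positive quadrant); then construct explicit driving trajectories of the form of Stroock--Varadhan-type support curves that force any prescribed subset of the running processes across $0$ and back, exploiting the skew parameter $q\in(0,1)$ to choose the post-crossing sign with positive conditional probability; and finally take a tubular neighborhood around these trajectories, using non-degeneracy of the joint density away from coalescence to turn the deterministic construction into a lower bound on the conditional probability. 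An alternative, potentially cleaner route would be to leverage \cref{thm:sbpfromlqg} and argue via the imaginary-geometry description of $(\bm\eta_0,\bm\eta_\theta)$ on an LQG sphere that any joint ordering of $k$ independently sampled points arises with positive probability; such an argument would cover the full range $(\rho,q)\in(-1,1)\times(0,1)$ but would require a separate analysis of SLE--SLE intersection patterns.
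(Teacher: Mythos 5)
This statement is \cref{eq:conj_pos}, which the paper explicitly lists as a \emph{conjecture} in the Open Problems section (\cref{sect:open_problems}); the paper contains no proof of it, so there is no ``paper's own proof'' to compare against. The revision footnote attached to Open Problem 8 records that the conjecture was subsequently proved in \cite{borga2022baxter} using the SLE-decorated LQG description from \cref{thm:sbpfromlqg} --- which is precisely the ``alternative, potentially cleaner route'' you mention at the end.

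On the substance of your primary (SDE-based) route: you have correctly identified the reduction --- via Fubini, \cref{lem:fbewobfoihf}, and a continuity-in-the-$u_i$'s argument --- to exhibiting one $k$-tuple of times with a positive-probability sign pattern, and you have correctly identified where the difficulty lies. But the ``support lemma'' you rely on is stated, not proved, and its validity is genuinely non-obvious. The danger is structural, not merely technical: all $k$ processes $\cnz_{\rho,q}^{(t_1)},\dots,\cnz_{\rho,q}^{(t_i)}$ are driven by the \emph{same} $\conti E_\rho$, and pathwise uniqueness (\cref{thm:conj_rho_gen}) forces them to coalesce permanently once two of them agree. Any Stroock--Varadhan-type control argument must therefore steer the whole tuple through a discontinuous vector field while avoiding coalescence, using only the two degrees of freedom $(d\conti X_\rho,d\conti Y_\rho)$; two running processes on the same side of zero receive identical increments and can only be separated by forcing one through zero, at which point the skew local-time term intervenes in a way that is not under the control of the driver alone (it is a.s.\ determined by it, but not steerable by a classical support theorem because the coefficient is not even continuous at $0$). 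Moreover, the fact that the $\rho=1$ case provably \emph{fails} to charge all patterns means the lemma must use $\rho\ne 1$ in an essential, quantitative way --- something a tubular-neighbourhood argument would need to surface explicitly. So the SDE route as sketched has a real gap at its core. Your LQG route is the one that works, but it is stated in one sentence; fleshing it out would require the multi-point SLE--LQG analysis that \cite{borga2022baxter} carries out, which is itself a separate substantial piece of work and is well beyond what this paper establishes.
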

	We remark that since $\bm \mu_{1,q}$ can be obtained as the limit of pattern-avoiding permutations then we know that $\P( \Perm_k(\bm \mu_{\rho,q})=\pi)$ is zero for some $q\in(0,1)$ and some pattern $\pi$.
	
	\item \textbf{The length of the longest increasing subsequence in the skew Brownian permuton.}
	
	This question is motivated by the recent work \cite{bassino2021linear}. The authors show that the length of the longest increasing subsequence in a sequence of  permutations converging to the biased Brownian separable permuton has sublinear size. Their proof builds on some self-similarity properties of the biased Brownian separable permuton.
	
	\begin{quest}\label{quest-lis}
	Let $(\bm \sigma_n)_{n\in\Z_{\geq 0}}$ be a sequence of random permutations converging in distribution in the permuton sense to the skew Brownian permuton $\bm \mu_{\rho,q}$. What is the length of the longest increasing subsequence in $\bm \sigma_n$?
	\end{quest}
	We expect, under some regularity conditions on the sequence $\bm \sigma_n$, a formula only depending on $\rho, q$.
	
	\item \textbf{Properties of the parameters $\rho$ and $q$ defining the skew Brownian permuton.} The skew Brownian permuton $\bm \mu_{\rho,q}$ is defined in terms of the two parameters $\rho \in (-1,1]$ and $q\in [0,1]$. It would be interesting to find some natural statistics on permutons (i.e.\ a map from the space of permutons $\mathcal M$ to $\R$ describing some natural quantity) determined by these two parameters. For instance, we expect that $q$ controls the proportion of inversions in $\bm \mu_{\rho,q}$. More precisely, we conjecture the following.
	
	\begin{conj}\label{conj:incresin}
		Set $\P( \Perm_2(\bm \mu_{\rho,q})=21)=f(\rho,q)$. Then  for every fixed $\rho \in (-1,1]$ the function $f(\rho,q)$ is increasing in $q$.
	\end{conj}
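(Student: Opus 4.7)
The plan is to reduce the conjecture to a pointwise monotonicity statement for the marginal law of $\cnz_{\rho,q}^{(t)}(s)$. Sample $\bm T_1,\bm T_2$ i.i.d.\ uniform on $[0,1]$, independent of $\bm\mu_{\rho,q}$. Using the heuristic order $t\preccurlyeq_{\cnz_{\rho,q}} s\iff \cnz_{\rho,q}^{(t)}(s)<0$ from the discussion after \cref{defn:Baxter_perm}, the event that $(\bm T_1,\bm T_2)$ induces pattern $21$ in $\bm\mu_{\rho,q}$ equals $\{\bm T_1<\bm T_2,\,\cnz_{\rho,q}^{(\bm T_1)}(\bm T_2)\ge 0\}\cup\{\bm T_2<\bm T_1,\,\cnz_{\rho,q}^{(\bm T_2)}(\bm T_1)\ge 0\}$, and by symmetry and Fubini,
\begin{equation*}
	f(\rho,q) \;=\; 2\int_0^1\!\!\int_0^s \P\bigl(\cnz_{\rho,q}^{(t)}(s)\ge 0\bigr)\,dt\,ds.
\end{equation*}
It therefore suffices to show that for a.e.\ $0\le t<s\le 1$, the function $q\mapsto \P(\cnz_{\rho,q}^{(t)}(s)\ge 0)$ is non-decreasing.

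The boundary case $\rho=1$ is immediate from \cref{eq:def_process2}: for a.e.\ $s\in[t,1]$ one has $\cnz_q^{(t)}(s)=(\bm e(s)-\bm m^{(t)}(s))\,\bm\varepsilon_q^{(t)}(s)$ with $\bm e(s)-\bm m^{(t)}(s)>0$, hence $\P(\cnz_{1,q}^{(t)}(s)\ge 0)=\P(\bm\varepsilon_q^{(t)}(s)=+1)=q$. One even obtains a pathwise monotone coupling by sampling a single family $(\bm u(\ell))_\ell$ of i.i.d.\ Uniform$[0,1]$ variables indexed by local minima of $\bm e$ and setting $\bm s_q(\ell)=\idf_{\{\bm u(\ell)\le q\}}-\idf_{\{\bm u(\ell)>q\}}$. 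Since $|\cnz_{1,q}^{(t)}(s)|=\bm e(s)-\bm m^{(t)}(s)$ does not depend on $q$ and $\bm s_{q_1}(\ell)\le\bm s_{q_2}(\ell)$ pointwise, this yields $\cnz_{1,q_1}^{(t)}(s)\le\cnz_{1,q_2}^{(t)}(s)$ for all $t,s$ whenever $q_1\le q_2$.

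The main task is to prove the analogous pathwise monotonicity in the interior case $\rho\in(-1,1)$: driven by the same two-dimensional Brownian excursion $(\conti X_\rho,\conti Y_\rho)$, the strong solutions of \cref{eq:flow_SDE_gen} should satisfy $\cnz_{\rho,q_1}^{(t)}(s)\le\cnz_{\rho,q_2}^{(t)}(s)$ a.s.\ whenever $q_1\le q_2$. Setting $D=\cnz_{\rho,q_2}-\cnz_{\rho,q_1}$ and decomposing according to the signs of the two processes, one sees that on the regions $\{\cnz_{\rho,q_1}>0,\,\cnz_{\rho,q_2}>0\}$ and $\{\cnz_{\rho,q_1}<0,\,\cnz_{\rho,q_2}<0\}$ both SDEs share the same driver and $dD=0$; on the ``mixed-sign'' region $\{\cnz_{\rho,q_1}<0<\cnz_{\rho,q_2}\}$ one has $dD=d\conti X_\rho+d\conti Y_\rho$, a martingale with quadratic variation $2(1+\rho)\,ds$; and the ``bad'' region $\{\cnz_{\rho,q_2}<0<\cnz_{\rho,q_1}\}$ is exactly what must be ruled out. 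The natural tool is Tanaka's formula applied to $D_-$, combined with the observation that at a zero of either process the local-time corrections satisfy $(2q_2-1)\ge(2q_1-1)$ and so push $D$ in the non-negative direction, in direct analogy with the classical pathwise comparison for skew Brownian motion of Ouknine and of Burdzy--Chen.

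The hard part will be controlling the mixed-sign region. Unlike in the constant-diffusion skew Brownian case, the diffusion coefficient in \cref{eq:flow_SDE_gen} depends on the sign of the solution and is driven by two distinct components of $\conti E_\rho$, so in the mixed-sign region the martingale $\conti X_\rho+\conti Y_\rho$ could in principle drive $D$ across zero. Showing this does not happen requires exploiting the fact that any such crossing must take place at a time when at least one of the two solutions is at $0$, at which point the comparison is again governed by the favourable local-time terms; a region-by-region Tanaka analysis of $D_-$ should produce a non-positive drift and hence $D_-\equiv 0$. Once pathwise monotonicity is established, specialising to the fixed pair $(t,s)$ gives $\{\cnz_{\rho,q_1}^{(t)}(s)\ge 0\}\subseteq\{\cnz_{\rho,q_2}^{(t)}(s)\ge 0\}$ for $q_1\le q_2$, and the conjecture then follows by integrating and invoking the reduction of the first paragraph. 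Alternatively, in view of \cref{thm:sbpfromlqg}, one could try to translate the statement into the monotonicity in the imaginary-geometry angle $\theta$ (via the homeomorphism $q=\overline q_\gamma(\theta)$), but I expect a direct SDE argument to be cleaner.
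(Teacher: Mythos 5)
This statement is labeled a \emph{conjecture} in the paper and sits in the list of open problems; no proof of it is given anywhere in the present paper. A footnote appended in revision records that \cref{conj:incresin} was only established later, in \cite{borga2022baxter}, by exploiting the SLE-decorated Liouville quantum gravity description from \cref{thm:sbpfromlqg} rather than by a direct SDE argument. There is therefore no internal proof against which to compare your attempt, so I assess the sketch on its own.

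Your reduction is correct: using \cref{lem:fbewobfoihf} together with the fact (established in the proof of \cref{thm:perm_is_ok}) that $\cnz_{\rho,q}^{(t)}(s)\neq 0$ a.s.\ for fixed $t<s$, one indeed gets
$f(\rho,q)=2\int_0^1\int_0^s \P\bigl(\cnz_{\rho,q}^{(t)}(s)\ge 0\bigr)\,dt\,ds$, and the boundary case $\rho=1$ is handled cleanly: from \cref{eq:def_process2} the sign $\bm\varepsilon_q^{(t)}(s)$ is $+1$ with probability $q$, so $f(1,q)=q$, and the uniform-variable coupling does give pathwise monotonicity in $q$.

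The genuine gap is the interior case $\rho\in(-1,1)$. You do not prove the pathwise comparison $\cnz_{\rho,q_1}^{(t)}\le\cnz_{\rho,q_2}^{(t)}$; you identify where it could fail and assert that a Tanaka analysis of $D_-$ ``should produce a non-positive drift.'' That is precisely the open part. In the mixed-sign region $\{\cnz_{\rho,q_1}<0<\cnz_{\rho,q_2}\}$, the difference is driven by $\conti X_\rho+\conti Y_\rho$, a nondegenerate martingale for $\rho>-1$; unlike the single-Brownian-motion skew comparison of Ouknine and Burdzy--Chen, the two solutions see genuinely different diffusion inputs there, and the favourable local-time ordering at the boundary of that region does not automatically dominate this extra noise. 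In particular, when both processes are at zero simultaneously, their subsequent sign choices are governed by the delicate local behaviour of two distinct (correlated) Brownian components, and whether the ordering of the local-time drifts wins pathwise is exactly the technical issue you leave unresolved. A correct proof along these lines would need to actually carry out the Tanaka computation for $D_-$ and control the singular terms coming from the times at which exactly one of the two processes is at zero; as written, the argument stops short of that. By contrast, the route eventually used in \cite{borga2022baxter} bypasses the SDE comparison entirely via \cref{thm:sbpfromlqg}, translating monotonicity in $q$ into monotonicity in the imaginary-geometry angle $\theta$ through the homeomorphism $\overline q_\gamma$ of \cref{rem:paramrel} — an alternative you mention but do not develop.
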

	
	It would be even more interesting to derive an explicit expression for $f(\rho,q)$. It would be also remarkable to find some natural statistics $\stc(\cdot)$ on permutons such that $\stc(\bm \mu_{\rho,q})=g(\rho,q)$ a.s., for some function $g$. This would imply that the laws of $\bm \mu_{\rho,q}$ are singular for different values of the parameters $\rho \in (-1,1]$ and $q\in [0,1]$.
	
	\item \textbf{The skew Brownian permuton for $\rho=-1$.} The skew Brownian permuton $\bm{\mu}_{-1,q}$ has not been defined yet, because when $\rho=-1$, it is meaningless to condition a two-dimensional Brownian motion of correlation $\rho=-1$ to stay in the non-negative quadrant. It would be interesting to investigate if there is a natural way of constructing $\bm{\mu}_{-1,q}$. A possible way would be to consider the limit of the permutons $\bm{\mu}_{\rho_n,q}$ for an appropriate sequence $\rho_n\to -1$. Looking at the simulations in \cref{fig:uyievievbeee}, we also believe that a natural candidate model for defining $\bm{\mu}_{-1,q}$ is the Mallows permuton \cite{starr2009thermodynamic, MR3817550}.
	
	\item \textbf{Critical 2-LQG and biased Brownian separable permuton.} \cref{thm:sbpfromlqg} connects the skew Brownian permuton $\bm \mu_{\rho,q}$ to SLE-decorated LQG spheres in the range of parameters $(\rho,q)\in(-1,1)\times[0,1]$.
	It would be interesting to show that the biased  Brownian separable permuton $\bm \mu_{1,q}$ for $q\in[0,1]$ arises from the critical $2$-LQG sphere (in the same spirit of \cref{thm:sbpfromlqg}). Here some new ideas are needed, as it is not clear how to define two SLE curves decorating a $2$-LQG sphere (see \cite{aru2021mating}).
	
	\item \textbf{Models of planar maps associated with semi-Baxter permutations and separable permutations.}
	We believe that there are two natural models of decorated planar maps associated with semi-Baxter permutations and separable permutations that might converge to SLE-decorated LQG spheres. This belief is justified by \cref{thm:sbpfromlqg} and the results in \cite{borga2020scaling}. 
	More precisely,
	\begin{itemize}
		\item We conjecture that \emph{bipolar posets} (introduced in \cite{fusy2021enumeration}) are in bijection with semi-Baxter permutations and converge to a $\gamma$-LQG sphere decorated with two SLE curves of angle $\theta (q)$, where $\gamma$ and $\theta (q)$ are related to the parameters $\rho, q$ appearing in \cref{thm:semi-baxter} through the relations given in  \cref{thm:sbpfromlqg}.
		\item We further conjecture that \emph{rooted series-parallel maps} (studied in \cite[Proposition 6]{MR2734180}, where it is also shown that they are in bijection with separable permutations) converge to the critical $2$-LQG sphere. This would be the first discrete model of planar maps shown to converge to the critical $2$-LQG sphere.
	\end{itemize}

	We point out that the results in this paper give three different possible approaches to answer the questions above: the \emph{permutation approach}, the \emph{SDE approach}, and the \emph{SLE-decorated LQG approach}.\footnote{\emph{Note added in revision:} The SLE-decorated LQG approach has been first used in \cite{borga2022baxter} to compute the intensity measure of the Baxter permuton (giving a partial answer to the Open Problem 1a) and to prove Conjectures \ref{eq:conj_pos} and \ref{conj:incresin}; and recently in \cite{borga2022meanders} to solve the Open Problem 1b and to give a partial answer to \cref{quest-lis}.}
\end{enumerate}

\section*{Acknowledgements}
During the realization of this long-term project, I had the opportunity to exchange various ideas with several people. Here is an alphabetically ordered list of the ones who participated in some discussions and to whom I would like to express my gratitude: thanks to  Mathilde Bouvel, Valentin Féray, Ewain Gwynne,  Hatem Hajri, Nina Holden, Antoine Lejay, Mickaël Maazoun, Grégory Miermont, Vilmos Prokaj, Kilian Raschel, and Xin Sun. We thank the anonymous referees for all their precious and useful comments.

\section{The skew perturbed Tanaka equations}

In this section we focus on the skew perturbed Tanaka equations introduced in \cref{eq:flow_SDE_gen}, p.\ \pageref{eq:flow_SDE_gen}, and \cref{eq:Tanaka}, p.\ \pageref{eq:Tanaka}. In particular, in the following two subsections, we prove \cref{thm:conj_rho_gen} and \cref{thm:fvuwevifview2}.  In both cases, we will first investigate the SDEs in \cref{eq:flow_SDE_gen,eq:Tanaka} when they are driven by a correlated two-dimensional Brownian motion instead of a correlated two-dimensional Brownian excursion, and then we will transfer the results to our specific cases using absolute continuity arguments.

These results are fundamental to then show in \cref{sect:welldef} that the skew Brownian permuton is well-defined, proving  \cref{thm:perm_is_ok}.

\subsection{The skew pure perturbed Tanaka equation}\label{sect:erfkvwijfbwe}

\subsubsection{The Brownian motion case}

Let $(\conti W_{\rho}(t))_{t\in \R_{\geq 0}}=(\conti X_{\rho}(t),\conti Y_{\rho}(t))_{t\in \R_{\geq 0}}$ be a two-dimensional Brownian motion of correlation $\rho\in(-1,1)$ 
and let $q\in[0,1]$ be a parameter. We consider the following SDE
\begin{equation}\label{eq:flow_SDE_gen2}
		d\cnz_{\rho,q}(t) = \idf_{\{\cnz_{\rho,q}(t)> 0\}} d\conti Y_{\rho}(t) - \idf_{\{\cnz_{\rho,q}(t)\leq 0\}} d \conti X_{\rho}(t)+(2q-1)\cdot d\conti L^{\cnz_{\rho,q}}(t),\quad t\in \R_{\geq 0},
\end{equation}
where we recall that $\conti L^{\cnz_{\rho,q}}(t)$ is the symmetric local-time process at zero of $\cnz_{\rho,q}$.
From now on, to simplify notation, we write all the involved processes forgetting the indexes $\rho$ and $q$. We also denote by $\mathcal C(I)$ the set of continuous functions from an interval $I$ of $\R_{\geq 0}$ to  $\R$. Recall also the definition of skew Brownian motion from the beginning of \cref{sect:wievviwevf}.
We prove the following result.

\begin{thm}\label{thm:ex_uni_rho_gen}
	Fix $\rho \in (-1,1)$ and $q\in [0,1]$. Pathwise uniqueness and existence of a strong solution to the SDE in \cref{eq:flow_SDE_gen2} hold. In addition, the  solution is a skew Brownian motion of parameter $q$.
	
	More precisely, if $(\Omega, \mathcal F, (\mathcal F_t)_{t}, \P)$ is a filtered probability space satisfying the usual conditions, and assuming that $\conti W_\rho$ is an $\mathcal F_t$-Brownian motion of correlation $\rho$, 
	\begin{enumerate}
		\item if $\cnz,\widetilde{\cnz}$ are two $\mathcal F_t$-adapted continuous processes that solve \cref{eq:flow_SDE_gen2} a.s., then $\cnz=\widetilde{\cnz}$ a.s.
		\item There exists an  $\mathcal F_t$-adapted continuous process $\cnz$ which solves \cref{eq:flow_SDE_gen2} a.s.
	\end{enumerate}
	In particular, for every $t\in \R_{\geq 0}$ there exists a measurable \emph{solution map} $F_t : \mathcal C([0,t)) \to \mathcal C([0,t))$ such that 
	\begin{enumerate}
		\item[3.] $F_t(\conti W_\rho|_{[0,t)})$ satisfies \cref{eq:flow_SDE_gen2} a.s.\ on the interval $[0,t)$.
		\item[4.] For every $s,t\in\R_{\geq 0}$ with $s \leq t$, then $F_t(\conti W_\rho|_{[0,t)})|_{[0,s)} = F_s (\conti W_\rho|_{[0,s)})$ a.s.
	\end{enumerate} 
\end{thm}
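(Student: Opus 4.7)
The plan is to combine the existence/uniqueness results for the perturbed Tanaka equation (the case $q=1/2$, proved in \cite{MR3098074,MR3882190}) with the classical skew Brownian motion theory of Harrison--Shepp \cite{MR606993}, via a Yamada--Watanabe framework. The starting point is the observation that every solution of \cref{eq:flow_SDE_gen2} carries a canonical hidden Brownian driver: for any $\mathcal F_t$-adapted solution $\cnz$, define
\begin{equation*}
\conti M(t) := \int_0^t \idf_{\{\cnz(s)>0\}}\, d\conti Y_\rho(s) - \int_0^t \idf_{\{\cnz(s)\leq 0\}}\, d\conti X_\rho(s).
\end{equation*}
Since the two indicators have disjoint support, the cross-brackets in $\langle \conti M \rangle$ cancel and $\langle \conti M\rangle_t = t$, so $\conti M$ is an $\mathcal F_t$-Brownian motion by Lévy's characterization; the SDE then rewrites as $d\cnz = d\conti M + (2q-1)\,d\conti L^{\cnz}$, which is exactly the defining skew BM SDE of parameter $q$. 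This already settles the ``in addition'' claim of the theorem.

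For pathwise uniqueness, let $\cnz, \widetilde\cnz$ be two $\mathcal F_t$-adapted solutions and set $\conti D := \cnz - \widetilde\cnz$. Using $\idf_{\{a>0\}} - \idf_{\{b>0\}} = -(\idf_{\{a\leq 0\}} - \idf_{\{b \leq 0\}})$, one obtains
\begin{equation*}
d\conti D = \varphi\,d(\conti X_\rho + \conti Y_\rho) + (2q-1)\,d(\conti L^{\cnz} - \conti L^{\widetilde\cnz}),\qquad \varphi := \idf_{\{\cnz>0\}} - \idf_{\{\widetilde\cnz > 0\}},
\end{equation*}
with $\varphi \in \{-1,0,+1\}$ supported exactly where the two solutions have opposite signs, and $\langle \conti X_\rho + \conti Y_\rho\rangle_t = 2(1+\rho)t$. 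I would then run a Yamada--Watanabe-type approximation on $|\conti D|$: apply It\^o to $\phi_\epsilon(\conti D)$ for smooth even approximants $\phi_\epsilon$ of $|\cdot|$ with $\phi_\epsilon'' \to \delta_0$, and let $\epsilon \to 0$. The martingale-bracket contribution vanishes in the standard way; the singular local-time terms are handled by exploiting that $d\conti L^{\cnz}$ is carried by $\{\cnz=0\}$ (and likewise for $\widetilde\cnz$), together with the skew BM structure of each solution established above.

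For existence I would use the Yamada--Watanabe theorem, which reduces strong existence to weak existence plus pathwise uniqueness. Weak existence is produced explicitly: on an auxiliary probability space, start from a skew Brownian motion $\cnz$ of parameter $q$ with driving BM $\conti M$, and let $\conti U', \conti V'$ be two further BMs, mutually independent and independent of $\conti M$. Define
\begin{align*}
d\conti X_\rho &:= \idf_{\{\cnz>0\}}\bigl(\rho\, d\conti M + \sqrt{1-\rho^2}\, d\conti V'\bigr) - \idf_{\{\cnz\leq 0\}}\, d\conti M,\\
d\conti Y_\rho &:= \idf_{\{\cnz>0\}}\, d\conti M + \idf_{\{\cnz\leq 0\}}\bigl(-\rho\, d\conti M + \sqrt{1-\rho^2}\, d\conti U'\bigr).
\end{align*}
A direct quadratic-covariation computation yields $\langle\conti X_\rho\rangle_t = \langle \conti Y_\rho\rangle_t = t$ and $\langle\conti X_\rho, \conti Y_\rho\rangle_t = \rho t$, so Knight's multidimensional Lévy theorem makes $(\conti X_\rho, \conti Y_\rho)$ a 2D Brownian motion of correlation $\rho$; substituting back, $\cnz$ satisfies \cref{eq:flow_SDE_gen2}. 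The condition $|\rho|<1$ is crucial: the independent augmentations $\conti V', \conti U'$ require $\sqrt{1-\rho^2} > 0$. Yamada--Watanabe then yields a measurable solution map $F_t$, and the consistency property (4) is immediate from pathwise uniqueness applied on $[0,s)$: both $F_t(\conti W_\rho|_{[0,t)})|_{[0,s)}$ and $F_s(\conti W_\rho|_{[0,s)})$ are $\mathcal F_u$-adapted solutions on $[0,s)$ driven by the same $\conti W_\rho|_{[0,s)}$.

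The hard part will be the pathwise uniqueness step, specifically controlling the singular local-time drift $(2q-1)\,d(\conti L^{\cnz} - \conti L^{\widetilde\cnz})$. For $q=1/2$ this term vanishes and one can transfer uniqueness from the perturbed Tanaka equation by absolute continuity, as in \cite[Theorem 4.6]{borga2020scaling}; for general $q$ the two local times at $0$ are a priori unrelated, and one must leverage that both solutions are skew BMs of the same parameter $q$ to absorb these contributions into a Gronwall-type estimate.
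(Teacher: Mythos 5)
Your observation that any solution of \cref{eq:flow_SDE_gen2} has a hidden driving Brownian motion $\conti M$ (because the two indicators have disjoint support, so $\langle \conti M\rangle_t = t$) is correct and gives a clean direct proof of the ``in addition'' claim, arguably slicker than the route the paper takes (via Nakao's SDE and \cite[Section 5.2]{MR2280299}). Your weak existence construction is also correct: starting from a skew BM and augmenting with two independent BMs $\conti U',\conti V'$ to manufacture a correlated pair $(\conti X_\rho,\conti Y_\rho)$ is a genuinely different but valid construction from the one in the paper, and the cancellation of the cross-terms works because the indicator supports are disjoint.

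The gap is exactly where you flag it: pathwise uniqueness, and your proposed Yamada--Watanabe-type $\phi_\epsilon$-approximation of $|\conti D|$ cannot work. In the difference equation $d\conti D = \varphi\, d(\conti X_\rho + \conti Y_\rho) + (2q-1)\,d(\conti L^{\cnz}-\conti L^{\widetilde\cnz})$, the coefficient $\varphi = \idf_{\{\cnz>0\}}-\idf_{\{\widetilde\cnz>0\}}$ is a pure jump in the pair $(\cnz,\widetilde\cnz)$, so there is no Hölder modulus (not even of order $1/2$) with which to absorb the $\phi_\epsilon''$ term — the martingale bracket contribution does not ``vanish in the standard way''. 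This is precisely the obstruction that makes the perturbed Tanaka equation ($q=1/2$) a nontrivial theorem of \cite{MR3098074,MR3882190}; for $q\neq 1/2$ the additional singular local-time drift only makes it harder. Even for $q=1/2$ a bare YW estimate on $|\conti D|$ is insufficient. The paper's way around this, which your plan misses, is the transformation $r(x)=\frac{x}{1-q}\idf_{x>0}+\frac{x}{q}\idf_{x\leq 0}$ (a scale-function change of variable). Proposition~\ref{prop:equiv_SDE} shows $\cnz$ solves \cref{eq:flow_SDE_gen2} iff $\conti R = s(\cnz)$ solves the local-time-free SDE $d\conti R = (1-q)\idf_{\{\conti R>0\}}d\conti Y_\rho - q\idf_{\{\conti R\leq 0\}}d\conti X_\rho$, which eliminates the singular drift entirely. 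After that, uniqueness is not obtained by a YW modulus argument either: the paper performs an orthogonal change of coordinates on $(\conti X_\rho,\conti Y_\rho)$ to rewrite the transformed SDE as $d\conti R = f(\conti R)\,d\conti M + d\conti N$ with $\langle\conti M,\conti N\rangle=0$ and $f$ of bounded variation, and then invokes the pathwise-uniqueness theorem \cite[Theorem 8.1]{MR3055262} for SDEs of exactly that form. Your plan as stated does not contain either of these two ideas (the scale change that removes the local time, and the orthogonal martingale decomposition placing the equation in the \cite{MR3055262} framework), so the hard half of the theorem remains open. Also note that the scale change $r$ degenerates at $q\in\{0,1\}$; the paper treats these endpoints by a separate martingale argument showing $\conti R = g(\cnz)$ is a non-negative martingale started at $0$ and hence vanishes, a step absent from your proposal.
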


From now on we fix $\rho \in (-1,1)$ and $q\in(0,1)$. The case $q\in\{0,1\}$ will be considered at the end of this section. We introduce the function $r(x)=x/(1-q)\cdot\mathds{1}_{x>0}+x/q\cdot\mathds{1}_{x\leq0}$ and the SDE
\begin{equation}\label{eq:flow_SDE_gen3}
		d\conti R(t) = (1-q)\idf_{\{\conti R(t)> 0\}} d\conti Y(t) - q\idf_{\{\conti R(t)\leq 0\}} d \conti X(t),\quad t\in \R_{\geq 0}. 
\end{equation}
We have the following result.

\begin{prop}\label{prop:equiv_SDE}
	Let $\conti R=(\conti R(t))_{t\in \R_{\geq 0}}$ and $\cnz=(\cnz(t))_{t\in \R_{\geq 0}}$ be two stochastic processes such that $\cnz(t)=r(\conti R(t))$ for all $t\in \R_{\geq 0}$.
	The process $\conti R$ is a strong solution to \cref{eq:flow_SDE_gen3}
	if and only if the process $\cnz$ is a strong solution to \cref{eq:flow_SDE_gen2}. 
\end{prop}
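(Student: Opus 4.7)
My plan is to apply the It\^o--Tanaka formula to the piecewise linear homeomorphism $r\colon\R\to\R$ (which for $q\in(0,1)$ is strictly increasing with $r(0)=0$, inverse $r^{-1}(y)=(1-q)y\mathds{1}_{y>0}+qy\mathds{1}_{y\leq 0}$, and a single derivative jump $r'(0^+)-r'(0^-)=\tfrac{1}{1-q}-\tfrac{1}{q}=\tfrac{2q-1}{q(1-q)}$ at the origin) and to its inverse. In both implications, the work reduces to (i) computing the martingale part after substituting the driving SDE and (ii) matching the local time correction produced by the derivative jump with the $(2q-1)\,d\conti L^{\cnz}$ term of \eqref{eq:flow_SDE_gen2}. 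The case $q\in\{0,1\}$ is degenerate and will need a separate direct check, since $r$ itself is undefined there but one of the drift terms in \eqref{eq:flow_SDE_gen3} vanishes identically.

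For the implication ``$\conti R$ solves \eqref{eq:flow_SDE_gen3} $\Rightarrow$ $\cnz=r(\conti R)$ solves \eqref{eq:flow_SDE_gen2}'', I apply It\^o--Tanaka to $r(\conti R)$. Using \eqref{eq:flow_SDE_gen3}, the martingale integral $\int_0^t r'(\conti R)\,d\conti R$ collapses to $\int_0^t \mathds{1}_{\cnz>0}\,d\conti Y - \int_0^t \mathds{1}_{\cnz\leq 0}\,d\conti X$, since the factors $\tfrac{1}{1-q}$ and $\tfrac{1}{q}$ cancel exactly against $(1-q)$ and $q$. The It\^o--Tanaka correction equals $\tfrac{2q-1}{2q(1-q)}L^0_t(\conti R)$, where $L^0_t(\conti R)$ denotes the semimartingale local time; note that $\conti R$ has no local time term in its own SDE, so $L^{0+}_t(\conti R)=L^{0-}_t(\conti R)$ unambiguously. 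The occupation density formula applied with $d\langle \conti R\rangle_s = \bigl((1-q)^2\mathds{1}_{\conti R>0}+q^2\mathds{1}_{\conti R\leq 0}\bigr)ds$ then yields
\begin{equation*}
\conti L^{\cnz}(t) = \lim_{\eps\to 0}\tfrac{1}{2\eps}\int_0^t \mathds{1}_{-q\eps\leq \conti R(s) \leq (1-q)\eps}\,ds = \tfrac{1}{2q(1-q)}L^0_t(\conti R),
\end{equation*}
so the correction is exactly $(2q-1)\conti L^{\cnz}(t)$, matching \eqref{eq:flow_SDE_gen2}.

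For the reverse implication, I apply It\^o--Tanaka to $r^{-1}(\cnz)$. Substituting \eqref{eq:flow_SDE_gen2} into $\int_0^t (r^{-1})'(\cnz)\,d\cnz$, and using that $d\conti L^{\cnz}$ is carried by $\{\cnz=0\}\subset\{\cnz\leq 0\}$, the martingale part gives $(1-q)\int \mathds{1}_{\cnz>0}\,d\conti Y - q\int \mathds{1}_{\cnz\leq 0}\,d\conti X + q(2q-1)\conti L^{\cnz}(t)$, while the It\^o--Tanaka correction is $\tfrac{1-2q}{2}L^{0+}_t(\cnz)$. The essential subtlety, absent in the forward direction, is that $\cnz$ has \emph{asymmetric} right and left semimartingale local times at zero: integrating $\mathds{1}_{\cnz=0}$ against \eqref{eq:flow_SDE_gen2} directly gives $\int_0^t \mathds{1}_{\cnz=0}\,d\cnz = (2q-1)\conti L^{\cnz}(t)$, hence $L^{0+}_t(\cnz)-L^{0-}_t(\cnz)=2(2q-1)\conti L^{\cnz}(t)$; combined with $L^{0+}_t(\cnz)+L^{0-}_t(\cnz)=2\conti L^{\cnz}(t)$ from the occupation density (note $d\langle \cnz\rangle_s=ds$), this gives $L^{0+}_t(\cnz)=2q\conti L^{\cnz}(t)$. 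Plugging in produces a total local time coefficient $q(2q-1) + \tfrac{1-2q}{2}\cdot 2q = 0$, so the two contributions cancel exactly and \eqref{eq:flow_SDE_gen3} holds. The hard part of the proof is precisely this local time bookkeeping: detecting that the skew correction built into \eqref{eq:flow_SDE_gen2} breaks the symmetry $L^{0+}=L^{0-}$ for $\cnz$, and verifying that the induced asymmetry is exactly what is needed to annihilate the extra $q(2q-1)\conti L^{\cnz}$ piece coming from the martingale integral.
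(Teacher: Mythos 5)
Your proof is correct, and it gets to the same conclusions as the paper's proof of \cref{prop:equiv_SDE}, but the local-time bookkeeping is organized in a genuinely different way. The paper runs the whole argument with the \emph{symmetric} It\^o--Tanaka formula (symmetric derivative and symmetric local time). In the forward direction, instead of invoking the occupation-density formula as you do, the paper derives the scaling relation between $\conti L^{\cnz}$ and the local time of $\conti R$ indirectly, by writing $|\cnz(t)|=|r(\conti R(t))|$ in two ways via Tanaka (once from $\cnz$, once from $\conti R$) and comparing; this yields $\conti L^{\cnz}(t)=\tfrac{\gamma'}{2}\conti L^{\conti R}(t)$ with $\gamma'=\tfrac1{1-q}+\tfrac1q$, which is exactly your $\tfrac{1}{2q(1-q)}L^0_t(\conti R)$ identity. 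In the reverse direction, the paper's symmetric convention gives $s'(0)=\tfrac12$, so the cancellation $(2q-1)s'(0)+\tfrac{1-2q}{2}=0$ is immediate and one never has to resolve $L^{0+}_t(\cnz)$ versus $L^{0-}_t(\cnz)$. You instead work with the right-continuous (one-sided) semimartingale local time and so must track the asymmetry of $\cnz$ at zero explicitly, proving $L^{0+}_t(\cnz)=2q\,\conti L^{\cnz}(t)$ before obtaining the same cancellation. The two routes are logically equivalent, but they put the work in different places: the paper's approach is slicker in the reverse implication (the symmetric derivative does the cancellation ``for free''), while yours is slicker in the forward implication (a single occupation-density computation replaces the comparison of two Tanaka expansions for $|\cnz|$) and makes the mechanism behind the cancellation more transparent. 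One small caution on your write-up: in passing from $\lim_{\eps\to 0}\tfrac1{2\eps}\int_0^t\mathds{1}_{-q\eps\le\conti R(s)\le(1-q)\eps}\,ds$ to $\tfrac{1}{2q(1-q)}L^0_t(\conti R)$, you should spell out the split into $\{\conti R>0\}$ and $\{\conti R\le 0\}$ and the use of $d\langle\conti R\rangle_s=\bigl((1-q)^2\mathds{1}_{\conti R>0}+q^2\mathds{1}_{\conti R\le0}\bigr)ds$ together with $L^{0+}_t(\conti R)=L^{0-}_t(\conti R)$; as stated, the computation is compressed. You should also note, as the paper does, that $\int_0^t\mathds{1}_{\{\conti R(s)=0\}}\,d\conti X(s)=0$ (zero quadratic variation), which is needed to reconcile $\mathds{1}_{\conti R<0}$ with $\mathds{1}_{\conti R\le 0}$ in the martingale part.
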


Thanks to \cref{prop:equiv_SDE}, pathwise uniqueness and existence of a strong solution to the SDEs in \cref{eq:flow_SDE_gen2,eq:flow_SDE_gen3} are equivalent.
Thanks to the Yamada--Watanabe theorem\footnote{If further explanations are needed, the reader can look at the discussion at the beginning of \cite[Section 2.1]{MR3882190}.}  (see \cite[Proposition 5.3.20 and Corollary 5.3.23]{MR1121940}), to show pathwise uniqueness and existence of a strong solution to the SDE in \cref{eq:flow_SDE_gen3}, it is enough to show pathwise uniqueness and existence of a weak solution to the SDE in \cref{eq:flow_SDE_gen3}.

\begin{prop}\label{prop:pu_SDE}
	Pathwise uniqueness holds for \cref{eq:flow_SDE_gen3}.
\end{prop}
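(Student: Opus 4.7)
The plan is to follow the strategy used for the classical perturbed Tanaka equation in \cite{MR3098074,MR3882190}, adapting the key computations to accommodate the prefactors $(1-q)$ and $q$ in \cref{eq:flow_SDE_gen3}. I treat only the generic case $q\in(0,1)$; the degenerate cases $q\in\{0,1\}$ collapse the SDE to a one-dimensional Tanaka-type equation and can be handled separately. Let $\conti R^1, \conti R^2$ be two continuous $\mathcal F_t$-adapted solutions of \cref{eq:flow_SDE_gen3} sharing the same initial condition, which I take to be $0$ for concreteness, and set $U\coloneqq \conti R^1-\conti R^2$. The goal is to show $U\equiv 0$ a.s.

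My first step is to compute the quadratic variation of the continuous local martingale $U$. Expanding $\langle U\rangle=\langle \conti R^1\rangle-2\langle \conti R^1,\conti R^2\rangle+\langle \conti R^2\rangle$ and performing a case analysis on the four cells
\[
A_{\pm\pm} \coloneqq \{\pm\conti R^1>0\}\cap\{\pm\conti R^2>0\},
\]
the contributions on $A_{++}$ and $A_{--}$ cancel exactly, while on $A_{+-}\cup A_{-+}$ the quadratic variation grows at the constant rate
\[
C_{\rho,q} \coloneqq (1-q)^2+q^2+2\rho q(1-q),
\]
which is strictly positive for $(\rho,q)\in(-1,1)\times(0,1)$. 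Hence
\[
d\langle U\rangle_t = C_{\rho,q}\,\idf_{\{\sgn \conti R^1(t)\neq\sgn \conti R^2(t)\}}\,dt.
\]

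The geometric heart of the argument is the observation that whenever the signs of $\conti R^1$ and $\conti R^2$ disagree one has $|U|=|\conti R^1|+|\conti R^2|>0$, so
\[
\{\sgn \conti R^1\neq\sgn \conti R^2\}\;\subseteq\;\{U\neq 0\}.
\]
In other words, $U$ accumulates quadratic variation only while staying away from the origin. Combined with Tanaka's formula
\[
|U|(t)=\int_0^t\sgn(U(s))\,dU(s)+L^U_0(t),
\]
the problem reduces to proving $L^U_0\equiv 0$: once this is established, $|U|$ coincides with a continuous local martingale starting at zero, and since it is nonnegative, the supermartingale property forces $|U|\equiv 0$, that is $\conti R^1=\conti R^2$ a.s.

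The main obstacle is therefore to justify $L^U_0\equiv 0$. Using the occupation-time representation of local time this amounts to showing that
\[
\lim_{\eps\to 0^+}\frac{1}{2\eps}\int_0^t \idf_{\{|\conti R^1(s)|+|\conti R^2(s)|<\eps,\;\sgn \conti R^1(s)\neq\sgn \conti R^2(s)\}}\,ds=0\quad\text{a.s.}
\]
The non-degeneracy $\rho\in(-1,1)$ is crucial here: it guarantees that $\conti W_\rho$ has full-rank covariance, so that the joint process $(\conti R^1,\conti R^2)$ explores a genuinely two-dimensional region and does not spend too much time near the origin in the opposite-sign configuration. Concretely, I would either directly adapt the local-time analysis of Prokaj and Hajri in \cite{MR3098074,MR3882190}, or reduce to their setting via the piecewise-linear bijection $r$ introduced just before \cref{prop:equiv_SDE}, so that the argument for the standard perturbed Tanaka equation can be invoked essentially verbatim after an absolutely continuous change of coordinates.
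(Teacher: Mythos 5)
Your reduction steps are correct as far as they go: the quadratic-variation computation giving
\[
d\langle U\rangle_t = C_{\rho,q}\,\idf_{\{\sgn \conti R^1(t)\neq\sgn \conti R^2(t)\}}\,dt,\qquad C_{\rho,q}=(1-q)^2+q^2+2\rho q(1-q)=1-2q(1-q)(1-\rho),
\]
is right, and $C_{\rho,q}>0$ for $(\rho,q)\in(-1,1)\times[0,1]$. The inclusion $\{\sgn \conti R^1\neq\sgn \conti R^2\}\subseteq\{U\neq 0\}$ and the reduction via Tanaka's formula to showing $L_0^U\equiv 0$ are also standard and correct. However, this is where the proof stops being a proof. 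The statement $L_0^U\equiv 0$ is not a side lemma one can dispatch with an occupation-time estimate; it \emph{is} the pathwise uniqueness theorem in disguise. The intuition that "the joint process explores a genuinely two-dimensional region and does not spend too much time near the origin" is precisely what Prokaj's argument in \cite{MR3055262} makes rigorous, and doing so requires a delicate localization and time-change argument that cannot be summarized as a routine adaptation. Your proposal therefore reduces the proposition to the hard part of the known result without proving it.

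Your fallback suggestion is also problematic: the piecewise-linear bijection $r(x)=x/(1-q)\cdot\mathds{1}_{x>0}+x/q\cdot\mathds{1}_{x\leq 0}$ maps solutions of \eqref{eq:flow_SDE_gen3} to solutions of \eqref{eq:flow_SDE_gen2}, \emph{not} to the $q=1/2$ perturbed Tanaka equation of \cite{MR3098074,MR3882190}. The $q$-dependence is not removable by a deterministic piecewise-linear change of state; it migrates from the diffusion coefficients to the local-time drift. So one cannot "invoke the standard perturbed Tanaka equation verbatim after an absolutely continuous change of coordinates." For comparison, the paper's proof takes a different and complete route: it constructs an explicit orthogonal linear change of the driving Brownian motion $(\conti X,\conti Y)\mapsto(\conti M,\widetilde{\conti N})$, chosen so that the relation $A(1-q)=Bq$ holds, and thereby rewrites \eqref{eq:flow_SDE_gen3} in the exact canonical form $d\conti R=f(\conti R)\,d\conti M+d\conti N$ with $\langle\conti M,\conti N\rangle\equiv 0$, $d\langle\conti M\rangle\leq c\,d\langle\conti N\rangle$, and $f$ of bounded variation. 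This puts the equation into the scope of \cite[Theorem 8.1]{MR3055262} as a black box, so the hard local-time analysis does not need to be redone. If you want to keep your direct $L_0^U\equiv 0$ approach, you would need to supply an actual proof of that claim — essentially rederiving Prokaj's occupation-time estimates adapted to the coefficients $(1-q)$ and $q$ — rather than citing them as adaptable.
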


\begin{prop}\label{prop:ex_SDE}
	There exists a weak solution $\conti R=(\conti R(t))_{t\in \R_{\geq 0}}$ to \cref{eq:flow_SDE_gen3} such that $r(\conti R)$ is a skew Brownian motion of parameter $q$.
\end{prop}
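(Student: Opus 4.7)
The plan is to build $\conti R$ as the image of a skew Brownian motion of parameter $q$ under the change of variables $h = r^{-1}$, where $h(x) = (1-q)x^+ - qx^-$, and to realize the driving pair $(\conti X,\conti Y)$ by orthogonally splicing the underlying one-dimensional Brownian motion with an independent auxiliary Brownian motion according to the sign of $\cnz$.

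On a filtered probability space $(\Omega,\mathcal F,(\mathcal F_t)_t,\P)$ satisfying the usual conditions, I would start with two independent standard Brownian motions $\conti B$ and $\widetilde{\conti B}$ and let $\cnz$ be the unique strong skew Brownian motion of parameter $q$ driven by $\conti B$, i.e.\ the Harrison--Shepp solution \cite{MR606993} of $d\cnz = d\conti B + (2q-1)\,d\conti L^{\cnz}$. Setting $\conti R \coloneqq h(\cnz)$, the relation $\sgn(\conti R) = \sgn(\cnz)$ makes the indicator sets appearing in \cref{eq:flow_SDE_gen3} coincide for $\conti R$ and $\cnz$. Applying Tanaka's formula to $\cnz^+$ and $\cnz^-$, with $L^0_{\cnz}$ denoting the Revuz--Yor local time of $\cnz$ at $0$, produces
\begin{equation*}
 d\conti R = \bigl[(1-q)\idf_{\{\cnz > 0\}} + q\idf_{\{\cnz \leq 0\}}\bigr]\, d\cnz + \tfrac{1-2q}{2}\,dL^0_{\cnz}.
\end{equation*}
The skew-BM identity $L^0_{\cnz} = 2q\,\conti L^{\cnz}$, which follows from the occupation-density formula applied to $\cnz$ together with the asymmetry of the right/left occupation densities for skew BM, then combines with the $(2q-1)\,d\conti L^{\cnz}$ drift of $\cnz$ so that the two local-time contributions cancel exactly, leaving the pure martingale decomposition
\begin{equation*}
 d\conti R = \bigl[(1-q)\idf_{\{\cnz > 0\}} + q\idf_{\{\cnz \leq 0\}}\bigr]\, d\conti B.
\end{equation*}

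Next I would construct the correlated pair $(\conti X,\conti Y)$ by splicing $\conti B$ with the independent $\widetilde{\conti B}$:
\begin{align*}
 \conti Y(t) &\coloneqq \int_0^t \idf_{\{\cnz > 0\}}\, d\conti B + \int_0^t \idf_{\{\cnz \leq 0\}}\bigl(-\rho\, d\conti B + \sqrt{1-\rho^2}\, d\widetilde{\conti B}\bigr),\\
 \conti X(t) &\coloneqq -\int_0^t \idf_{\{\cnz \leq 0\}}\, d\conti B + \int_0^t \idf_{\{\cnz > 0\}}\bigl(\rho\, d\conti B + \sqrt{1-\rho^2}\, d\widetilde{\conti B}\bigr).
\end{align*}
Both processes have quadratic variation equal to $t$ and quadratic covariation $\rho t$, so by L\'evy's characterization $(\conti X,\conti Y)$ is a two-dimensional Brownian motion of correlation $\rho$. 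The disjointness of $\idf_{\{\cnz > 0\}}$ and $\idf_{\{\cnz \leq 0\}}$ yields $(1-q)\idf_{\{\conti R > 0\}}\,d\conti Y - q\idf_{\{\conti R \leq 0\}}\,d\conti X = [(1-q)\idf_{\{\cnz > 0\}} + q\idf_{\{\cnz \leq 0\}}]\,d\conti B$, which equals $d\conti R$ by the previous step. Thus $\conti R$ is a weak solution of \cref{eq:flow_SDE_gen3}, and $r(\conti R) = \cnz$ is a skew Brownian motion of parameter $q$ by construction.

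The main obstacle is the local-time bookkeeping in the Tanaka step: the two $\tfrac12\,dL^0_{\cnz}$ increments inherited from $\cnz^+$ and $\cnz^-$ combine into the residual $\tfrac{1-2q}{2}\,dL^0_{\cnz}$, and this residual only annihilates against the skew-BM drift after one substitutes the asymmetry identity $L^0_{\cnz} = 2q\,\conti L^{\cnz}$. Once this is settled the construction of $(\conti X,\conti Y)$ as an orthogonal decomposition of $\conti B$ relative to $\widetilde{\conti B}$ is routine, and the boundary cases $q\in\{0,1\}$ (excluded by the standing assumption $q\in(0,1)$) are treated separately later in the section.
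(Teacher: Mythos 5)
Your proof is correct, but it runs in essentially the opposite direction from the paper's. The paper starts by invoking Nakao's theorem to obtain a unique strong solution of the one-dimensional SDE $d\conti R = ((1-q)\idf_{\{\conti R>0\}} - q\idf_{\{\conti R\leq 0\}})\,d\conti B$, builds the correlated driving pair $(\overline{\conti X},\overline{\conti Y})$ \emph{afterwards} from $\conti R$ and two auxiliary Brownian motions correlated with $\conti B$, and only at the end deduces that $r(\conti R)$ is a skew Brownian motion by citing the arguments of \cite[Section 5.2]{MR2280299}. You instead \emph{begin} with a Harrison--Shepp skew Brownian motion $\cnz$, set $\conti R = s(\cnz)$, and verify by hand, via the Tanaka formula and the local-time ratio $L^0_{\cnz} = 2q\,\conti L^{\cnz}$ for skew BM, that the two local-time contributions cancel, giving directly $d\conti R = [(1-q)\idf_{\{\cnz>0\}} + q\idf_{\{\cnz\leq 0\}}]\,d\conti B$. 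This makes your argument self-contained on the ``skew BM'' side at the cost of having to establish the ratio identity (which you correctly deduce from the asymmetry of skew BM at the origin, and which can also be read off by comparing the symmetric It\^{o}--Tanaka formula the paper records in \cref{eq:itotanaka} applied to $x^+$ with the classical right-local-time Tanaka formula), whereas the paper delegates existence and the skew-BM identification to existing results and spends its effort on the splicing. Your construction of $(\conti X,\conti Y)$ also differs in a minor way: you use a single auxiliary Brownian motion $\widetilde{\conti B}$ and an orthogonal decomposition $\rho\,d\conti B \pm \sqrt{1-\rho^2}\,d\widetilde{\conti B}$, while the paper uses two auxiliary Brownian motions $\conti B_1,\conti B_2$; both correctly produce correlation $\rho$, and in fact your version avoids the notational issue of $\sqrt\rho$ appearing in the paper's construction when $\rho<0$. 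The sign of the coefficient of $\idf_{\{\cdot\leq 0\}}$ in your derived one-dimensional SDE is opposite to the paper's Nakao equation, but your definition of $\conti X$ absorbs the sign, so the final verification that $(1-q)\idf_{\{\conti R>0\}}\,d\conti Y - q\idf_{\{\conti R\leq 0\}}\,d\conti X = d\conti R$ is sound. Both routes are valid; yours is slightly more computational in the skew-BM bookkeeping and slightly more economical in auxiliary randomness.
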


Note that the last two propositions together with \cref{prop:equiv_SDE} prove \cref{thm:ex_uni_rho_gen} for $\rho \in (-1,1)$ and $q\in(0,1)$. We now proceed with the proof of these three propositions.

\bigskip

We start by recalling the \emph{symmetric It\^{o}--Tanaka formula} for convex functions (see for instance \cite[Section 5.1]{MR2280299}) since we will use it repeatedly in this section. Let $f$ be a function from $\R$ to $\R$ which is the difference of two convex functions.
Then 
$$
f'_r(x) \coloneqq \lim_{\varepsilon\to 0,\varepsilon>0}\varepsilon^{-1}
(f(x+\varepsilon)-f(x))
\quad\text{and}\quad 
f'_\ell(x) \coloneqq \lim_{\varepsilon\to 0,\varepsilon<0}\varepsilon^{-1}
(f(x+\varepsilon)-f(x))
$$ 
exist for almost every $x\in\R$. In addition, there exists a signed measure
$\nu$ on $\R$, called the second derivative measure, such that
$$\int_{\R}g(x)\nu(dx)=-\int_{\R}g'(x)f'_\ell(x)dx$$
for every piecewise $\mathcal C^1$ function $g$ with compact support on $\R$. If $f$ has a second
derivative, then $f''$ is the density of $\nu$ with respect to the Lebesgue measure.

Let $\conti M=(\conti M(t))_{t\in\R_{\geq 0}}$ be a real-valued semi-martingale. The \emph{symmetric It\^{o}--Tanaka formula} for $f(\conti M )$ states that
\begin{equation}\label{eq:itotanaka}
	f(\conti M(t))=f(\conti M(0))+\int_0^t \tfrac 1 2 \Big(f'_r(\conti M(s))+f'_\ell(\conti M(s))\Big) d\conti M(s)+\frac{1}{2}\int_{\R} \conti L_x^{\conti M}(t) \nu(dx),\quad t\in\R_{\geq 0},
\end{equation}
where $\conti L_x^{\conti M}(t)$ denotes the symmetric local time process at $x$ of $\conti M$, that is
$$\conti L^{\conti M}_{x}(t)=\lim_{\varepsilon\to 0}\frac{1}{2\varepsilon}\int_0^t\idf_{\{\conti M(s)\in[x-\varepsilon,x+\varepsilon]\}}ds.$$

\bigskip

We can now proceed with the proof of \cref{prop:equiv_SDE}.

\begin{proof}[Proof of \cref{prop:equiv_SDE}]
	We start by recalling that 
	$$r(x)=x/(1-q)\cdot\mathds{1}_{x>0}+x/q\cdot\mathds{1}_{x\leq0}.$$
	We also introduce some additional functions that are used in the proof. We set
	\begin{align}
		&r'(x)=1/(1-q)\cdot\mathds{1}_{x>0}+1/(2q(1-q))\cdot\mathds{1}_{x=0}+1/q\cdot\mathds{1}_{x<0},\\
		&s(x)=(1-q)x\cdot\mathds{1}_{x>0}+qx\cdot\mathds{1}_{x\leq0},\\
		&s'(x)=(1-q)\cdot\mathds{1}_{x>0}+\frac 1 2 \cdot\idf_{x=0}+q\cdot\mathds{1}_{x<0}.
	\end{align}
    Note that $r(s(x))=s(r(x))=x$.
    We first assume that the process $\conti R$ is a strong solution to \cref{eq:flow_SDE_gen3} and we show that the process $\cnz$, defined by $\cnz(t)=r(\conti R(t))$, is a strong solution to \cref{eq:flow_SDE_gen2}. 
	Applying the It\^{o}--Tanaka formula (\cref{eq:itotanaka}) we have that 
	\begin{equation}
		d\cnz(t)=r'(\conti R(t))d\conti R(t)+\frac \gamma 2 d\conti L^{\conti R}(t),
	\end{equation}
	where $\gamma\coloneqq 1/(1-q)-1/q$. Using \cref{eq:flow_SDE_gen3} we obtain that
	\begin{multline}\label{weiubdieuwbdweiobndqenwd}
		d\cnz(t)=\idf_{\{\conti R(t)> 0\}} d\conti Y(t) - \idf_{\{\conti R(t)< 0\}} d \conti X(t)-\frac{1}{2(1-q)}\idf_{\{\conti R(t)= 0\}} d \conti X(t)+\frac \gamma2 d\conti L^{\conti R}(t)\\
		=\idf_{\{\cnz(t)> 0\}} d\conti Y(t) - \idf_{\{\cnz(t)\leq 0\}} d \conti X(t)+\frac \gamma2 d\conti L^{\conti R}(t),
	\end{multline}
	where in the last equality we used that $\cnz(t)=r(\conti R(t))$ and that
	$\int_0^t\idf_{\{\conti R(s)= 0\}} d \conti X(s)$ is identically zero. Indeed, this stochastic integral has zero mean and 
	\begin{equation}\label{eq:vweivfweiuvfewiu}
		\Var\left[\int_0^t\idf_{\{\conti R(s)= 0\}} d \conti X(s)\right]=\E\left[\int_0^t\idf_{\{\conti R(s)= 0\}} ds\right]=0,
	\end{equation}
	since $\int_0^t\idf_{\{\conti R(s)= 0\}} ds=0$ by \cite[Exercise 3.7.10]{MR1121940}.

	It remains to find a relation between $\conti L^{\conti R}(t)$ and $\conti L^{\cnz}(t)$. Define $\widetilde\sgn(x)\coloneqq\idf_{x>0}-\idf_{x<0}$ (note that $\widetilde\sgn(0)=0$ by definition). By It\^{o}--Tanaka formula (\cref{eq:itotanaka}) and \cref{weiubdieuwbdweiobndqenwd},
	\begin{multline}\label{weuhfweuibfipw}
		|\cnz(t)|=\int_0^t\widetilde\sgn(\cnz(s))d\cnz(s)+\conti L^{\cnz}(t)\\
        =\int_0^t\widetilde\sgn(\cnz(s))(\idf_{\{\cnz(s)> 0\}} d\conti Y(s) - \idf_{\{\cnz(s)\leq 0\}} d \conti X(s))
        +\frac \gamma 2\int_0^t\widetilde\sgn(\cnz(s)) d\conti L^{\conti R}(s)
        +\conti L^{\cnz}(t)\\
		=\int_0^t\widetilde\sgn(\cnz(s))(\idf_{\{\cnz(s)> 0\}} d\conti Y(s) - \idf_{\{\cnz(s)\leq 0\}} d \conti X(s))+\conti L^{\cnz}(t),
	\end{multline}
	where in the last equality we used that $\int_0^t\widetilde\sgn(\cnz(s))d\conti L^{\conti R}(s)=\int_0^t\widetilde\sgn(\conti R(s))d\conti L^{\conti R}(s)=0$ because $\widetilde\sgn(0)=0$ and $\conti L^{\conti R}(s)$ increases only when $\conti R(s)=0$.
	Again by It\^{o}--Tanaka formula (\cref{eq:itotanaka}), setting $\gamma'\coloneqq 1/(1-q)+1/q$, we have that
	\begin{multline}\label{fwuibfwe}
		|r(\conti R(t))|=\int_0^t\widetilde\sgn(r(\conti R(s)))r'(\conti R(s))d\conti R(s)+\frac {\gamma'}{2}\conti L^{\conti R}(t)\\
		=\int_0^t\widetilde\sgn(\cnz(s))(\idf_{\{\cnz(s)> 0\}} d\conti Y(s) - \idf_{\{\cnz(s)\leq 0\}} d \conti X(s))+\frac {\gamma'}{2}\conti L^{\conti R}(t),
	\end{multline}
	where in the last equality we used that $\widetilde\sgn(r(\conti R(s)))=\widetilde\sgn(\cnz(s))$ and \cref{eq:flow_SDE_gen3}.
	Comparing \cref{weuhfweuibfipw,fwuibfwe}, we obtain that $\conti L^{\cnz}(t)=\frac {\gamma'}{2}\conti L^{\conti R}(t)$. Substituting the latter expression in \cref{weiubdieuwbdweiobndqenwd} we can conclude that
	$\cnz$ is a strong solution to \cref{eq:flow_SDE_gen2}.

	\medskip
	
	Assume now that the process $\cnz$ is a strong solution to \cref{eq:flow_SDE_gen2}. Recalling that $r(s(x))=s(r(x))=x$, we show that the process $\conti R$, defined by $\conti R(t)=s(\cnz(t))$ is a strong solution to \cref{eq:flow_SDE_gen3}. By It\^{o}--Tanaka formula (\cref{eq:itotanaka}),
	\begin{equation}
		d\conti R(t)=s'(\cnz(t))d\cnz(t)+\frac{1-2q}{2} d\conti L^{\cnz}(t).
	\end{equation}
	From \cref{eq:flow_SDE_gen2}, using similar arguments as before, we obtain that
	\begin{align}
		d\conti R(t)&=s'(\cnz(t))(\idf_{\{\cnz(t)> 0\}} d\conti Y(t) - \idf_{\{\cnz(t)\leq 0\}} d \conti X(t))+\left((2q-1)s'(0)+\frac{1-2q}{2}\right) d\conti L^{\cnz}(t)\\
		&=(1-q)\idf_{\{\conti R(t)> 0\}} d\conti Y(t) - \frac 1 2 \idf_{\{\conti R(t)= 0\}} d \conti X(t)-q \idf_{\{\cnz(t)< 0\}} d \conti X(t).
	\end{align}
	The latter SDE is equivalent to the SDE in \cref{eq:flow_SDE_gen3} because as before $\int_0^t\idf_{\{\conti R(s)= 0\}} d \conti X(s)$ is identically zero.
\end{proof}

We now move to the proof of \cref{prop:pu_SDE}.

\begin{proof}[Proof of \cref{prop:pu_SDE}]
	Our strategy to show that pathwise uniqueness holds for the SDE
	\begin{equation}\label{fbirefiuowebfouiwebnf}
		d\conti R(t) = (1-q)\idf_{\{\conti R(t)> 0\}} d\conti Y(t) - q\idf_{\{\conti R(t)\leq 0\}} d \conti X(t)
	\end{equation}
	is to apply \cite[Theorem 8.1]{MR3055262} that guarantees pathwise uniqueness in the following setting:
	Let $f : \R \to \R$ be a function of finite variation. Consider two continuous local martingales $(\conti M(t))_{t\in\R_{\geq 0}}$ and $(\conti N(t))_{t\in\R_{\geq 0}}$ started at zero, which almost surely satisfy for some constant $c > 0$,
	\begin{equation}
		\langle \conti M,\conti N \rangle (t)=0, \quad   d\langle \conti M \rangle (t) \leq c\cdot d\langle \conti N \rangle (t),\qquad t\in\R_{\geq 0}.
	\end{equation}
	Then pathwise uniqueness holds for the SDE
	$$d\cnz(t) = f(\cnz(t)) d\conti M(t)+d\conti N(t).$$
	The rest of the proof is devoted to rewriting the SDE in \cref{fbirefiuowebfouiwebnf} in the form described above. We first construct the continuous local martingales $(\conti M(t))_{t\in\R_{\geq 0}}$ and $(\conti N(t))_{t\in\R_{\geq 0}}$.
	It is straightforward to check that defining $x$ such that $\sin(2x)=\rho$, then the process 
	$$\begin{pmatrix}
		\cos(x) & -\sin(x) \\
		-\sin(x) & \cos(x) 
	\end{pmatrix}
	\begin{pmatrix}
		\conti X(t) \\
		\conti Y(t) 
	\end{pmatrix}$$ 
	is a standard two-dimensional Brownian motion, in particular, it has uncorrelated coordinates.

	\noindent Now, setting\footnote{The parameters $p,q,t,u$ are chosen so that the process $({\conti M}(t),\widetilde{\conti N}(t))$ defined in \cref{eq:weuobfweobf} is a standard two-dimensional Brownian motion and \cref{diuebiebwdow} holds.}  $\Theta=(1 + 2 ( q -1) q - 
		2 (-1 + q) q \sin(2x))^{-1/2}$ and
	\begin{align}
		p&=(-q \cos(x) + ( q - 1) \sin(x))\Theta,\qquad
		t=((q-1) \cos(x) - q \sin(x))\Theta,\\
		q&=(\cos(x) - q \cos(x) + q \sin(x))\Theta,\qquad	
		u=(-q \cos(x) + (q-1) \sin(x))\Theta,
	\end{align}
	it is simple to check that the matrix $\begin{pmatrix}
	p & t \\
	q & u 
	\end{pmatrix}$ is orthogonal. We finally define $A,B,C,D$ such that $\begin{pmatrix}
	A & B \\
	C & D 
	\end{pmatrix}=\begin{pmatrix}
		p & t \\
		q & u 
	\end{pmatrix}
	\begin{pmatrix}
		\cos(x) & -\sin(x) \\
		-\sin(x) & \cos(x) 
	\end{pmatrix}$.
	With these definitions, the process 
	\begin{equation}\label{eq:weuobfweobf}
		({\conti M}(t),\widetilde{\conti N}(t))\coloneqq
		\begin{pmatrix}
		A & B \\
		C & D 
		\end{pmatrix}
		\begin{pmatrix}
		\conti X(t) \\
		\conti Y(t) 
		\end{pmatrix}
	\end{equation}
	is a standard two-dimensional Brownian motion. Indeed, it is a linear isometric transformation of another standard two-dimensional Brownian motion. Additionally, it holds that 
	\begin{equation}\label{diuebiebwdow}
		A(1-q)=Bq.
	\end{equation}
	Note that setting $S=(AD-BC)^{-1}$ we have that
	\begin{equation}
		\conti X(t)=S(D{\conti M}(t)-B\widetilde{\conti N}(t))\quad \text{and} \quad \conti Y(t)=S(A\widetilde{\conti N}(t)-C{\conti M}(t)).
	\end{equation}
	Therefore the SDE $d\conti R(t) = (1-q)\idf_{\{\conti R(t)> 0\}} d\conti Y(t) - q\idf_{\{\conti R(t)\leq 0\}} d \conti X(t)$ can be written in the following equivalent form
	\begin{multline}
		d\conti R(t) = S(1-q)\idf_{\{\conti R(t)> 0\}} (A\cdot d\widetilde{\conti N}(t)-C\cdot d{\conti M}(t)) - Sq\idf_{\{\conti R(t)\leq 0\}} (D\cdot d{\conti M}(t)-B\cdot d\widetilde{\conti N}(t))\\
		=S (-C(1-q)\idf_{\{\conti R(t)> 0\}}-qD\idf_{\{\conti R(t)\leq 0\}})d{\conti M}(t)+
		qSBd\widetilde{\conti N}(t),		
	\end{multline}
	where in the last equality we used \cref{diuebiebwdow}. Note that we can write the latter SDE as follows
	\begin{equation}
			d\conti R(t) 
		=f(\conti R(t))d{\conti M}(t)+d{\conti N}(t),
	\end{equation}
	where $f(x)=S (-C(1-q)\idf_{\{x> 0\}}-qD\idf_{\{x\leq 0\}})$ is a bounded variation function and ${\conti N}(t)=qSB\widetilde{\conti N}(t)$. In addition, the continuous martingales ${\conti M}(t)$ and ${\conti N}(t)$ satisfy all properties required to apply \cite[Theorem 8.1]{MR3055262}. Therefore we can conclude that pathwise uniqueness holds for the SDE in \cref{fbirefiuowebfouiwebnf}.
\end{proof}

We finally prove \cref{prop:ex_SDE}.

\begin{proof}[Proof of \cref{prop:ex_SDE}]
	Let $(\conti B(t))_{t\in\R_{\geq 0}}$ be a standard one-dimensional Brownian motion. Consider the SDE
	\begin{equation}\label{ewqiyvduqwvbdipbqwdpibqw}
		d\conti R(t) = ((1-q)\idf_{\{\conti R(t)> 0\}} - q\idf_{\{\conti R(t)\leq 0\}})d \conti B(t).
	\end{equation}
	Since we assumed $q\in(0,1)$, according to a result due to S. Nakao \cite{MR326840} (see also \cite{MR770393,MR777514}), the latter SDE has a unique strong solution.  Note that
	\begin{equation}\label{wefivwevbfoubwef}
		d\langle\conti R \rangle(t) = ((1-q)^2\idf_{\{\conti R(t)> 0\}} + q^2\idf_{\{\conti R(t)\leq 0\}})dt.
	\end{equation}
	We now consider two standard one-dimensional Brownian motions $(\widetilde{\conti X}(t))_{t\in\R_{\geq 0}},$ $(\widetilde{\conti Y}(t))_{t\in\R_{\geq 0}}$ such that
	\begin{equation}\label{fbweuubfpeqbnfpqe}
		\langle\conti B,\widetilde{\conti X}\rangle(t)=\langle\conti B,\widetilde{\conti Y}\rangle(t)=\rho t.
	\end{equation}
        This is possible (for instance) by considering two additional independent Brownian motions $\conti B_1$ and $\conti B_2$, both also independent of $\conti B$, and setting $\widetilde{\conti X}=\sqrt \rho \conti B+ \sqrt{1-\rho} \conti B_1$ and  $\widetilde{\conti Y}= \sqrt \rho \conti B+ \sqrt{1-\rho} \conti B_2$.
	We define the following two processes
	\begin{align}
		\overline{\conti X}(t)&\coloneqq\int_0^t\idf_{\{\conti R(s)> 0\}}d \widetilde{\conti Y}(s)  -\frac 1 q\int_0^t\idf_{\{\conti R(s)\leq 0\}} d{\conti R}(s),\\
		\overline{\conti Y}(t)&\coloneqq\frac{1}{1-q}\int_0^t\idf_{\{\conti R(s)> 0\}}d {\conti R}(s)  -\int_0^t\idf_{\{\conti R(s)\leq 0\}} d\widetilde{\conti X}(s).
	\end{align}
	Note that
	\begin{equation}
		 (1-q)\idf_{\{\conti R(t)> 0\}} d\overline{\conti Y}(t) - q\idf_{\{\conti R(t)\leq 0\}} d\overline{\conti X}(t)=d\conti R(t),
	\end{equation}
	and so the triplet $(\conti R,\overline{\conti X},\overline{\conti Y})$ solves \cref{eq:flow_SDE_gen3}. To show that $(\conti R,\overline{\conti X},\overline{\conti Y})$ is a weak solution to \cref{eq:flow_SDE_gen3}, it remains to show that $(\overline{\conti X},\overline{\conti Y})$ is a two-dimensional Brownian motion with correlation $\rho$. By definition, $\overline{\conti X}$ and $\overline{\conti Y}$ are continuous local martingales. In addition, we have that
	\begin{equation}
		\langle\overline{\conti X}\rangle(t)
		=
		\int_0^t\idf_{\{\conti R(s)> 0\}}ds  
		+\frac{1}{ q^2}\int_0^t\idf_{\{\conti R(s)\leq 0\}} d\langle {\conti R} \rangle (s)
		=
		\int_0^t\idf_{\{\conti R(s)> 0\}}ds  
		+\int_0^t\idf_{\{\conti R(s)\leq 0\}}ds=t,
	\end{equation}
	where in the second inequality we used \cref{wefivwevbfoubwef}. Similarly,
	\begin{equation}
		\langle\overline{\conti Y}\rangle(t)
		=
		\tfrac{1}{(1-q)^2}\int_0^t\idf_{\{\conti R(s)> 0\}}d\langle {\conti R} \rangle (s) +\int_0^t\idf_{\{\conti R(s)\leq 0\}} ds
		=
		\int_0^t\idf_{\{\conti R(s)> 0\}}ds +\int_0^t\idf_{\{\conti R(s)\leq 0\}} ds
		=t.
	\end{equation}
	Therefore, by Lévy's characterization theorem (\cite[Theorem 3.3.16]{MR1121940}), $\overline{\conti X}$ and $\overline{\conti Y}$ are standard one-dimensional Brownian motions. It only remains to check that $\overline{\conti X}$ and $\overline{\conti Y}$ have the desired correlation. Note that
	\begin{equation}\label{frbferiobfwbfpwewfw}
		\langle\overline{\conti X}, \overline{\conti Y}\rangle(t)
		=\frac{1}{1-q}\int_0^t\idf_{\{\conti R(s)> 0\}}d \langle\widetilde{\conti Y}, {\conti R}\rangle(s)  +\frac{1}{q}\int_0^t\idf_{\{\conti R(s)\leq 0\}} d\langle\widetilde{\conti X}, {\conti R}\rangle(s).
	\end{equation} 
	From \cref{ewqiyvduqwvbdipbqwdpibqw} and \cref{fbweuubfpeqbnfpqe}, we have that
	\begin{equation}
		d\langle\widetilde{\conti X}, {\conti R}\rangle(s)=d\langle\widetilde{\conti Y}, {\conti R}\rangle(s)=\rho \left((1-q) \idf_{\{\conti R(s)> 0\}} ds+ q \idf_{\{\conti R(s)\leq0\}} ds\right),
	\end{equation}
	and substituting this expression in \cref{frbferiobfwbfpwewfw} we can conclude that $\langle\overline{\conti X}, \overline{\conti Y}\rangle(t)=\rho t$, as desired.
	
	\medskip
	
	The fact that $r(\conti R(t))_{t\in\R_{\geq 0}}$ is a skew Brownian motion of parameter $q$ follows (for instance) using the same arguments as in \cite[Section 5.2]{MR2280299} and recalling that $\conti R$ is a solution to \cref{ewqiyvduqwvbdipbqwdpibqw}.
\end{proof}

	We complete the proof of \cref{thm:ex_uni_rho_gen}, considering the case $\rho \in (-1,1)$ and $q=0$ (the case $q=1$ follows with similar arguments). We consider the functions $g(x)=x\cdot\mathds{1}_{x\geq 0}$ and $g'(x)=\mathds{1}_{x>0}+\frac 1 2 \cdot\idf_{x=0}$. We assume that $\cnz$ is a strong solution to \cref{eq:flow_SDE_gen2} and we define the process $\conti R$ by $\conti R(t)=g(\cnz(t))$. By It\^{o}--Tanaka formula (\cref{eq:itotanaka}) and \cref{eq:flow_SDE_gen2}, we have that
	\begin{equation}
		d \conti R(t)=g'(\cnz(t))d\cnz(t)+\frac 1 2 d\conti L^{\cnz}(t)=
		 \idf_{\{\conti R(t)> 0\}} d\conti Y(t) - \frac 1 2\idf_{\{\conti R(t)= 0\}} d \conti X(t).
	\end{equation}
	By definition the process $\conti R$ is non-negative, continuous, and started at zero. The last equation shows that $\conti R$ is also a martingale, and so $\conti R$ is a.s.\ identically zero. This implies that a.s.\ $\cnz(t)\leq 0$ for all $t\in\R_{\geq 0}$ and so $\cnz$ solves the SDE
	\begin{equation}
		d\cnz(t) = - \idf_{\{\cnz(t)\leq 0\}} d \conti X(t)-d\conti L^{\cnz}(t),\quad t\in \R_{\geq 0}.
	\end{equation}
	From \cite{MR606993}, we know that the latter SDE has a unique strong solution 
	$\cnz(t)_{t\in\R_{\geq 0}}$ that is a skew Brownian motion of parameter $0$. This completes the proof of \cref{thm:ex_uni_rho_gen}.
	
\subsubsection{The Brownian excursion case}\label{eq;efbwevfouewvfouew}

Building on \cref{thm:ex_uni_rho_gen} it is straightforward to prove \cref{thm:conj_rho_gen} using absolute continuity arguments. We include the following (short) proof for making the article as self-contained as we can, but we highlight that the arguments are similar to the one used in \cite[Theorem 4.6]{borga2020scaling}.

\begin{proof}[Proof of \cref{thm:conj_rho_gen}]
The strategy is to consider the solution mappings $F_t$ defined in \cref{thm:ex_uni_rho_gen} and for all $r\in(u,1)$ to define the following measurable (with regards to $\mathcal F_r^{(u)}$) process $\conti S_{u,r}\in \mathcal C([u,r])$:
\begin{equation}
	\conti S_{u,r}(t) \coloneqq F_{r-u}\Big((\conti E_\rho({u+s}) - \conti E_\rho({u}))_{s\in[0,r-u]}\Big)(t-u),\quad t\in [u,r].
\end{equation}
Since by \cref{prop:abs_cont}, the Brownian excursion $((\conti E_\rho({u+s}) - \conti E_\rho({u}))_{s\in[0,r-u]}$ is absolutely continuous with regards to a two-dimensional Brownian motion of correlation $\rho$ on $[0,r-u]$, using the items 3-4 in \cref{thm:ex_uni_rho_gen} we have that
\begin{enumerate}
	\item $\conti S_{u,r}$ a.s.\ satisfies \cref{eq:flow_SDE_gen} on interval $[u,r]$;
	\item for $r,r'\in(u,1)$, we have $\conti S_{u,r} = (\conti S_{u,r'})|_{[u,r]}$ almost surely.
\end{enumerate}
In addition, by construction, the map $(u,r,\omega)\mapsto \conti S_{u,r}$ is a measurable function. Now, noting that the two statements above hold simultaneously for all $r,r'\in(u,1)\cap\mathbb Q$, we have that a.s.\ there exists a process $\cnz^{(u)} \in \mathcal C([u,1))$ such that $\cnz^{(u)}|_{[u,r]}=\conti S_{u,r}$ for every $r\in\mathbb (u,1)\cap\mathbb Q$. Hence $\cnz^{(u)}$ is $\mathcal F^{(u)}$-adapted and a.s.\ satisfies \cref{eq:flow_SDE_gen}, proving	the existence of a strong solution.

For pathwise uniqueness, let $\cnz^{(u)},\widetilde{\cnz}^{(u)}$ be two $\mathcal F^{(u)}$-adapted solutions to the SDE in \cref{eq:flow_SDE_gen} and $r<1$. There exist two functionals $G,\widetilde{G} : \mathcal C([u,r]) \to \mathcal C([u,r])$ such that a.s.,
\begin{equation}
	\cnz^{(u)} = G\left((\conti E_\rho({s}) - \conti E_\rho({u}))_{s\in[u,r]}\right)
	\quad
	\text{ and }
	\quad\widetilde {\cnz}^{(u)} = \widetilde{G}\left((\conti E_\rho({s}) - \conti E_\rho({u}))_{s\in[u,r]}\right).
\end{equation}	
Using again the absolute continuity (in the other direction) in \cref{prop:abs_cont}, for a two-dimensional Brownian motion $\conti W_\rho$ of correlation $\rho$, the stochastic processes $G(\conti W_\rho)$ and $\widetilde{G}(\conti W_\rho)$ are two solutions to the SDE in \cref{eq:flow_SDE_gen2}. Since by \cref{thm:ex_uni_rho_gen} pathwise uniqueness holds for \cref{eq:flow_SDE_gen2}, $G(\conti W_\rho)=\widetilde{G}(\conti W_\rho)$ a.s. Therefore, by absolute continuity, $\cnz^{(u)} = \widetilde {\cnz}^{(u)}$ a.s. This ends the proof.
\end{proof}

\subsection{The skew Tanaka equation}\label{sect:skew}

The goal of this section is to prove \cref{thm:fvuwevifview2}.
Fix $q\in[0,1]$. We recall that we want to construct solutions to the following SDEs, defined for all $u\in[0,1]$ by
\begin{equation}\label{eq:Tanaka_2}
	\begin{cases}
		d\cnz^{(u)}(t) = \sgn(\cnz^{(u)}(t)) d \bm e(t)+(2q-1)\cdot d\conti L^{\cnz^{(u)}}(t),& t\in [u,1),\\
		\cnz^{(u)}(t)=0,&  t\in [0,u].
	\end{cases} 
\end{equation}
where $\sgn(x)\coloneqq \idf_{\{x>0\}}-\idf_{\{x\leq0\}}$, $(\bm e(t))_{t\in [0,1]}$ is a one-dimensional Brownian excursion on $[0,1]$ and $\conti L^{\cnz^{(u)}}(t)$ is the symmetric local time at zero of $\cnz^{(u)}$.

\medskip

	\cref{eq:Tanaka_2} when $q=1/2$ and $(\bm e(t))_{t\in [0,1]}$ is replaced by a standard one-dimensional Brownian motion $(\conti B(t))_{t\in \R_{\geq 0}}$ is the well-known Tanaka's SDE:
	\begin{equation}\label{eq:fiwvfouwbofwbh}
		\begin{cases}
			d\cnz^{(u)}(t) = \sgn(\cnz^{(u)}(t)) d \conti B(t),&t\in \R_{> u},\\
			\cnz^{(u)}(t)=0,&  t\in[0,u].
		\end{cases} 
	\end{equation}
	The striking feature of this equation is the absence of pathwise
	uniqueness: solutions cannot be measurable functions of the driving
	process $\conti B$ and must also incorporate additional randomness (see for instance \cite[Example 5.3.5]{MR1121940}). 
	
	Similarly, there is absence of pathwise
	uniqueness also for the following SDEs (for a proof see for instance the discussion between Eq.\ (4) and Eq.\ (6) in \cite{MR2835247}):
	\begin{equation}\label{eq:Tanaka3}
		\begin{cases}
			d\cnz^{(u)}_{q}(t) = \sgn(\cnz_q^{(u)}(t)) d \conti B(t)+(2q-1)\cdot d\conti L^{\cnz_q^{(u)}}(t),& t\in\R_{>u},\\
			\cnz_q^{(u)}(t)=0,&  t\in [0,u].
		\end{cases} 
	\end{equation}
	This absence of pathwise uniqueness (both for the SDEs in \cref{eq:fiwvfouwbofwbh} and \cref{eq:Tanaka3}) raises many questions when we want to couple solutions to \cref{eq:fiwvfouwbofwbh} (or \cref{eq:Tanaka3}) for several starting times $u\in\R$. An elegant solution was developed by Le Jan and Raimond (\cite{MR2060298,MR4112725}) using the notion of \textit{stochastic flow of maps}. The same authors proved in \cite[Theorem 2.1]{MR2235172} that there exists a stochastic flow of maps solving \cref{eq:fiwvfouwbofwbh} and explicitly constructed this flow (which was
	also studied in \cite{MR1816931}). Later, Hajri (\cite[Theorem 2]{MR2835247}) extended the ideas of Le Jan and Raimond and explicitly constructed another stochastic flow of maps solving\footnote{We remark that the construction given in \cite[Theorem 2]{MR2835247} is presented in a much more general setting. Specifically, Hajri considered general flow evolving on graphs and some generalized SDEs (mixing Tanaka’s SDE and the skew Brownian motion SDE). Furthermore, he gave a discrete approximation of this flow in \cite{MR2953347}.
	We do not need to consider this general setting in the present paper. We just mention that our specific case corresponds (following the notation in \cite[Theorem 2]{MR2835247}) to $N=p=2$, $\alpha_1=q$ and $\alpha_2=1-q$ (so that $\alpha^{+}=1$).} \cref{eq:Tanaka3}.
	
	Using our notation, the construction of Hajri
	gives the following solution 	$\left\{\cnz^{(u)}_q(t)\right\}_{u\in\R_{\geq 0}}$ to the SDEs in \cref{eq:Tanaka3}. (We suggest comparing the following explanation with \cref{figure:efwuyvew}.)

	Conditional on $\conti B$,  consider an i.i.d.\ sequence $(\bm s(\ell))_{\ell}$, indexed by the local
	minima of $\conti B$, and with distribution $\P(\bm s(\ell)=+1)=q=1-\P(\bm s(\ell)=-1)$.
	
	For $u,t\in[0,1]$ with $u \leq t$, set $ {\bm m}^{(u)}(t) \coloneqq \inf_{[u,t]} \conti B$ and ${\bm \varepsilon}^{(u)}_{q}(t) \coloneqq \bm s\left(\sup\{r\in[u,t]: \conti B(r) = {\bm m}^{(u)}(t)\}\right)$. 
	Then the solutions $\left\{\cnz^{(u)}_q(t)\right\}_{u\in\R_{\geq 0}}$ are defined as follows 
	\begin{equation}\label{eq:def_process}
		\begin{cases}
			\cnz_{q}^{(u)}(t) \coloneqq (\conti B(t) - 
			{\bm m}^{(u)}(t)) {\bm \varepsilon}_q^{(u)}(t),&t\in \R_{> u},\\
			\cnz_{q}^{(u)}(t)=0,&  t\in[0,u].
		\end{cases} 
	\end{equation}
	
	\begin{figure}[h]
		\centering
			\includegraphics[scale=.75]{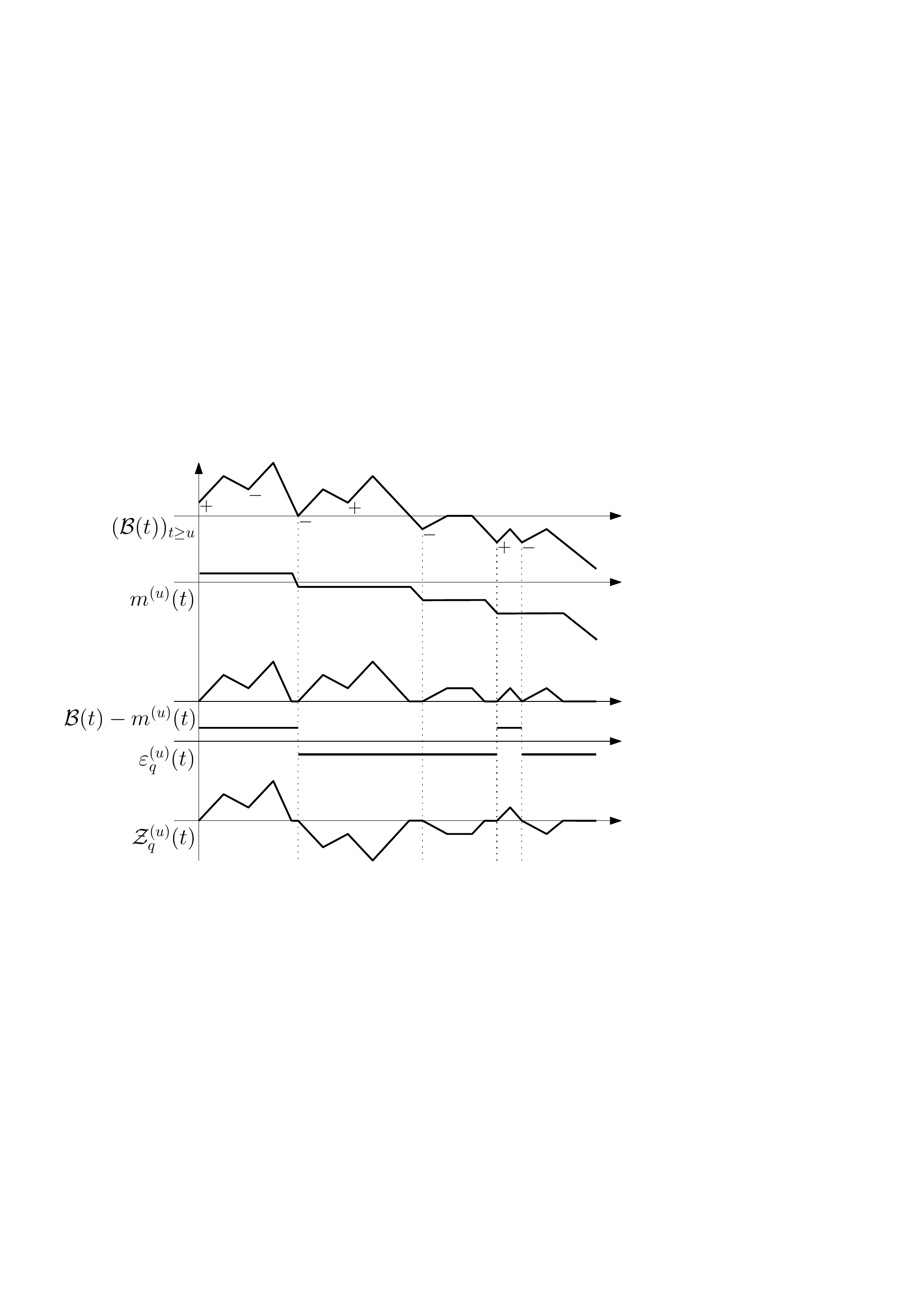}
		\caption{A schema for the processes introduced in \cref{eq:def_process}.\label{figure:efwuyvew}}
	\end{figure}

\begin{rem}
	Note that the solutions $\left\{\cnz^{(u)}_q(t)\right\}_{u\in\R_{\geq 0}}$ defined in \cref{eq:def_process} are constructed using \emph{the same} Brownian motion $\conti B$ and \emph{the same} sequence of i.i.d.\ signs $(\bm s(\ell))_{\ell}$. In particular, there is a coupling between different solutions. 
\end{rem}

\begin{rem}\label{rem:sol_sbm}
	For every fixed $u\in\R_{\geq 0}$, the process $\cnz^{(u)}_q$ defined in \cref{eq:def_process} is a skew Brownian motion of parameter $q$ (and in particular is measurable) as shown in\footnote{We remark again that it is enough to consider the case $N=2$, $\alpha_1=q$ and $\alpha_2=1-q$ in \cite[Proposition 1]{MR2835247} for our purposes.} \cite[Proposition 1]{MR2835247}.
\end{rem}

The fact that the processes $\left\{\cnz_q^{(u)}(t)\right\}_{u\in\R_{\geq 0}}$ defined in \cref{eq:def_process} form a family of solutions to the SDEs in \cref{eq:Tanaka3} is a consequence of the more general result stated in 
\cite[Theorem 2]{MR2835247}, as already mentioned above. Since here we do not need such generality, we include a simple self-contained proof of this result.

\begin{prop}\label{prop:fvuwevifview}
	The family $\left\{\cnz_q^{(u)}(t)\right\}_{u\in\R_{\geq 0}}$ defined in \cref{eq:def_process} is a family of solutions to the SDEs in \cref{eq:Tanaka3}. Moreover, for every $u\in\R_{\geq 0}$, $(\cnz_q^{(u)}(t))_{t\in\R_{\geq u}}$ is a skew Brownian motion of parameter $q$ started at zero at time $u$.
\end{prop}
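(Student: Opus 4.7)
I fix $u \in \R_{\geq 0}$ and set $Y_t := \cnz^{(u)}_q(t)$, $W_t := \conti B(t) - \bm m^{(u)}(t)$, and $\sigma_t := \bm\varepsilon^{(u)}_q(t)$, so that $|Y|=W$ and $Y=\sigma W$ on $[u,\infty)$. By stationarity of Brownian increments I may assume $u=0$. My plan has three stages: identify the local time of $Y$ at $0$ via the Skorokhod reflection of $W$; identify $Y$ distributionally as a skew Brownian motion via Itô's excursion theory; and match the Harrison--Shepp driving Brownian motion with $\int_0^\cdot \sgn(Y_s)\,d\conti B_s$ by an Itô--Tanaka comparison.

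For the first stage, Skorokhod's lemma gives $W_t = \conti B(t) + K_t$ with $K_t := -\bm m^{(0)}(t)$ continuous, non-decreasing, starting at $0$ and supported on $\{W=0\}$. Applying the symmetric Itô--Tanaka formula \cref{eq:itotanaka} to $|W|=W$, the integrand $\widetilde{\sgn}(W) = \idf_{W > 0}$ (with $\widetilde{\sgn}(x) := \idf_{x>0} - \idf_{x<0}$) annihilates the $dK$-part and immediately yields $\conti L^W(t) = K_t$; since $|Y|=W$, the defining limit of the symmetric local time at zero forces $\conti L^Y(t) = \conti L^W(t) = K_t$. For the second stage, the pair $(W,K)$ is a reflected Brownian motion together with its boundary local time, so by Itô's excursion theory the positive excursions of $W$ form a Poisson point process on the local-time scale $K$ with intensity the standard Itô excursion measure. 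The prescription \cref{eq:def_process} attaches to each excursion an independent sign ($+1$ with probability $q$, $-1$ with probability $1-q$), and the resulting signed-excursion point process has intensity equal to the excursion measure of a skew Brownian motion of parameter $q$ by the classical excursion-theoretic construction of skew BM from reflected BM. Hence $Y$ is a skew Brownian motion of parameter $q$ started at zero at time $0$, proving the \emph{moreover} assertion; in particular $Y$ is a continuous semimartingale and admits, by Harrison--Shepp, the decomposition $dY = d\beta + (2q-1)\,d\conti L^Y$ for a unique standard Brownian motion $\beta$ adapted to the enlarged filtration generated by $\conti B$ and $\bm s$.

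For the third stage, I apply the symmetric Itô--Tanaka formula to $|Y|=W$,
\begin{equation*}
W_t = \int_0^t \widetilde{\sgn}(Y_s)\,dY_s + \conti L^Y(t),
\end{equation*}
and substitute the Harrison--Shepp SDE into $dY$; the local-time contribution is killed because $\widetilde{\sgn}(0)=0$, yielding $W_t - \conti L^Y(t) = \int_0^t \widetilde{\sgn}(Y_s)\,d\beta_s$. By the first stage the left-hand side equals $\conti B(t)$, so $\int_0^t \widetilde{\sgn}(Y_s)\,d\beta_s = \conti B(t)$. Since $\widetilde{\sgn}(Y)^2 = \idf_{Y \neq 0}$ has Lebesgue density $1$ a.e.\ (the zero set of $Y$ has vanishing Lebesgue measure), multiplying the identity $d\conti B = \widetilde{\sgn}(Y)\,d\beta$ by $\widetilde{\sgn}(Y)$ inverts it to $\beta_t = \int_0^t \widetilde{\sgn}(Y_s)\,d\conti B_s = \int_0^t \sgn(Y_s)\,d\conti B_s$, the last equality holding because $\widetilde{\sgn}$ and $\sgn$ differ only on $\{Y=0\}$, a set of zero Lebesgue measure, so their Itô integrals against $\conti B$ agree. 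Plugging this back into the Harrison--Shepp SDE delivers the claimed Tanaka-type SDE $dY = \sgn(Y)\,d\conti B + (2q-1)\,d\conti L^Y$. The main obstacle is the excursion-theoretic identification in the second stage, which simultaneously encapsulates the distributional claim and the semimartingale property of $Y$; once that is granted, the remaining verifications are essentially mechanical manipulations of the symmetric Itô--Tanaka formula.
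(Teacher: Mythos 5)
Your proof is correct and follows essentially the same route as the paper's: Harrison--Shepp decomposition of the skew Brownian motion $Y=\cnz_q^{(u)}$, an It\^o--Tanaka computation for $|Y|$, identification of $\conti L^Y$ with $-\bm m^{(u)}$ and of $\conti B$ with the stochastic integral of the sign against the Harrison--Shepp Brownian motion, and finally inversion to recover $\sgn(Y)\,d\conti B$ as the driving term. The only cosmetic differences are that you invoke It\^o excursion theory for the distributional identification of $Y$ as a skew Brownian motion (the paper cites Hajri's Proposition 1 in \cite{MR2835247} for the same fact, so neither version reproves it), and that you obtain $\conti L^Y = -\bm m^{(u)}$ via a separate Skorokhod-reflection step rather than reading it off, together with $\conti B = \int \sgn(Y)\,d\conti W$, from the uniqueness of the semimartingale decomposition as the paper does in one stroke.
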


\begin{proof}
	Fix $u\in \R_{\geq 0}$. Recall from \cref{rem:sol_sbm} that $\cnz^{(u)}_q(t)$ is a skew Brownian motion of parameter $q$. 
	From \cite{MR606993}, there exists a one-dimensional Brownian motion $(\conti W(t))_{t\in \R_{\geq u}}$ started at zero at time $u$ such that
	\begin{equation}\label{eq:djhfevwiufbeqwpidf}
		\cnz_q^{(u)}(t)=\conti W(t)+(2q-1)\conti L^{\cnz_q^{(u)}}(t),\qquad t\in\R_{\geq u}.
	\end{equation}
	From It{\^o}--Tanaka formula (\cref{eq:itotanaka}), we have that
	\begin{align}
		\left|\cnz_q^{(u)}(t)\right|
		&=
		\int_u^t \left(\idf_{\{\cnz_q^{(u)}(s)>0\}}-\idf_{\{\cnz_q^{(u)}(s)<0\}}\right)d\cnz_q^{(u)}(s)
		+\conti L^{\cnz_q^{(u)}}(t)\\
		&=
		\int_u^t\left(\idf_{\{\cnz_q^{(u)}(s)>0\}}-\idf_{\{\cnz_q^{(u)}(s)<0\}}\right)d\conti W(s)
		+\conti L^{\cnz_q^{(u)}}(t),
	\end{align}
	where in the last equality we used \cref{eq:djhfevwiufbeqwpidf} and the fact that 
	$$\int_u^t \left(\idf_{\{\cnz_q^{(u)}(s)>0\}}-\idf_{\{\cnz_q^{(u)}(s)<0\}}\right)d\conti L^{\cnz_q^{(u)}}(s)=0,$$ 
	because  $\conti{L}^{\cnz_q^{(u)}}(s)$ increases only if $\cnz_q^{(u)}(s)=0$.
	Noting that $\int_u^t\idf_{\{\cnz_q^{(u)}(s)=0\}}d\conti W(s)=0$ (this follows for instance using the same arguments as in \cref{eq:vweivfweiuvfewiu}), we obtain that
	\begin{equation}\label{eq:fbiewbfopwenfw}
		\left|\cnz_q^{(u)}(t)\right|=
		\int_u^t\sgn(\cnz_q^{(u)}(s))d\conti W(s)
		+\conti L^{\cnz_q^{(u)}}(t).
	\end{equation}
	Now by construction (see \cref{eq:def_process}) we also have that
	\begin{equation}\label{eq:fhjvewifb}
		\left|\cnz_q^{(u)}(t)\right|= \conti B(t) - {\bm m}^{(u)}(t).
	\end{equation}
    Recall that every continuous semimartingale can be uniquely decomposed into a continuous local martingale and a continuous finite variation process started at zero. Therefore, comparing \cref{eq:fbiewbfopwenfw,eq:fhjvewifb}, and noting that $\conti L^{\cnz_q^{(u)}}(t)$ and $- {\bm m}^{(u)}(t)$ are both increasing (and so with finite variation) continuous processes started at zero, we obtain that 
    \begin{equation}
	\conti L^{\cnz_q^{(u)}}(t)= - {\bm m}^{(u)}(t) \qquad\text{and}\qquad\conti B(t)=\int_u^t\sgn(\cnz_q^{(u)}(s))d\conti W(s).
    \end{equation}
    Using the equality on the right-hand side of the last equation, we get
	\begin{equation}
		\int_u^t\sgn(\cnz_q^{(u)}(s))d\conti B(s)=\int_u^t\sgn(\cnz_q^{(u)}(s))^2d\conti W(s)= \int_u^td\conti W(s)=\conti W(t),
	\end{equation}
    where in the last equality we used that $(\conti W(t))_{t\in \R_{\geq u}}$ is a Brownian motion started at zero at time $u$. Substituting the last expression in \cref{eq:djhfevwiufbeqwpidf}, we conclude that $\cnz_q^{(u)}(t)$ is a solution to the SDE in \cref{eq:Tanaka3}.
\end{proof}

The result stated in \cref{thm:fvuwevifview2} follows from \cref{prop:fvuwevifview}
using standard absolute continuity arguments between one-dimensional Brownian excursions and one-dimensional Brownian motions. Since these arguments are very similar to the ones used for the proof of \cref{thm:conj_rho_gen} in \cref{eq;efbwevfouewvfouew}, we skip the details here.

\bigskip

We conclude this section with a quick discussion on uniqueness of the solutions to \cref{eq:Tanaka3} given in \cref{prop:fvuwevifview}.

\begin{rem}\label{rem:uniqueness}
	In the present paper, we decided to avoid using the formalism of stochastic flow of maps and so we did not include any claim in \cref{prop:fvuwevifview} related to uniqueness of the solutions to \cref{eq:Tanaka3} defined in \cref{eq:def_process} (and we made the same choice for \cref{thm:fvuwevifview2} in the introduction).
	Nevertheless, in \cite[Theorem 2]{MR2835247} it is also showed that the stochastic flow of maps corresponding to the processes $\left\{\cnz_q^{(u)}(t)\right\}_{u\in\R_{\geq 0}}$ defined in \cref{eq:def_process} is \emph{unique}, that is, the unique flow adapted to the filtration generated by the driving Brownian motion. We also remark that the proof of this uniqueness result has been later simplified in \cite{MR3314651}.
\end{rem}

\section{The skew Brownian permuton}

In the previous section we investigated solutions to the skew perturbed Tanaka equations driven by a two-dimensional correlated Brownian excursion. Building on these results, we can now deduce that the skew Brownian permuton is well-defined, proving \cref{thm:perm_is_ok}. Later, we will also show that the biased Brownian separable permuton is a particular case of the skew Brownian permuton, proving \cref{thm:Baxt_brow_same}. These are the two goals of the next two sections.

\subsection{The skew Brownian permuton is well-defined}\label{sect:welldef}

We prove here \cref{thm:perm_is_ok}, showing that the skew Brownian permuton $\bm \mu_{\rho,q}$ is well-defined for all $(\rho,q)\in(-1,1]\times[0,1]$. Before doing that we also give a slightly different (but equivalent) definition of the skew Brownian permuton that will be also helpful later to prove \cref{thm:Baxt_brow_same}.

We fix $(\rho,q)\in(-1,1]\times[0,1]$ and consider the continuous coalescent-walk process\footnote{Actually $\cnz_{\rho,q}^{(u)}$ was not defined for $u\in \{0,1\}$ (see \cref{defn:cont_coal_proc}). As what happens on a negligible subset of $[0,1]$ is irrelevant to the arguments to come, this causes no problems.} $\cnz_{\rho,q}=\left\{\cnz^{(u)}_{\rho,q}\right\}_{u\in[0,1]}$. 
We first define a random binary relation $\preccurlyeq_{\cnz_{\rho,q}}$ on $[0,1]^2$ as follows:
\begin{equation}\label{eq:efviwevfiuew}
	\begin{cases}
		t\preccurlyeq_{\cnz_{\rho,q}} t &\text{ for all }t\in [0,1],\\
		t\preccurlyeq_{\cnz_{\rho,q}} s &\text{ for all }0\leq t<s \leq 1\text{ s.t.\ }\ \cnz_{\rho,q}^{(t)}(s)<0,\\
		s\preccurlyeq_{\cnz_{\rho,q}} t,&\text{ for all }0\leq t<s \leq 1\text{ s.t.\ }\ {\cnz}_{\rho,q}^{(t)}(s)\geq0.
	\end{cases}
\end{equation}
We note that the stochastic process $\varphi_{\cnz_{\rho,q}}(t)$ introduce in \cref{eq:random_skew_function} satisfies
\begin{equation}
	\varphi_{\cnz_{\rho,q}}(t)=\Leb\left( \big\{x\in[0,1]|x \preccurlyeq_{\cnz_{\rho,q}} t\big\}\right),\qquad t\in[0,1],
\end{equation}
and we recall that the skew Brownian permuton $\bm \mu_{\rho,q}$ is defined by
\begin{equation}\label{eq:defnskeperm}
	\bm \mu_{\rho,q}(\cdot)\coloneqq(\Id,\varphi_{\cnz_{\rho,q}})_{*}\Leb (\cdot)= \Leb\left(\{t\in[0,1]|(t,\varphi_{\cnz_{\rho,q}}(t))\in \cdot \,\}\right).
\end{equation}
We restrict for the moment to the case $(\rho,q)\in(-1,1)\times[0,1]$ (the case $\rho=1$ will be treated separately in \cref{sect:equiv}).
Using pathwise uniqueness (Item 3 in \cref{thm:conj_rho_gen}) for the SDEs defining the continuous coalescent-walk process $\cnz_{\rho,q}$, it is simple to obtain the following result (for a proof see \cite[Proposition 5.2]{borga2020scaling}).

\begin{prop} \label{prop:orfn3rnfo3rinfi3rnf}
	Fix $(\rho,q)\in(-1,1)\times[0,1]$. There exists a random set $\bm A \subset [0,1]^2$ of a.s.\ zero Lebesgue measure, that is,
	$\P(\Leb(\bm A)=0)=1$, such that the restriction of the relation $\preccurlyeq_{\cnz_{\rho,q}}$ to $[0,1]^2\setminus \bm A$ is a.s.\ a total order.
\end{prop}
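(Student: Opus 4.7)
My plan is to adapt the strategy used for the Baxter case \cite[Proposition 5.2]{borga2020scaling} to the present, more general, skew perturbed Tanaka setting. The engine of the argument is a coalescence/comparison principle for the flow of solutions to \eqref{eq:flow_SDE_gen}, powered by the pathwise uniqueness established in \cref{thm:conj_rho_gen}.

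First, I would fix the exceptional set. By Tonelli applied to \cref{thm:conj_rho_gen}, on a full-probability event there is a random full-Lebesgue subset $U \subset [0,1]$ such that $\cnz_{\rho,q}^{(u)}$ satisfies \eqref{eq:flow_SDE_gen} on $[u,1)$ for every $u \in U$. Taking $\bm A \coloneqq [0,1]^2 \setminus (U \times U)$ then gives a Lebesgue-null random set.

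The core step is to establish a comparison principle: for $u < v$ in $U$, recalling that $\cnz_{\rho,q}^{(v)}(v) = 0$, if $\cnz_{\rho,q}^{(u)}(v) < 0$ then $\cnz_{\rho,q}^{(u)}(t) \leq \cnz_{\rho,q}^{(v)}(t)$ for all $t \in [v, 1)$, with the symmetric statement when $\cnz_{\rho,q}^{(u)}(v) > 0$ and complete coincidence when $\cnz_{\rho,q}^{(u)}(v) = 0$. The proof has two ingredients. The first is \emph{coalescence}: if $\cnz_{\rho,q}^{(u)}(\tau) = \cnz_{\rho,q}^{(v)}(\tau)$ at some $\tau \in [v, 1)$, both processes satisfy the SDE \eqref{eq:flow_SDE_gen} on $[\tau, 1)$ with the same initial value, so pathwise uniqueness forces them to coincide on $[\tau, 1)$. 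This step requires a mild extension of \cref{thm:conj_rho_gen} from initial value $0$ to arbitrary deterministic starting values, which should be immediate from the absolute continuity arguments of \cref{eq;efbwevfouewvfouew}. The second ingredient is continuity of the trajectories plus the intermediate value theorem: a strict reversal of the inequality without a prior meeting time is impossible, and coalescence then forbids any reversal after a meeting.

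With the comparison principle in hand, I would deduce the total order axioms on $U$. Reflexivity and totality are built into the definition. Antisymmetry on pairs $(s,t)$ with $s \neq t$ in $U$ is just the disjointness of the conditions $\cnz_{\rho,q}^{(t)}(s) < 0$ and $\cnz_{\rho,q}^{(t)}(s) \geq 0$. For transitivity, I would examine a triple $a < b < c$ in $U$ across the six possible orderings. As a representative case, $a \preccurlyeq_{\cnz_{\rho,q}} b \preccurlyeq_{\cnz_{\rho,q}} c$ reduces to the implication: $\cnz_{\rho,q}^{(a)}(b) < 0$ and $\cnz_{\rho,q}^{(b)}(c) < 0$ imply $\cnz_{\rho,q}^{(a)}(c) < 0$. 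The comparison principle applied at time $b$ gives $\cnz_{\rho,q}^{(a)}(c) \leq \cnz_{\rho,q}^{(b)}(c) < 0$, closing the case, and the remaining five orderings are handled by the same kind of argument.

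The main obstacle is the comparison principle, and within it the coalescence step, which rests on pathwise uniqueness for the skew perturbed Tanaka SDE started from an arbitrary (not necessarily zero) deterministic initial value---a technical extension of \cref{thm:conj_rho_gen} that I expect to follow from its proof. Once this is in place, the total order properties follow as a bookkeeping exercise.
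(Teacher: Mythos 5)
Your proposal follows the same coalescence-via-pathwise-uniqueness strategy that the paper invokes (deferring the details to \cite[Proposition 5.2]{borga2020scaling}), and the plan is sound. One small simplification worth noting: the extension of \cref{thm:conj_rho_gen} to arbitrary deterministic starting values that you flag as the main obstacle can in fact be bypassed, since if two solutions first meet at a level $z\neq 0$, then by continuity both are strictly on the same side of zero in a neighborhood of the meeting time, so they follow identical increments (no local-time contribution) and their difference is locally constant, contradicting that this was the first meeting; hence the first meeting is necessarily at level zero, where the pathwise uniqueness of \cref{thm:conj_rho_gen} applies directly.
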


We can now prove \cref{thm:perm_is_ok} (using similar ideas as in \cite[Lemma 5.5]{borga2020scaling}).

\begin{proof}[Proof of \cref{thm:perm_is_ok} for  $\rho\neq 1$]
	We start by proving that for $t,s\in(0,1)$ with $t<s$, $\cnz_{\rho,q}^{(t)}(s) \neq 0$ almost surely. Let $\varepsilon>0$ be such that $s<1-\varepsilon<1$. Thanks to \cref{thm:ex_uni_rho_gen}, $(\cnz_{\rho,q}^{(t)}(t+r))_{r\in[0,1-t-\varepsilon]}$ is absolutely continuous with regards to a skew Brownian motion of parameter $q$ on $[0,1-t-\varepsilon]$. Since the time spent at zero by a skew Brownian motion a.s.\ has zero Lebesgue measure, we can conclude that $\cnz_{\rho,q}^{(t)}(s) \neq 0$ a.s.
	
	We can now prove that $\bm \mu_{\rho,q}$ is a permuton. By definition, $\bm \mu_{\rho,q}$ is a random probability measure on the unit square, and its first marginal is a.s.\ uniform (see \cref{eq:defnskeperm}). Therefore, it is enough to verify that also its second marginal is a.s.\ uniform, i.e.\ that  $(\varphi_{\cnz_{\rho,q}})_{*}\Leb=\Leb$ a.s.

	Let $(\bm U_i)_{i\in \Z_{>0}}$ be a sequence of i.i.d.\ uniform random variables on $[0,1]$. We define for $k\in\Z_{\geq 2}$,
	\begin{equation}
		\bm U_{1,k}
		\coloneqq
		\tfrac 1 {k-1}{\#\left\{i\in \llbracket 2,k \rrbracket\middle|\bm U_i\preccurlyeq_{\cnz_{\rho,q}} \bm U_1 \right\}}=\tfrac 1 {k-1}
		\sum_{i\in \llbracket 2,k \rrbracket}\mathds{1}_{\left\{\bm U_i\preccurlyeq_{\cnz_{\rho,q}} \bm U_1\right\}}.
	\end{equation}
	The random variables $\left(\mathds{1}_{\left\{\bm U_i\preccurlyeq_{\cnz_{\rho,q}} \bm U_1\right\}}\right)_{i\in \Z_{\geq 2}}$ are i.i.d. Bernoulli random variables of parameter $\varphi_{\cnz_{\rho,q}}(\bm U_1)$, conditionally on $(\conti E_\rho,\bm U_1)$. Therefore, from the law of large numbers, $\bm U_{1,k}$ a.s.\ converges to 
	$\varphi_{\cnz_{\rho,q}}(\bm U_1)$ as $k$ tends to infinity.
	
	On the other hand, by the exchangeability of the random variables $(\bm U_i)_{i\in \Z_{>0}}$, and using that $\cnz_{\rho,q}^{(t)}(s) \neq 0$ a.s., the random variable $\bm U_{1,k}$ is uniform in $\big\{\frac{0}{k-1},\dots, \frac{k-1}{k-1}\big\},$ conditionally on $\conti E_\rho$. Therefore, $\bm U_{1,k}$ converges in distribution to a uniform random variable on $[0,1]$ as $k$ tends to infinity. 
	Thus we can conclude that $\varphi_{\cnz_{\rho,q}}(\bm U_1)$ is uniform on $[0,1]$ conditionally on $\conti E_\rho$. This concludes the proof.
\end{proof}

We state and prove a final lemma useful for later purposes.

\begin{lem}\label{lem:fbewobfoihf}
	Fix $(\rho,q)\in(-1,1)\times[0,1]$. Almost surely, for almost every $0\leq t<s\leq 1$, 
	either
	$\cnz_{\rho,q}^{(t)}(s)>0$ and $\varphi_{\cnz_{\rho,q}}(s)<\varphi_{\cnz_{\rho,q}}(t)$, or  $\cnz_{\rho,q}^{(t)}(s)<0$ and $\varphi_{\cnz_{\rho,q}}(s)>\varphi_{\cnz_{\rho,q}}(t)$.
\end{lem}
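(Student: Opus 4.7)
The plan is to first show the weak inequalities ($\leq$) in the claimed direction using the total order from \cref{prop:orfn3rnfo3rinfi3rnf}, and then upgrade to strict inequalities by showing that $\varphi_{\cnz_{\rho,q}}$ is injective up to a Lebesgue null set. The two ingredients I would reuse from earlier in the paper are: (i) \cref{prop:orfn3rnfo3rinfi3rnf}, providing a random null set $\bm A\subset[0,1]^2$ on whose complement $\preccurlyeq_{\cnz_{\rho,q}}$ is a total order; and (ii) facts proved inside \cref{thm:perm_is_ok}, namely that $(\varphi_{\cnz_{\rho,q}})_{*}\Leb=\Leb$ and that $\cnz_{\rho,q}^{(t)}(s)\neq 0$ a.s.\ for any fixed $0<t<s<1$, hence, by Tonelli, for Lebesgue-a.e.\ pair $(t,s)$ simultaneously with probability one.

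For the weak inequalities, fix a typical realization and a pair $(t,s)\notin\bm A$ with $t<s$ and $\cnz_{\rho,q}^{(t)}(s)<0$, i.e.\ $t\preccurlyeq_{\cnz_{\rho,q}}s$ by \cref{eq:efviwevfiuew}. Transitivity of the total order gives that for a.e.\ $x\in[0,1]$, if $x\preccurlyeq_{\cnz_{\rho,q}}t$ then $x\preccurlyeq_{\cnz_{\rho,q}}s$, so
\[
\{x\in[0,1]:x\preccurlyeq_{\cnz_{\rho,q}}t\}\subseteq\{x\in[0,1]:x\preccurlyeq_{\cnz_{\rho,q}}s\}
\]
up to a Lebesgue-null set. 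Taking Lebesgue measures yields $\varphi_{\cnz_{\rho,q}}(t)\leq\varphi_{\cnz_{\rho,q}}(s)$, and the symmetric argument handles the case $\cnz_{\rho,q}^{(t)}(s)>0$.

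To promote these to strict inequalities it suffices to show that $B:=\{(t,s)\in[0,1]^2:\varphi_{\cnz_{\rho,q}}(t)=\varphi_{\cnz_{\rho,q}}(s)\}$ has zero two-dimensional Lebesgue measure a.s. Setting $g(y):=\Leb(\varphi_{\cnz_{\rho,q}}^{-1}(y))$, Fubini gives $\int_0^1 g(y)\,dy=\Leb_{2}\{(t,y):\varphi_{\cnz_{\rho,q}}(t)=y\}=\int_0^1\Leb(\{\varphi_{\cnz_{\rho,q}}(t)\})\,dt=0$. Using $(\varphi_{\cnz_{\rho,q}})_{*}\Leb=\Leb$ one then obtains $\int_0^1 g(\varphi_{\cnz_{\rho,q}}(t))\,dt=\int_0^1 g(y)\,dy=0$, so $g\circ\varphi_{\cnz_{\rho,q}}$ vanishes for a.e.\ $t$; a final Fubini gives $\Leb_{2}(B)=\int_0^1 g(\varphi_{\cnz_{\rho,q}}(t))\,dt=0$. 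Combined with the weak inequalities and with (ii), this yields the lemma.

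The only delicate point, which I do not expect to be a real obstacle, is bookkeeping: one must verify that the three a.e.\ statements in play (the total-order property off $\bm A$, the non-vanishing of $\cnz_{\rho,q}^{(t)}(s)$, and the injectivity of $\varphi_{\cnz_{\rho,q}}$) are all controlled by excluding a single random subset of the triangle $\{0\leq t<s\leq 1\}$ of zero two-dimensional Lebesgue measure, which is routine via Tonelli.
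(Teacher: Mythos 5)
Your proof is correct and takes essentially the same approach as the paper's. The paper phrases the argument in terms of two auxiliary i.i.d.\ uniforms $\bm U, \bm V$ independent of $\conti E_\rho$: it invokes \cref{prop:orfn3rnfo3rinfi3rnf} for the weak inequalities, rules out $\cnz_{\rho,q}^{(\bm U)}(\bm V)=0$ by the first part of the proof of \cref{thm:perm_is_ok}, and rules out $\varphi_{\cnz_{\rho,q}}(\bm U)=\varphi_{\cnz_{\rho,q}}(\bm V)$ by noting that, conditionally on $\conti E_\rho$, these are two independent uniform random variables. Your Fubini computation showing $\Leb_2\{(t,s):\varphi_{\cnz_{\rho,q}}(t)=\varphi_{\cnz_{\rho,q}}(s)\}=0$ a.s.\ is precisely the content of that last step, unpacked; the two formulations are interchangeable via $\P(\varphi_{\cnz_{\rho,q}}(\bm U)=\varphi_{\cnz_{\rho,q}}(\bm V)\mid\conti E_\rho)=\Leb_2(B)$.
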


\begin{proof}
	Consider two independent uniform random variables $\bm U$ and $\bm V$ on $[0,1]$, also independent of $\conti E_{\rho}$. It is immediate from \cref{prop:orfn3rnfo3rinfi3rnf} that if $\bm U < \bm V$ and $\cnz_{\rho,q}^{(\bm U)}(\bm V)> 0$ then $\varphi_{\cnz_{\rho,q}}(\bm U)\geq\varphi_{\cnz_{\rho,q}}(\bm V)$ a.s. Similarly, if $\bm U < \bm V$ and $\cnz_{\rho,q}^{(\bm U)}(\bm V)<0$ then $\varphi_{\cnz_{\rho,q}}(\bm U)\leq\varphi_{\cnz_{\rho,q}}(\bm V)$ a.s. The case $\cnz_{\rho,q}^{(\bm U)}(\bm V) = 0$ is a.s.\ excluded by the first part of the previous proof, and the case  $\varphi_{\cnz_{\rho,q}}(\bm U) = \varphi_{\cnz_{\rho,q}}(\bm V)$ is a.s.\ excluded by the fact that $\varphi_{\cnz_{\rho,q}}(\bm U)$ and $\varphi_{\cnz_{\rho,q}}(\bm V)$ are two independent uniform random variables, thanks to the second part of the previous proof. This is enough to complete the proof of the lemma.
\end{proof}

\subsection{The biased Brownian separable permuton is a particular case of the skew Brownian permuton}\label{sect:equiv}

Recalling the construction of the biased Brownian separable permuton from \cref{sect:fvuuvf} and the construction of the skew Brownian permuton presented in \cref{sect:welldef}, to prove \cref{thm:Baxt_brow_same} (and also to complete the proof of \cref{thm:perm_is_ok} when $\rho=1$) it is enough to prove the following result.
\begin{prop}
	Fix a one-dimensional Brownian excursion $(\bm e(t))_{t\in [0,1]}$ on $[0,1]$ and a parameter $p\in[0,1]$. Conditional on $\bm e$, consider an i.i.d.\ sequence $(\bm s(\ell))_{\ell}\in\{+1,-1\}^{\Z_{>0}}$ indexed by the local	minima of $\bm e$ and with distribution $\P(\bm s(\ell)=+1)=p=1-\P(\bm s(\ell)=-1)$. Let $\vartriangleleft_{\widetilde{\bm e},p}$ be the relation defined in \cref{eq:exc_to_perm} constructed from the pair $(\bm e(t),(\bm s(\ell))_{\ell})$ and $\preccurlyeq_{\cnz_{1,1-p}}$ be the relation defined in \cref{eq:efviwevfiuew}, p.\ \pageref{eq:efviwevfiuew}, constructed from the pair $(\bm e(t),(-\bm s(\ell))_{\ell})$ (note that the two relations $\vartriangleleft_{\widetilde{\bm e},p}$ and $\preccurlyeq_{\cnz_{1,1-p}}$ now have a specific coupling).
	
	Then there exists a random set $\bm A \subset [0,1]^2$ of a.s.\ zero Lebesgue measure, i.e.\ $\P(\Leb(\bm A)=0)=1$, such that the relations $\vartriangleleft_{\widetilde{\bm e},p}$ and $\preccurlyeq_{\cnz_{1,1-p}}$ restricted to $[0,1]^2\setminus \bm A$ are the same total order.
\end{prop}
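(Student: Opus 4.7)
The plan is to exploit the explicit formula \cref{eq:def_process2} for solutions of the skew Tanaka equation: once this formula is substituted into the definition of $\preccurlyeq_{\cnz_{1,1-p}}$, the comparison with $\vartriangleleft_{\widetilde{\bm e},p}$ reduces to matching signs at the relevant local minima of $\bm e$.

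More precisely, by \cref{thm:fvuwevifview2} applied with $q=1-p$ and driving signs $(-\bm s(\ell))_\ell$ (which are i.i.d.\ with $\P(-\bm s(\ell)=+1)=1-p=q$), one has for $0\le t<s\le 1$
\[
\cnz_{1,1-p}^{(t)}(s) = \bigl(\bm e(s) - \bm m^{(t)}(s)\bigr)\cdot\bigl(-\bm s(\ell(t,s))\bigr),
\]
where $\ell(t,s)\coloneqq\sup\{r\in[t,s]:\bm e(r)=\bm m^{(t)}(s)\}$. The next step is to define the exceptional set $\bm A\subset[0,1]^2$ as the union of the diagonal, the Maazoun null set recalled in \cref{sect:fvuuvf} of pairs $(t,s)$ with $t<s$ for which $\min_{[t,s]}\bm e$ is not attained at a unique strict interior local minimum, and the set of pairs $(t,s)$ with $t<s$ for which $\bm e(s)=\bm m^{(t)}(s)$. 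The first two components are a.s.\ Lebesgue-negligible by Maazoun's result, and the third by Fubini, since for each fixed $t$ the descending ladder times of $\bm e$ on $[t,1]$ form an a.s.\ Lebesgue-negligible random set.

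On $[0,1]^2\setminus\bm A$, for any $t<s$ the point $\ell=\ell(t,s)$ is the unique strict interior local minimum of $\bm e$ on $[t,s]$ and $\bm e(s)-\bm m^{(t)}(s)>0$, so $\cnz_{1,1-p}^{(t)}(s)$ has the sign of $-\bm s(\ell)$. If $\bm s(\ell)=+1$, then $\cnz_{1,1-p}^{(t)}(s)<0$, which by \cref{eq:efviwevfiuew} gives $t\preccurlyeq_{\cnz_{1,1-p}} s$, and by \cref{eq:exc_to_perm} gives $t\vartriangleleft_{\widetilde{\bm e},p} s$; if $\bm s(\ell)=-1$, the analogous computation yields $s\preccurlyeq_{\cnz_{1,1-p}} t$ and $s\vartriangleleft_{\widetilde{\bm e},p} t$. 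The two relations therefore coincide on $[0,1]^2\setminus\bm A$ and, on this complement, both restrict to the same total order (totality for $\vartriangleleft_{\widetilde{\bm e},p}$ follows from Maazoun, and for $\preccurlyeq_{\cnz_{1,1-p}}$ by the same argument as in \cref{prop:orfn3rnfo3rinfi3rnf}, which transfers once the explicit formula is available).

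I do not expect a serious obstacle: the whole argument rests on the explicit representation \cref{eq:def_process2}, after which everything is sign-by-sign matching. The one point worth a careful check is the Fubini step showing that $\{(t,s):t<s,\ \bm e(s)=\bm m^{(t)}(s)\}$ has zero planar Lebesgue measure, which follows from the standard fact that a one-dimensional Brownian excursion spends zero time at its running infimum after any fixed time.
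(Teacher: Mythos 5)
Your proof is correct and follows essentially the same route as the paper: both rest on substituting the explicit representation \cref{eq:def_process2} of the skew Tanaka solutions into the definition of $\preccurlyeq_{\cnz_{1,1-p}}$ and then matching signs at the unique strict local minimum, with the Maazoun null set as the core of the exceptional set $\bm A$. The one small difference is that you are a bit more explicit than the paper about why $\bm e(s)-\bm m^{(t)}(s)>0$ off $\bm A$ (you adjoin the set $\{\bm e(s)=\bm m^{(t)}(s)\}$ and check it is Lebesgue-null by Fubini), whereas the paper implicitly folds this into Maazoun's characterization of $\bm A$; this is a harmless and arguably welcome precaution, not a genuinely different argument.
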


\begin{proof}
	Recall from \cref{sect:fvuuvf} that there exists a random set $\bm A \subset [0,1]^2$ of a.s.\ zero Lebesgue measure such that for every $x,y\in[0,1]^2\setminus \bm A$ with $x<y$ then $\min_{[x,y]}\bm e$ is reached at a unique point which is a strict local minimum. In addition, $\vartriangleleft_{\widetilde{\bm e},p}$ is a random total order on $[0,1]^2\setminus \bm A$.
	  
	Fix now $x,y\in[0,1]^2\setminus \bm A$ with $x<y$ and assume that  $\bm s(\ell)$ is the sign corresponding to the unique local minimum where $\min_{[x,y]}\bm e$ is reached.
	By definition (see \cref{eq:exc_to_perm}, p.\ \pageref{eq:exc_to_perm}),
	\begin{equation}\label{eq:wfvbwegfweohbfowe}
		\begin{cases}
			x\vartriangleleft_{\widetilde{\bm e},p} y, &\quad\text{if}\quad \bm s(\ell)=+1,\\
			y\vartriangleleft_{\widetilde{\bm e},p} x, &\quad\text{if}\quad \bm s(\ell)=-1.\\
		\end{cases}
	\end{equation}
	Now note that since the excursion $\bm e$ has a unique local minimum on the interval $[x,y]$, say at time $t$, then by construction (see \cref{eq:def_process2}) the process $\cnz_{1,1-p}^{(x)}$ is positive at time $y$ if $-s(\ell)=+1$ and negative if $-s(\ell)=-1$, therefore by definition (see \cref{eq:efviwevfiuew}, p.\ \pageref{eq:efviwevfiuew}),
	\begin{equation}\label{eq:wfvbwegfweohbfowe2}
		\begin{cases}
			x\preccurlyeq_{\cnz_{1,1-p}} y, &\quad\text{if}\quad s(\ell)=+1,\\
			y\preccurlyeq_{\cnz_{1,1-p}} x, &\quad\text{if}\quad s(\ell)=-1.\\
		\end{cases}
	\end{equation}  
	Comparing \cref{eq:wfvbwegfweohbfowe,eq:wfvbwegfweohbfowe2} we can conclude that $\vartriangleleft_{\widetilde{\bm e},p}$ and $\preccurlyeq_{\cnz_{1,1-p}}$ restricted to $[0,1]^2\setminus \bm A$ are the same total order.
\end{proof}

\section{The skew Brownian permuton and SLE-decorated Liouville quantum gravity spheres}\label{sect:LQGperm}

This section is devoted to establishing the connection between the skew Brownian permuton and SLE-decorated LQG spheres described in \cref{thm:sbpfromlqg}. 

\medskip

Recall that $({\mathbb C}\cup\{\infty\}, \bm h, \infty)$ and  $(\bm \eta_0, \bm \eta_\theta)$ denote the $\gamma$-LQG sphere and the pair of space-filling SLE$_{\kappa'}$ introduced in \cref{sect:efbweifbewoubf}. 
\cref{thm:sbpfromlqg} immediately follows from the result in \cref{prop:huewvuwevf} below. To state this proposition we need another construction.

For $t\in[0,1]$, let $\conti X_\rho(t)$ be
the $\nu_{\bm h}$-LQG length measure (see \cite[Section 3.3]{gwynne2019mating}) of the left outer boundary of $\bm \eta_0([0, t])$ and $\conti Y_\rho(t)$ be
the $\nu_{\bm h}$-LQG length measure
of the right outer boundary of $\bm\eta_0([0, t])$.
From \cite[Theorem 1.1]{MR4010949} (see also \cite[Theorem 4.10]{gwynne2019mating}) the process $\conti E_\rho(t)=(\conti X_\rho (t),\conti Y_\rho (t))$ defined above has  (up to time reparametrization)  the law of a two-dimensional Brownian excursion of correlation $\rho$ in the non-negative quadrant.
In addition, the process $(\conti E_\rho(t))_{t\in [0,1]}$ a.s.\ determines $(({\mathbb C}\cup\{\infty\}, \bm h, \infty),\bm \eta_0)$ as a curve-decorated quantum surface. 

\begin{prop}\label{prop:huewvuwevf}
	Fix $\gamma\in(0,2)$ and $\theta\in[0,\pi]$. Recall that $\rho=-\cos(\pi\gamma^2/4)$. Let $\conti E_\rho(t)=(\conti X(t),\conti Y(t))$ be the two-dimensional Brownian excursion of correlation $\rho$ defined above from the curve-decorated quantum surface $(({\mathbb C}\cup\{\infty\}, \bm h, \infty),\bm \eta_0)$. Recall that, for $t\in[0, 1]$, $\bm\psi_{\gamma,\theta}(t)\in[0, 1]$ denotes the first time when $\bm\eta_\theta$ hits the point $\bm\eta_0(t)$.
	
	Let $\cnz_{\rho,q}=\left\{\cnz_{\rho,q}^{(u)}\right\}_{u\in\R}$ be the continuous coalescent-walk process driven by $(\conti E_\rho,q)$ and $\varphi_{\cnz_{\rho,q}}$ be the associated stochastic process defined in \cref{eq:random_skew_function}.
	There exist a constant $\overline q=\overline q_\gamma(\theta)\in[0,1]$ such that a.s.\ for almost all $t\in(0,1)$
	\begin{equation}
	\bm\psi_{\gamma,\theta}(t)=\varphi_{\cnz_{\rho,\overline q}}(t).
	\end{equation}
\end{prop}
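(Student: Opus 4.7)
The plan is to build, directly from the SLE/LQG data $({\mathbb C}\cup\{\infty\}, \bm h, \infty, \bm\eta_0, \bm\eta_\theta)$, a family of continuous processes $\{\conti Z^{(u)}\}_{u\in(0,1)}$, to show that it solves the skew perturbed Tanaka equation driven by $\conti E_\rho$ with a certain skewness parameter $\overline q = \overline q_\gamma(\theta)$, and then to invoke the pathwise uniqueness of \cref{thm:conj_rho_gen} to identify this process with $\cnz_{\rho,\overline q}$.

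For $0\le u<t\le 1$, I would define $\conti Z^{(u)}(t)$ as a signed $\nu_{\bm h}$-LQG boundary length. Using the imaginary geometry construction of \cite{MR3719057}, consider the angle-$\theta$ flow lines of the Gaussian free field $\hat{\bm h}$ starting from $\bm\eta_0(u)$ and $\bm\eta_0(t)$; these are whole-plane SLE$_\kappa(2-\kappa)$ processes that eventually merge. The quantity $\conti Z^{(u)}(t)$ should be the $\nu_{\bm h}$-LQG length of the sub-arc of the boundary of $\bm\eta_0([u,t])$ lying between $\bm\eta_0(t)$ and the angle-$\theta$ flow line from $\bm\eta_0(u)$, with positive sign when $\bm\eta_\theta$ visits $\bm\eta_0(u)$ after $\bm\eta_0(t)$ (equivalently, when $\bm\psi_{\gamma,\theta}(u)>\bm\psi_{\gamma,\theta}(t)$) and negative sign otherwise. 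By imaginary geometry, $\conti Z^{(u)}(t) = 0$ precisely when the two angle-$\theta$ flow lines have merged before $\bm\eta_\theta$-time $\min(\bm\psi_{\gamma,\theta}(u),\bm\psi_{\gamma,\theta}(t))$.

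The main step is to identify the SDE solved by $\conti Z^{(u)}$. Away from zero, the dynamics follow from the mating-of-trees theorem of \cite{duplantier2014liouville,MR4010949}: the left (resp.\ right) outer boundary length of $\bm\eta_0([0,t])$ equals $\conti X_\rho(t)$ (resp.\ $\conti Y_\rho(t)$), and $\conti Z^{(u)}$ tracks the relevant portion of one or the other boundary depending on its sign, yielding exactly the indicator-driven part $\idf_{\{\conti Z^{(u)}(t)>0\}}d\conti Y_\rho(t)-\idf_{\{\conti Z^{(u)}(t)\le 0\}}d\conti X_\rho(t)$ of \cref{eq:flow_SDE_gen}. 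At zero, the excursions of $\conti Z^{(u)}$ away from zero correspond to successive detachments of the two angle-$\theta$ flow lines after each merger, and by \cite[Lemma 4.3]{li2017schnyder} the asymmetry of these excursions is governed by an explicit function of $\gamma$ and $\theta$, which produces a local-time drift of the form $(2\overline q-1)\,d\conti L^{\conti Z^{(u)}}$ with $\overline q = \overline q_\gamma(\theta)\in[0,1]$. Thus $\conti Z^{(u)}$ satisfies \cref{eq:flow_SDE_gen} with parameters $(\rho, \overline q)$, and the pathwise uniqueness statement in \cref{thm:conj_rho_gen} gives $\conti Z^{(u)} = \cnz_{\rho,\overline q}^{(u)}$ almost surely, for almost every $u$.

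To conclude, observe that by construction the sign of $\cnz_{\rho,\overline q}^{(u)}(t)$ matches the sign of $\bm\psi_{\gamma,\theta}(t) - \bm\psi_{\gamma,\theta}(u)$ for almost every $u<t$. Combining this with \cref{lem:fbewobfoihf}, the two measurable maps $t \mapsto \bm\psi_{\gamma,\theta}(t)$ and $t \mapsto \varphi_{\cnz_{\rho,\overline q}}(t)$ induce the same strict order on almost every pair in $[0,1]$. Since both push forward Lebesgue measure on $[0,1]$ to Lebesgue measure (the first because $\bm\eta_0$ and $\bm\eta_\theta$ are both parametrized by $\mu_{\bm h}$-mass, the second by \cref{thm:perm_is_ok}), and two Lebesgue-preserving maps with the same ordering must agree almost everywhere, we obtain $\bm\psi_{\gamma,\theta}(t) = \varphi_{\cnz_{\rho,\overline q}}(t)$ for almost every $t$. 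The main obstacle is the precise identification of the local-time term and the extraction of $\overline q_\gamma(\theta)$, which requires a careful excursion analysis of the merging behavior of the two angle-$\theta$ flow lines in the LQG environment.
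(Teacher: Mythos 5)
Your high-level strategy is essentially the one the paper uses: construct a process from the SLE/LQG data that should coincide with $\cnz_{\rho,\overline q}^{(u)}$, show it solves the skew perturbed Tanaka equation, and invoke pathwise uniqueness from \cref{thm:conj_rho_gen}. Your concluding step (that two Lebesgue-measure-preserving maps inducing the same a.e.\ strict order must agree a.e.) is a valid and slightly more abstract route than the paper's, which instead directly writes $\bm\psi_{\gamma,\theta}(t)$ as a Lebesgue measure and matches the defining sets term by term via \cref{lem:keylemfinvol} and \cref{lem:fbewobfoihf}; both work.

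However, there is a genuine gap in the middle of your argument, namely the step where you assert that the signed boundary-length process $\conti Z^{(u)}$ satisfies \cref{eq:flow_SDE_gen}. You argue this loosely by saying the indicator terms ``follow from the mating-of-trees theorem'' and the local-time term comes from the excursion asymmetry in \cite[Lemma 4.3]{li2017schnyder}. But showing that $\conti Z^{(u)}$ solves \emph{some} skew perturbed Tanaka SDE driven by \emph{some} correlated Brownian process is far from enough: you must show it is driven by the \emph{same} $\conti E_\rho$ that encodes $(({\mathbb C}\cup\{\infty\},\bm h,\infty),\bm\eta_0)$, because only then does pathwise uniqueness identify $\conti Z^{(u)}$ with $\cnz_{\rho,\overline q}^{(u)}$ as defined from that very excursion. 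In the paper's \cref{lem:keylem} this is exactly the nontrivial content: one builds an auxiliary pair $(\overline{\conti X}_\rho,\overline{\conti Y}_\rho)$ from $\conti Q$ and the Brownian motion $\conti B$ in the skew Brownian decomposition, and then proves $\overline{\conti W}_\rho=\widetilde{\conti W}_\rho$ a.s.\ via a martingale argument that relies on \cite[Proposition~3.4, Lemma~3.16 and Lemma~3.17]{gwynne2016joint}. Your proposal does not identify this issue, let alone address it. A related omission is that the identification of $\conti Q$ as a skew Brownian motion with parameter $\overline q_\gamma(\theta)$ (and the measurability/adaptedness needed to apply \cref{thm:ex_uni_rho_gen}) is established in the \emph{infinite-volume cone} setting, so the paper proves the analogue on a $\gamma$-quantum cone first (\cref{lem:keylem}) and then transfers to the sphere via the absolute continuity in \cref{prop:abs_cont}; working directly on the sphere as you propose is not directly supported by the cited SLE/LQG results.
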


The proof of \cref{prop:huewvuwevf} builds on the next lemma (see \cref{lem:keylemfinvol} below) whose proof is postponed to the end of the section. We need some additional notation.

For $t\in[0,1]$, let $\hat{\bm\phi}_\theta^{(t)}$ denote the flow line of the vector field $e^{i(\hat{\bm h}/\chi+\theta)}$ started at $\bm \eta_0(t)$. We point out that there is a unique flow line of the vector field $e^{i(\hat{\bm h}/\chi+\theta)}$ started from $\bm \eta_0(t)$ for almost all $t\in[0,1]$ and this is enough for our purposes.\footnote{In particular, there will be two possible choices of the flow line of the vector field $e^{i(\hat{\bm h}/\chi+\theta)}$ started at $\bm \eta_0(t)$ if $\bm \eta_0(t)$ is a double point of $\bm \eta_\theta$.} Finally, we denote by $\bm\phi_\Theta^{(t)}$ the union of the flow lines $\hat{\bm\phi}_\theta^{(t)}$ and $\hat{\bm\phi}_{\theta+\pi}^{(t)}$ (followed in the same direction of $\hat{\bm\phi}_\theta^{(t)}$).
In \cref{fig:schemaLQG} we show the various curves that we are considering and we explain what we mean when we refer to \emph{left} and \emph{right} in \cref{lem:keylemfinvol}.

\begin{figure}[h]
	\centering
	\includegraphics[scale=.63]{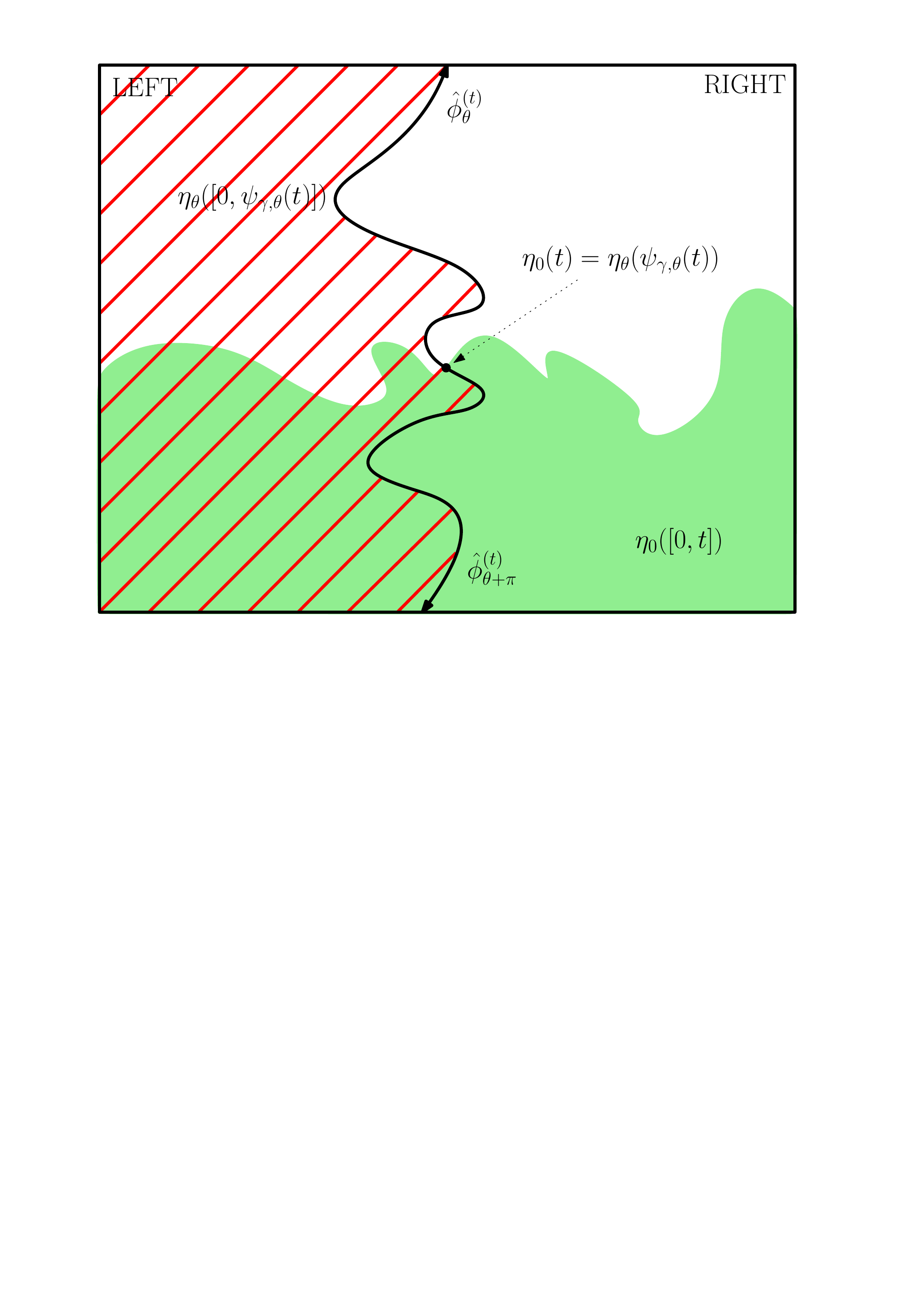}
	\caption{A chart for the various curves considered in this section. In particular, the region covered by $\bm\eta_0([0,t])$ is highlited in green and the region covered by $\bm\eta_\theta([0,\bm\psi_{\gamma,\theta}(t)])$ is highlited in dashed red. By construction, the boundaries of $\bm\eta_\theta([0,\bm\psi_{\gamma,\theta}(t)])$ are given by $\hat{\bm\phi}_\theta^{(t)}$ and $\hat{\bm\phi}_{\theta+\pi}^{(t)}$.
	\emph{Left} and \emph{right} in \cref{lem:keylemfinvol} are defined with respect to the flow line $\bm\phi_\theta^{(t)}=\hat{\bm\phi}_\theta^{(t)}\cup\hat{\bm\phi}_{\theta+\pi}^{(t)}$ followed in the direction of $\hat{\bm\phi}_\theta^{(t)}$. 
	\label{fig:schemaLQG}}
\end{figure}

\begin{lem}\label{lem:keylemfinvol}
	
	There exists a constant $\overline q=\overline q_\gamma(\theta)\in[0,1]$ such that a.s.\ for almost all $t\in(0,1)$
	\begin{align}
		\left\{x\in[t,1):\cnz_{\rho,\overline q}^{(t)}(x)\geq0\right\}&=\left\{x\in[t,1):\bm \eta_0(x) \text{ is weakly on the left of } {\bm\phi}_\theta^{(t)}\right\},\\
		\left\{x\in[t,1):\cnz_{\rho,\overline q}^{(t)}(x)\leq 0\right\}&=\left\{x\in[t,1): \bm \eta_0(x) \text{ is weakly on the right of } {\bm\phi}_\theta^{(t)} \right\}.
	\end{align}
\end{lem}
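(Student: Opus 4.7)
The plan is to construct, for each fixed $t\in(0,1)$, an explicit signed process $\widetilde{\conti Z}^{(t)}$ on $[t,1)$ whose sign encodes on which side of $\bm\phi_\theta^{(t)}$ the point $\bm\eta_0(x)$ lies, to show that $\widetilde{\conti Z}^{(t)}$ solves the skew perturbed Tanaka equation driven by $(\conti E_\rho,\overline q)$ for some constant $\overline q=\overline q_\gamma(\theta)\in[0,1]$, and then to invoke pathwise uniqueness from \cref{thm:conj_rho_gen} to identify $\widetilde{\conti Z}^{(t)}=\cnz_{\rho,\overline q}^{(t)}$. Once this identification is in hand, the two set equalities of the lemma follow immediately from the sign convention used to define $\widetilde{\conti Z}^{(t)}$.

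The natural candidate $\widetilde{\conti Z}^{(t)}(x)$ is a signed $\nu_{\bm h}$-length along $\bm\phi_\theta^{(t)}$: one takes the LQG length of the portion of $\bm\phi_\theta^{(t)}$ currently shared with the outer boundary of $\bm\eta_0([t,x])$, with a positive sign when $\bm\eta_0(x)$ lies on the left of $\bm\phi_\theta^{(t)}$ (so this shared portion lies on $\hat{\bm\phi}_\theta^{(t)}$) and a negative sign when it lies on the right (shared portion on $\hat{\bm\phi}_{\theta+\pi}^{(t)}$). Using the mating-of-trees description of $(({\mathbb C}\cup\{\infty\},\bm h,\infty),\bm\eta_0)$ via $\conti E_\rho$ recalled just before the proposition, combined with the imaginary-geometry interpretation of $\hat{\bm\phi}_\theta^{(t)}$ and $\hat{\bm\phi}_{\theta+\pi}^{(t)}$ as the two outer boundaries of $\bm\eta_\theta$ started at $\bm\eta_0(t)$, a short peeling/excursion argument should give
\begin{equation}
d\widetilde{\conti Z}^{(t)}(x)=\idf_{\{\widetilde{\conti Z}^{(t)}(x)>0\}}\,d\conti Y_\rho(x)-\idf_{\{\widetilde{\conti Z}^{(t)}(x)\le 0\}}\,d\conti X_\rho(x)
\end{equation}
off the zero set of $\widetilde{\conti Z}^{(t)}$, which matches the drift/diffusion part of \cref{eq:flow_SDE_gen}.

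The main obstacle is to understand the behavior of $\widetilde{\conti Z}^{(t)}$ on its zero set, which corresponds to the (fractal) set of times when $\bm\eta_0$ returns to $\bm\phi_\theta^{(t)}$ and has to choose a side to continue on. Using the imaginary geometry of \cite{MR3719057} together with the joint SLE/LQG framework of \cite{duplantier2014liouville,gwynne2016joint}, I would argue that, conditionally on $\conti E_\rho$, the signs of the successive excursions of $\widetilde{\conti Z}^{(t)}$ away from zero form an i.i.d.\ family of Bernoulli random variables equal to $+1$ with probability $\overline q=\overline q_\gamma(\theta)$, and that this constant depends only on $\gamma$ and $\theta$ and coincides with the flow-line-side probability computed in \cite[Lemma 4.3]{li2017schnyder} (cf.\ \cref{rem:paramrel}). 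A standard skew excursion-theoretic computation, in the spirit of the reconstruction of a skew Brownian motion of parameter $\overline q$ from its absolute value, will then add the missing term $(2\overline q-1)\,d\conti L^{\widetilde{\conti Z}^{(t)}}(x)$, and $\widetilde{\conti Z}^{(t)}$ satisfies \cref{eq:flow_SDE_gen} on $[t,1)$. The hardest step is this conditional independence of the excursion signs from $\conti E_\rho$ and the identification of $\overline q$; once it is established, \cref{thm:conj_rho_gen} gives $\widetilde{\conti Z}^{(t)}=\cnz_{\rho,\overline q}^{(t)}$ a.s., and the two set equalities of the lemma then hold simultaneously for almost every $t\in(0,1)$ by Fubini, with a single constant $\overline q$ thanks to the scale/translation invariance of the quantum sphere decorated by $(\bm\eta_0,\bm\eta_\theta)$.
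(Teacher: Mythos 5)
Your overall plan matches the paper's: define a signed process encoding which side of $\bm\phi_\theta^{(t)}$ the curve is on, argue it solves the skew perturbed Tanaka equation driven by the boundary-length process $(\conti X_\rho,\conti Y_\rho)$, and conclude by pathwise uniqueness from \cref{thm:conj_rho_gen}. That much is correct and is indeed the paper's strategy. But the two steps you dismiss as routine are precisely where the real work lies, and as stated they leave genuine gaps.

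First, the paper does not work directly on the sphere. It first proves the analogous statement (\cref{lem:keylem}) on a $\gamma$-quantum cone driven by a full Brownian motion $\widetilde{\conti W}_\rho$, and only then transfers to the sphere via absolute continuity between the restricted curve-decorated quantum surfaces and between the Brownian excursion and the Brownian motion (\cref{prop:abs_cont} together with \cite[Lemmas 3.6, 3.10, 3.12]{gwynne2016joint}). You never mention this transfer. It is not cosmetic: the inputs from \cite{gwynne2016joint} (in particular Proposition 3.2 and Lemma 3.17) are formulated for the cone, so working directly on the sphere would require re-deriving them there.

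Second, and more importantly, the claim that ``a short peeling/excursion argument should give'' $d\widetilde{\conti Z}^{(t)}=\idf_{\{>0\}}d\conti Y_\rho-\idf_{\{\le 0\}}d\conti X_\rho$ off the zero set, combined with ``a standard skew excursion-theoretic computation'' at zero, hides the hardest point: identifying the \emph{driving process} of the SDE with the specific boundary-length process $(\conti X_\rho,\conti Y_\rho)$ that encodes $(\bm h,\bm\eta_0)$. In the paper, one starts from \cite[Proposition 3.2]{gwynne2016joint}, which already identifies the signed process $\conti Q$ as a skew Brownian motion of some parameter $\overline q$ (this is exactly the conditional i.i.d.\ Bernoulli excursion-sign structure you invoke, and is black-boxed rather than re-derived). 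One then uses the Harrison--Shepp representation $\conti Q=\conti B+(2\overline q-1)\conti L^{\conti Q}$, builds an auxiliary correlated Brownian motion $\overline{\conti W}_\rho$ out of $\conti B$ and $\widetilde{\conti W}_\rho$, and shows via \cref{prop:equiv_SDE} that $\conti Q$ satisfies the skew perturbed Tanaka equation \emph{driven by} $\overline{\conti W}_\rho$. The remaining problem is to prove $\overline{\conti W}_\rho=\widetilde{\conti W}_\rho$ almost surely, which is done by first showing that $\overline{\conti W}_\rho$ and $\widetilde{\conti W}_\rho$ agree on increments since the last crossing time $\widetilde{\bm\tau}_t$, then showing $\overline{\conti W}_\rho-\widetilde{\conti W}_\rho$ is a martingale using the Markov/determination structure of \cite[Proposition 3.4, Lemma 3.16]{gwynne2016joint}, and finally invoking \cite[Lemma 3.17]{gwynne2016joint} to conclude the difference has vanishing quadratic variation. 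None of this follows from local peeling alone: your process $\widetilde{\conti Z}^{(t)}$ measures lengths relative to the last crossing, whereas $\conti X_\rho,\conti Y_\rho$ measure lengths relative to the starting time, and matching these across the (fractal) set of crossing times requires exactly this global martingale argument. So while your sign convention and pathwise-uniqueness endgame are correct, the central identification of the driving process is asserted rather than proved.

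A minor additional point: the constant $\overline q_\gamma(\theta)$ in the paper is supplied by \cite[Proposition 3.2]{gwynne2016joint}; the matching with \cite[Lemma 4.3]{li2017schnyder} that you cite is, in the paper, a \emph{consequence} of \cref{lem:keylemfinvol} (see \cref{rem:paramrel}), not an input to its proof, so using \cite{li2017schnyder} here risks circularity.
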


We can now prove  \cref{prop:huewvuwevf} using \cref{lem:keylemfinvol}.

\begin{proof}[Proof of \cref{prop:huewvuwevf}]
	Recall that $\bm\psi_{\gamma,\theta}(t)\in[0, 1]$ denotes the first time when $\bm\eta_\theta$ hits the point $\bm\eta_0(t)$. Fix $t\in(0,1)$ such that the relations in \cref{lem:keylemfinvol} hold (this is true for almost all $t\in(0,1)$). We want to show that a.s.\	
	$$\bm\psi_{\gamma,\theta}(t)=\Leb\left( \big\{x\in[0,t)|\cnz_{\rho,q}^{(x)}(t)<0\big\} \cup \big\{x\in[t,1]|\cnz_{\rho,q}^{(t)}(x)\geq0\big\} \right).$$
	By definition and time parametrization of $\bm\eta_0$ and $\bm\eta_\theta$, it holds that
	\begin{equation}
		\bm\psi_{\gamma,\theta}(t)=\Leb\left(\left\{x\in[0,1]:\bm \eta_0(x) \text{ is weakly on the left of } {\bm\phi}_\theta^{(t)} \right\}\right).
	\end{equation}
	Noting that for $x\in[0,t)$ the point $\bm \eta_0(x)$ is weakly on the left of $\bm\phi_{\theta}^{(t)}$ if and only if $\bm \eta_0(t)$ is weakly on the right of ${\bm\phi}_{\theta}^{(x)}$, we obtain that	a.s.\
	\begin{align}
		\bm\psi_{\gamma,\theta}(t)
		=&\Leb\left(\left\{x\in[t,1]:\bm \eta_0(x) \text{ is weakly on the left of } {\bm\phi}_\theta^{(t)} \right\}\right)\\
		+& \Leb\left(\left\{x\in[0,t):\bm \eta_0(t) \text{ is weakly on the right of } \bm\phi_{\theta}^{(x)} \right\}\right)\\
		=&\Leb\left(\left\{x\in[t,1]:\cnz_{\rho,\overline q}^{(t)}(x)\geq0\right\}\right)
		+\Leb\left(\left\{x\in[0,t):\cnz_{\rho,\overline q}^{(x)}(t)\leq0 \right\}\right),
	\end{align}
	where in the last equality we used \cref{lem:keylemfinvol}. Finally, from \cref{lem:fbewobfoihf} it holds that
	$$\Leb\left(\left\{x\in[0,t):\cnz_{\rho,\overline q}^{(x)}(t)=0 \right\}\right)=0$$
	and so we can conclude the proof.
\end{proof}

It remains to prove \cref{lem:keylemfinvol}. We first state and prove a result similar to \cref{lem:keylemfinvol} (see \cref{lem:keylem} below) when the pair of space-filling SLEs $( \bm \eta_0, \bm \eta_\theta)$ is parameterized by an LQG cone instead of an LQG sphere.

Let $(\mathbb C,\widetilde{\bm h},0,\infty)$ be a $\gamma$-quantum cone (\cite[Definition 3.10]{gwynne2019mating}) independent of $\hat{\bm h}$. We denote by  $( \widetilde{\bm \eta}_0, \widetilde{\bm \eta}_\theta)$ the pair $( \bm \eta_0, \bm \eta_\theta)$  parametrize  by the $ \mu_{\widetilde{\bm h}}$-LQG area measure so that $\widetilde{\bm\eta}_0(0)=\widetilde{\bm\eta}_\theta(0)=0$ and $\mu_{\widetilde{\bm h}}( \widetilde{\bm\eta}_0([s, t]))= \mu_{\widetilde{\bm h}}(\widetilde{\bm\eta}_\theta([s, t])) = t-s$, for each $s,t\in\R$ with $s<t$. 

For $t\in\R_+$ (resp.\ $t\in\R_-$), let $\widetilde{\conti X}_{\rho}(t)$ be
the net change of the $\nu_{\widetilde{\bm h}}$-LQG length measure of the left outer boundary of $\widetilde{\bm\eta}_0([0, t])$ (resp. $\widetilde{\bm\eta}_0([-t,0])$) relative to time 0 and $\widetilde{\conti Y}_{\rho}(t)$ be
the net change of the $\nu_{\widetilde{\bm h}}$-LQG length measure
of the right outer boundary of $\widetilde{\bm\eta}_0([0, t])$ (resp. $\widetilde{\bm\eta}_0([-t,0])$)  relative to 0 (see \cite[Section 4.2.1]{gwynne2019mating}).
From \cite[Theorems 1.9 and 1.11]{duplantier2014liouville} (see also \cite[Theorem 4.6]{gwynne2019mating}) the process $\widetilde{\conti W}_\rho(t)=(\widetilde{\conti X}_{\rho} (t),\widetilde{\conti Y}_{\rho}(t))$ defined above has (up to time reparametrization) the law of a two-dimensional Brownian motion of correlation $\rho$.
In addition, the process $(\widetilde{\conti W}_\rho(t))_{t\in \R}$ a.s.\ determines $(({\mathcal C}, \widetilde{\bm h}, -\infty),\widetilde{\bm \eta}_0)$ as a curve-decorated quantum surface. 

For $u\in\R$, let $\widetilde{\bm\phi}_\theta^{(u)}$ denote the union of the two flow lines of the vector fields $e^{i(\hat{\bm h}/\chi+\theta)}$ and $e^{i(\hat{\bm h}/\chi+\theta+\pi)}$ started at $\widetilde{\bm \eta}_0(u)$ (followed in the direction of the flow line of $e^{i(\hat{\bm h}/\chi+\theta)}$). As before these flow lines are unique for almost all $u\in\R$.

\begin{lem}\label{lem:keylem}
	Let $\widetilde{\cnz}_{\rho,q}=\left\{\widetilde{\cnz}_{\rho,q}^{(u)}\right\}_{u\in\R}$ denote the collection of (strong) solutions to the following SDEs indexed by $u\in\R$ and driven by $\widetilde{\conti W}_\rho$,	
		\begin{equation}\label{eq:infcolsde}
			\begin{cases}
				d\widetilde{\cnz}_{\rho,q}^{(u)}(t) = \idf_{\{\widetilde{\cnz}_{\rho,q}^{(u)}(t)> 0\}} d\widetilde{\conti Y}_\rho (t) - \idf_{\{\widetilde{\cnz}_{\rho,q}^{(u)}(t)\leq 0\}} d \widetilde{\conti X}_{\rho}(t)+(2q-1)\cdot d\conti L^{\widetilde{\cnz}_{\rho,q}^{(u)}}(t),& t\in \R_{> u}\\
				\widetilde{\cnz}_{\rho,q}^{(u)}(u)=0.&  
			\end{cases} 
		\end{equation}
	There exists a constant $\overline q=\overline q_\gamma(\theta)\in[0,1]$ such that a.s.\ for almost all $u\in\R$
	\begin{align}
			\left\{t\in \R_{\geq u}:\widetilde{\cnz}_{\rho,\overline q}^{(u)}(t)\geq0\right\}&=\left\{t\in \R_{\geq u}:\widetilde{\bm \eta}_0(t) \text{ is weakly on the left of } \widetilde{\bm\phi}_\theta^{(u)} \right\},\\
			\left\{t\in \R_{\geq u}:\widetilde{\cnz}_{\rho,\overline q}^{(u)}(t)\leq 0\right\}&=\left\{t\in \R_{\geq u}:\widetilde{\bm \eta}_0(t) \text{ is weakly on the right of } \widetilde{\bm\phi}_\theta^{(u)} \right\}.
	\end{align}
\end{lem}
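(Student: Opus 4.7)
The plan is to produce, directly from the SLE/LQG data, a candidate process $\widehat{\conti Z}^{(u)}$ whose sign encodes whether $\widetilde{\bm\eta}_0(t)$ lies weakly on the left or on the right of the flow line $\widetilde{\bm\phi}_\theta^{(u)}$, to verify that $\widehat{\conti Z}^{(u)}$ solves the skew perturbed Tanaka SDE \cref{eq:infcolsde} for a suitable constant $\overline q = \overline q_\gamma(\theta) \in [0,1]$, and then to invoke pathwise uniqueness (\cref{thm:ex_uni_rho_gen}) to identify $\widehat{\conti Z}^{(u)} = \widetilde{\cnz}_{\rho,\overline q}^{(u)}$ almost surely. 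Since $\widehat{\conti Z}^{(u)}(t) \geq 0$ iff $\widetilde{\bm\eta}_0(t)$ is weakly on the left of $\widetilde{\bm\phi}_\theta^{(u)}$ by construction, the two set-theoretic identities in the statement then follow.

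A natural choice is to let $\widehat{\conti Z}^{(u)}(t)$ be the $\nu_{\widetilde{\bm h}}$-length of the portion of the outer boundary of $\widetilde{\bm\eta}_0([u,t])$ joining $\widetilde{\bm\eta}_0(t)$ to the closest point of $\widetilde{\bm\phi}_\theta^{(u)}$, signed positively on the left side and negatively on the right side. Away from the zero set, the mating-of-trees machinery of \cite{duplantier2014liouville,gwynne2016joint} pins down the law of the relevant boundary-length increments: during a left-side excursion of $\widetilde{\bm\eta}_0$ relative to $\widetilde{\bm\phi}_\theta^{(u)}$, the infinitesimal change in $\widehat{\conti Z}^{(u)}$ coincides with $d\widetilde{\conti Y}_\rho(t)$ (the change in the right boundary length of $\widetilde{\bm\eta}_0([0,t])$), and during a right-side excursion with $-d\widetilde{\conti X}_\rho(t)$, matching the two indicator-drift terms in \cref{eq:infcolsde}. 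At the zero set, where $\widetilde{\bm\eta}_0(t) \in \widetilde{\bm\phi}_\theta^{(u)}$, the excursions of $\widehat{\conti Z}^{(u)}$ above and below zero correspond to left- and right-side excursions of $\widetilde{\bm\eta}_0$ relative to the flow line; by \cite[Lemma 4.3]{li2017schnyder} these are i.i.d.\ biased coin flips with parameter $\overline q_\gamma(\theta)$, which is in fact how $\overline q_\gamma(\theta)$ is characterised in the imaginary geometry of angle $\theta$. The standard excursion-theoretic calculation behind the SDE characterisation of skew Brownian motion then produces the local time contribution $(2\overline q_\gamma(\theta) - 1)\, d\conti L^{\widehat{\conti Z}^{(u)}}(t)$, completing the semimartingale decomposition.

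The main obstacle is to make the analysis at crossings rigorous: verifying that $\widehat{\conti Z}^{(u)}$ is genuinely a continuous semimartingale across the uncountable zero set, and that its excursion signs form i.i.d.\ biased coin flips with the asserted parameter. The cleanest strategy is probably to thicken $\widetilde{\bm\phi}_\theta^{(u)}$ into an $\varepsilon$-neighbourhood, run the increment computation in its complement where crossings are well-separated, and pass to $\varepsilon \to 0$ using the conformal invariance of SLE-decorated $\gamma$-LQG together with the fact that a skew Brownian motion spends Lebesgue-measure zero time at $0$. Once this is achieved, joint measurability of $(u,\omega)\mapsto \widehat{\conti Z}^{(u)}$ (inherited from the joint measurability of the flow lines and of the $\nu_{\widetilde{\bm h}}$-length) combines with pathwise uniqueness to yield $\widehat{\conti Z}^{(u)} = \widetilde{\cnz}_{\rho,\overline q}^{(u)}$ a.s.\ for almost every $u \in \R$, and the two set identities in the lemma follow.
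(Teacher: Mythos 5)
Your high-level plan follows the same skeleton as the paper: exhibit a process built from the SLE/LQG picture whose sign encodes left/right of the flow line, show it solves the skew perturbed Tanaka SDE, and then invoke pathwise uniqueness (\cref{thm:ex_uni_rho_gen}) to identify it with $\widetilde{\cnz}_{\rho,\overline q}^{(u)}$. Your candidate process $\widehat{\conti Z}^{(u)}$ is essentially the paper's $\conti Q$: the paper defines (fixing $u=0$)
$\conti Q(t) = (\widetilde{\conti Y}_\rho(t) - \widetilde{\conti Y}_\rho(\widetilde{\bm\tau}_t))\mathds{1}_{E_t} - (\widetilde{\conti X}_\rho(t) - \widetilde{\conti X}_\rho(\widetilde{\bm\tau}_t))\mathds{1}_{E_t^c}$,
where $\widetilde{\bm\tau}_t$ is the last crossing time and $E_t$ is the event that $\widetilde{\bm\eta}_0(t)$ is to the left of the flow line, and this is precisely the signed boundary-length quantity you describe. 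The paper then quotes \cite[Proposition~3.2]{gwynne2016joint} for the fact that $\conti Q$ is a skew Brownian motion of parameter $\overline q_\gamma(\theta)$ with the asserted sign properties (this is where your reference to \cite[Lemma~4.3]{li2017schnyder} lives, cf.\ \cref{rem:paramrel}).

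The genuine gap in your argument is that knowing $\widehat{\conti Z}^{(u)}$ is a skew Brownian motion of parameter $\overline q$ and that its increments match $d\widetilde{\conti Y}_\rho$ and $-d\widetilde{\conti X}_\rho$ on the interiors of its excursions does \emph{not} by itself yield the SDE \cref{eq:infcolsde} driven by the \emph{original} pair $\widetilde{\conti W}_\rho = (\widetilde{\conti X}_\rho, \widetilde{\conti Y}_\rho)$. The skew-BM decomposition of $\widehat{\conti Z}^{(u)}$ produces some Brownian motion $\conti B$ with $\widehat{\conti Z}^{(u)} = \conti B + (2\overline q - 1)\conti L^{\widehat{\conti Z}^{(u)}}$, and what one must actually prove is that $\conti B(t) = \int_0^t\big(\idf_{\{\widehat{\conti Z}^{(u)}(s) > 0\}} d\widetilde{\conti Y}_\rho(s) - \idf_{\{\widehat{\conti Z}^{(u)}(s) \leq 0\}} d\widetilde{\conti X}_\rho(s)\big)$. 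This identification is the entire technical content of the paper's proof: the paper defines an auxiliary pair $\overline{\conti W}_\rho = (\overline{\conti X}_\rho, \overline{\conti Y}_\rho)$ by splicing $\conti B$ with $\widetilde{\conti X}_\rho, \widetilde{\conti Y}_\rho$ according to the sign of a transformed process $\conti U$, verifies via \cref{prop:equiv_SDE} that $\conti Q$ solves the SDE driven by $\overline{\conti W}_\rho$, and then proves $\overline{\conti W}_\rho = \widetilde{\conti W}_\rho$ a.s.\ by showing their difference is a martingale with vanishing quadratic variation, crucially relying on the independence structure of \cite[Proposition~3.4 and Lemmas~3.16--3.17]{gwynne2016joint}. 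Your proposed fix via $\varepsilon$-thickening and conformal invariance addresses the regularity of $\widehat{\conti Z}^{(u)}$ at its zero set, but does not touch this identification of the driver; a ``standard excursion-theoretic calculation'' gives an SDE driven by some correlation-$\rho$ Brownian pair, and passing from that to the specific pair $\widetilde{\conti W}_\rho$ that encodes the LQG cone is where the argument actually has to work.
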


\begin{proof}
	We start by recalling that existence and uniqueness of solutions to the SDE in \cref{eq:infcolsde} are guaranteed by \cref{thm:ex_uni_rho_gen}.
	
	We now fix $u=0$ and we set $\widetilde{\bm\phi}_\theta\coloneqq\widetilde{\bm\phi}_\theta^{(0)}$. We assume that the two flow lines of the vector fields $e^{i(\hat{\bm h}/\chi+\theta)}$ and $e^{i(\hat{\bm h}/\chi+\theta+\pi)}$ started at $\widetilde{\bm \eta}_0(0)$ are unique. The proof for $u\neq 0$ is similar. For $t\in\R_{\geq0}$, we consider the event
	\begin{equation}
		E_t=\left\{\widetilde{\bm \eta}_0(t) \text{ is on the left of } \widetilde{\bm\phi}_\theta\right\},
	\end{equation}
	and the random variable
	\begin{align}
		\widetilde{\bm \tau_t}&=\sup\left\{s\in \R_{\leq t}:  \widetilde{\bm \eta}_0 \text{ crosses }\widetilde{\bm\phi}_\theta \text{ at time } s\right\}.
	\end{align}
 	We also consider the process
 	\begin{equation}\label{wefivwfwevfiouewfwee}
 		\conti Q(t)\coloneqq(\widetilde{\conti Y}_{\rho}(t)-\widetilde{\conti Y}_{\rho}(\widetilde{\bm \tau_t}))\mathds{1}_{E_t}-(\widetilde{\conti X}_{\rho}(t)-\widetilde{\conti X}_{\rho}(\widetilde{\bm \tau_t}))\mathds{1}_{E_t^c},\quad t\in\R_{\geq 0},
 	\end{equation}
	where we recall that  $\widetilde{\conti W}_\rho(t)=(\widetilde{\conti X}_{\rho} (t),\widetilde{\conti Y}_{\rho}(t))$ is the two-dimensional Brownian motion of correlation $\rho$ encoding $(({\mathcal C}, \widetilde{\bm h}, -\infty),\widetilde{\bm \eta}_0)$. In \cite[Proposition 3.2]{gwynne2016joint} it was shown that there exists a constant $\overline q=\overline q_\gamma(\theta)\in[0,1]$ such that $\conti Q(s)$ is a skew Brownian motion of parameter $\overline q$ and 
	\begin{align}
		\left\{t\in\R_{\geq 0}:\bm Q(t)\geq0\right\}&=\left\{t\in\R_{\geq 0}:\widetilde{\bm \eta}_0(t) \text{ is weakly on the left of } \widetilde{\bm\phi}_\theta\right\},\\
		\left\{t\in\R_{\geq 0}:\bm Q(t)\leq0\right\}&=\left\{t\in\R_{\geq 0}:\widetilde{\bm \eta}_0(t) \text{ is weakly on the right of } \widetilde{\bm\phi}_\theta\right\}.\label{ewdbiwuydvuweibvd}
	\end{align}
	Therefore in other to complete the proof it is enough to show that $\conti Q(s)$ solves the SDE in \cref{eq:infcolsde} for $u=0$. Indeed, thanks to pathwise uniqueness (\cref{thm:ex_uni_rho_gen}), then we have that $\conti Q=\widetilde{\cnz}_{\rho,q}^{(0)}$ a.s.
	
	Since $\conti Q(t)$ is a a skew Brownian motion of parameter $\overline q$ then, as shown in \cite{MR606993}, there exists a standard one-dimensional Brownian motion $(\conti B(t))_{t\in\R_{\geq 0}}$ such that 
	\begin{equation}\label{edvwevfouwefboiweo}
		\conti Q(t)=\conti B(t)+(2\overline q-1)\conti L^{\conti Q}(t),\quad t\in\R_{\geq 0}.
	\end{equation}
	Setting 
	\begin{equation}\label{weivfowevfbeopwfew}
		\conti U (t)\coloneqq (1-\overline q)\conti Q(t)\mathds{1}_{\{\conti Q(t)>0\}}+\overline q \conti Q(t)\mathds{1}_{\{ \conti Q(t)\leq0\}},
	\end{equation}
	then from \cite[Equation (11)]{MR606993} we have that
	\begin{equation}\label{fuvfuvwrfvewoufwe}
		d\conti U(t)=(1-\overline q)\mathds{1}_{\{\conti U(t)>0\}}d\conti B(t)-\overline q \mathds{1}_{\{ \conti U(t)\leq0\}}d\conti B(t).
	\end{equation}
	We also introduce the following stochastic process $\overline{\conti W}_{\rho}(t)=(\overline{\conti X}_{\rho}(t),\overline{\conti Y}_{\rho}(t))$, defined for all $t\in\R_{\geq 0}$ by
	\begin{align}\label{efuigwehfweojfew}
		\overline{\conti X}_{\rho}(t)&\coloneqq\int_0^t\mathds{1}_{\{\conti U(s)>0\}} d \widetilde{\conti X}_{\rho}(s)+\int_0^t\mathds{1}_{\{\conti U(s)\leq0\}} d \conti B(s),\\
		\overline{\conti Y}_{\rho}(t)&\coloneqq\int_0^t\mathds{1}_{\{\conti U(s)>0\}} d \conti B(s)+\int_0^t\mathds{1}_{\{\conti U(s)\leq0\}} d \widetilde{\conti Y}_{\rho}(s).
	\end{align}
	Note that 
	\begin{equation}
		(1-\overline q)\mathds{1}_{\{\conti U(t)>0\}}d\overline{\conti Y}_{\rho}(t)-\overline q \mathds{1}_{\{ \conti U(t)\leq0\}}d\overline{\conti X}_{\rho}(t)=(1-\overline q)\mathds{1}_{\{\conti U(t)>0\}}d\conti B(t)-\overline q \mathds{1}_{\{ \conti U(t)\leq0\}}d\conti B(t)=d\conti U(t),
	\end{equation}
	where in the last equality we used \cref{fuvfuvwrfvewoufwe}. Setting, $r(x)=x/(1-\overline q)\cdot\mathds{1}_{x>0}+x/\overline q\cdot\mathds{1}_{x\leq0}$, and using \cref{weivfowevfbeopwfew}, we have that $r(\conti U(t))=\conti Q(t)$. Therefore, from \cref{prop:equiv_SDE}, we obtain that $\conti Q(t)$ satisfies
	\begin{equation}
		d\conti Q(t) = \idf_{\{\conti Q(t)> 0\}} d\overline{\conti Y}_\rho (t) - \idf_{\{\conti Q(t)\leq 0\}} d \overline{\conti X}_{\rho}(t)+(2\overline q-1)\cdot d\conti L^{\conti Q}(t).
	\end{equation}
	Note that if we show that $\overline{\conti X}_\rho=\widetilde{\conti X}_\rho$ a.s.\ and $\overline{\conti Y}_\rho=\widetilde{\conti Y}_\rho$ a.s., i.e.\ $\overline{\conti W}_\rho=\widetilde{\conti W}_\rho$ a.s., then we can conclude the proof.
	From \cref{efuigwehfweojfew,weivfowevfbeopwfew} we have that
	\begin{equation}\label{ebcuiewrcbouierwbcew}
		\overline{\conti X}_\rho(t)-\overline{\conti X}_\rho(\widetilde{\bm \tau_t})=\int_{\widetilde{\bm \tau_t}}^t\mathds{1}_{\{\conti Q(s)>0\}} d \widetilde{\conti X}_{\rho}(s)+\int_{\widetilde{\bm \tau_t}}^t\mathds{1}_{\{\conti Q(s)\leq0\}} d \conti B(s).
	\end{equation}
	Since $\int_{0}^t\mathds{1}_{\{\conti Q(s)=0\}} d \conti B(s)$ is identically zero (this follows for instance using the same arguments as in \cref{eq:vweivfweiuvfewiu}) and $d \conti L^{\conti Q}(s)=0$ when $\conti Q(s)<0$, using \cref{edvwevfouwefboiweo} we have that 
	$$\int_{\widetilde{\bm \tau_t}}^t\mathds{1}_{\{\conti Q(s)\leq0\}} d \conti B(s)=\int_{\widetilde{\bm \tau_t}}^t\mathds{1}_{\{\conti Q(s)<0\}} d \conti Q(s).$$ 
	In addition, noting that $\{\conti Q(s)<0\}\subseteq E_s^c$ (this follows form \cref{ewdbiwuydvuweibvd}) and using \cref{wefivwfwevfiouewfwee} we obtain that 
	$$\int_{\widetilde{\bm \tau_t}}^t\mathds{1}_{\{\conti Q(s)<0\}} d \conti Q(s)=\int_{\widetilde{\bm \tau_t}}^t\mathds{1}_{\{\conti Q(s)<0\}} d \widetilde{\conti X}_{\rho}(s).$$ 
	Substituting the latter two expressions in \cref{ebcuiewrcbouierwbcew} we conclude that for all $t\in\R_{\geq 0},$
	\begin{equation}\label{eq:webfwevfiuoewvfoew}
		\overline{\conti X}_\rho(t)-\overline{\conti X}_\rho(\widetilde{\bm \tau_t})=\widetilde{\conti X}_\rho(t)-\widetilde{\conti X}_\rho(\widetilde{\bm \tau_t}).
	\end{equation}
	Similarly, $\overline{\conti Y}_\rho(t)-\overline{\conti Y}_\rho(\widetilde{\bm \tau_t})=\widetilde{\conti Y}_\rho(t)-\widetilde{\conti Y}_\rho(\widetilde{\bm \tau_t})$ for all $t\in\R_{\geq 0}$.
	From \cref{efuigwehfweojfew} and Lévy's characterization theorem (\cite[Theorem 3.3.16]{MR1121940}), we have that both $\overline{\conti X}_\rho(t)$ and $\overline{\conti Y}_\rho(t)$ are standard one-dimensional Brownian motions. Therefore, to conclude that $\overline{\conti W}_\rho=\widetilde{\conti W}_\rho$ a.s., it is enough to show that for $t>s$
	\begin{equation}\label{eq:erergger}
		\E\left[\overline{\conti W}_\rho(t)-\widetilde{\conti W}_\rho(t)\middle |\mathcal{F}_s\right]=\overline{\conti W}_\rho(s)-\widetilde{\conti W}_\rho(s),
	\end{equation}
	where $\mathcal{F}_s\coloneqq \sigma((\overline{\conti W}_\rho,\widetilde{\conti W}_\rho)|_{[0,s]})$.
	Indeed, the latter equation implies that $\overline{\conti W}_\rho-\widetilde{\conti W}_\rho$ is a $\mathcal{F}_t$-martingale and then in \cite[Lemma 3.17]{gwynne2016joint} it is shown that if \cref{eq:webfwevfiuoewvfoew} holds then the quadratic varion of $\overline{\conti W}_\rho-\widetilde{\conti W}_\rho$ must be zero and so $\overline{\conti W}_\rho=\widetilde{\conti W}_\rho$ a.s.
	
	\medskip
	
	We proceed with the proof of \cref{eq:erergger} by showing that the law of $(\overline{\conti W}_{\rho}(\cdot)-\overline{\conti W}_{\rho}(s))|_{[s,\infty)}$ (resp.\ $(\widetilde{\conti W}_{\rho}(\cdot)-\widetilde{\conti W}_{\rho}(s))|_{[s,\infty)}$) given $\mathcal{F}_s$ is the unconditional law of $\overline{\conti W}_{\rho}$ (resp.\ $\widetilde{\conti W}_{\rho}$). 
	
	We set $\widetilde{\mathfrak{W}}_\rho(t)\coloneqq\widetilde{\conti W}_\rho(t)-\widetilde{\conti W}_\rho(\widetilde{\bm \tau_t})$. From \cite[Proposition 3.4 and Lemma 3.16]{gwynne2016joint}, we have that:
	\begin{itemize}
	\item the processes $\widetilde{\mathfrak{W}}_\rho|_{[0,s]}$ and $\widetilde{\conti W}_\rho|_{[0,s]}$ (resp.\ $(\widetilde{\mathfrak{W}}_\rho(\cdot)-\widetilde{\mathfrak{W}}_\rho(s))|_{[s,\infty)}$ and $(\widetilde{\conti W}_\rho(\cdot)-\widetilde{\conti W}_\rho(s))|_{[s,\infty)}$) determine each other. In particular, $(\widetilde{\mathfrak{W}}_\rho(\cdot)-\widetilde{\mathfrak{W}}_\rho(s))|_{[s,\infty)}$ is independent of  $\widetilde{\mathfrak{W}}_\rho|_{[0,s]}$.
	\item  $\widetilde{\mathfrak{W}}_\rho|_{[0,s]}$ (resp.\ $(\widetilde{\mathfrak{W}}_\rho(\cdot)-\widetilde{\mathfrak{W}}_\rho(s))|_{[s,\infty)}$) determines $\conti Q|_{[0,s]}$ (resp.\ $(\conti Q(\cdot)-\conti Q(s))|_{[s,\infty)}$).
	\end{itemize}

	Now from the definition of $\overline{\conti W}_{\rho}$ in \cref{efuigwehfweojfew} and the relations in \cref{edvwevfouwefboiweo,weivfowevfbeopwfew}, we also have that:
	\begin{itemize}
		\item the process $\overline{\conti W}_{\rho}|_{[0,s]}$ (resp.\ $(\overline{\conti W}_{\rho}(\cdot)-\overline{\conti W}_{\rho}(s))|_{[s,\infty)}$) is a.s.\ determined by $(\widetilde{\conti W}_{\rho},\conti Q)|_{[0,s]}$ (resp.\ $(\widetilde{\conti W}_{\rho}(\cdot)-\widetilde{\conti W}_{\rho}(s),\conti Q(\cdot)-\conti Q(s))|_{[s,\infty)}$).  Therefore, from the items above, $\overline{\conti W}_{\rho}|_{[0,s]}$ (resp.\ $(\overline{\conti W}_{\rho}(\cdot)-\overline{\conti W}_{\rho}(s))|_{[s,\infty)}$) is a.s.\ determined by $\widetilde{\mathfrak{W}}_\rho|_{[0,s]}$ (resp.\ $(\widetilde{\mathfrak{W}}_\rho(\cdot)-\widetilde{\mathfrak{W}}_\rho(s))|_{[s,\infty)}$).
	\end{itemize}  
	From the items above, we can conclude that that the law of $(\overline{\conti W}_{\rho}(\cdot)-\overline{\conti W}_{\rho}(s))|_{[s,\infty)}$ (resp.\ $(\widetilde{\conti W}_{\rho}(\cdot)-\widetilde{\conti W}_{\rho}(s))|_{[s,\infty)}$) given $\mathcal{F}_s$ is the unconditional law of $\overline{\conti W}_{\rho}$ (resp.\ $\widetilde{\conti W}_{\rho}$). This ends the proof.
\end{proof}

It remains to deduce \cref{lem:keylemfinvol} from \cref{lem:keylem}.

\begin{proof}[Proof of \cref{lem:keylem}]
	Note that the only difference between the pairs $( \widetilde{\bm \eta}_0, \widetilde{\bm \eta}_\theta)$ and $( \bm \eta_0, \bm \eta_\theta)$ is in their time parametrization. More precisely, 
	\begin{itemize}
		\item $( \widetilde{\bm \eta}_0, \widetilde{\bm \eta}_\theta)$ are parametrized using the $\gamma$-quantum cone $(\mathbb C,\widetilde{\bm h},0,\infty)$ and therefore the left and right boundary measures of $\widetilde{\bm \eta}_0$ are encoded by the Brownian motion $\widetilde{\conti W}_\rho(t)=(\widetilde{\conti X}_{\rho} (t),\widetilde{\conti Y}_{\rho}(t))$ of correlation $\rho$;	
		
		\item $(\bm \eta_0, \bm \eta_\theta)$ are parametrized using the $\gamma$-LQG sphere $({\mathbb C}\cup\{\infty\}, \bm h, \infty)$  and therefore the left and right boundary measures of $\bm \eta_0$ are encoded by the  Brownian excursion $\conti E_\rho(t)=(\conti X_{\rho} (t),\conti Y_{\rho}(t))$  of correlation $\rho$.
	\end{itemize} 
	For any $\varepsilon > 0$, we consider the following  curve-decorated quantum surfaces:
	\begin{itemize}
		\item the curve-decorated quantum surface $(\widetilde{\mathcal S}_\eps,\widetilde{\bm \eta}_0|_{[\eps , 1-\eps]})$, where $\widetilde{\mathcal S}_\eps$ denotes the quantum surface obtained by restricting the quantum cone field $\widetilde{\bm h}$ to $\widetilde{\bm \eta}_0([\eps , 1-\eps])$.
		\item the curve-decorated quantum surface $(\mathcal S_\eps,\bm \eta_0|_{[\eps , 1-\eps]})$, where $\mathcal S_\eps$ denotes the quantum surface obtained by restricting the quantum sphere field $\bm h$ to $\bm \eta_0([\eps , 1-\eps])$.
	\end{itemize}

	From \cite[Lemma 3.12]{gwynne2016joint} the curve-decorated quantum surface $(\widetilde{\mathcal S}_\eps,\widetilde{\bm \eta}_0|_{[\eps , 1-\eps]})$ is a.s.\ determined by $(\widetilde{\conti W}_\rho (\eps+t) - \widetilde{\conti W}_\rho(\eps))_{0\leq t\leq 1-2\eps}$, while the curve-decorated quantum surface $(\mathcal S_\eps,\bm \eta_0|_{[\eps , 1-\eps]})$ is a.s.\ determined by $(\conti E_\rho (\eps+t) - \conti E_\rho(\eps))_{0\leq t\leq 1-2\eps}$.

	The law of $(\conti E_\rho (\eps+t) - \conti E_\rho(\eps))_{0\leq t\leq 1-2\eps}$ is absolutely continuous w.r.t.\ the law of $(\widetilde{\conti W}_\rho (\eps+t) - \widetilde{\conti W}_\rho(\eps))_{0\leq t\leq 1-2\eps}$ (see \cref{prop:abs_cont}).
	This implies that curve-decorated quantum surface $(\mathcal S_\eps,\bm \eta_0|_{[\eps , 1-\eps]})$ is absolutely continuous w.r.t.\ the  curve-decorated quantum surface $(\widetilde{\mathcal S_\eps},\widetilde{\bm \eta}_0|_{[\eps , 1-\eps]})$.
	
	From \cite[Lemma 3.10]{gwynne2016joint} it follows that the flow lines $\left\{\widetilde{\bm \phi}^{(t)}_\theta\right\}_{t\in[\eps,1-\eps]}$ (resp.\ $\left\{{\bm \phi}^{(t)}_\theta\right\}_{t\in[\eps,1-\eps]}$) run until they exit $\widetilde{\bm \eta}_0([\eps , 1-\eps])$ (resp.\ $\bm \eta_0([\eps , 1-\eps])$) are a.s.\ determined by $\widetilde{\mathcal S}_\eps$ (resp. $\mathcal S_\eps$).
	
	Finally, since by \cite[Lemma 3.6]{gwynne2016joint}, $\widetilde{\bm \eta}_0$ (resp.\ $\bm \eta_0$) hits points on $\widetilde{\bm \phi}^{(t)}_\theta$ (resp.\ $\bm \phi^{(t)}_\theta$) in chronological order, $\widetilde{\bm \phi}^{(t)}_\theta$ (resp.\ $\bm \phi^{(t)}_\theta$) cannot revisit $\widetilde{\bm \eta}_0([\eps , 1-\eps])$ (resp.\ $\bm \eta_0([\eps , 1-\eps])$) after exiting this region. Hence it is possible to determine from $\widetilde{\mathcal S}_\eps$ (resp. $\mathcal S_\eps$) what points of $\widetilde{\bm \eta}_0$ (resp.\ $\bm \eta_0$) are to the left or right of $\widetilde{\bm \eta}_\theta$ (resp.\ $\bm \eta_\theta$). 
	
	\medskip
	
	On the other hand, thanks to \cref{thm:conj_rho_gen} (resp.\ \cref{thm:ex_uni_rho_gen}) the processes $\left\{\cnz_{\rho,\overline q}^{(t)}|_{[t,1-\eps]}\right\}_{t\in[\eps,1-\eps]}$ in the statement of \cref{lem:keylemfinvol} (resp. $\left\{\widetilde{\cnz}_{\rho,\overline q}^{(t)}|_{[t,1-\eps]}\right\}_{t\in[\eps,1-\eps]}$ in the statement of \cref{lem:keylem})  are a.s.\ determined -- through the same solution map $F_{1-\varepsilon}$ -- by $(\conti E_\rho (\eps+t) - \conti E_\rho(\eps))_{0\leq t\leq 1-2\eps}$ (resp.\ $(\widetilde{\conti W}_\rho (\eps+t) - \widetilde{\conti W}_\rho(\eps))_{0\leq t\leq 1-2\eps}$).
	
	\medskip
	
	Therefore, by absolute continuity, we can deduce from \cref{lem:keylem} that a.s.\ for almost all $t\in[\eps,1-\eps]$
	\begin{align}
		\left\{x\in[t,1-\eps]:\cnz_{\rho,\overline q}^{(t)}(x)\geq0\right\}&=\left\{x\in[t,1-\eps]:\bm \eta_0(x) \text{ is weakly on the left of } \bm\phi_\theta^{(t)}\right\},\\
		\left\{x\in[t,1-\eps]:\cnz_{\rho,\overline q}^{(t)}(x)\leq 0\right\}&=\left\{x\in[t,1-\eps]: \bm \eta_0(x) \text{ is weakly on the right of } \bm\phi_\theta^{(t)} \right\}.
	\end{align}
	Since $\varepsilon > 0$ can be made arbitrarily small, this proves \cref{lem:keylemfinvol}.
\end{proof}

\appendix

\section{Absolute continuity between correlated Brownian excursions in cones and correlated Brownian motions}\label{sect:abs_cont}

Let $\conti{W}_\rho$ be a two-dimensional Brownian motion of correlation $\rho\in(-1,1)$ and $\conti{E}_\rho$ a two-dimensional Brownian excursion of correlation $\rho$.

 \begin{prop}\label{prop:abs_cont}
	For every $\eps>0$, the distribution of $(\conti{E}_\rho (\eps+t) - \conti{E}_\rho(\eps))_{t\in[0, 1-2\eps]}$ is absolutely continuous w.r.t.\ the distribution of $(\conti W_{\rho}(t))_{t\in[0,1-2\eps]}$. 
	In particular, for every $0<\eps<1/2$ and for every integrable function  $h:\mathcal{C}([0,1-2\eps],\R^2)\to \R$,
	\begin{equation}\label{eq:fwiobfuowebf}
		\E\left[h\left((\conti{E}_\rho (\eps+t) - \conti{E}_\rho(\eps))_{t\in[0, 1-2\eps]}\right)\right]
		= \E\left[
		h\left((\conti W_{\rho}(t))_{t\in[0,1-2\eps]}\right)
		\alpha_{\eps}\left(-\inf_{[0,1-2\eps]} \conti W_\rho\; ,\; \conti W_\rho(1-2\eps)\right)\right],
	\end{equation}
	where $\alpha_{\eps}$ is a bounded positive continuous function on $(\R_+)^2\times \R^2$. 
	
	In addition, since $\alpha_{\eps}>0$, we have that the two measures are equivalent.
\end{prop}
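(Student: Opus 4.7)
The idea is to use the Markov property of the two-dimensional Brownian excursion $\conti E_\rho$ in $\R_{\geq 0}^2$ together with its description as a Doob $h$-transform of the correlated Brownian motion killed on exiting the quadrant.
Let $T = 1-2\eps$ and let $q^{\rho,+}_t(x,y)$ denote the Dirichlet heat kernel on $\R_{\geq 0}^2$ for the Brownian motion of correlation $\rho$.
First I would record two structural facts that follow from the explicit $h$-transform construction of $\conti E_\rho$ (as in the references cited after the definition of $\conti E_\rho$ in \cref{sect:fvuuvf}): the marginal law of $\conti E_\rho(\eps)$ admits a smooth, positive density $p_\eps^{\mathrm{exc}}$ on $\R_{>0}^2$, and there exists a smooth positive function $h_s: \R_{>0}^2 \to \R_{>0}$ (the ``excursion return kernel'' at time $s$, built from $q_s^{\rho,+}$) such that, conditionally on $\conti E_\rho(\eps) = x$, the process $(\conti E_\rho(\eps + t))_{t\in[0,T]}$ has Radon--Nikodym derivative with respect to the law of the Brownian motion $(x + \conti W_\rho(t))_{t\in[0,T]}$ equal to
\begin{equation*}
\frac{h_\eps(x + \conti W_\rho(T))}{h_{1-\eps}(x)} \cdot \mathds{1}\bigl\{x + \conti W_\rho(t) \in \R_{\geq 0}^2 \text{ for all } t \in [0,T]\bigr\}.
\end{equation*}

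Second, I would perform the change of variables $\omega \mapsto \omega - x$ and integrate against $p_\eps^{\mathrm{exc}}(x)\,dx$. The indicator becomes $\mathds{1}\{x_i \geq -\inf_{[0,T]} (\conti W_\rho)_i\ \text{for }i=1,2\}$, which depends on the path $\conti W_\rho$ only through the coordinatewise negative infimum $I(\conti W_\rho) \coloneqq -\inf_{[0,T]} \conti W_\rho \in \R_+^2$. Writing $Z = \conti W_\rho(T)$, this gives
\begin{equation*}
\E\bigl[h(\conti E_\rho(\eps+\cdot) - \conti E_\rho(\eps))\bigr] = \E\bigl[h(\conti W_\rho|_{[0,T]}) \cdot \alpha_\eps(I(\conti W_\rho), Z)\bigr],
\end{equation*}
where
\begin{equation*}
\alpha_\eps(I, Z) = \int_{\R_{\geq 0}^2} \mathds{1}\{x \geq I\} \cdot \frac{p_\eps^{\mathrm{exc}}(x) \, h_\eps(x + Z)}{h_{1-\eps}(x)}\, dx.
\end{equation*}
Here $\{x \geq I\}$ is understood coordinatewise. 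This identifies $\alpha_\eps$ as a function of $(I, Z) \in \R_+^2 \times \R^2$ only, as required.

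Finally I would establish the three regularity properties of $\alpha_\eps$. Positivity is immediate since $p_\eps^{\mathrm{exc}}$, $h_\eps$, and $h_{1-\eps}$ are strictly positive on $\R_{>0}^2$, and the domain of integration has positive Lebesgue measure for any $(I,Z)$. Continuity follows by dominated convergence, using the joint continuity of $(x, Z) \mapsto h_\eps(x+Z) p_\eps^{\mathrm{exc}}(x)/h_{1-\eps}(x)$ together with an integrable majorant obtained from the Gaussian-type decay of the heat kernels. Boundedness of $\alpha_\eps$ on $\R_+^2 \times \R^2$ will be the main obstacle: one must uniformly control the integral, both as $x \to \infty$ (where the Gaussian tails of $p_\eps^{\mathrm{exc}}$ and $h_\eps(x+Z)$ compete with the decay rate of $h_{1-\eps}(x)$) and as $x$ approaches the boundary $\partial\R_{\geq 0}^2$ (where the Dirichlet boundary condition forces $h_s(x) \to 0$, so one must check that $h_\eps(x+Z)/h_{1-\eps}(x)$ remains bounded uniformly in $x$ near the boundary). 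Both estimates reduce to standard two-sided bounds on the Dirichlet heat kernel $q^{\rho,+}_t$ on the quadrant (of the form $q^{\rho,+}_t(x,y) \asymp \mathrm{dist}(x, \partial \R_{\geq 0}^2)^{\alpha} \mathrm{dist}(y, \partial \R_{\geq 0}^2)^{\alpha} \cdot G_t(x,y)$ for a Gaussian-like factor $G_t$), which imply that the ratio $h_\eps(x+Z)/h_{1-\eps}(x)$ is uniformly controlled for $\eps>0$ fixed. Equivalence of the two measures then follows from $\alpha_\eps > 0$.
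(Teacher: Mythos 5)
Your plan is a genuinely different route from the paper's. The paper proves \cref{prop:abs_cont} entirely through a discrete approximation: it takes a two-dimensional random walk $\bm W$ with the prescribed covariance, invokes the invariance principle of \cite{MR4102254} to identify the scaling limit of the conditioned walk with $\conti E_\rho$, derives an exact discrete Radon--Nikodym identity (\cite[Lemma A.2]{borga2020scaling}, reproduced in \cref{fhuvfihwevofubwfwe}--\cref{eq:bfwowbfwubfowiebf}), and then passes to the limit using the local limit theorems of \cite{MR3342657}. The density $\alpha_\eps$ in the paper is thus obtained as a uniform limit of discrete ratios, and the input estimates are all about random walks in cones. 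You instead work directly in the continuum: you invoke the Doob $h$-transform construction of $\conti E_\rho$, disintegrate over $\conti E_\rho(\eps)=x$, and read off an explicit Radon--Nikodym derivative involving the Dirichlet heat kernel on $\R_{\geq 0}^2$ and the excursion's entrance/return kernels. The key structural observation you make --- that the indicator of staying in the quadrant depends on the shifted Brownian path only through the coordinatewise running infimum --- is exactly what makes $\alpha_\eps$ a function of the two statistics $(-\inf_{[0,T]}\conti W_\rho,\ \conti W_\rho(T))$ in both proofs, so the two formulas are analogous: your integral in $x$ is the continuum version of the sum over $z$ in \cref{eq:bfwowbfwubfowiebf}, and the function $g$ in the paper's $\alpha_\eps$ plays the role of your kernels $p^{\mathrm{exc}}_\eps$ and $h_\eps$ after normalization. (Incidentally, the coordinate flip $\widehat{\cdot}$ in the paper's formula is an artifact of the time-reversal of the particular lattice walk used; for the Brownian case the law of the increments is symmetric under $w\mapsto -w$, so your formula without a flip is the right one.) What each approach buys: the paper's route stays within already-published random-walk-in-cone estimates and is more self-contained given those citations; your route is conceptually more direct and yields $\alpha_\eps$ as a recognizable heat-kernel ratio, at the price of needing sharp two-sided Dirichlet heat kernel bounds in the (possibly obtuse) cone obtained by linearly transforming the quadrant.

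The one genuine gap is the boundedness of $\alpha_\eps$, which you correctly identify as the main obstacle but only sketch. Two things need to be checked carefully and are not automatic. First, near the boundary $\partial\R_{\geq 0}^2$, both $p_\eps^{\mathrm{exc}}(x)$ and $h_{1-\eps}(x)$ vanish like $\mathrm{dist}(x,\partial)^{\alpha}$ for the \emph{same} exponent $\alpha=\pi/\beta$ (where $\beta$ is the cone opening angle after the linear change of variables), so the ratio is bounded there --- but this matching of exponents must be verified, and it relies on the precise boundary Harnack behavior of the Dirichlet heat kernel in a cone, not just a soft two-sided Gaussian estimate. Second, you must control the integral uniformly in $Z\in\R^2$, not only for $Z$ fixed; this requires showing $\sup_Z \int \mathds{1}\{x\geq I\} p_\eps^{\mathrm{exc}}(x) h_\eps(x+Z)/h_{1-\eps}(x)\,dx < \infty$, which does hold because $h_\eps(\cdot)\in L^\infty$ vanishes outside the quadrant, but you should say so rather than fold it into a generic dominated-convergence gesture. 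Neither of these is a fatal objection --- the needed heat-kernel asymptotics in cones are in the literature (e.g.\ Ba\~nuelos--Smits, DeBlassie) --- but as written the boundedness and continuity of $\alpha_\eps$ are asserted rather than proved, whereas in the paper they come for free from the uniform convergence in \cref{hvrfvwjobfowebfpiwe}, which is itself a direct consequence of the cited discrete results.
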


The theorem above was proved in \cite[Proposition A.1]{borga2020scaling} in the specific case when $\rho=-1/2$ building on some specific results on a family of discrete two-dimensional walks called \emph{tandem walk} \cite{MR4219068}. In what follows we prove the general case $\rho\in(-1,1)$ building on the more general results of \cite{MR3342657,MR4102254}.

\begin{proof}
	We prove the proposition by considering a two-dimensional random walk  $(\bm W_n)_{n\in\Z_{\geq 0}}$.
	Setting $(\bm X,\bm Y)=\bm W_1-\bm W_0$, we assume that $\bm X $ and $\bm Y$ have finite moments, $\E[\bm X] =\E[\bm Y]= 0$, $\E[\bm X^2] =\E[\bm Y^2]= 1$ and $\Cov(\bm X, \bm Y) = \rho$. Under these assumptions, \cite[Theorem 4]{MR4102254} guarantees\footnote{The results in \cite{MR4102254} are stated under the assumption that $\Cov(\bm X, \bm Y) =0$. This does not restrict the generality of the results and they remain valid (with the obvious adaptations) when $\Cov(\bm X, \bm Y) =\rho$. See the bottom part of page 996 in \cite{MR3342657} for more explanations on this fact.}
	 that setting $Q\coloneqq \Z_{\geq 0}^2$,
	\begin{equation}\label{fkbewivfoueqwbfwe}
		\P_x\left(\left(\tfrac 1 {\sqrt{2 n}} \bm W_{\lfloor nt \rfloor}\right)_{t\in[0, 1]} \in \cdot \;
		\middle| \; \bm W_{[0,n]}\subset Q, \bm W_n = y\right)
		\xrightarrow[n\to\infty]{} \P((\conti{E}_\rho(t))_{t\in[0, 1]} \in \cdot),
	\end{equation}
	 for all $x,y\in Q$.
	 In \cite[Lemma A.2]{borga2020scaling} it was proved\footnote{Note that the proof of \cite[Lemma A.2]{borga2020scaling} does not use any specific property of the tandem walk $(\bm W_n)_{n\in\Z_{\geq 0}}$.} that if  $h:(\Z^2)^{n-2m+1}\to\R$ is a bounded measurable function, $x,y\in Q$ and $1\leq m<n/2$, then
	 \begin{multline}\label{fhuvfihwevofubwfwe}
	 	\E_{x}\left[h((\bm W_{i+m}-\bm W_{m})_{0\leq i \leq n-2m}) \mid \bm W_{[0,n]}\subset Q, \bm W_{n} = y\right]\\
	 	= \E_{0}\left[
	 	h(\bm W_{i})_{0\leq i \leq n-2m}\cdot
	 	\alpha_{n,m}^{x,y}\left(-\inf_{0\leq i \leq n-2m} \bm W_{i}\; ,\;\bm W_{n-2m}\right)
	 	\right],
	 \end{multline}
	 where
	 \begin{equation}\label{eq:bfwowbfwubfowiebf}
	 	\alpha_{n,m}^{x,y}(a,b) \coloneqq 
	 	\sum_{z\in Q \colon z-a \in Q} \frac{
	 		\P_x\left(\bm W_{m} = z, \bm{W}_{[0,m]}\subset Q\right)
	 		\P_{\widehat y}\left(\bm W_{m} = \wzwb, \bm{W}_{[0,m]}\subset Q\right)
	 	}{\P_{x}\left(\bm W_{n} = y, \bm{W}_{[0,n]}\subset Q\right)}
	 \end{equation}
 	and $\widehat{(i,j)} = (j,i)$.
 	Note that if we show that for fixed $x,y\in Q$ and for all $\eps\in(0,1/2)$, there exists a bounded positive continuous function $\alpha_\eps$ on $(\R_+)^2\times \R^2$ such that
 	\begin{equation}\label{hvrfvwjobfowebfpiwe}
 		\lim_{n\to\infty} \sup_{a\in\Z^2_{\geq 0} ,b\in \Z^2} \left\lvert\alpha^{x,y}_{n,\lfloor n\eps\rfloor}(a,b) - \alpha_\eps\left(\tfrac a{\sqrt {n}},\tfrac b{\sqrt {n}}\right)\right\rvert = 0,
 	\end{equation}
 	 then \cref{eq:fwiobfuowebf} follows from \cref{fkbewivfoueqwbfwe,fhuvfihwevofubwfwe,hvrfvwjobfowebfpiwe}. Therefore it just remains to prove \cref{hvrfvwjobfowebfpiwe}.
 	 
 	 \medskip
 	 
 	 Fix $x\in Q$. From \cite{MR3342657}, there exists a positive function $V$ on $Q$ such that as $n\to\infty$ the following asymptotics hold
 	 \begin{gather}
 	 	\mathbb{P}_x\left(\bm W_{[0,n]} \subset Q\right) \sim c_1 V(x) n^{-p / 2}\text{ as }n \rightarrow \infty, \label{eq:LLTconditioning}\\
	 	\delta_1(x,n) \coloneqq \sup _{y \in Q}\left|n^{p/2+1} \cdot \mathbb{P}_x\left(\bm W_n=y, \bm W_{[0,n]} \subset Q\right)-c_2 V(x) g\left(\frac{y}{\sqrt{2n}}\right)\right| \rightarrow 0, 	\label{eq:iwrfbwruobfipenfwipnefw}\\
 	 	\mathbb{P}_x\left(\bm W_n=y, \bm W_{[0,n]} \subset Q\right) \sim c_3\cdot \frac{V(x) V(\widehat y)}{n^{p+1}}, \label{eq:obewfbweoifbweipnfipwen}
 	 \end{gather}
 	 where $c_1,c_2,c_3$ are three constants, $g$ is a positive bounded integrable function on $\R^2_{\geq 0}$ and $p$ is a parameter depending only on $\rho$ and $Q$. (We highlight that all these constants and the function $g$ can be explicitly computed in specific cases, see for instance \cite[Lemma A.5]{borga2020scaling}.) More precisely, \cref{eq:LLTconditioning} is \cite[Theorem 1]{MR3342657}, \cref{eq:iwrfbwruobfipenfwipnefw} is \cite[Theorem 5]{MR3342657}, and \cref{eq:obewfbweoifbweipnfipwen} is \cite[Theorem 6]{MR3342657}.

	In what follows, $m = \lfloor n \eps \rfloor$ for some $\eps>0$. Let us consider $\alpha_{n,m}^{x,y}(a,b)$ defined in \cref{eq:bfwowbfwubfowiebf}.
	By \cref{eq:obewfbweoifbweipnfipwen}, the denominator (which is independent of $a,b$) is of order $n^{-p-1}$.
	
	We now look at the numerator of $\alpha_{n,m}^{x,y}(a,b)$. We first cut the sum at $t\sqrt{n}$ for some $t>0$ and bound the rest of the sum. Using \cref{eq:iwrfbwruobfipenfwipnefw} for the first factor (recall that $g$ is bounded) and \cref{eq:LLTconditioning} for the second one, we can guarantee that there exists a constant $C>0$ depending only on $x,y$ such that
	\begin{align}
		R_n\coloneqq &\sum_{|z|>t\sqrt{n}} 
		\P_x(\bm W_{m} = z, \bmwzm \subset Q)
		\P_{\widehat y}(\bm W_m = \wzwb, \bmwzm\subset Q)\\
		&\leq C n^{-p/2}n^{-p/2-1}\sum_{|z|>t\sqrt n} \P_{x}(\bm W_{m} = z \mid \bmwzm \subset Q)\\
		&= C n^{-p-1}\P_{x}(|\bm W_{ \lfloor n \eps \rfloor}| > t\sqrt{n} \mid \bm W_{[0, \lfloor n \eps \rfloor]} \subset Q).
	\end{align}
	Using \cite[Theorem 3]{MR3342657}, it is possible to find a function $\dxyent$ independent of $a,b$ such that
	\begin{equation}\label{eq:eibdiuwebdowebdbiew}
		n^{p+1}R_n \leq \dxyent\qquad \text{and}\qquad 
		\lim_{t\to\infty} \limsup_{n\to\infty} \dxyent = 0 .
	\end{equation}
	Now set $S_{n,m} \coloneqq \sum_{z:z-a\in Q ,|z|\leq t\sqrt n} 
		\P_x(\bm W_m = z, \bmwzm \subset Q)
		\cdot \P_{\widehat y}(\bm W_m = \wzwb, \bmwzm \subset Q).$
	Using \cref{eq:iwrfbwruobfipenfwipnefw}, we have for fixed $x$ and $y$ that
	\begin{multline}
		S_{n,m} = m^{-p-2}\cdot c_2^2\cdot V(x)V(\widehat y)\sum_{z:z-a\in Q ,|z|\leq t\sqrt n}  
		g\left(\frac{z}{\sqen}\right)g\left(\frac{\widehat z+\widehat b}{\sqen}\right) \\
		+ O(1) \left(t\sqrt n\right)^2 m^{-p-2}(\delta_{1}( x , m ) + \delta_{1}(\widehat y , m )).
	\end{multline}	
	Putting together the latter estimate for the numerator of \cref{eq:bfwowbfwubfowiebf} with the estimate in \cref{eq:obewfbweoifbweipnfipwen} for the denominator, both uniform in $(a,b)$, we have
	\begin{align}
		&\alpha^{x,y}_{n,\lfloor \eps n \rfloor}(a,b) \\
		&= O(1)n^{p+1}R_n + o(1) + \frac{1}{\eps^{p+1}}\left(\frac{c_2^2}{c_3}+o(1)\right) \frac 1 {(\sqrt n)^2} \sum_{z \geq a,|z|\leq t\sqrt n}  g\left(\frac{z}{\sqen}\right)g\left(\frac{\widehat z+\widehat b}{\sqen}\right)\\
		&= O(1)n^{p+1}R_n + o(1) + \frac{1}{\eps^{p+1}}\left(\frac{c_2^2}{c_3}+o(1)\right)\int_{w\geq \frac a {\sqrt n}, |w|\leq t} g\left(\frac{w}{\sqrt{2\eps }}\right)g\left(\frac{\widehat w+ \widehat b/\sqrt n}{\sqrt{2\eps }}\right)dw + o(1), 
	\end{align}
	where the final $o(1)$ error term comes from the summation approximation. We highlight that all the error terms are uniform in $a$ and $b$.	
	Finally, setting 
	\begin{equation}
		\alpha_\eps\left(a,b\right)
		\coloneqq
		\frac{1}{\eps^{p+1}}\cdot\frac{c_2^2}{c_3}\cdot\int_{w:w- a\in\R^2_{\geq 0}} g\left(\frac{w}{\sqrt{2\eps}}\right)g\left(\frac{\widehat w+ \widehat b}{\sqrt{2\eps}}\right)dw,
	\end{equation}
	we obtain that
	\begin{align}
		\left|\alpha_{n,\lfloor n\eps\rfloor}(a,b) - \alpha_\eps\left(\tfrac a{\sqrt n},\tfrac b{\sqrt n}\right)\right|
		&= O(1)n^{p+1}R_n + o(1) + O(1)\int_{|w|> t} g(w/\sqrt{2 \eps}).
	\end{align}
	Since $g$ is integrable and we have the bounds in \cref{eq:eibdiuwebdowebdbiew}, we can conclude that the latter term tends to zero first taking $n\to\infty$ and then $t\to\infty$.
\end{proof}

\section{Simulations of the skew Brownian permuton}\label{sect:simul}

We briefly explain how we obtained the simulations of the skew Brownian permuton $\bm{\mu}_{\rho,q}$ given in \cref{fig:uyievievbeee}, p.\ \pageref{fig:uyievievbeee}.

\begin{itemize}
	\item The first step is to sample an approximation of a two-dimensional Brownian excursion $\conti{E}_\rho$ with correlations $\rho$. To do that, 
	\begin{itemize}
		\item we start with (any) two-dimensional walk in the non-negative quadrant, started and ended at $(0,0)$,  with (say) 10 points and linear interpolating among these points. Then we run the following Glauber dynamics: we resample each point of the walk in such a way that the conditional law of each point given the neighboring points is that of the Gaussian distribution with correct mean and variance to correspond to the desired Brownian excursion, then conditioned to be in the non-negative quadrant.
		\item To get a new walk with double points, we consider the walk obtained in the previous step and then we add a new point in the middle point of each linear segment. Then we run again the same  Glauber dynamics used in the previous step. 
		\item Iterating the previous step, we always get a new walk with double points, which is a better approximation of two-dimensional Brownian excursion $\conti{E}_\rho$.
	\end{itemize}

	\item The second step is to construct approximations of the solutions of the SDEs in \cref{eq:flow_SDE_gen}, p.\ \pageref{eq:flow_SDE_gen}, driven by the approximation of $\conti{E}_\rho$ obtained in the previous step. This can be done by using some well-chosen \emph{discrete coalescent-walk processes} (see \cite[Section 2.2]{borga2021strongBaxter}) whose walks converge to the solutions of the SDEs in \cref{eq:flow_SDE_gen} (in the same spirit of \cite[Proposition 4.8]{borga2021strongBaxter}).
	
	\item The final step is the simplest one: once we have the approximated solutions of the SDEs in \cref{eq:flow_SDE_gen}, it is simple to construct the process $\varphi_{\cnz_{\rho,q}}$ defined in \cref{eq:random_skew_function}. Finally, the skew Brownian permuton $\bm{\mu}_{\rho,q}$ is well approximated by the graph of the function of the process $\varphi_{\cnz_{\rho,q}}$.
\end{itemize}

\bibliography{mybib}
\bibliographystyle{alpha}

\end{document}